\def\wt{\widetilde}
\def\wh{\widehat}
\def\ov{\overline}
\def \im{{\rm Im}}
 \def\up{\upharpoonright}
\def\cH{\mathcal H} \def\cB{\mathcal B}
\def\cK{\mathcal K} \def\cL{\mathcal L}
 \def\cN{\mathcal N}  
 \def\cT{\mathcal T} \def\cI{\mathcal I}
\def \gH{\mathfrak H}   \def \gN{\mathfrak N}
\def \bC{\mathbb C}    \def\bR{\mathbb R}
\def\bH{\mathbb H} 
\def \l{\lambda}
\def \a{\alpha} \def \b{\beta}  \def \L{\Lambda}  \def \s{\sigma}
 \def \t{\theta} \def\g {\gamma}
\def\d {\delta}   
\def \f{\varphi}  \def \G{\Gamma} \def\D {\Delta} \def\Si{\Sigma}
\def \C{\widetilde {\mathcal C}}
\def \CA{\C(\cH_0,\cH_1)}
\def \cd {\cdot}
\def\HH {\cH_0\oplus\cH_1}
\def\AC {AC(\cI; \bH)}   \def\LI {L_\Delta^2(\cI)}
\def\lI {\cL_\Delta^2(\cI)}
\def\LSH {L^2(\Sigma ;\cH)} \def\LS {L^2(\Sigma_\tau ; H_0)}
\def\St{\Sigma_\tau(\cdot)}
\def\tma{\cT_{\max}} \def\tmi{\cT_{\min}} \def\Tma{T_{\max}} \def\Tmi{T_{\min}}
\def\Tt {\wt T^\tau}
\def \dom {{\rm dom}\,}  \def \ran {{\rm ran}\,}  \def \ker{{\rm ker\,}}
 \def \mul {{\rm mul}\,}
\def \exa { {Ext}_A}
  \def\tm{\times}
\def  \RH {\wt R (\cH_0,\cH_1)}
\def  \RP {\wt R (\wt\cH_b,\cH_b)}
 \def \RZ {\wt R^0 (\cH_0,\cH_1)}
\def  \Rh {\wt R (\cH)} 
\def \pair {\tau=\{\tau_+,\tau_-\}}
\def \CR {\bC\setminus\bR}
\newcommand {\lo}[1] {\cL_\D^2[#1,\bH ]}
\def\bt{\{\cH,\G_0,\G_1\}}
\def\bta{\{\cH_0\oplus \cH_1,\Gamma _0,\Gamma _1\}}
\newtheorem{theorem}{Theorem}[section]
\newtheorem{proposition}[theorem]{Proposition}
\newtheorem{corollary}[theorem]{Corollary}
\newtheorem{lemma}[theorem]{Lemma}
\theoremstyle{definition}
\theoremstyle{definition}
\newtheorem {definition} [theorem]{Definition}
\theoremstyle{remark}
\newtheorem{remark}[theorem]{Remark}
\numberwithin{equation}{section}
\begin{document}
\title[On Titchmarsh-Weyl functions ]
{On Titchmarsh-Weyl functions and eigenfunction expansions of
first-order symmetric systems }

\author{Sergio Albeverio}
\address{Institut f\"ur Angevandte Mathematic,HCM, IZKS, SFB611,
 Universit\"at Bonn, Endenicherallee 60, D - 53115 Bonn, Germany;  }
\address{CERFIM, Locarno, Switzerland}
\email{albeverio@uni-bonn.de}
\author{Mark Malamud}
\address{Institute of Applied Mathematics and Mechanics, NAS of
Ukraine,  R. Luxemburg Str. 74,  83050 Donetsk,    Ukraine}
\email{mmm@telenet.dn.ua}
\author {Vadim Mogilevskii}
\address{Department of Mathematical Analysis, Lugans'k National University,
 Oboronna Str. 2, 91011  Lugans'k,   Ukraine}
\email{vim@mail.dsip.net}

\subjclass{34B08, 34B20, 34B40,34L10,\\
47A06,47B25}

\keywords{First-order symmetric system, Nevanlinna boundary
conditions, $m$-function, spectral function of a boundary problem,
Fourier transform}


\begin{abstract}
We study general (not necessarily Hamiltonian) first-order
symmetric  systems $J y'(t)-B(t)y(t)=\D(t) f(t)$ on an interval
$\cI=[a,b\rangle $ with the regular endpoint $a$. It is assumed
that the deficiency indices $n_\pm(\Tmi)$ of the   minimal
relation $\Tmi$ in $\LI$ satisfy $n_-(\Tmi)\leq n_+(\Tmi)$. By
using a Nevanlinna boundary parameter $\tau=\tau(\l)$ at the
singular endpoint $b$ we define self-adjoint and $\l$-depending
Nevanlinna boundary conditions which are analogs  of separated
self-adjoint boundary conditions for Hamiltonian systems. With a
boundary value problem involving such conditions we associate the
$m$-function $m(\cd)$, which is an analog of the Titchmarsh-Weyl
coefficient for the Hamiltonian system. By using $m$-function we
obtain the Fourier transform $V:\LI\to L^2(\Si)$ with the spectral
function $\Si(\cd)$ of the minimally possible dimension. If $V$ is
an isometry, then the (exit space) self-adjoint extension $\wt T$
of $\Tmi$ induced by the boundary problem is unitarily equivalent
to the multiplication operator in $L^2(\Si)$; hence the spectrum
of $\wt T$ is defined by the spectral function $\Si(\cd)$. We show
that all the objects of the boundary problem are determined by the
parameter $\tau$, which  enables us to parametrize all spectral
function $\Si(\cd) $ immediately in terms of $\tau$. Similar
results for various classes of boundary problems were obtained by
Kac and Krein, Fulton , Hinton and Shaw  and other authors.
\end{abstract}
\maketitle
\section{Introduction}
Let  $H$ and $\wh H$ be finite dimensional Hilbert spaces and let
\begin{gather}\label{1.1}
H_0:=H\oplus\wh H, \qquad \bH:=H_0\oplus H=H\oplus\wh H \oplus H.
\end{gather}

The main object of the paper is  first-order symmetric  system of
differential equations defined on an interval $\cI=[a,b\rangle,
-\infty<a <b\leq\infty,$ with the regular endpoint $a$ and regular
or singular endpoint $b$. Such a system is of the form
\cite{Atk,GK}
\begin {equation}\label{1.2}
J y'(t)-B(t)y(t)=\D(t) f(t), \quad t\in\cI,
\end{equation}
where $B(t)=B^*(t)$ and $\D(t)\geq 0$ are the $[\bH]$-valued
functions on $\cI$ and
\begin {equation} \label{1.3}
J=\begin{pmatrix} 0 & 0&-I_H \cr 0& i I_{\wh H}&0\cr I_H&
0&0\end{pmatrix}:H\oplus\wh H\oplus H \to H\oplus\wh H\oplus H.
\end{equation}
Throughout the paper we  assume  that the system \eqref{1.2} is
definite. The latter means  that   for any $\l\in\bC$ each common
solution of the equations
\begin {equation}\label{1.5}
J y'(t)-B(t)y(t)=\l \D(t) y(t)
\end{equation}
and $\D(t)y(t)=0$  (a.e. on $\cI$) is trivial, i.e.,
$y(t)=0,\;t\in\cI$.

System \eqref{1.2} is called Hamiltonian system if $\wh H=\{0\}$.
In this case one has
\begin {equation}\label{1.5a}
J=\begin{pmatrix}  0&-I_H \cr  I_H& 0\end{pmatrix}:H\oplus H \to
H\oplus H.
\end{equation}

In what follows  we denote by $\LI$ the Hilbert space of
$\bH$-valued Borel measurable functions  $f(\cd)$ (in fact,
equivalence classes) on $\cI$ satisfying $||f||_\D^2:=
\int\limits_\cI (\D(t)f(t),f(t))_\bH\,dt<\infty$.

Investigations of symmetric systems  is motivated by several
reasons.  For instance, systems \eqref{1.5} form more general
objet than formally self-adjoint differential equation of
arbitrary order with matrix coefficients. Such  equation is
reduced  to a system of the form \eqref{1.5} with $J$ given by
\eqref{1.3} (see \cite{KogRof75}). Emphasize that presence  of the
term $i I_{\wh H}$ in \eqref{1.3} under this reduction
characterizes  odd order equations, although  even order equations
are reduced to Hamiltonian systems (with  $J$ given by
\eqref{1.5a}). Moreover,  the Krein-Feller string equation is also
reduced to  Hamiltonian  system \eqref{1.5} (\cite[Chapter 6, \S
8]{GK}).

As it is known,  the extension theory of symmetric linear
relations gives a natural framework   for investigation of the
boundary value problems for symmetric systems (see
\cite{BHSW10,DLS88,DLS93,HSW00,Kac03,LT82,LesMal03,Orc} and
references therein). According to \cite{Kac03,LesMal03, Orc}  the
system \eqref{1.2} generates the minimal linear relation $\Tmi$
and the maximal linear relation $\Tma$ in $\LI$. It turns out that
$\Tmi$ is a closed symmetric relation with not necessarily equal
deficiency indices $n_\pm(\Tmi)$. Since system \eqref{1.2} is
assumed to be  definite,  $n_\pm(\Tmi)$ can be defined as a
number of $L_\D^2$-solutions of  \eqref{1.5} for $\l\in\bC_\pm$.
Moreover, $\Tma=\Tmi^*$ and the equality
\begin {equation}\label{1.6}
[y,z]_b=\lim _{t\uparrow b} (J y(t), z(t)), \quad y,z\in\dom\Tma,
\end{equation}
defines a skew-Hermitian bilinear form on  the domain of $\Tma$.

A description of various classes of extensions of $\Tmi$
(self-adjoint, $m$-dissipative, etc.) in terms of boundary
conditions is an important problem in the spectral theory of
symmetric systems. Assume that the system \eqref{1.2} is
Hamiltonian and $n_+(\Tmi)=n_-(\Tmi)$. Let $
y(t)=\{y_0(t),y_1(t)\}(\in H\oplus H)$ be  the representation of a
function $y\in\dom\tma$. Then according to \cite{HinSch93} the
general form of self-adjoint separated boundary conditions is
\begin {equation}\label{1.7}
\cos B_1 \, y_0(a)+\sin B_1 \, y_1(a)=0, \;\; [y,\chi_j]_b=0, \;\;
j\in \{1,\,...\,,\,\nu_b\},\;\; y\in\dom\tma,
\end{equation}
where $B_1$ is a self-adjoint operator on $H$ and
$\{\chi_j\}_1^{\nu_b}, \nu_b=n_\pm(\Tmi) -\dim H,$ is a certain
system of functions from $\dom \Tma$. The  vector
$y_b:=\{[y,\chi_j]_b\}_1^{\nu_b}\in \bC^{\nu_b}$ is called a
singular boundary value   of a function $y\in\dom\tma$. Observe
that for ordinary differential operators description  \eqref{1.7}
goes back to I.M. Glazman (see \cite[Appendix 2,\S 5] {AkhGla}),
while the form of the boundary conditions (at regular endpoints)
goes back to F.S. Rofe-Beketov \cite{Rof69}. Note also that the
notion of a singular boundary value can be found in the book
\cite[Ch.13.2] {DunSch}).

Boundary conditions \eqref{1.7} generate a self-adjoint extension
$\wt A$ of $\Tmi$ given by $\wt A=\{\{y,f\}\in\Tma:\, y \;\;
\text{satisfies \eqref{1.7}}\}$. The resolvent $(\wt A-\l)^{-1}$
of $\wt A$ is defined as follows:  for any
$f\in\LI$ vector $y=(\wt A-\l)^{-1}f$ is the $L_\D^2$-solution of the equation  
\begin {equation}\label{1.7a}
J y'(t)-B(t)y(t)=\l\D (t)y+\D(t) f(t), \quad f\in\LI, \quad \l\in
\CR,
\end{equation}
subject to the boundary conditions \eqref{1.7}. Moreover,
according to \cite{HinSch93} the Titchmarsh - Weyl coefficient
$M_{TW}(\l)(\in [H])$ of  the boundary problem \eqref{1.7a},
\eqref{1.7} is defined by the relations
\begin {gather*}
v(t,\l):=\f(t,\l)M_{TW}(\l)+\psi(t,\l)\in \lo{H},\\
[v(\cd,\l)h,\chi_j]_b=0,\quad h\in H, \quad j\in
\{1,\;\dots,\;\nu_b\}.
\end{gather*}
Here $\f(\cd,\l)$ and $\psi(\cd,\l)$ are the
$[H,\bH\oplus\bH]$-valued operator solutions of Eq. \eqref{1.5}
with the initial data
$$
\f(a,\l)= (\sin B_1,\, -\cos B_1)^\top \quad \text{and} \quad \psi
(a,\l)= (-\cos B_1,\, \sin B_1)^\top.
$$
Note also the papers \cite{Khr06,Kra89}, where   the Titchmarsh -
Weyl coefficient for Hamiltonian systems is defined in another
way. By using $M_{TW}(\cd)$ one obtains the Fourier transform with
the spectral function $\Si(\cd)$ of the minimally possible
dimension $N_\Si=\dim H$ (see \cite{DLS88,DLS93,HinSch98,Kac03}).

It turns out that for general (not necessarily Hamiltonian)
symmetric systems the situation is more complicated. In
particular, it was shown in \cite{Mog11} that
\emph{non-Hamiltonian system \eqref{1.2} does not admit separated
self-adjoint boundary conditions}. Moreover,  the inequality
$n_+(\Tmi)\neq n_-(\Tmi)$, and hence absence  of self-adjoint
boundary conditions is a typical situation for such systems.  For
instance, in the limit point case at $b$ one has $n_+(\Tmi)=\dim
H$ and $n_-(\Tmi)=\dim H+\dim \wh H$. Such circumstances make it
natural to investigate  the following problems:

$\bullet$\; To find (might be $\l$-depending) analogs of
self-adjoint separated boundary conditions for general systems
\eqref{1.2} and describe such type conditions;


$\bullet$\; To describe in terms of boundary conditions all
spectral matrix functions that have the minimally possible
dimension and  investigate the corresponding Fourier transforms.


In the  paper we solve these problems for symmetric systems
\eqref{1.2} assuming that $n_-(\Tmi)\leq n_+(\Tmi)$.  However to
simplify presentation we assume within this section that
$n_-(\Tmi)= n_+(\Tmi)$ (the case $n_+(\Tmi)< n_-(\Tmi)$ will be
treated elsewhere). We first show that there exists a
finite-dimensional Hilbert space $\cH_b$  and a surjective linear
mapping
\begin{gather*}
\G_b=(\G_{0b},\,  \wh\G_b,\,  \G_{1b})^\top:\dom\Tma\to
\cH_b\oplus\wh H\oplus \cH_b
\end{gather*}
such that the bilinear form \eqref{1.6} admits the representation
\begin {equation*}
[y,z]_b=(\G_{0b}y,\G_{1b}z)-(\G_{1b}y,\G_{0b}z)+ i (\wh\G_b y,
\wh\G_b z), \quad y,z\in\dom\Tma.
\end{equation*}
It turns out that $\G_b y$ can be chosen in the form of a singular
boundary value of  $y\in\dom\Tma$ (see Remark \ref{rem3.2a}).
Moreover, each proper extension of $\Tmi$ can be defined by means
of boundary conditions imposed on vectors $y(a)=\{y_0(a), \, \wh
y(a), \, y_1(a)\}(\in H\oplus\wh H\oplus H)$ and $\G_b
y=\{\G_{0b}y,\,  \wh\G_by,\,  \G_{1b}y\}(\in \cH_b\oplus\wh
H\oplus \cH_b)$. In particular, a linear relation $T$ given by
\begin {equation*}
T:=\{\{ y,  f\}\in\Tma: \, y_1(a)=0, \;\wh y(a)=\wh\G_b y,\;
\G_{0b}y =\G_{1b}y=0 \},
\end{equation*}
is a symmetric extension of  $\Tmi$  and  plays a crucial  role in
our considerations.

Recall that a generalized resolvent of $T$ is an operator-valued
function given by
\begin {equation*}
R(\l)=P_{\LI}(\wt T -\l)^{-1}\up \LI, \quad \l\in\CR,
\end{equation*}
where $\wt T$ is an exit space  self-adjoint extension of $T$
acting in a wider Hilbert space $\wt \gH \supset \LI$.  Moreover,
the spectral function of $T$ is defined by
\begin {equation*}
F(t)=P_{\LI}E(t)\up \LI,\quad t\in\bR,
\end{equation*}
where $E(\cdot)$ is the orthogonal spectral function (resolution
of identity) of $\wt T$. We show that each generalized resolvent
$y=R(\l)f, \; f\in\LI,$ is given as the $\LI$-solution of the
following boundary-value  problem with $\l$-depending boundary
conditions:
\begin{gather}
J y'-B(t)y=\l \D(t)y+\D(t)f(t), \quad t\in\cI,\label{1.10}\\
y_1(a)=0, \qquad \wh  y (a)= \wh\G_b y,\label{1.11}\\
C_0(\l)\G_{0b}y+C_1(\l)\G_{1b}y=0, \quad \l\in\CR. \label{1.12}
\end{gather}
Here $C_0(\cdot)$ and  $C_1(\cdot)$ are the components of a
Nevanlinna operator pair $\tau
(\cdot)=\{(C_0(\cdot),C_1(\cdot))\}$ with values in
$[\cH_b]\oplus[\cH_b]$, so that formula \eqref{1.12} defines a
Nevanlinna boundary condition at the singular endpoint $b$. One
may consider a pair $\tau=\tau(\cdot)$ as a boundary parameter,
since $R(\l)$ runs over the set of all generalized resolvents of
$T$ when $\tau$ runs over the set of all Nevanlinna operator
pairs. To indicate this fact explicitly we write
$R(\l)=R_\tau(\l)$ and $F(t)=F_\tau(t)$ for the generalized
resolvents and spectral functions of $T$ respectively. Moreover,
we denote by $\wt T=\wt T^\tau$ the exit space self-adjoint
extension of $T$ generating $R_\tau(\cd)$ and $F_\tau(\cd)$.

The boundary value problem \eqref{1.10}-\eqref{1.12} defines a
canonical resolvent $R_\tau(\l)$ if and only if $\tau$ is a
self-adjoint operator pair $\tau=\{(\cos B,\sin B)\}$ with some
$B=B^*\in [\cH_b]$. In this case $ R_\tau(\l)=(\wt T^\tau -
\l)^{-1}, \; \l\in\CR,$ where  $\wt T^\tau$ is a self-adjoint
extension of $T$ in $\LI$ defined by the following mixed boundary
conditions :
\begin {equation}\label{1.14}
\wt T^\tau =\{\{y,f\}\in\Tma : y_1(a)=0, \quad \wh  y(a)= \wh\G_b
y,\quad \cos B\cd\G_{0b}y+\sin B\cd\G_{1b}y=0\}.
\end{equation}
For  Hamiltonian systems  the equalities in the right-hand side of
\eqref{1.14} take the form of self-adjoint separated boundary
conditions
\begin {gather}\label{1.15}
y_1(a)=0, \quad \cos B\cd\G_{0b}y+\sin B\cd\G_{1b}y=0.
\end{gather}
Formula \eqref{1.15} seems to be more convenient than \eqref{1.7},
because it enables one to parametrize  singular self-adjoint
boundary conditions (at the endpoint $b$) by means of a
self-adjoint boundary parameter $B$.

Next assume that $\f(\cd,\l)$ and $\psi (\cd,\l)$ are
$[H_0,\bH]$-valued operator solutions of  equation  \eqref{1.5}
satisfying  the initial conditions
\begin {equation*}
\f(a,\l)=\begin{pmatrix} I_{H_0}\cr 0\end{pmatrix}(\in [H_0,
H_0\oplus H]),\quad \psi(a,\l)=\begin{pmatrix} -\tfrac i 2 P_{\wh
H}\cr -P_H\end{pmatrix}(\in [H_0, H_0\oplus H]).
\end{equation*}
 We show that,  for each Nevanlinna boundary parameter $\tau =
\{(C_0(\l),C_1(\l))\}$, there exists a unique operator function
$m_\tau(\l)(\in [H_0])$ such that the operator solution
\begin {equation*}
v_\tau(t,\l):=\f(t,\l)m_\tau(\l) +\psi (t,\l), \quad \l\in\CR,
\end{equation*}
of Eq. \eqref{1.5} has the following property: for every $h_0\in
H_0$ the function $y=v_\tau (t,\l)h_0$ belongs to $\LI$ and
satisfies the boundary conditions
\begin{equation*}
i(\wh y(a) - \wh\G_b y )=P_{\wh H} h_0 ,\qquad
C_0(\l)\G_{0b}y+C_1(\l)\G_{1b}y=0.
\end{equation*}
We call $m_\tau(\cd)$  the $m$-function corresponding to the
boundary problem \eqref{1.10}-\eqref{1.12}. It turns out that
$m_\tau(\cd) $ is a Nevanlinna operator function satisfying the
inequality
\begin {equation*}
(\im \,\l)^{-1}\cd \im\, m_\tau(\l)\geq \int_\cI
v_\tau^*(t,\l)\D(t) v_\tau(t,\l)\, dt, \quad\l\in\CR.
\end{equation*}
Moreover, in the case of the Hamiltonian system the  $m$-function
of the ''canonical'' boundary problem \eqref{1.10}, \eqref{1.15}
coincides with the Titchmarsh - Weyl coefficient $M_{TW}(\cd)$ in
the sense of \cite{HinSch93,Kra89,Khr06}. Note also that a concept
of the Titchmarsh - Weyl function for the general system
\eqref{1.2} with separated $\l$-depending boundary conditions was
proposed  in \cite{Khr06}. This function is no longer a Nevanlinna
function, that does not allow one to define  the spectral function
of the corresponding boundary value problem (cf. \eqref{1.21}
below).

In the final part of the paper we study eigenfunction expansions
of the boundary value  problems for symmetric systems. Namely, let
$\tau  = \{(C_0(\cdot),C_1(\cdot))\}$ be a boundary parameter and
let $F_\tau(\cd)$ be the spectral function of $T$ generated by the
boundary value problem \eqref{1.10}--\eqref{1.12}. A nondecreasing
left-continuous operator-valued function $\Si_\tau(\cd):\bR\to
[H_0]$ is called a spectral function of this problem  if, for each
function $f\in\LI$ with compact support, the Fourier transform
\begin {equation}\label{1.14A}
\wh f(s)=\int _\cI \f^*(t,s)\, \D(t)\,f(t)\, dt
\end{equation}
satisfies
\begin {equation}\label{1.20}
((F_\tau(\b)-F_\tau(\a))f,f)_{\LI}=\int_{[\a,\b)}
(d\Si_{\tau}(s)\wh f(s), \wh f(s))
\end{equation}
for any compact interval $[\a,\b)\subset\bR$. We show that for
each boundary parameter $\tau$ there exists unique spectral
function $\Si_\tau(\cd)$ and it is recovered  from the
$m$-function $m_\tau(\cd)$ by means of the Stieltjes inversion
formula
   \begin {equation}\label{1.21}
\Si_\tau(s)=\lim\limits_{\delta \to +0}\lim\limits_{\varepsilon
\to +0}\frac 1 \pi \int_{-\delta}^{s-\delta} \im\, m_\tau(\sigma
+i\varepsilon)\, d\sigma.
  \end{equation}
Below (within this section) we assume  for simplicity that $T$ is
a  (not necessarily densely defined)
operator, i.e., $\mul T=\{0\}$. 

It follows from \eqref{1.20} that,  the mapping  $V f=\wh f$,
originally defined by \eqref{1.14A} for functions with compact
supports, admits a continuous extension to a contractive map $V:
\LI \to \LS$ (for the strict definition of the Hilbert space $\LS$
see \cite{DunSch, Kac50, MalMal03} and also Section
\ref{sect6.2}).
In the following theorem we characterize the most interesting case
when the mapping $V$ is isometric.
\begin{theorem}\label{th1.1}
For each boundary parameter $\tau$ the following statements are
equivalent:

{\rm (i)} The Fourier transform $V$ is an isometry from $\LI$ to
$\LS$ or, equivalently, the Parseval equality $||\wh f||_{\LS}=||
f||_{\LI}$ holds for every  $f\in\LI$.

{\rm (ii)} The exit space self-adjoint extension $\wt T^\tau$ (in
$\wt\gH$) is the operator, that is $\mul \wt T^\tau=\{0\}$.

If $(i)$ (hence  $(ii)$) is valid, then:

{\rm (1)} For each $f\in\LI$ the inverse Fourier transform is
given by
\begin {equation*}
f(t)=\int _\bR \f(t,s) d\Si_\tau(s)\wh f(s)
\end{equation*}
where the integral is understood in an appropriate sense.

{\rm (2)} There exists a unitary extension  $U$ of the operator
$V$ that maps  $\wt\gH$ onto $\LS$ and such that the operator
$\Tt$ is unitarily equivalent to  the multiplication operator $\L$
on  $\LS$,  $\Tt =U^*\L U$. Hence, the operators $\Tt$ and $\L$
have the same spectral properties; for instance, the  multiplicity
of spectrum of $\Tt$ does not exceed $\dim H_0(=\dim H+\dim \wh
H)$.
\end{theorem}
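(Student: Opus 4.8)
The plan is to derive everything from the Parseval-type identity \eqref{1.20} combined with the spectral decomposition of the self-adjoint relation $\Tt$ in $\wt\gH$. First I would split $\wt\gH=\wt\gH_{op}\oplus\mul\Tt$ with $\wt\gH_{op}=\ov{\dom\Tt}$, and recall the standard facts that the resolution of identity $E(\cd)$ of $\Tt$ satisfies $E(+\infty)-E(-\infty)=P_{\wt\gH_{op}}$ and that each resolvent $(\Tt-\l)^{-1}$ annihilates $\mul\Tt$ with range inside $\wt\gH_{op}$. Since $f\in\LI$ gives $P_{\LI}f=f$, one has $(E(s)f,f)_{\wt\gH}=(P_{\LI}E(s)f,f)_{\LI}=(F_\tau(s)f,f)_{\LI}$, so letting $\a\to-\infty$ and $\b\to+\infty$ in \eqref{1.20} yields the master identity
\begin{equation*}
\|Vf\|_{\LS}^2=\int_\bR(d\Si_\tau(s)\wh f(s),\wh f(s))=(P_{\wt\gH_{op}}f,f)_{\wt\gH}=\|P_{\wt\gH_{op}}f\|^2,\qquad f\in\LI.
\end{equation*}

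From this the equivalence of (i) and (ii) is immediate. Indeed, $V$ is isometric iff $\|P_{\wt\gH_{op}}f\|=\|f\|$ for all $f\in\LI$, that is, iff $\LI\subseteq\wt\gH_{op}=(\mul\Tt)^\perp$. Here I would invoke the minimality of the exit space extension $\Tt$ generating $R_\tau$ and $F_\tau$: the space $\wt\gH$ is the closed linear span of $\LI$ together with the resolvent images $(\Tt-\l)^{-1}\LI$, $\l\in\CR$. Since every such image already lies in $\wt\gH_{op}$, the inclusion $\LI\subseteq\wt\gH_{op}$ forces $\wt\gH=\wt\gH_{op}$, i.e. $\mul\Tt=\{0\}$; the converse implication is trivial.

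For statement (2), assuming (i), I would apply the master identity to resolvent vectors through the spectral theorem: for $f\in\LI$,
\begin{equation*}
\|(\Tt-\l)^{-1}f\|_{\wt\gH}^2=\int_\bR\frac{d(E(s)f,f)}{|s-\l|^2}=\int_\bR\frac{(d\Si_\tau(s)\wh f(s),\wh f(s))}{|s-\l|^2}=\|(s-\l)^{-1}\wh f\|_{\LS}^2.
\end{equation*}
Thus the assignment $Vf\mapsto\wh f$, $(\Tt-\l)^{-1}f\mapsto(s-\l)^{-1}\wh f$ is isometric on a set whose span is dense in $\wt\gH$ by minimality, and it intertwines $(\Tt-\l)^{-1}$ with multiplication by $(s-\l)^{-1}$. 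Let $U$ be the resulting isometric extension of $V$; surjectivity onto $\LS$ follows because the vectors $(s-\l)^{-1}\wh f$ are total in $\LS$ by the minimality built into the construction of $\Si_\tau$. The intertwining then upgrades to $U\Tt=\L U$, i.e. $\Tt=U^*\L U$, and the multiplicity bound $\le\dim H_0$ is read off from $\L$ on $\LS=L^2(\Si_\tau;H_0)$. Statement (1) is then merely the explicit form of $U^*=U^{-1}$: since $Uf=\wh f$ for $f\in\LI$ we get $f=U^*\wh f$, and identifying $U^*$ as the integral operator with kernel $\f(t,s)$ dual to the kernel $\f^*(t,s)\D(t)$ of $V$ in \eqref{1.14A} gives $f(t)=\int_\bR\f(t,s)\,d\Si_\tau(s)\wh f(s)$, the integral understood as the $\LI$-limit inherited from the definition of $\LS$.

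I expect the main obstacle to be the surjectivity of $U$ onto $\LS$ and the rigorous handling of the model space: one must establish that $\{(s-\l)^{-1}\wh f:f\in\LI,\ \l\in\CR\}$ is total in $\LS$, equivalently that $\Si_\tau$ carries no spectral redundancy beyond $\LI$, and one must make precise the convergence of the integral defining $U^*$ in statement (1), since $\f(t,\cd)$ need not be square-integrable against $d\Si_\tau$ for fixed $t$. Both points rest on the minimality of $\Tt$ and on the identification \eqref{1.21} of $\Si_\tau$ from $m_\tau(\cd)$.
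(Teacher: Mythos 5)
Your overall skeleton --- the master identity $\|Vf\|^2_{\LS}=\|P_{\wt\gH_{op}}f\|^2$ (with $\wt\gH_{op}=\wt\gH\ominus\mul\Tt$) obtained from \eqref{1.20} together with $E(+\infty)-E(-\infty)=P_{\wt\gH_{op}}$, the minimality argument giving (i)$\iff$(ii), and the construction of $U$ by intertwining resolvents --- coincides with the paper's own route (formulas \eqref{6.21}, \eqref{6.23}, the equivalence \eqref{6.37}, and Proposition \ref{pr2.11.1} as applied in Theorem \ref{th6.9}). The genuine gap is the surjectivity of $U$, i.e.\ the totality of $\lb \wh f,\ (s-\l)^{-1}\wh f:\ f\in\LI,\ \l\in\CR\rb$ in $\LS$. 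You assert this holds ``by the minimality built into the construction of $\Si_\tau$'' and, in your closing remarks, that it ``rests on the minimality of $\Tt$ and on the identification \eqref{1.21}''. Neither suffices. $\gH$-minimality of $\Tt$ only gives that the span of $\LI$ and the resolvent images is dense in $\wt\gH$, hence that $U\wt\gH$ is a closed $\L$-reducing subspace of $\LS$; it says nothing about the complement $\LS\ominus U\wt\gH$. Nor does the Stieltjes formula \eqref{1.21} by itself exclude ``extra mass'' of $\Si_\tau$ invisible to the Fourier transform: abstractly, a distribution function can define an $L^2$-space in which the closed span of the transforms and their resolvent images is a proper reducing subspace, in which case $\Tt$ would be unitarily equivalent only to a part of $\L$, and both the ``onto'' claim in (2) and the completeness half of the expansion (1) would remain unproved.

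The paper closes exactly this hole with Proposition \ref{pr6.6} ($\L$ is $L_0$-minimal, where $L_0={\rm clos}\,(V\gH)$), and its proof is not abstract: it uses the definiteness of the symmetric system. If $g\perp V\gH$ and $E_\L([\a,\b))g\perp V\gH$ for every bounded interval, then by the formula \eqref{6.25} for $V^*$ the function $y(t)=\int_{[\a,\b)}\f_U(t,s)\,d\Si_\tau(s)g(s)$ solves the homogeneous system while $\D(t)y(t)=0$ a.e.\ on $\cI$, hence $y\equiv 0$ by definiteness; since $\f_U(a,s)$ does not depend on $s$ and has trivial kernel (see \eqref{3.32.1}, \eqref{3.32.2}), this forces $\int_{[\a,\b)}d\Si_\tau(s)g(s)=0$ for all intervals, i.e.\ $g=0$. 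You would need this argument (or a substitute exploiting the concrete structure of $\f_U$) to finish statement (2). A secondary, smaller gap: identifying $U^*$ (equivalently $V^*$) with the integral operator of kernel $\f_U(t,s)$, which you use to read off statement (1), is not a formal duality either; the paper proves it separately (Theorem \ref{th6.5a}), and only then does the inverse transform \eqref{6.38} follow from $V^*V=I$ on $\gH_0$.
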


It follows from Theorem \ref{th1.1} that $V$ is a unitary operator
from $\LI$ onto $\LS$ if and only if $\tau =\{\cos B, \sin B\}$ is
a selfadjoint operator pair and the self-adjoint extension
\eqref{1.14} of $T$  is the operator. Observe also that the
statements (i) and  (ii) hold for any boundary parameter $\tau $
if and only if $T$ is a densely defined operator.

Next, we show that all spectral functions $\Si_\tau(\cd)$ can  be
parametrized immediately in terms of the  boundary parameter
$\tau$. More precisely the following theorem holds.
\begin{theorem}\label{th1.2}
There exists a Nevanlinna operator function
\begin {equation}\label{1.23}
M(\l)=\begin{pmatrix} m_0(\l) & M_2(\l) \cr M_3(\l) & M_4(\l)
\end{pmatrix}:H_0\oplus\cH_b\to H_0\oplus\cH_b, \quad \l\in\CR,
\end{equation}
such that for each Nevanlinna boundary parameter $\tau
=\{(C_0(\cdot), C_1(\cdot))\}$ the corresponding $m$-function
$m_\tau(\cd)$ is  given by
\begin {equation}\label{1.24}
m_\tau(\l)=m_0(\l)+M_2(\l)(C_0(\l)-C_1(\l)M_4(\l))^{-1}C_1(\l)M_3(\l),
\quad\l\in\CR .
\end{equation}
Thus, formula \eqref{1.24} together with the Stieltjes inversion
formula \eqref{1.21} defines  (unique) spectral function
$\Si_\tau(\cd)$ of the boundary problem
\eqref{1.10}--\eqref{1.12}. Moreover, the Fourier transform $V$ is
an isometry if and only if the following two conditions are
fulfilled:
\begin{gather}
\lim\limits_{y\to \infty} \tfrac 1 y (C_0(i y)-C_1(i y)M_4(i
y))^{-1}C_1(i
y)=0, \label{1.25} \\
\lim\limits_{y\to \infty} \tfrac 1 y M_4(i y) (C_0(i y)-C_1(i
y)M_4(i y))^{-1}C_0(i y)=0.\label{1.26}
\end{gather}
\end{theorem}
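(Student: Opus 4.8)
The plan is to realize the function $M(\cd)$ of \eqref{1.23} as the Weyl function of a boundary triple for $\Tma$ whose boundary space is the orthogonal sum $H_0\oplus\cH_b$, the first summand carrying the data of the regular endpoint $a$ and the second carrying the singular boundary values $\G_{0b}y,\G_{1b}y$. Concretely, using the operator solutions $\f(\cd,\l),\psi(\cd,\l)$ with the normalizations recorded above together with the maps $\G_{0b},\G_{1b}$, I would introduce boundary mappings $\G_0,\G_1:\dom\Tma\to H_0\oplus\cH_b$ and verify the abstract Green identity from the representation of $[y,z]_b$ stated earlier in the excerpt. The Weyl function of such a triple is, by definition, the operator sending $\G_0 f_\l$ to $\G_1 f_\l$ along the defect subspace $\ker(\Tma-\l)$; evaluating it on $\f$ and $\psi$ produces the block matrix \eqref{1.23}, in which $m_0(\l)$ is the $m$-function of the decoupled problem and $M_4(\l)$ is the Weyl function seen at the singular endpoint. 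That $M(\cd)$ is a Nevanlinna function is then immediate from the general properties of Weyl functions of boundary triples.

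With $M(\cd)$ in hand, formula \eqref{1.24} is essentially a Schur-complement computation. By the defining property of the $m$-function, the solution $v_\tau=\f m_\tau+\psi$ satisfies, on the $H_0$-component, $\G_1 v_\tau h_0=m_\tau(\l)h_0$, while its singular boundary values are linked through the lower blocks by $\G_{1b}v_\tau=M_3(\l)h_0+M_4(\l)\,\G_{0b}v_\tau$ and $m_\tau(\l)h_0=m_0(\l)h_0+M_2(\l)\,\G_{0b}v_\tau$. Imposing the Nevanlinna boundary condition $C_0(\l)\G_{0b}v_\tau+C_1(\l)\G_{1b}v_\tau=0$ and eliminating the $\cH_b$-component $\G_{0b}v_\tau$ from the resulting linear system gives $\G_{0b}v_\tau=(C_0(\l)-C_1(\l)M_4(\l))^{-1}C_1(\l)M_3(\l)h_0$, up to the signs dictated by the Green identity, and substitution into the $H_0$-response yields exactly \eqref{1.24}. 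Invertibility of $C_0(\l)-C_1(\l)M_4(\l)$ for $\l\in\CR$ follows from maximality of the Nevanlinna pair $\tau$ together with the strict imaginary part $\im\,M_4(\l)$, so $m_\tau(\cd)$ is well defined; combined with \eqref{1.21} this already produces the spectral function $\Si_\tau(\cd)$.

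The substantive part is the isometry criterion. Here I would start from the equivalence in Theorem \ref{th1.1}, by which $V$ is an isometry exactly when $\mul\Tt=\{0\}$, and translate the latter into a growth condition for the generalized resolvent along the imaginary axis. For Nevanlinna functions the multivalued part of the minimal exit-space extension is detected by the strong limit of $(iy)^{-1}R_\tau(iy)$ as $y\to\infty$, and vanishing of this limit is equivalent to $\mul\Tt=\{0\}$. Using the block form of $M(\cd)$ and the linear-fractional formula \eqref{1.24}, the relevant boundary contribution at infinity is carried precisely by $(C_0(iy)-C_1(iy)M_4(iy))^{-1}C_1(iy)$ and by its pairing with $M_4(iy)$, and a direct estimate identifies the two normalized limits \eqref{1.25}, \eqref{1.26} as necessary and sufficient for that contribution to disappear.

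I expect this last step to be the main obstacle. The difficulty is not algebraic but analytic: one must control the behaviour of $C_0(iy),C_1(iy)$ and $M_4(iy)$ simultaneously as $y\to\infty$, where the blocks of $M$ may themselves grow and where $\tau(\l)$ is only a Nevanlinna pair, so that $C_0,C_1$ need not have separate limits. The key technical point is to show that \eqref{1.25} and \eqref{1.26} together capture the \emph{entire} multivalued part, i.e. that no cancellation between the $H_0$- and $\cH_b$-blocks can conceal a nonzero $\mul\Tt$; this requires exploiting the monotonicity (Nevanlinna) structure of both $M(\cd)$ and $\tau(\cd)$ rather than estimating the individual blocks independently.
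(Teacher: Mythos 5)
Your first two paragraphs reproduce the paper's own route. The matrix $M(\cd)$ in \eqref{1.23} is indeed realized as the Weyl function of the decomposing boundary triplet $\Pi=\bt$ for $\Tma$ (Proposition \ref{pr3.3}, Corollary \ref{cor4.1b}), and your Schur--complement elimination of the $\cH_b$-component of the boundary values of $v_\tau(\l)h_0$ is a legitimate derivation of \eqref{1.24}: writing $\G_0 v_\tau(\l)h_0=\{h_0,\G_{0b}v_\tau(\l) h_0\}$, the Weyl relation gives $m_\tau(\l)h_0=m_0(\l)h_0+M_2(\l)\G_{0b}v_\tau(\l) h_0$ and $-\G_{1b}v_\tau(\l) h_0=M_3(\l)h_0+M_4(\l)\G_{0b}v_\tau(\l) h_0$, and the boundary condition then yields exactly \eqref{1.24}. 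This is equivalent to the paper's Theorem \ref{th5.4} and Remark \ref{rem5.8}; where the paper cites \cite[Lemma 2.1]{MalMog02} for $0\in\rho(C_0(\l)-C_1(\l)M_4(\l))$, your argument via maximality of $\tau$ and $\im M_4(\l)>0$ is the standard proof of that lemma. Up to this point the proposal is sound and essentially identical to the paper.

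The gap is in the isometry criterion, and you partly diagnose it yourself. Two concrete problems. First, your detector is misstated: $(iy)^{-1}R_\tau(iy)\to 0$ holds for \emph{every} boundary parameter, since $\|R_\tau(iy)\|\leq 1/y$. What actually detects the multivalued part is $s\text{-}\lim_{y\to\infty} iyR_\tau(iy)=-F_\tau(\infty)=-P_\gH P_{\wt\gH_0}\up\gH$ (via \eqref{2.38} and \eqref{2.39}), and the criterion is $F_\tau(\infty)=I_\gH$, which together with $\gH$-minimality of $\Tt$ forces $\mul\Tt=\{0\}$; "vanishing of the limit" is not the right condition. Second, and more substantively, passing from that resolvent limit to the pair of conditions \eqref{1.25}--\eqref{1.26} is precisely the content of the $\Pi$-admissibility theorem of \cite{DM00} (see also \cite{DM95,DM09}), which is what the paper invokes: in the proof of Theorem \ref{th6.13} it first replaces $\Tma$ by the boundary triplet $\dot\Pi=\{\cH_b,\dot\G_0,\dot\G_1\}$ for $T^*$ from Proposition \ref{pr3.6}, whose Weyl function is the single block $M_4(\cd)$ by Proposition \ref{pr2.10a}, (3), and then applies the abstract admissibility criterion to the pair $(\tau,M_4)$. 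This reduction also dissolves the difficulty you flag at the end: since $\Tt$ is an exit-space extension of $T$, and all such extensions are parametrized through $\dot\Pi$, the question of $\mul\Tt$ involves only $\tau$ and $M_4$; the blocks $m_0$, $M_2$, $M_3$ never enter, so there is no "cancellation between the $H_0$- and $\cH_b$-blocks" to rule out. Without either citing the admissibility theorem or reproving it --- which means expanding the Krein-type formula for $R_\tau(iy)$ in terms of $(A_0-iy)^{-1}$, the $\g$-field $\dot\g(iy)$ and $(C_0(iy)-C_1(iy)M_4(iy))^{-1}C_1(iy)$, and controlling all terms as $y\to\infty$ using the Nevanlinna structure --- your argument for the equivalence with \eqref{1.25}--\eqref{1.26} is not complete, exactly as you anticipate in your last paragraph.
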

Note that a description of spectral functions for various classes
of boundary problems in the form close to \eqref{1.24},
\eqref{1.21} can be found in
\cite{Ful77,Gor66,HinSha82,Hol85,KacKre,Mog07}.

The above results are obtained in the framework of the new
approach to the extension theory of symmetric operators developed
during three last decades (see \cite{DM00, DM91, DM95, GorGor,
Mal92,MalMog02,MalNei12, Mog06.2} and references therein). This
approach is based on concepts of boundary triplets and the
corresponding Weyl functions. To apply this method to boundary
value problems for system \eqref{1.2} we construct  an appropriate
boundary triplet for the relation $\Tma$ (see  Proposition
\ref{pr3.3}). Moreover, in Proposition \ref{pr4.1a} and Corollary
\ref{cor4.1b}  we express the corresponding Weyl function
$M(\cdot)$ in the sense of \cite{DM91,Mal92,Mog06.2} in terms of
the boundary values of respective matrix solutions of \eqref{1.5}.
It is worth to mention that the operator-valued function
\eqref{1.23} coincides with the Weyl function $M(\cdot)$  computed
in Corollary \ref{cor4.1b}. Note also that  conditions
\eqref{1.25}, \eqref{1.26} are implied by general result on
$\Pi$-admissibility from \cite{DM00, DM09}.

We complete the paper by explicit  example illustrating the main
results.

Some results of the paper have been  published as a  preprint
\cite{AlbMalMog12}.

\section{Preliminaries}
\subsection{Notations}
The following notations will be used throughout the paper: $\gH$,
$\cH$ denote Hilbert spaces; $[\cH_1,\cH_2]$  is the set of all
bounded linear operators defined on the Hilbert space $\cH_1$ with
values in the Hilbert space $\cH_2$; $[\cH]:=[\cH,\cH]$; $A\up
\cL$ is the restriction of an operator $A$ onto the linear
manifold $\cL$; $P_\cL$ is the orthogonal projector in $\gH$ onto
the subspace $\cL\subset\gH$; $\bC_+\,(\bC_-)$ is the upper
(lower) half-plane  of the complex plane.

Recall that a closed linear   relation   from $\cH_0$ to $\cH_1$
is a closed linear subspace in $\cH_0\oplus\cH_1$. The set of all
closed linear relations from $\cH_0$ to $\cH_1$ (in $\cH$) will be
denoted by $\C (\cH_0,\cH_1)$ ($\C(\cH)$). A closed linear
operator $T$ from $\cH_0$ to $\cH_1$  is identified  with its
graph $\text {gr}\, T\in\CA$.

For a linear relation $T\in\C (\cH_0,\cH_1)$  we denote by $\dom
T,\,\ran T, \,\ker T$ and $\mul T$  the domain, range, kernel and
the multivalued part of $T$ respectively. Recall also that the
inverse and adjoint linear relations of $T$ are the relations
$T^{-1}\in\C (\cH_1,\cH_0)$ and $T^*\in\C (\cH_1,\cH_0)$ defined
by
\begin{gather}
T^{-1}=\{\{h_1,h_0\}\in\cH_1\oplus\cH_0:\{h_0,h_1\}\in T\}\nonumber\\
T^* = \{\{k_1,k_0\}\in \cH_1\oplus\cH_0:\, (k_0,h_0)-(k_1,h_1)=0,
\; \{h_0,h_1\}\in T\}\label{2.0}.
\end{gather}

  In the
case $T\in\CA$ we write $0\in \rho (T)$ if $\ker T=\{0\}$\ and\
$\ran T=\cH_1$, or equivalently if $T^{-1}\in [\cH_1,\cH_0]$;
$0\in \wh\rho (T)$\ \ if\ \ $\ker T=\{0\}$\ and\   $\ran T$ is a
closed subspace in $\cH_1$. For a linear relation $T\in \C(\cH)$
we denote by $\rho (T):=\{\l \in \bC:\ 0\in \rho (T-\l)\}$ and
$\wh\rho (T)=\{\l \in \bC:\ 0\in \wh\rho (T-\l)\}$ the resolvent
set and the set of regular type points  of $T$ respectively.

A linear relation $T\in\C (\cH)$ is called symmetric
(self-adjoint) if $T\subset T^*$ (resp. $T=T^*$). For each
$T=T^*\in \C (\cH)$ the following decompositions hold
\begin {equation}\label{2.0.1}
\cH=\cH'\oplus \mul T, \qquad T= T'\oplus\wh {\mul} T,
\end{equation}
where $\wh {\mul} T =\{0\}\oplus \mul T$ and $T'$ is the
self-adjoint operator in $\cH'$ (the operator part of $T$).

Let $T=T^*\in \C (\cH)$, let $\cB$ be the Borel $\s$-algebra of
$\bR$ and let $E'(\cd):\cB\to [\cH']$ be the orthogonal spectral
measure of $T'$. Then the spectral measure $E(\cd)$ of $T$ is
defined as $E(\d)=E'(\d)P_{\cH'}, \; \d\in\cB$.

Recall also the following definition.
\begin{definition}\label{def2.0}
A holomorphic operator function $\Phi (\cd):\bC\setminus\bR\to
[\cH]$ is called a Nevanlinna function  if $\im\, \l\cd \im \Phi
(\l)\geq 0 $ and $\Phi ^*(\l)= \Phi (\ov \l), \;
\l\in\bC\setminus\bR$.
\end{definition}
\subsection{Holomorphic operator pairs }
Let  $\Lambda$ be an open set in $\bC$, let $\cK,\cH_0,\cH_1$  be
Hilbert spaces and let
\begin {equation*}
(C_0(\l), \,C_1(\l)):\cH_0\oplus\cH_1\to\cK, \quad \l\in\Lambda,
\end{equation*}
be a pair of holomorphic operator functions $C_j(\cd):\Lambda \to
[\cH_j,\cK], \; j\in \{0,1\}$  (in short a holomorphic pair). Two
such pairs $C_j(\cd):\Lambda\to [\cH_j,\cK]$ and
$C_j'(\cd):\Lambda\to [\cH_j,\cK']$ are said to be equivalent if
there exists a holomorphic isomorphism $\f(\cd): \Lambda\to [\cK,
\cK']$ such that $C_j'(\l)=\f (\l)C_j(\l), \l\in\Lambda,\;j\in
\{0,1\}$. Clearly, the set of all holomorphic pairs splits into
disjoint equivalence classes; moreover, the equality
\begin {equation}\label{2.1}
\tau (\l)=\{(C_0(\l), C_1(\l));\cK\}:=\{\{h_0,h_1\}\in
\cH_0\oplus\cH_1: C_0(\l)h_0+C_1(\l)h_1=0\}
\end{equation}
allows us to identify such a class with the $\CA$-valued function
$\tau(\l), \; \l\in\Lambda $.

In what follows, unless otherwise stated, $\cH_0$ is a Hilbert
space, $\cH_1$ is a subspace in $\cH_0$,
$\cH_2:=\cH_0\ominus\cH_1$ and $P_j$ is the orthoprojector in
$\cH_0$ onto $\cH_j,\; j\in\{1,2\}$.

 With each linear relation $\t\in\CA$ we associate the
$\tm$-adjoint linear relation $\t^\times\in\CA$ given by
\begin {equation*}
\t^\times=\{\{k_0,k_1\}\in \cH_0\oplus\cH_1:(k_1,h_0)-(k_0,h_1)+i
(P_2 k_0,P_2 h_0)=0,\; \{h_0,h_1\}\in\t\}.
\end{equation*}
It follows from \eqref{2.0} that  in the case $\cH_0=\cH_1=:\cH$
one has $\t^\times=\t^*$.

Next assume that
\begin {equation}\label{2.2}
\begin{array}{c}
\tau_+(\l)=\{(C_0(\l),C_1(\l));\cH_0\}, \;\;\l\in\bC_+; \\
\tau_-(\l)=\{(D_0(\l),D_1(\l));\cH_1\}, \;\;\l\in\bC_-
\end{array}
\end{equation}
are  equivalence classes of the holomorphic pairs
\begin {gather}
(C_0(\l), C_1(\l)):\HH\to\cH_0,\;\;\;\;\l\in\bC_+ \label{2.3}\\
(D_0(\l), D_1(\l)):\HH\to\cH_1,\;\;\;\;\l\in\bC_-.\label{2.3a}
\end{gather}
Assume also that
\begin {gather*}
C_0(\l)=(C_{01}(\l),C_{02}(\l)):\cH_1\oplus\cH_2\to\cH_0\\
D_0(\l)=(D_{01}(\l),D_{02}(\l)):\cH_1\oplus\cH_2\to\cH_1
\end{gather*}
are the block representations of $C_0(\l)$ and $D_0(\l)$.
\begin{definition}\label{def2.1}
 A collection $\pair$ of two holomorphic pairs
\eqref{2.2} (more precisely, of the equivalence classes of the
corresponding pairs) belongs to the class $\RH$ if it satisfies
the following relations:
\begin{gather}
2\,\im(C_{1}(\l)C_{01}^*(\l))+ C_{02}(\l)C_{02}^*(\l)\geq 0,\label{2.6}\\
\quad 2\,\im(D_1(\l)D_{01}^*(\l))+ D_{02}(\l)D_{02}^*(\l)\leq
0,  \label{2.6a}\\
C_1(\l)D_{01}^*(\ov\l)-C_{01}(\l)D_{1}^*(\ov\l)+i
C_{02}(\l)D_{02}^* (\ov\l)=0,
\;\;\; \l\in\bC_+\label{2.7}\\
0\in \rho (C_0(\l)-iC_1(\l)P_1), \;\l\in\bC_+;\;\;\;0\in\rho
(D_{01}(\l)+iD_{1}(\l)),\;\l\in\bC_-.\label{2.8}
\end{gather}

A collection $\pair\in\RH$ belongs to the class $\RZ$ if for some
(and hence for any) $\l\in\bC_+$ one has
\begin{equation*}
2\,\im(C_{1}(\l)C_{01}^*(\l))+ C_{02}(\l)C_{02}^*(\l)= 0
\;\;\;\;\text{and}\;\;\;\;
 0\in\rho(C_{01}(\l)+iC_{1}(\l)).
\end{equation*}
\end{definition}
The following proposition is immediate from Definition
\ref{def2.1} and the results of \cite{Mog06.1}.
\begin{proposition}\label{pr2.2}
{\rm (1)} If $\pair\in\RH$, then $(-\tau_\pm
(\ov\l))^\tm=-\tau_\mp(\l),$ $\;\l\in\bC_\mp,$ and the following
equality holds
\begin {equation}\label{2.8b}
\tau_\mp(\l)=\{\{-h_1-i P_2 h_0, -P_1h_0\}:\{h_1,h_0\}\in
(\tau_\pm (\ov\l))^*\}.
\end{equation}

{\rm (2)} The set $\RZ$ is not empty if and only if $\dim\cH_0=
\dim\cH_1$. This implies that in the case $\dim\cH_1<\infty$ the
set $\RZ$ is not empty if and only if $\cH_0= \cH_1=:\cH$.

{\rm (3)} Each collection $\pair\in\RZ$ can be represented as a
constant
\begin {equation}\label{2.10}
\tau_\pm(\l)\equiv \{(C_0,C_1);\cH_0\}=\t(\in\CA),\quad
\l\in\bC_\pm,
\end{equation}
where $C_j\in [\cH_j,\cH_0],\; j\in\{0,1\},$ and   $(-\t)^\tm
=-\t$.
\end{proposition}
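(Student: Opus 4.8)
The plan is to treat the three assertions separately, reducing each to Definition \ref{def2.1} supplemented by the structural results of \cite{Mog06.1}. For assertion (1) I would first isolate a purely algebraic identity valid for every relation $\sigma\in\CA$: computing $(-\sigma)^\times$ from the defining form of the $\times$-adjoint and comparing it, term by term, with the defining relation \eqref{2.0} of $\sigma^*$ (keeping track of $P_1$, $P_2$ and using $P_2 h_1=0$ for $h_1\in\cH_1$), one verifies
\begin{equation*}
(-\sigma)^\times=\{\{-h_1-iP_2h_0,\,P_1h_0\}:\{h_1,h_0\}\in\sigma^*\},
\end{equation*}
the map $\{h_1,h_0\}\mapsto\{-h_1-iP_2h_0,P_1h_0\}$ being a bijection of $\sigma^*$ onto $(-\sigma)^\times$. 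This identity shows at once that formula \eqref{2.8b} and the equality $(-\tau_\pm(\ov\l))^\times=-\tau_\mp(\l)$ are equivalent reformulations, so it suffices to prove one of them. I would then establish $(-\tau_+(\ov\l))^\times=-\tau_-(\l)$ (and symmetrically for the other sign) by a double inclusion: the neutrality inclusion is exactly the content of the cross-relation \eqref{2.7}, which, expanded in the $C$- and $D$-block notation of \eqref{2.2}, says precisely that every element of $-\tau_-(\l)$ annihilates every element of $-\tau_+(\ov\l)$ in the $\times$-form; the reverse inclusion (maximality) then follows from the resolvent conditions \eqref{2.8}, which force $\tau_+(\ov\l)$ and $\tau_-(\l)$ to have the correct defect, leaving no room for a proper extension.

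For assertion (2) the ``only if'' direction is immediate: if $\pair\in\RZ$, the resolvent condition in Definition \ref{def2.1} gives $0\in\rho(C_{01}(\l)+iC_1(\l))$, and since $C_{01}(\l)+iC_1(\l)$ maps $\cH_1$ into $\cH_0$, it is then a bounded bijection of $\cH_1$ onto $\cH_0$, whence $\dim\cH_0=\dim\cH_1$. For the ``if'' direction I would construct an explicit element of $\RZ$. By the computation in assertion (1), membership in $\RZ$ amounts to $-\tau$ being a maximal $\times$-neutral subspace of $\HH$, with the two halves $\cH_0$ and $\cH_1$ playing the role of the deficiency subspaces of the associated $\times$-symmetric form; exactly as in von Neumann's extension theory such a maximal neutral subspace exists once the halves have equal dimension. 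Concretely, starting from a unitary $U:\cH_0\to\cH_1$ I would write down a constant pair $\{(C_0,C_1);\cH_0\}$ built from $U$ and verify the equalities and resolvent conditions defining $\RZ$ directly. The finite-dimensional consequence is then trivial, since $\cH_1\subseteq\cH_0$ together with $\dim\cH_1=\dim\cH_0<\infty$ forces $\cH_1=\cH_0$.

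For assertion (3) the substantive point is constancy of $\l\mapsto\tau_\pm(\l)$, and here I would invoke \cite{Mog06.1}: for $\pair\in\RH$ the collection is, via the standard transform, the graph of a Nevanlinna pair, and the $\RZ$-condition that the imaginary-part form \eqref{2.6} vanish is the relation-theoretic analogue of a Nevanlinna function having zero imaginary part; combined with holomorphy this forces $\tau_+(\l)$ (and likewise $\tau_-(\l)$) to be independent of $\l$, so the two families glue to a single constant $\tau\in\CA$. Boundedness $C_j\in[\cH_j,\cH_0]$ is then obtained by passing to the canonical bounded representative of the equivalence class \eqref{2.1}, and the identity $(-\tau)^\times=-\tau$ is read off from assertion (1) upon setting $\tau_\pm(\ov\l)=\tau_\mp(\l)=\tau$. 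I expect this constancy to be the main obstacle: unlike the scalar case it requires the rigidity of holomorphic families of maximal $\times$-neutral relations, and it is precisely this rigidity, packaged in the results of \cite{Mog06.1}, on which the word ``immediate'' in the proposition rests.
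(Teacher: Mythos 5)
Your opening reduction is correct and is the right first move: the identity
\begin{equation*}
(-\sigma)^{\tm}=\bigl\{\{-h_1-iP_2h_0,\,P_1h_0\}:\{h_1,h_0\}\in\sigma^*\bigr\},
\qquad \sigma\in\CA ,
\end{equation*}
does hold with the signs you state, the map being a bijection of $\sigma^*$ onto $(-\sigma)^{\tm}$, so \eqref{2.8b} and the equality $(-\tau_\pm(\ov\l))^{\tm}=-\tau_\mp(\l)$ are indeed equivalent; likewise your ``only if'' argument in part (2) is fine, and your treatment of part (3) (constancy via holomorphic rigidity, deferred to \cite{Mog06.1}) matches what the paper itself does --- note the paper prints no proof at all of this proposition, declaring it immediate from Definition \ref{def2.1} and \cite{Mog06.1}, so the comparison here is with correctness rather than with a written argument.

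The genuine gap is in part (1): condition \eqref{2.7} is \emph{not} the neutrality inclusion; it is exactly the opposite one. Apply \eqref{2.7} to a vector $u\in\cH_1$ and regroup the three terms according to the block structure $C_0(\l)=(C_{01}(\l),C_{02}(\l))$ on $\cH_1\oplus\cH_2$; what you get is precisely
\begin{equation*}
\bigl\{-D_1^*(\ov\l)u+iD_{02}^*(\ov\l)u,\;D_{01}^*(\ov\l)u\bigr\}\in
\ker \bigl(C_0(\l),C_1(\l)\bigr)=\tau_+(\l),\qquad u\in\cH_1,\;\;\l\in\bC_+ .
\end{equation*}
The vectors on the left are built from the \emph{adjoints} of the $D$-blocks, and ranges of adjoints parametrize the orthogonal complement of $\ker(D_0(\ov\l),D_1(\ov\l))=\tau_-(\ov\l)$, not $\tau_-(\ov\l)$ itself; unwinding the $\tm$-form (or simply applying your own bijection with $\sigma=\tau_-(\ov\l)$) shows that these vectors run through a dense subspace of $(-\tau_-(\ov\l))^{\tm}$, up to a sign flip that is immaterial for subspace inclusions. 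Hence \eqref{2.7} is equivalent to the inclusion $(-\tau_-(\ov\l))^{\tm}\subset-\tau_+(\l)$ --- what you call the maximality half --- and by itself says nothing about mutual annihilation of the two kernels: with $C_j\equiv D_j\equiv 0$, condition \eqref{2.7} holds trivially while both kernels equal $\HH$ and are certainly not $\tm$-orthogonal to each other. The neutrality inclusion $-\tau_+(\l)\subset(-\tau_-(\ov\l))^{\tm}$ is exactly the half that requires \eqref{2.8} (in finite dimensions, surjectivity \eqref{2.11} plus the count that both $\tau_+(\l)$ and $(-\tau_-(\ov\l))^{\tm}$ have dimension $\dim\cH_1$, which upgrades the inclusion supplied by \eqref{2.7} to an equality). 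So your two-inclusion skeleton survives with the roles of \eqref{2.7} and \eqref{2.8} interchanged, but the verification you describe --- expand \eqref{2.7} and read off that every element of $-\tau_-(\l)$ annihilates every element of $-\tau_+(\ov\l)$ --- would simply fail if carried out.

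A smaller caveat of the same nature occurs in your part (2): when $\cH_2\neq\{0\}$, no constant pair with vanishing $\cH_2$-column can lie in $\RZ$, since $C_{02}=0$ makes $C_0-iC_1P_1=(C_{01}-iC_1,\,0)$ have kernel containing $\cH_2$, violating \eqref{2.8}. So the pair ``built from a unitary $U:\cH_0\to\cH_1$'' must be chosen with $C_{02}\neq0$; for instance $C_{01}=\tfrac12(X+Y)$, $C_1=\tfrac1{2i}(X-Y)$, with $X$ a unitary from $\cH_1$ onto $\cH_0$, $Y$ the inclusion of $\cH_1$ into $\cH_0$, and $C_{02}=\tfrac1{\sqrt2}$ times the inclusion of $\cH_2$, for which $2\,\im(C_1C_{01}^*)+C_{02}C_{02}^*=\tfrac12\,(YY^*+2C_{02}C_{02}^*-XX^*)=0$ and both invertibility conditions hold. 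Your von Neumann analogy is the right heuristic, but the sketch as written glosses over the constraint that forces this nonzero $\cH_2$-component.
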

Moreover, one can easily prove the following proposition.
\begin{proposition}\label{pr2.3}
If $\dim\cH_0<\infty$, then a collection $\pair$ of two
holomorphic pairs \eqref{2.2} belongs to the class $\RH$ if and
only if \eqref{2.6}--\eqref{2.7} holds and
\begin {equation}\label{2.11}
\ran (C_0(\l), C_1(\l))=\cH_0,\;\;\l\in\bC_+; \quad \ran (D_0(\l),
D_1(\l))= \cH_1,\;\;\l\in\bC_-.
\end{equation}
\end{proposition}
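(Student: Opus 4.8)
The plan is to observe first that conditions \eqref{2.6}, \eqref{2.6a} and \eqref{2.7} occur verbatim both in Definition \ref{def2.1} and in the asserted characterization, so the entire statement reduces to a pointwise (in $\l$) linear-algebra equivalence: \emph{assuming} \eqref{2.6}, \eqref{2.6a} and $\dim\cH_0<\infty$, I must show that the resolvent conditions \eqref{2.8} are equivalent to the range conditions \eqref{2.11}. The one place where finite-dimensionality is decisive is the elementary fact that for $A\in[\cH_0]$ one has $0\in\rho(A)$ iff $\ker A=\{0\}$ iff $A$ is surjective; this turns \eqref{2.8} into a pure (in)jectivity statement. The $\bC_+$ half (using \eqref{2.6}) and the $\bC_-$ half (using \eqref{2.6a}) are completely parallel, so I would carry out the first in detail and then transcribe.

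The easy inclusion is \eqref{2.8} $\Rightarrow$ \eqref{2.11}, and it needs no finiteness: for any $h\in\cH_0$ one has $(C_0-iC_1P_1)h=C_0h+C_1(-iP_1h)\in\ran(C_0,C_1)$, whence $\ran(C_0-iC_1P_1)\subseteq\ran(C_0,C_1)$; since \eqref{2.8} makes the left-hand range equal to $\cH_0$, so is the right-hand one.

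For the converse \eqref{2.11} $\Rightarrow$ \eqref{2.8} the natural move is to pass to adjoints, because \eqref{2.6} is phrased through $C_{01}^*$ and $C_{02}^*$. Here $(C_0-iC_1P_1)^*=C_0^*+iP_1C_1^*$, and in finite dimensions $0\in\rho(C_0-iC_1P_1)$ is equivalent to triviality of the kernel of this adjoint. I would therefore take $g\in\cH_0$ with $C_0^*g+iP_1C_1^*g=0$; using $C_1^*g\in\cH_1$ (so $P_1C_1^*g=C_1^*g$) and comparing the $\cH_1$- and $\cH_2$-components of $C_0^*g$, this is equivalent to $C_{02}^*g=0$ together with $C_{01}^*g=-iC_1^*g$. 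Feeding $g$ into the nonnegative form \eqref{2.6} gives $2\,\im(C_1C_{01}^*g,g)+\|C_{02}^*g\|^2\geq 0$; the second summand vanishes, and after substituting $C_{01}^*g=-iC_1^*g$ one computes $(C_1C_{01}^*g,g)=(C_{01}^*g,C_1^*g)=-i\|C_1^*g\|^2$, so the first summand equals $-2\|C_1^*g\|^2$. Thus $C_1^*g=0$, hence $C_{01}^*g=0$, and therefore $g\in\ker C_0^*\cap\ker C_1^*$. But \eqref{2.11} says precisely that $(C_0,C_1)$ is surjective, i.e. $\ker C_0^*\cap\ker C_1^*=\{0\}$, so $g=0$, giving \eqref{2.8}.

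The $\bC_-$ case runs identically: one uses $(D_{01}+iD_1)^*=D_{01}^*-iD_1^*$ on $\cH_1$, takes $g\in\cH_1$ in its kernel, and the sign-reversed form \eqref{2.6a} yields $2\|D_1^*g\|^2+\|D_{02}^*g\|^2\leq 0$, forcing $D_1^*g=D_{02}^*g=0$ and then $D_{01}^*g=0$; the range condition $\ran(D_0,D_1)=\cH_1$, i.e. $\ker D_0^*\cap\ker D_1^*=\{0\}$, closes it. I expect the only genuine difficulty to be bookkeeping: computing the two adjoints correctly, tracking which block lands in $\cH_1$ versus $\cH_2$, and invoking finite-dimensionality at the single point where triviality of the adjoint kernel is upgraded to $0\in\rho$ of the original operator. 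Note that \eqref{2.7}, being common to both formulations, plays no part in this argument.
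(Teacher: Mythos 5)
Your proof is correct. The paper gives no proof of Proposition \ref{pr2.3} (it is merely announced with ``one can easily prove''), and your argument --- observing that \eqref{2.7} is common to both formulations, reducing the equivalence of \eqref{2.8} and \eqref{2.11} to injectivity of the adjoints $(C_0(\l)-iC_1(\l)P_1)^*$ and $(D_{01}(\l)+iD_1(\l))^*$, and then using \eqref{2.6}, resp.\ \eqref{2.6a}, to force a kernel vector $g$ to lie in $\ker C_0^*(\l)\cap\ker C_1^*(\l)$, resp.\ $\ker D_0^*(\l)\cap\ker D_1^*(\l)$, which is trivial exactly when the range conditions hold, with $\dim\cH_0<\infty$ invoked only to pass from injectivity of the adjoint to $0\in\rho$ --- is precisely the routine verification the paper intends.
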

\begin{remark}\label{rem2.4}
 If $\cH_1=\cH_0=:\cH$, then the class $\Rh:=\wt R (\cH,\cH)$ coincides with
the well-known class of Nevanlinna functions $\tau (\cd)$ with
values in $\C (\cH)$ (see, for instance, \cite{DM00}). In this
case the collection \eqref{2.2} turns into the Nevanlinna pair
\begin {equation}\label{2.19}
\tau(\l)=\{(C_0(\l),C_1(\l));\cH\}, \quad \l\in\CR,
\end{equation}
with $C_0(\l), \; C_1(\l)\in [\cH]$. In view of
\eqref{2.6}--\eqref{2.8} such a pair is characterized by the
relations (cf. \cite[Definition 2.2]{DM00})
\begin{gather}
\im \l\cd \im (C_1(\l)C_0^*(\l))\geq 0,\quad C_1(\l)C_0^*(\ov\l)-
C_0(\l)C_1^*(\ov\l)=0,  \;\; \l\in\CR,\label{2.20}\\
0\in\rho (C_0(\l)-iC_1(\l)), \;\; \l\in\bC_+; \qquad 0\in\rho
(C_0(\l)+ iC_1(\l)), \;\; \l\in\bC_-.
\end{gather}
Moreover, the function $\tau(\cd)$ belongs to the class $\wt
R^0(\cH):=\wt R^0(\cH,\cH)$ if and only if it admits the
representation in the form of the constant (cf. \eqref{2.10})
\begin {equation}\label{2.22}
\tau(\l)\equiv \{(C_0,C_1);\cH\}=\t(\in\C (\cH)),\quad \l\in\CR
\end{equation}
with the operators $C_j\in [\cH]$ such that $\im (C_1C_0^*)=0 $
and $0\in\rho (C_0\pm i C_1)$ (this means that $\t=\t^*$). Observe
also that according to \cite{Rof69} each $\tau\in\wt R^0(\cH)$
admits the normalized representation \eqref{2.22} with
\begin {equation}\label{2.23}
C_0=\cos B, \qquad C_1=\sin B, \qquad B=B^*\in [\cH].
\end{equation}

Assume now that $n:=\dim \cH<\infty$, $e=\{e_j\}_1^n$ is an
orthonormal basis in $\cH$,  $\tau (\l)=\{(C_0 (\l),C_1(\l));
\cH\}$ is a pair of holomorphic operator-functions
$C_l(\cd):\CR\to [\cH]$ and $C_l(\l)= (c_{kj,l} (\l) )_{k,j=1}^n $
is the matrix representations of the operator $C_l(\l),\;
l\in\{0,1\},$ in the basis $e$. Then by Proposition \ref{pr2.3}
$\tau$ belongs to the class $\wt R(\cH)$ if and only if the
matrices $C_0(\l)$ and $C_1(\l)$ satisfy \eqref{2.20} and the
following equality:
\begin {equation*}
\text {rank} \,(C_0(\l),C_1(\l))=n, \quad \l\in\CR.
\end{equation*}
Moreover, the operator pair $\t=\{(C_0,C_1);\cH\}$ belongs to the
class $\wt R^0(\cH)$ if and only if  $\im (C_1C_0^*)=0$ and
$\text{rank}\, (C_0,C_1)=n$ (here $C_l=(c_{kj,l} )_{k,j=1}^n$ is
the matrix representation of the operator $C_l, \; l\in \{0,1\},$
in the basis $e$). Note that such a ''matrix'' definition of the
classes $\wt R(\cH)$ and $\wt R^0(\cH)$ in the case $\dim
\cH<\infty$ can be found, e.g. in \cite{DLS93,Kov83}
\end{remark}

\subsection{Boundary triplets and Weyl functions}
Here we recall  definitions of boundary triplets, the
corresponding Weyl functions, and $\gamma$-fields following
\cite{DM91, DM95, Mal92, Mog06.2}.

 Let $A$ be a closed  symmetric linear relation in the Hilbert space $\gH$,
let $\gN_\l(A)=\ker (A^*-\l)\; (\l\in\wh\rho (A))$ be a defect
subspace of $A$, let $\wh\gN_\l(A)=\{\{f,\l f\}:\, f\in
\gN_\l(A)\}$ and let $n_\pm (A):=\dim \gN_\l(A)\leq\infty, \;
\l\in\bC_\pm,$ be deficiency indices of $A$. Denote by $\exa$ the
set of all proper extensions of $A$, i.e., the set of all
relations $\wt A\in \C (\gH)$ such that $A\subset\wt A\subset
A^*$.

 Next assume that $\cH_0$ is a Hilbert space,  $\cH_1$ is a subspace
in $\cH_0$ and   $\cH_2:=\cH_0\ominus\cH_1$, so that
$\cH_0=\cH_1\oplus\cH_2$. Denote by $P_j$ the orthoprojector  in
$\cH_0$ onto $\cH_j,\; j\in\{1,2\} $.

\begin{definition}\label{def2.5}
 A collection $\Pi=\bta$, where
$\G_j: A^*\to \cH_j, \; j\in\{0,1\},$ are linear mappings, is
called a boundary triplet for $A^*$, if the mapping $\G :\wh f\to
\{\G_0 \wh f, \G_1 \wh f\}, \wh f\in A^*,$ from $A^*$ into
$\cH_0\oplus\cH_1$ is surjective and the following Green's
identity
\begin {equation}\label{2.28}
(f',g)-(f,g')=(\G_1  \wh f,\G_0 \wh g)_{\cH_0}- (\G_0 \wh f,\G_1
\wh g)_{\cH_0}+i (P_2\G_0 \wh f,P_2\G_0 \wh g)_{\cH_2}
\end{equation}
 holds for all $\wh
f=\{f,f'\}, \; \wh g=\{g,g'\}\in A^*$.
\end{definition}
\begin{proposition}\label{pr2.7}
Let  $\Pi=\bta$ be a boundary triplet for $A^*$. Then:
\begin{enumerate}
\item
$\;\dim \cH_1=n_-(A)\leq n_+(A)=\dim \cH_0$.
\item
$\ker \G_0\cap\ker\G_1=A$ and $\G_j$ is a bounded operator from
$A^*$ into $\cH_j, \;  j\in\{0,1\}$.
\item
The equality
\begin {equation}\label{2.31}
A_0:=\ker \G_0=\{\wh f\in A^*:\G_0 \wh f=0\}
 \end{equation}
defines the maximal symmetric extension $A_0\in\exa$ such that
$\bC_+\subset \rho (A_0) $.
\end{enumerate}
\end{proposition}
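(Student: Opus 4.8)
The plan is to read all three statements off the abstract Green identity \eqref{2.28}, so I would first record its immediate consequences. Writing $\wh f=\{f,f'\}$, $\wh g=\{g,g'\}$, the boundary form $J[\wh f,\wh g]:=(f',g)-(f,g')$ on $A^*$ equals the right-hand side of \eqref{2.28} and, by the definition \eqref{2.0} of the adjoint, vanishes whenever one argument lies in $A$. For the identity $\ker\G_0\cap\ker\G_1=A$ in (2), the inclusion $A\subseteq\ker\G_0\cap\ker\G_1$ follows by inserting $\wh f\in A$ against arbitrary $\wh g\in A^*$: the left side of \eqref{2.28} is then zero, and since $\G$ is surjective I may let $\G_0\wh g$ and $\G_1\wh g$ run independently over $\cH_0$ and $\cH_1$; choosing first $\G_0\wh g=0$ and then $\G_1\wh g=0$, and splitting $\cH_0=\cH_1\oplus\cH_2$ via $P_1,P_2$, forces $\G_0\wh f=\G_1\wh f=0$. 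The reverse inclusion is immediate: if $\G_0\wh f=\G_1\wh f=0$ then \eqref{2.28} gives $(f',g)=(f,g')$ for all $\wh g\in A^*$, i.e.\ $\wh f\in A^{**}=A$.

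For boundedness of $\G_j$ in (2) I would view $A^*$ as a Hilbert space in the graph norm (it is a closed subspace of $\gH\oplus\gH$) and show that $\G=(\G_0,\G_1)$ is closed, so that the closed graph theorem applies. If $\wh f_n\to\wh f$ in $A^*$ and $\G\wh f_n\to(a_0,a_1)$, then passing to the limit in \eqref{2.28} written for $\wh f_n$ against a fixed $\wh g$ and subtracting \eqref{2.28} for $\wh f$ yields, for all $\wh g\in A^*$, the identity $(\G_1\wh f-a_1,\G_0\wh g)_{\cH_0}-(\G_0\wh f-a_0,\G_1\wh g)_{\cH_0}+i(P_2(\G_0\wh f-a_0),P_2\G_0\wh g)_{\cH_2}=0$; surjectivity of $\G$ and the same independent-choice argument force $\G_0\wh f=a_0$ and $\G_1\wh f=a_1$.

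The dimension statement (1) I would obtain from Sylvester's law of inertia applied to $\tfrac1i J$, which descends to a nondegenerate Hermitian form on $A^*/A$ (its radical is $A^{**}=A$). Fixing $\l\in\bC_+$ and using the von Neumann decomposition $A^*=A\dotplus\wh\gN_\l(A)\dotplus\wh\gN_{\ov\l}(A)$, a direct evaluation gives $\tfrac1i J=2\,\im\l\,\|f\|^2>0$ on $\wh\gN_\l(A)$ and $=-2\,\im\l\,\|f\|^2<0$ on $\wh\gN_{\ov\l}(A)$, so the form has a positive subspace of dimension $n_+(A)$ and a negative subspace of dimension $n_-(A)$; being nondegenerate on the quotient, its inertia is $(n_+(A),n_-(A))$. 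On the other hand $\G$ is a linear isomorphism of $A^*/A$ onto $\cH_0\oplus\cH_1$ carrying $\tfrac1i J$ to $Q(x,y)=2\,\im(y,x)_{\cH_0}+\|P_2x\|^2_{\cH_2}$; writing $x=P_1x+P_2x$ and diagonalizing the off-diagonal part $2\,\im(y,P_1x)$ (inertia $(\dim\cH_1,\dim\cH_1)$) together with the definite part $\|P_2x\|^2$ shows the inertia of $Q$ is $(\dim\cH_1+\dim\cH_2,\dim\cH_1)=(\dim\cH_0,\dim\cH_1)$. Equating inertias yields $\dim\cH_0=n_+(A)$ and $\dim\cH_1=n_-(A)$, and since $\cH_1\subseteq\cH_0$ the inequality $n_-(A)\le n_+(A)$ comes for free.

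Finally, for (3): symmetry of $A_0=\ker\G_0$ (see \eqref{2.31}) is read off \eqref{2.28}, since both boundary terms vanish when $\G_0\wh f=\G_0\wh g=0$, giving $A_0\subseteq A_0^*$. Taking $\wh f=\wh g=\{f,\l f\}\in A_0$ with $\l\in\bC_+$ in \eqref{2.28} gives $2i\,\im\l\,\|f\|^2=0$, hence $\ker(A_0-\l)=\{0\}$, and symmetry makes $\ran(A_0-\l)$ closed. For surjectivity I would use that, by (1), $\dim(A^*/A_0)=\dim\cH_0=n_+(A)=\dim\wh\gN_\l(A)$ while $A_0\cap\wh\gN_\l(A)=\{0\}$, so that $A^*=A_0\dotplus\wh\gN_\l(A)$. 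Since the map $M_\l:\{g,g'\}\mapsto g'-\l g$ annihilates $\wh\gN_\l(A)$, one gets $\ran(A_0-\l)=M_\l(A_0)=M_\l(A^*)\supseteq M_\l\big(A\dotplus\wh\gN_{\ov\l}(A)\big)=\ran(A-\l)+\gN_{\ov\l}(A)=\gH$, whence $\bC_+\subseteq\rho(A_0)$; this forces $n_-(A_0)=0$ and makes $A_0$ maximal symmetric. I expect this surjectivity step to be the main obstacle: the asymmetry of \eqref{2.28}, namely the extra term $i\|P_2\cd\|^2$, is exactly what produces unequal deficiency indices and prevents $A_0$ from being self-adjoint, so the range must be computed through the defect-space decomposition rather than by a naive adjoint argument.
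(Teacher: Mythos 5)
The paper itself states Proposition \ref{pr2.7} without proof (it is recalled from \cite{Mog06.2} and the Derkach--Malamud circle of papers), so your argument has to be judged on its own merits. Parts (1) and (2) are correct: the closed-graph argument for boundedness and the identification $\ker\G_0\cap\ker\G_1=A^{**}=A$ are the standard ones, and your inertia comparison is sound --- indeed it survives even when $n_\pm(A)=\infty$, provided you replace the appeal to ``Sylvester'' and ``diagonalization'' by the cardinal argument (a positive definite subspace injects, under the projection along a negative definite complement, into the positive summand), using the explicit splittings $\cH_0\oplus\cH_1=\{(x,iP_1x):x\in\cH_0\}\dotplus\{(x_1,-ix_1):x_1\in\cH_1\}$ on one side and $A^*/A\cong\wh\gN_\l(A)\dotplus\wh\gN_{\ov\l}(A)$ on the other.

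The genuine gap is in (3), precisely at the step you flagged as the main obstacle: from $\dim(A^*/A_0)=\dim\cH_0=n_+(A)=\dim\wh\gN_\l(A)$ and $A_0\cap\wh\gN_\l(A)=\{0\}$ you conclude $A^*=A_0\dotplus\wh\gN_\l(A)$. That inference is pure dimension counting and is false as linear algebra once $n_+(A)=\infty$ (trivial intersection plus equal infinite dimension/codimension does not give a direct sum), and the proposition is stated for an arbitrary closed symmetric relation $A$ --- the paper explicitly contemplates infinite indices, cf.\ the hypothesis ``$n_\pm(A)<\infty$'' inserted only in Proposition \ref{pr2.10a}. The standard route (Proposition \ref{pr2.8}, i.e.\ \cite{Mog06.2}) is to prove directly that $\G_0\up\wh\gN_\l(A)$ maps $\wh\gN_\l(A)$ \emph{onto} $\cH_0$, which is exactly the statement your count was meant to substitute for. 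Alternatively you can bypass the decomposition entirely: since $\G$ is surjective with kernel $A$, the Green identity \eqref{2.28} yields $A_0^*=\{\wh g\in A^*:\,P_1\G_0\wh g=0\}$; then for $\wh g=\{g,\ov\l g\}\in A_0^*$ with $\l\in\bC_+$, identity \eqref{2.28} gives
\begin{equation*}
-2\,\im\l\,\|g\|^2=\|P_2\G_0\wh g\|^2\geq 0,
\end{equation*}
because the cross term $(\G_1\wh g,\G_0\wh g)$ vanishes ($\G_0\wh g\in\cH_2\perp\cH_1$); hence $g=0$, so $\gN_{\ov\l}(A_0)=\ran(A_0-\l)^\perp=\{0\}$, and combined with your closedness estimate this gives $\ran(A_0-\l)=\gH$ and $\bC_+\subset\rho(A_0)$ with no dimension count. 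Note also that the sign of the extra term $i\|P_2\cdot\|^2$ works \emph{for} you in $\bC_+$ and against you in $\bC_-$, which is exactly why $A_0$ is maximal symmetric but not self-adjoint. In the finite-index case (which covers all applications in this paper) your argument is complete as written.
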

\begin{proposition}\label{pr2.8}$\,$ \cite{Mog06.2}
Let  $\Pi=\bta$ be a boundary triplet for $A^*$. Denote also by
$\pi_1$ the orthoprojector  in $\gH\oplus\gH$ onto $\gH\oplus
\{0\}$. Then the operators $\G_0\up \wh \gN_\l (A), \;\l\in\bC_+,$
and $P_1\G_0\up \wh \gN_z (A),\; z\in\bC_-,$ isomorphically map
$\wh\gN_\l (A)$ onto $\cH_0$ and $\wh\gN_z(A)$ onto $\cH_1$
respectively. Therefore the equalities
\begin {equation}\label{2.32}
\begin{array}{c}
\g_{+} (\l)=\pi_1(\G_0\up\wh \gN_\l (A))^{-1}, \;\;\l\in\Bbb C_+,\\
 \g_{-} (z)=\pi_1(P_1\G_0\up\wh\gN_z (A))^{-1}, \;\; z\in\Bbb C_-,
\end{array}
\end{equation}
\begin{gather}
M_{+}(\l)h_0=\G_1\{\g_+(\l)h_0, \l\g_+(\l)h_0\}, \quad
h_0\in\cH_0, \quad
\l\in\bC_+\label{2.32a}\\
M_{-}(z)h_1=(\G_1+iP_2\G_0)\{\g_-(z)h_1, z\g_-(z)h_1\}, \quad
h_1\in\cH_1, \quad z\in\bC_- \label{2.32b}
\end{gather}
correctly define the operator functions $\g_{+}(\cdot):\Bbb
C_+\to[\cH_0,\gH], \; \; \g_{-}(\cdot):\Bbb C_-\to[\cH_1,\gH]$ and
$M_{+}(\cdot):\bC_+\to [\cH_0,\cH_1], \;\; M_{-}(\cdot):\bC_-\to
[\cH_1,\cH_0]$, which are holomorphic on their domains. Moreover,
the equality $M_+^*(\ov\l)=M_-(\l), \;\l\in\bC_-,$ is valid.
\end{proposition}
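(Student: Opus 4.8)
The plan is to derive everything from Green's identity \eqref{2.28}, the surjectivity of $\G=(\G_0,\G_1)$, and the properties of $A_0=\ker\G_0$ recorded in Proposition \ref{pr2.7}. First I would show that $\G_0\up\wh\gN_\l(A)$ $(\l\in\bC_+)$ and $P_1\G_0\up\wh\gN_z(A)$ $(z\in\bC_-)$ are injective, in both cases by substituting a diagonal pair into \eqref{2.28}. For $\wh f=\{f,\l f\}\in\wh\gN_\l(A)$ with $\G_0\wh f=0$, every term on the right-hand side of \eqref{2.28} taken with $\wh g=\wh f$ vanishes, whereas the left-hand side equals $2i\cd\im\l\cd\|f\|^2$; since $\im\l\neq 0$ this gives $f=0$. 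For $\wh f=\{f,zf\}\in\wh\gN_z(A)$ with $P_1\G_0\wh f=0$ one has $\G_0\wh f\in\cH_2$ and $\G_1\wh f\in\cH_1$, so the two cross terms in \eqref{2.28} cancel and the identity collapses to $2\cd\im z\cd\|f\|^2=\|\G_0\wh f\|^2\geq 0$; as $\im z<0$ this again forces $f=0$. Bijectivity is then immediate in the finite-dimensional setting of the paper: by Proposition \ref{pr2.7} the spaces $\wh\gN_\l(A)$ and $\cH_0$ share the dimension $n_+(A)$, and $\wh\gN_z(A)$ and $\cH_1$ share the dimension $n_-(A)$, so an injective linear map between them is onto. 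In the general case I would instead use the decomposition $A^*=A_0\dotplus\wh\gN_\l(A)$, valid since $\bC_+\subset\rho(A_0)$, together with the surjectivity of $\G_0$ onto $\cH_0$; for $z\in\bC_-$ the analogous decomposition $A^*=A_1\dotplus\wh\gN_z(A)$ would be used, where $A_1=\ker(P_1\G_0)$ is the dissipative extension whose dissipativity is read off from \eqref{2.28}.

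Granting bijectivity, the inverse operators appearing in \eqref{2.32} are bounded (they are of finite rank, or one invokes the closed graph theorem), so $\g_+(\cd)$ and $\g_-(\cd)$ are well defined; and since $\G_1$ and $\G_1+iP_2\G_0$ are bounded by Proposition \ref{pr2.7}, formulas \eqref{2.32a} and \eqref{2.32b} define $M_+(\cd)$ and $M_-(\cd)$. For holomorphy I would use that $\wh\gN_\l(A)$ is a holomorphic family of subspaces on $\CR$, parametrized for instance through the holomorphic resolvent $(A_0-\l)^{-1}$ on $\bC_+$; the operators in \eqref{2.32}--\eqref{2.32b} are then holomorphic as compositions of this family with the fixed bounded maps $\pi_1,\G_0,\G_1,P_1,P_2$.

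The remaining identity $M_+^*(\ov\l)=M_-(\l)$ $(\l\in\bC_-)$ is the one genuinely computational step, and the bookkeeping here is what I expect to be most delicate. I would feed into \eqref{2.28} the pair $\wh f=\{\g_+(\ov\l)h_0,\,\ov\l\,\g_+(\ov\l)h_0\}\in\wh\gN_{\ov\l}(A)$ and $\wh g=\{\g_-(\l)h_1,\,\l\,\g_-(\l)h_1\}\in\wh\gN_\l(A)$, with $h_0\in\cH_0$ and $h_1\in\cH_1$. By construction $\G_0\wh f=h_0$ and $\G_1\wh f=M_+(\ov\l)h_0$, while $P_1\G_0\wh g=h_1$ and $(\G_1+iP_2\G_0)\wh g=M_-(\l)h_1$; putting $w:=P_2\G_0\wh g\in\cH_2$ gives $\G_1\wh g=M_-(\l)h_1-iw$. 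The left-hand side of \eqref{2.28} vanishes since $f'=\ov\l f$ and $g'=\l g$. On the right-hand side the $w$-contributions cancel, because $(M_+(\ov\l)h_0,w)=0$ (as $M_+(\ov\l)h_0\in\cH_1\perp\cH_2$) and $i(P_2h_0,w)=-(h_0,iw)$, and what survives is exactly $(M_+(\ov\l)h_0,h_1)_{\cH_1}-(h_0,M_-(\l)h_1)_{\cH_0}=0$. Since $h_0$ and $h_1$ are arbitrary, this is precisely the asserted relation $M_+^*(\ov\l)=M_-(\l)$.

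The main obstacle lies in the surjectivity of $P_1\G_0\up\wh\gN_z(A)$ in the general, possibly infinite-dimensional, situation: it requires knowing that $\bC_-\subset\rho(A_1)$ for the dissipative extension $A_1=\ker(P_1\G_0)$, that is, that $A_1$ is maximal dissipative, before the resolvent decomposition $A^*=A_1\dotplus\wh\gN_z(A)$ can be applied. Within the finite-dimensional framework of the present paper this difficulty disappears, since surjectivity then follows from injectivity by equality of dimensions.
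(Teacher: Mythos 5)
The paper contains no proof of this proposition to compare yours against: the statement is imported verbatim from \cite{Mog06.2} (note the citation attached to it), so your reconstruction can only be judged on its own terms. Where your argument is complete, it is correct. The two injectivity claims do follow from Green's identity \eqref{2.28} exactly as you say (in the lower half-plane the identity collapses to $2\,\im z\cd\|f\|^2=\|P_2\G_0\wh f\|^2$, and the sign of $\im z$ kills $f$); the decomposition $A^*=A_0\dotplus\wh\gN_\l(A)$, $\l\in\bC_+$, valid because $\bC_+\subset\rho(A_0)$ by Proposition \ref{pr2.7}, combined with surjectivity of $\G$, does give surjectivity of $\G_0\up\wh\gN_\l(A)$; and your verification of $M_+^*(\ov\l)=M_-(\l)$ by feeding $\wh f\in\wh\gN_{\ov\l}(A)$ and $\wh g\in\wh\gN_\l(A)$ into \eqref{2.28} is clean — the $\cH_2$-contributions do cancel with precisely the signs you indicate.

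The gap you flag is, however, a genuine one for the proposition \emph{as stated}, because it is formulated for an arbitrary closed symmetric relation $A$ with no finiteness assumption on $n_\pm(A)$: in that generality both the surjectivity of $P_1\G_0\up\wh\gN_z(A)$ and the holomorphy of $\g_-(\cd)$, $M_-(\cd)$ on $\bC_-$ hinge on $\bC_-\subset\rho(A_1)$ for the dissipative relation $A_1:=\ker(P_1\G_0)$, i.e.\ on its maximality, and this is exactly the nontrivial content of the cited reference — it cannot be waved through. (Note also that your holomorphy mechanism via $(A_0-\l)^{-1}$ only covers $\bC_+$.) In the finite-dimensional regime — the only one this paper ever uses, since $n_\pm(\Tmi)\leq\dim\bH<\infty$ — your dimension count closes the surjectivity gap, and you could close the remaining one yourself: $\codim_{A^*}A_1=\dim\cH_1=n_-(A)=\dim\wh\gN_z(A)$ and $A_1\cap\wh\gN_z(A)=\{0\}$ by your injectivity argument, so $A^*=A_1\dotplus\wh\gN_z(A)$; since $\ran(A^*-z)=\gH$ for nonreal $z$ (standard for closed symmetric relations), this yields $z\in\rho(A_1)$, and then the identity $\g_-(z)=\bigl(I+(z-w)(A_1-z)^{-1}\bigr)\g_-(w)$ gives holomorphy on $\bC_-$ as well. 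With that addendum your proof is complete in the setting in which the paper actually applies the proposition; in full generality it remains an outline whose missing step is the one you named.
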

It follows from \eqref{2.32} that for each $h_0\in\cH_0$ and
$h_1\in\cH_1$ the following equalities hold
\begin {equation}\label{2.33}
\G_0\{\g_+(\l)h_0,\l \g_+(\l)h_0\}=h_0,  \qquad
P_1\G_0\{\g_-(z)h_1, z \g_-(z)h_1\}=h_1.
\end{equation}

\begin{definition}\label{def2.10}$\,$\cite{Mog06.2}
The operator functions $\g_\pm(\cd)$ and $M_\pm(\cd)$ defined in
Proposition \ref{pr2.8}  are called the $\g$-fields and the Weyl
functions, respectively, corresponding to the boundary triplet
$\Pi$.
\end{definition}
\begin{proposition}\label{pr2.10a}
Let $\Pi=\bta$ be a boundary triplet for $A^*$ and let
$\g_\pm(\cd)$ and $M_\pm(\cd)$ be the corresponding $\g$-fields
and Weyl functions respectively. Moreover, let the spaces $\cH_0$
and $\cH_1$ be decomposed as
\begin {equation*}
\cH_1=\wh\cH\oplus\dot\cH_1, \qquad \cH_0=\wh\cH\oplus\dot\cH_0
\end{equation*}
(so  that $\dot\cH_0=\dot\cH_1\oplus\cH_2$) and let
\begin {equation*}
\G_0=( \wh\G_0 , \dot\G_0)^\top : A^* \to \wh\cH\oplus\dot\cH_0,
\qquad \G_1=( \wh\G_1 , \dot\G_1 )^\top: A^* \to
\wh\cH\oplus\dot\cH_1
\end{equation*}
be the block representations of the operators $\G_0$ and $\G_1$.
Then:

{\rm (1)} The equality
\begin {equation*}
\wt A=\{\wh f\in A^*: \wh\G_0 \wh f=\dot\G_0\wh f= \dot\G_1\wh
f=0\}
\end{equation*}
defines a closed symmetric extension $\wt A\in\exa$ and the
adjoint relation $\wt A^*$ of $\wt A$ is
\begin {equation*}
\wt A^*=\{\wh f\in A^*: \wh\G_0 \wh f=0\}.
\end{equation*}

If in addition $n_\pm (A)<\infty$, then the deficiency indices of
$\wt A$ are $n_\pm (\wt A)=n_\pm (A)-\dim \wh \cH$.

{\rm  (2)} The collection $\dot\Pi=\{\dot\cH_0\oplus\dot\cH_1,
\dot \G_0\up \wt A^*, \dot \G_1\up \wt A^*\}$  is a boundary
triplet for $\wt A^*$.

{\rm (3)} The $\g$-fields $\dot\g_\pm(\cd)$ and the Weyl functions
$\dot M_\pm (\cd)$ corresponding to $\dot\Pi$ are given by
\begin{gather*}
\dot\g_+(\l)=\g_+(\l)\up \dot\cH_0, \qquad \dot
M_+(\l)=P_{\dot\cH_1}M_+(\l)\up
\dot\cH_0, \quad \l\in\bC_+\\
\dot\g_-(\l)=\g_-(\l)\up \dot\cH_1, \qquad \dot
M_-(\l)=P_{\dot\cH_0}M_-(\l)\up \dot\cH_1, \quad \l\in\bC_-.
\end{gather*}
\end{proposition}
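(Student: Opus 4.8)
The plan is to realise $\wt A$ as a restriction of $A^*$ cut out by the components of the original boundary maps and then transport the defining properties of the triplet $\Pi$ to $\dot\Pi$. Observe first that the conditions $\wh\G_0\wh f=\dot\G_0\wh f=0$ together mean $\G_0\wh f=0$, so $\wt A=\ker\G_0\cap\ker\dot\G_1$. Since $\ker\G_0\cap\ker\G_1=A$ by Proposition \ref{pr2.7} and $\ker\G_1\subset\ker\dot\G_1$, this gives $A\subset\wt A\subset A^*$, whence $\wt A\in\exa$. Boundedness of $\G_0,\G_1$ (again Proposition \ref{pr2.7}) exhibits $\wt A$ as the intersection of kernels of bounded operators on the closed relation $A^*$, hence $\wt A$ is closed. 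For $\wh f,\wh g\in\wt A$ every term on the right of Green's identity \eqref{2.28} vanishes because $\G_0\wh f=\G_0\wh g=0$, so $(f',g)-(f,g')=0$ and $\wt A$ is symmetric.

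The first substantive step is to identify the adjoint, namely to prove $\wt A^*=B:=\ker\wh\G_0$. For $B\subset\wt A^*$ I would take $\wh g\in B$ and $\wh f\in\wt A$: then $\G_0\wh f=0$ and $\G_1\wh f=\wh\G_1\wh f\in\wh\cH$, while $\G_0\wh g=\dot\G_0\wh g\in\dot\cH_0$, so the only surviving term $(\G_1\wh f,\G_0\wh g)_{\cH_0}$ of \eqref{2.28} vanishes by $\wh\cH\perp\dot\cH_0$; hence $(f',g)-(f,g')=0$ and the defining bilinear relation for $\wt A^*$ holds. For the reverse inclusion, $A\subset\wt A$ gives $\wt A^*\subset A^*$, and for $\wh g\in\wt A^*$ with arbitrary $\wh f\in\wt A$ Green's identity reduces to $(\wh\G_1\wh f,\G_0\wh g)_{\cH_0}=0$. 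The key observation is that $\wh\G_1$ maps $\wt A$ onto $\wh\cH$: given $\wh h\in\wh\cH\subset\cH_1$, surjectivity of $\G=(\G_0,\G_1)$ on $A^*$ produces $\wh f\in A^*$ with $\G_0\wh f=0$ and $\G_1\wh f=\wh h$, i.e. $\wh f\in\wt A$ with $\wh\G_1\wh f=\wh h$. Consequently $P_{\wh\cH}\G_0\wh g=\wh\G_0\wh g=0$, so $\wh g\in B$.

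Next I would verify that $\dot\Pi$ is a boundary triplet for $\wt A^*=B$ in the sense of Definition \ref{def2.5}. Restricting \eqref{2.28} to $\wh f,\wh g\in B$ and using $\G_0\wh f=\dot\G_0\wh f$, $\G_1\wh f=\wh\G_1\wh f\oplus\dot\G_1\wh f$, together with $\wh\cH\perp\dot\cH_0$ and $\cH_2\subset\dot\cH_0$ (so that $P_2$ may be read as the projector in $\dot\cH_0$ onto $\dot\cH_0\ominus\dot\cH_1=\cH_2$), the three right-hand terms collapse exactly to Green's identity for $\{\dot\cH_0\oplus\dot\cH_1,\dot\G_0,\dot\G_1\}$. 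Surjectivity of $\dot\G=(\dot\G_0,\dot\G_1):B\to\dot\cH_0\oplus\dot\cH_1$ again follows from surjectivity of $\G$: for $(\dot h_0,\dot h_1)$ pick $\wh f\in A^*$ with $\G_0\wh f=0\oplus\dot h_0$ and $\G_1\wh f=0\oplus\dot h_1$, so $\wh f\in B$ realises the prescribed values. Thus $\dot\Pi$ is a boundary triplet for $B=\wt A^*$, and when $n_\pm(A)<\infty$ Proposition \ref{pr2.7}(1) applied to $\dot\Pi$ gives $n_+(\wt A)=\dim\dot\cH_0=\dim\cH_0-\dim\wh\cH$ and $n_-(\wt A)=\dim\dot\cH_1=\dim\cH_1-\dim\wh\cH$, i.e. $n_\pm(\wt A)=n_\pm(A)-\dim\wh\cH$.

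Finally the $\g$-fields and Weyl functions are computed from \eqref{2.32}--\eqref{2.32b} (Proposition \ref{pr2.8}) by comparing defect subspaces. Since $\gN_\l(\wt A)=\{f\in\gN_\l(A):\wh\G_0\{f,\l f\}=0\}$, for $\dot h_0\in\dot\cH_0$ the vector $f=\g_+(\l)(0\oplus\dot h_0)$ already satisfies $\wh\G_0\{f,\l f\}=0$, so $\{f,\l f\}\in\wh\gN_\l(\wt A)$ and $\dot\G_0\{f,\l f\}=\dot h_0$; this yields $\dot\g_+(\l)=\g_+(\l)\up\dot\cH_0$ and then $\dot M_+(\l)\dot h_0=\dot\G_1\{f,\l f\}=P_{\dot\cH_1}\G_1\{f,\l f\}=P_{\dot\cH_1}M_+(\l)\dot h_0$. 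The $\bC_-$ formulas are analogous after noting that on $B$ one has $P_1\G_0=\dot P_1\dot\G_0$ and $P_2\G_0=\dot P_2\dot\G_0$ (with $\dot P_1,\dot P_2$ the projectors in $\dot\cH_0$ onto $\dot\cH_1,\cH_2$), so $\dot\g_-(z)=\g_-(z)\up\dot\cH_1$; then, writing $M_-(z)(0\oplus\dot h_1)=(\G_1+iP_2\G_0)\{f,zf\}$, applying $P_{\dot\cH_0}$ kills only the $\wh\cH$-part of $\G_1\{f,zf\}$ and leaves the $\cH_2$-valued term $iP_2\G_0\{f,zf\}$ intact, giving $\dot M_-(z)=P_{\dot\cH_0}M_-(z)\up\dot\cH_1$. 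I expect the main obstacle to be exactly this last bookkeeping: keeping the nested decompositions $\cH_0=\wh\cH\oplus\dot\cH_1\oplus\cH_2$, $\cH_1=\wh\cH\oplus\dot\cH_1$, $\dot\cH_0=\dot\cH_1\oplus\cH_2$ straight, so that each projector ($P_1,P_2,P_{\dot\cH_0},P_{\dot\cH_1}$) and in particular the anti-symmetric correction term $iP_2\G_0$ in $M_-$ lands in the correct subspace without a projection or sign slip.
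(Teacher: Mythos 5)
Your proposal is correct. Note that the paper gives no proof of Proposition \ref{pr2.10a} at all: it is explicitly omitted, with a reference to the analogous Proposition 4.1 in \cite{DM00} (stated there for ordinary boundary triplets, i.e.\ $\cH_0=\cH_1$). Your argument is precisely the direct-verification route that citation gestures at, adapted to the unequal-index setting of Definition \ref{def2.5}: identify $\wt A=\ker\G_0\cap\ker\dot\G_1$ and compute $\wt A^*=\ker\wh\G_0$ from surjectivity of $\G$, check Green's identity and surjectivity for $\dot\Pi$ (using $\dot\cH_0\ominus\dot\cH_1=\cH_2$ so the correction term $i(P_2\G_0\cd,P_2\G_0\cd)$ carries over verbatim), and read off $\dot\g_\pm$, $\dot M_\pm$ by restricting defect subspaces \eqref{2.32}--\eqref{2.32b}. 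In particular you correctly handle the two points where the generalized setting differs from \cite{DM00}: the $\wh\cH$-component of $P_1\G_0$ on $\wh\gN_z(A)$, $z\in\bC_-$, and the fact that $P_{\dot\cH_0}$ leaves the $\cH_2$-valued term $iP_2\G_0$ in $M_-$ intact while removing only the $\wh\cH$-part of $\G_1$ --- which is exactly why $\dot M_-(z)=P_{\dot\cH_0}M_-(z)\up\dot\cH_1$ holds with no extra correction.
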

We omit the proof of Proposition \ref{pr2.10a}, since it is
similar to that of Proposition 4.1 in \cite{DM00} (see also remark
\ref{rem2.10b} below).
\begin{remark}\label{rem2.10b}
If $\cH_0=\cH_1:=\cH$, then   the boundary triplet in the sense of
Definition \ref{def2.5} turns into the boundary triplet
$\Pi=\{\cH,\G_0,\G_1\}$ for $A^*$ in the sense of
\cite{GorGor,Mal92}.In this case $n_+(A)=n_-(A)=\dim \cH$,
$\,A_0(=\ker \G_0 )$ is a self-adjoint extension of $A$ and
according
 to \cite{DM91,Mal92,DM95} the relations
\begin {equation}\label{2.34a}
\begin{array}{c}
\g(\l)=\pi_1(\G_0\up\wh\gN_\l(A))^{-1}, \\
 \G_1\{\g(\l)h, \l \g(\l)h\}=
M(\l)h,\quad h\in\cH, \quad \l\in\rho (A_0)
\end{array}
\end{equation}
define the $\g$-field $\g(\cd):\rho (A_0)\to [\cH,\gH]$ and the
Weyl function $M(\cd):\rho (A_0)\to [\cH]$ corresponding to the
triplet $\Pi$. It follows from \eqref{2.34a} that $\g(\cd)$ and
$M(\cd)$ are associated
 with the operator functions  $\g_\pm(\cd)$ and $M_\pm(\cd)$ from Definition
 \ref{def2.10}  via $\g(\l)=\g_{\pm}(\l)$ and $M(\l)=M_{\pm}(\l),
\;\l\in\bC_\pm $. Moreover, for such a triplet the identity
\begin {equation}\label{2.34b}
M(\mu)-M^*(\l)=(\mu-\ov\l)\g^*(\l)\g(\mu), \quad \mu,\l\in\CR.
\end{equation}
holds, which implies that $M(\cd)$ is a Nevanlinna operator
function. Observe also that for the triplet $\Pi=\bt$ all the
results in this subsection were obtained in
\cite{DM91,Mal92,DM95,DM00}.

In what follows a boundary triplet $\Pi=\bt$ in the sense  of
\cite{GorGor,Mal92} will be sometimes called an ordinary boundary
triplet for $A^*$.
\end{remark}
\subsection{Generalized resolvents and spectral functions}
Let $\gH$ be a subspace in  a Hilbert space $\wt\gH$, let $\wt
A=\wt A^*\in\C (\wt \gH)$ and let $E(\cd)$ be the spectral measure
of $\wt A$.
\begin{definition}\label{def2.10c}
The relation $\wt A$ is called $\gH$-minimal if it satisfies at
least one of the following equivalent conditions:

(1) $\text{span} \{\gH,(\wt A-\l)^{-1}\gH: \l\in\CR\}=\wt\gH$;

(2) there is not a nontrivial subspace $\gH'\subset
\wt\gH\ominus\gH$ such that $E([\a,\b))\gH'\subset\gH'$ for each
bounded interval $[\a,\b)\subset\bR$.
\end{definition}

\begin{definition}\label{def2.11}
The relations $T_j\in \C (\gH_j), \; j\in\{1,2\},$ are said to be
unitarily equivalent (by means of a unitary operator $U\in
[\gH_1,\gH_2]$) if $T_2=\wt U T_1$ with $\wt U=U\oplus U \in
[\gH_1^2, \gH_2^2]$.
\end{definition}
\begin{proposition}\label{pr2.11.1}
Let $\gH_j$ be a subspace in a Hilbert space $\wt\gH_j$  and let
$\wt A_j=\wt A_j^*\in\C (\wt\gH_j)$  be a $\gH_j$-minimal
relation, $j\in \{1,2\}$. Assume also that $V\in [\gH_1,\gH_2]$ is
a unitary operator such that
\begin {equation*}
P_{\gH_1}(\wt A_1-\l)^{-1}\up\gH_1 = V^*(P_{\gH_2}(\wt
A_2-\l)^{-1}\up\gH_2) V.
\end{equation*}
Then there exists a unitary operator $U\in [\wt\gH_1, \wt\gH_2]$
such that $U\up\gH_1=V$ and the relations $\wt A_1$ and $\wt A_2$
are unitarily equivalent by means of $U$.
\end{proposition}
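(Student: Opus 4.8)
The plan is to build $U$ by hand on a dense manifold of $\wt\gH_1$ and to verify afterwards that it intertwines the two resolvents. By condition (1) of Definition~\ref{def2.10c}, the linear manifold
\[
\cL_1 := \text{span}\{h,\, (\wt A_1 - \l)^{-1} h :\, h \in \gH_1,\; \l \in \CR\}
\]
is dense in $\wt\gH_1$, and the analogous $\cL_2$ is dense in $\wt\gH_2$. On the generators of $\cL_1$ I would set
\[
U h := V h, \qquad U (\wt A_1 - \l)^{-1} h := (\wt A_2 - \l)^{-1} V h, \qquad h \in \gH_1,\; \l \in \CR,
\]
and extend by linearity. Since $V$ maps $\gH_1$ \emph{onto} $\gH_2$, the images of these generators run over precisely the generators of $\cL_2$, so $U$ will have dense range.

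The step carrying the real content is to check that $U$ preserves all inner products of generators, since this yields at once both well-definedness (independence of the representation of a vector as a combination of generators) and isometry, via $\|\sum_i c_i U x_i\|^2 = \sum_{i,j} c_i \ov{c_j}(U x_i, U x_j)_{\wt\gH_2} = \sum_{i,j} c_i \ov{c_j}(x_i, x_j)_{\wt\gH_1}$. There are three kinds of pairings. For two vectors in $\gH_1$ preservation is immediate from unitarity of $V$. For $((\wt A_1 - \l)^{-1} h, g)$ with $g \in \gH_1$ one has $((\wt A_1 - \l)^{-1} h, g)_{\wt\gH_1} = (P_{\gH_1}(\wt A_1 - \l)^{-1} h, g)_{\gH_1}$, and the hypothesis on the compressed resolvents together with $(V\cd, V\cd)_{\gH_2} = (\cd, \cd)_{\gH_1}$ turns this into $((\wt A_2 - \l)^{-1} V h, V g)_{\wt\gH_2} = (U(\wt A_1 - \l)^{-1} h, U g)_{\wt\gH_2}$. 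For the last pairing $((\wt A_1 - \l)^{-1} h, (\wt A_1 - \mu)^{-1} g)$ I would use self-adjointness, $((\wt A_1 - \mu)^{-1})^* = (\wt A_1 - \ov\mu)^{-1}$, and the resolvent identity to reduce to the previous case; applying the same reduction to $\wt A_2$ shows the two sides coincide. Hence $U$ extends to an isometry of $\wt\gH_1$ into $\wt\gH_2$ with dense range, i.e.\ a unitary $U \in [\wt\gH_1, \wt\gH_2]$, and $U\up\gH_1 = V$ by construction.

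It remains to pass from $U$ to the unitary equivalence of the relations in the sense of Definition~\ref{def2.11}. The defining formulas give $U(\wt A_1 - \l)^{-1} = (\wt A_2 - \l)^{-1} U$ directly on $\gH_1$, and on vectors $(\wt A_1 - \mu)^{-1} h$ after one use of the resolvent identity on both sides; by density and continuity the intertwining holds on all of $\wt\gH_1$ for every $\l \in \CR$. To convert this into $\wt A_2 = \wt U \wt A_1$, I would take $\{f, f'\} \in \wt A_1$, put $g := f' - \l f$ so that $(\wt A_1 - \l)^{-1} g = f$, apply the intertwining to obtain $(\wt A_2 - \l)^{-1} U g = U f$, and unwind this to $\{U f, U f'\} \in \wt A_2$; the reverse inclusion follows from the analogous intertwining for $U^*$. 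The main obstacle I anticipate is not conceptual but technical: keeping the resolvent calculus correct in the presence of nontrivial multivalued parts (so that $(\wt A_j - \l)^{-1}$ is a bounded operator with $\ker = \mul \wt A_j$ rather than a boundedly invertible one), and organizing the inner-product bookkeeping so that the hypothesis is invoked in exactly the reduced form above.
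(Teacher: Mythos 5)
Your proof is correct and is precisely the argument the paper itself delegates to the literature: the paper offers no proof of its own, citing \cite{LanTex77} for the special case $\gH_1=\gH_2=:\gH$, $V=I_\gH$ and asserting the general case is similar, and your construction --- define $U$ on the dense span supplied by $\gH_j$-minimality, verify preservation of the three types of inner products using the hypothesis on compressed resolvents, the formula $\bigl((\wt A-\mu)^{-1}\bigr)^*=(\wt A-\ov\mu)^{-1}$ and the resolvent identity (both of which do hold for self-adjoint relations with nontrivial $\mul \wt A_j$, as you note), then upgrade the intertwining $U(\wt A_1-\l)^{-1}=(\wt A_2-\l)^{-1}U$ to $\wt A_2=\wt U\wt A_1$ via $g=f'-\l f$ and the analogous relation for $U^*$ --- is exactly that standard argument adapted to a general unitary $V$. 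The only point worth flagging is the degenerate pairing where $\l=\ov\mu$ (and likewise $\mu=\l$ in the intertwining step), where the resolvent identity's denominator vanishes; this is handled by continuity/holomorphy of the resolvent in the spectral parameter, which your reduction implicitly permits.
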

In the case $\gH_1=\gH_2=:\gH$ and $V=I_{\gH}$ the proof of this
proposition can be found in \cite{LanTex77}. In general case the
proof is similar.

Recall further the following definition.
\begin{definition}\label{def2.11.2}
Let $A$ be a symmetric relation in a Hilbert space $\gH$.  The
operator functions $R(\cd):\CR\to [\gH]$ and $F(\cd):\bR\to [\gH]$
are called the generalized resolvent and the spectral function of
$A$ respectively if there exist a Hilbert space $\wt
\gH\supset\gH$ and a self-adjoint  relation $\wt A\in \C (\wt\gH)$
such that $A\subset \wt A$ and the following equalities hold:
\begin {gather}
R(\l) =P_\gH (\wt A- \l)^{-1}\up \gH, \quad \l \in \CR\label{2.36}\\
F(t)=P_{\gH}E((-\infty,t))\up\gH, \quad  t\in\bR\label{2.37}
\end{gather}
(in formula \eqref{2.37} $E(\cd)$ is the spectral measure of $\wt
A$).

The relation $\wt A$ in \eqref{2.36} is called an exit space
extension of $A$.
\end{definition}
It follows from \eqref{2.36} and \eqref{2.37} that the generalized
resolvent $R(\cd)$ and the spectral function $F(\cd)$ generated by
the same extension $\wt A$ of $A$ are connected by
\begin {equation}\label{2.38}
R(\l)=\int_{\bR}\frac {d F(t)} {t-\l}, \quad \l\in\bR.
\end{equation}
Moreover, \eqref{2.37} yields
\begin {equation}\label{2.39}
F(\infty)(:=s-\lim\limits_{t\to +\infty}F(t) )=P_\gH
P_{\wt\gH_0}\up\gH,
\end{equation}
where $\wt\gH_0=\wt\gH\ominus \mul \wt A$.

According to \cite{LanTex77} each generalized resolvent  of $A$ is
generated by some $\gH$-minimal exit space extension  $\wt A$ of
A. Moreover,  if the $\gH$-minimal exit space  extensions $\wt
A_1\in\C (\wt\gH_1)$ and $\wt A_2\in\C (\wt\gH_2)$ of $A$ induce
the same generalized resolvent $R(\l)$, then in view of
Proposition \ref{pr2.11.1} there exists a unitary operator $V'\in
[\wt\gH_1\ominus \gH, \wt\gH_2\ominus \gH]$ such that  $\wt A_1$
and $\wt A_2$ are unitarily equivalent by means of
$U=I_{\gH}\oplus V'$.  By using this fact we suppose in the
following that the exit space extension $\wt A$ in \eqref{2.36} is
$\gH$-minimal, so that  $\wt A$  is defined by $R(\cd)$ uniquely
up to the unitary equivalence.
\begin{definition}\label{def2.11.3}
The generalized resolvent \eqref{2.36} and the spectral function
\eqref{2.37} are called canonical if $\wt \gH=\gH$, i.e., if
$R(\l)=(\wt A-\l)^{-1}, \; \l\in\CR,$ is the resolvent of the
extension $\wt A=\wt A^* \in \C (\gH)$ of $A$ and
$F(t)=E((-\infty,t)), \; t\in\bR,$ is the spectral function  of
$\wt A $.
\end{definition}
Clearly, canonical resolvents and spectral functions  exist if and
only if $n_+(A)=n_-(A)$.

A description of all generalized resolvents of $A$ in terms of
boundary triplets for $A^*$ is given in the following theorem (see
\cite{Bru76,Mal92} for the case $n_+(A)=n_-(A)$ and \cite{Mog06.2}
for the case of arbitrary deficiency indices $n_\pm(A)$).
\begin{theorem}\label{th2.12}
Let $\Pi=\bta$ be a boundary triplet for $A^*$. If $\pair\in\RH$
is a collection of holomorphic pairs \eqref{2.2}, then for every
$g\in\gH$ and $\l\in\CR$ the abstract boundary value problem
\begin{gather}
\{f,\l f+g\}\in A^*\label{2.45}\\
C_0(\l)\G_0\{f,\l f+g\}-C_1(\l)\G_1\{f,\l f+g\}=0, \quad
\l\in\bC_+
\label{2.46}\\
D_0(\l)\G_0\{f,\l f+g\}-D_1(\l)\G_1\{f,\l f+g\}=0, \quad
\l\in\bC_-\label{2.47}
\end{gather}
has a unique solution $f=f(g,\l)$ and the equality
$R(\l)g:=f(g,\l)$ defines a generalized resolvent $R(\l)=R_\tau
(\l)$ of the relation $A$. Conversely, for each generalized
resolvent $R(\l)$ of $A$ there exists a unique $\tau\in\RH$ such
that $R(\l)=R_\tau (\l)$. Moreover, $R_\tau(\l)$ is a canonical
resolvent if and only if $\tau\in\RZ$.


\end{theorem}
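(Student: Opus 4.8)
The plan is to treat the half-plane $\bC_+$ first, reduce the boundary value problem to a single linear equation via the $\g$-field, and then transfer everything to $\bC_-$ by the symmetry in Proposition \ref{pr2.2}. Fix $\l\in\bC_+$. Since $\bC_+\subset\rho(A_0)$ with $A_0=\ker\G_0$ (Proposition \ref{pr2.7}), one has the direct decomposition $A^*=A_0\dotplus\wh\gN_\l(A)$, so every $\wh f=\{f,\l f+g\}\in A^*$ splits uniquely as $\wh f=\wh f_0+\{\g_+(\l)h_0,\l\g_+(\l)h_0\}$ with $\wh f_0=\{(A_0-\l)^{-1}g,\cd\}\in A_0$ and $h_0\in\cH_0$. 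Reading off the boundary data by means of \eqref{2.33} and \eqref{2.32a} gives $\G_0\wh f=h_0$ and $\G_1\wh f=\G_1\wh f_0+M_+(\l)h_0$, where $\G_1\wh f_0$ depends boundedly on $g$ through the standard $\g$-field identity. Substituting into \eqref{2.46} collapses the whole problem to the linear equation $(C_0(\l)-C_1(\l)M_+(\l))h_0=C_1(\l)\G_1\wh f_0$ in $\cH_0$. Hence \eqref{2.45}, \eqref{2.46} is uniquely solvable for every $g$ if and only if $0\in\rho(C_0(\l)-C_1(\l)M_+(\l))$, in which case solving for $h_0$ and substituting back yields Krein's formula
\begin{equation*}
R_\tau(\l)=(A_0-\l)^{-1}+\g_+(\l)(C_0(\l)-C_1(\l)M_+(\l))^{-1}C_1(\l)\,\g_-^*(\ov\l),\qquad\l\in\bC_+,
\end{equation*}
with the obvious analogue on $\bC_-$ built from $\g_-$, $M_-$ and the pair $(D_0,D_1)$.

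The main obstacle is precisely the invertibility $0\in\rho(C_0(\l)-C_1(\l)M_+(\l))$ on $\bC_+$ (and its $\bC_-$ counterpart). I would prove it by a sign computation on imaginary parts: the Weyl function satisfies $(\im\l)^{-1}\cd\im M_+(\l)\geq 0$, while the Nevanlinna-type inequality \eqref{2.6} controls $\im(C_1(\l)C_{01}^*(\l))$ through the block form $C_0=(C_{01},C_{02})$. Assuming $(C_0(\l)-C_1(\l)M_+(\l))h_0=0$, pairing with $h_0$ and extracting imaginary parts forces, after using \eqref{2.6} and the positivity of $\im M_+$, that $h_0=0$; the transversality relations \eqref{2.8} then upgrade injectivity to bounded invertibility. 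This block sign-chasing, carried out separately on each half-plane, is the technically heaviest ingredient, and it is exactly the place where the membership $\tau\in\RH$ (rather than mere holomorphy of the pairs) is used.

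It remains to show that $R_\tau$ is a genuine generalized resolvent and that $\tau\mapsto R_\tau$ is a bijection onto all of them. From the formula $R_\tau$ is holomorphic on $\CR$, and a routine computation with the resolvent identity for $(A_0-\l)^{-1}$ and the definitions of $\g_\pm$, $M_\pm$ shows that $R_\tau$ satisfies the resolvent identity together with $R_\tau(\l)^*=R_\tau(\ov\l)$; the latter is the step where the compatibility $(-\tau_\pm(\ov\l))^\times=-\tau_\mp(\l)$ of Proposition \ref{pr2.2}, i.e. \eqref{2.8b}, is indispensable, since it is what matches the $\bC_+$ and $\bC_-$ branches into one self-adjoint object. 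Together with the normalization $s\text{-}\lim_{y\to+\infty}iy\,R_\tau(iy)=-I_\gH$, these properties place $R_\tau$ in the range of the abstract characterization of generalized resolvents, so $R_\tau(\l)=P_\gH(\wt T-\l)^{-1}\up\gH$ for some $\gH$-minimal self-adjoint $\wt T\supset A$, unique up to unitary equivalence by Proposition \ref{pr2.11.1}. For the converse, an arbitrary generalized resolvent $R$ is realized through its Straus family $A_{\tau(\l)}=\{\wh f\in A^*:\{\G_0\wh f,\G_1\wh f\}\in\tau(\l)\}$ via $R(\l)=(A_{\tau(\l)}-\l)^{-1}\up\gH$; writing $\tau(\l)$ through holomorphic pairs $(C_0,C_1)$ on $\bC_+$ and $(D_0,D_1)$ on $\bC_-$ and translating the Nevanlinna property of $R$ into \eqref{2.6}--\eqref{2.8} yields $\tau\in\RH$, while the rigidity in Proposition \ref{pr2.2} gives uniqueness of $\tau$.

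Finally, $R_\tau$ is canonical, i.e. $\wt\gH=\gH$ in Definition \ref{def2.11.3}, exactly when the generating $\wt T$ is already self-adjoint inside $\gH$; in boundary-triplet language this forces the Straus family $\tau(\l)$ to be $\l$-independent and self-adjoint. By Proposition \ref{pr2.2}(3) such constant self-adjoint parameters are precisely the elements of $\RZ$, and by Proposition \ref{pr2.2}(2) they exist only when $\dim\cH_0=\dim\cH_1$, i.e. $n_+(A)=n_-(A)$, in agreement with the stated criterion $\tau\in\RZ$.
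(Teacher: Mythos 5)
A preliminary remark: the paper never proves Theorem \ref{th2.12} at all --- it imports it from \cite{Bru76,Mal92} (equal deficiency indices) and \cite{Mog06.2} (the general case) --- so your attempt has to be measured against those proofs rather than against anything in the text. Much of your skeleton is sound. On $\bC_+$ the decomposition $A^*=A_0\dotplus\wh\gN_\l(A)$ is legitimate because $\bC_+\subset\rho(A_0)$ (Proposition \ref{pr2.7}), the boundary data are read off correctly from \eqref{2.33} and \eqref{2.32a}, one indeed has $\G_1\wh f_0=\g_-^*(\ov\l)g$ for the $A_0$-component, and your Krein-type formula is the right one. The invertibility of $C_0(\l)-C_1(\l)M_+(\l)$ also holds, but not by ``pairing with $h_0$'': $M_+(\l)$ is a rectangular operator, so ``$\im M_+(\l)\geq 0$'' does not parse; the correct mechanism is that $(C_0(\l)-C_1(\l)M_+(\l))h_0=0$ means $\{h_0,-M_+(\l)h_0\}\in\tau_+(\l)$, and the dissipativity-type property of $\tau_+(\l)$ coming from \eqref{2.6} (cf.\ the use of it in \eqref{5.31}), combined with the Green's-identity consequence $2\,\im\,(M_+(\l)h_0,P_1h_0)+\|P_2h_0\|^2=2\,\im\l\,\|\g_+(\l)h_0\|^2$ and the \emph{injectivity} of $\g_+(\l)$, forces $h_0=0$; \eqref{2.8} then upgrades this to $0\in\rho(C_0(\l)-C_1(\l)M_+(\l))$. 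Note also that your ``obvious analogue on $\bC_-$'' is not obvious: when $n_-(A)<n_+(A)$ one has $\bC_-\not\subset\rho(A_0)$, so the reference extension there must be $A_0^*=\ker (P_1\G_0)$ rather than $A_0$.

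The genuine gap is the step that certifies $R_\tau$ as a generalized resolvent of $A$. You propose to verify the first resolvent identity and the normalization that $iy\,R_\tau(iy)$ tends strongly to $-I_\gH$, and then to invoke an ``abstract characterization''. Neither property holds for generalized resolvents, and jointly they prove far too much: a holomorphic family with $R^*(\l)=R(\ov\l)$ satisfying the resolvent identity is automatically the resolvent of a self-adjoint relation acting in $\gH$ itself (the standard pseudo-resolvent argument), and your normalization then forces that relation to be an operator. So if your verification succeeded, \emph{every} $R_\tau$, $\tau\in\RH$, would be canonical, contradicting the very clause you must prove (canonical iff $\tau\in\RZ$) and the rest of the paper: by \eqref{2.39} one has $iy\,R_\tau(iy)\to -P_\gH P_{\wt\gH_0}\up\gH$ with $\wt\gH_0=\wt\gH\ominus\mul\wt T^\tau$, and the case $\mul\wt T^\tau\neq\{0\}$ is precisely the situation analyzed in Remark \ref{rem3.7} and Theorems \ref{th1.1}, \ref{th6.9}, \ref{th6.13}. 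What a compression $P_\gH(\wt T-\l)^{-1}\up\gH$ actually satisfies --- and what must be checked for your formula --- is $R^*(\l)=R(\ov\l)$, the inequality $(\im\l)^{-1}\im\,(R(\l)g,g)\geq\|R(\l)g\|^2$, and $R(\l)(g'-\l g)=g$ for $\{g,g'\}\in A$; equivalently, that the extensions cut out by \eqref{2.46}, \eqref{2.47} form a holomorphic Straus family with the right maximality on each half-plane, the two half-planes being matched through \eqref{2.8b}. That is exactly the machinery you already invoke for the converse; it must be run in the forward direction as well, or one constructs the exit-space extension $\wt T^\tau$ outright by coupling $A$ with a Naimark-type realization of the family $\tau$, which is how \cite{Mal92} and \cite{Mog06.2} proceed. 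As written, this middle step fails, and it is where the substance of the theorem lies.
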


\section{Boundary triplets for symmetric systems}
\subsection{Notations}
Let $\cI=[ a,b\rangle\; (-\infty < a< b\leq\infty)$ be an interval
of the real line (the symbol $\rangle$ means that the endpoint
$b<\infty$  might be either included  to $\cI$ or not). Further,
let $\bH$ be a finite-dimensional Hilbert space, let $\AC$ be the
set of functions $f(\cd):\cI\to \bH$ which are absolutely
continuous on each segment $[a,\b]\subset \cI$ and let $AC
(\cI):=AC(\cI;\bC)$. Denote also by $\cL_{loc}^1(\cI; [\bH])$ the
set of  Borel operator-valued  functions $F(\cd)$ defined almost
everywhere on $\cI$ with values in $[\bH]$ and such that
$\int\limits_{[a,\b]}||F(t)||\,dt<\infty$ for each $\b\in \cI$.

Next assume that $\D(\cd)\in \cL_{loc}^1(\cI;[\bH])$ is an
operator function such that $\D(t)\geq 0 $ a.e. on $\cI$. Denote
by $\lI$  the linear space of all Borel-measurable
vector-functions $f(\cd): \cI\to \bH$ satisfying
$$
\int\limits_{\cI}(\D (t)f(t),f(t))_\bH \,dt =
\int\limits_{\cI}||\D^{\frac 1 2} (t)f(t)||^2\,dt <\infty.
$$
Moreover, for a given finite-dimensional Hilbert space $\cK$
denote by $\lo{\cK}$ the set of all Borel operator-functions
$F(\cd): \cI\to [\cK,\bH]$ such that $F(t)h\in \lI$ for each
$h\in\cK$. It is clear that the latter condition is equivalent to
$\int\limits_{\cI}||\D^{\frac 1 2} (t)F(t)||^2\,dt <\infty$.

It  is known \cite{Kac50, DunSch, MalMal03} that $\lI$ is a
semi-Hilbert space with the semi-definite inner product
$(\cd,\cd)_\D$ and the semi-norm $||\cd||_\D$ given by
\begin {equation}\label{3.0}
(f,g)_\D=\int_{\cI}(\D (t)f(t),g(t))_\bH \,dt, \;\;
||f||_\D=((f,f)_\D)^{\frac1 2 }, \;\; f,g\in \lI.
\end{equation}
The semi-Hilbert space $\lI$ gives rise to the quotient Hilbert
space $\LI=\lI /
\{f\in\lI: ||f||_\D=0\}$. 
The inner product and the norm in $\LI$ are defined by
\begin {equation*}
(\wt f, \wt g)=(f,g)_\D, \quad ||\wt f||=(\wt f, \wt f)^{\frac 1
2}=||f||_\D, \qquad \wt f, \wt g\in\LI,
\end{equation*}
respectively, where $f\in\wt f \; (g\in\wt g)$ is any
representative of the class $\wt f$ (resp. $\wt g$).

In the sequel we systematically use the quotient map $\pi$ from
$\lI$ onto $\LI$ given by $\pi f=\wt f(\ni f), \; f\in \lI$.
Moreover, we let $\wt\pi=\pi\oplus\pi: (\lI)^2 \to (\LI)^2$, so
that $\wt \pi\{f,g\}=\{\wt f, \wt g\}, \;\; f,g \in \lI$.

\subsection{Symmetric systems}
In this subsection we provide some known results on symmetric
systems of differential equations following  \cite{GK, Kac03,
KogRof75, LesMal03, Orc}.

Let as above $\cI=[ a,b\rangle \;(-\infty < a <b\leq\infty )$ be
an interval and let $\bH$ be a Hilbert space with $n:=\dim
\bH<\infty$. Moreover, let $B(\cd), \D (\cd)\in \cL_{loc}^1(\cI;
[\bH])$ be operator functions such that $B(t)=B^*(t)$ and
$\D(t)\geq 0$ a.e. on $\cI$ and let $J\in [\bH]$ be a signature
operator ( this means that $J^*=J^{-1}=-J$).

A first-order symmetric  system on an interval $\cI$ (with the
regular endpoint $a$) is a system of differential equations of the
form
\begin {equation}\label{3.1}
J y'(t)-B(t)y(t)=\D(t) f(t), \quad t\in\cI,
\end{equation}
where $f(\cd)\in \lI$. Together with \eqref{3.1} we consider also
the homogeneous  system
\begin {equation}\label{3.2}
J y'(t)-B(t)y(t)=\l \D(t) y(t), \quad t\in\cI, \quad \l\in\bC.
\end{equation}
A function $y\in\AC$ is a solution of \eqref{3.1} (resp.
\eqref{3.2}) if the equality \eqref{3.1} (resp. \eqref{3.2} holds
a.e. on $\cI$. Moreover, a function $Y(\cd,\l):\cI\to [\cK,\bH]$
is an operator solution of the equation \eqref{3.2} if
$y(t)=Y(t,\l)h$ is a (vector) solution of this equation for each
$h\in\cK$ (here $\cK$ is a Hilbert space with $\dim\cK<\infty$).

In what follows  we always assume  that  system \eqref{3.1} is
definite in the sense of the following definition.
 \begin{definition}\label{def3.1}$\,$\cite{GK,KogRof75}
The symmetric system \eqref{3.1} is called definite if for each
$\l\in\bC$ and each solution $y$ of \eqref{3.2} the equality
$\D(t)y(t)=0$ (a.e. on $\cI$) implies $y(t)=0, \; t\in\cI$.
\end{definition}
As it is known \cite{Orc, Kac03, LesMal03} symmetric system
\eqref{3.1} gives rise to the \emph{maximal linear relations}
$\tma$ and $\Tma$  in  $\lI$ and $\LI$, respectively. They are
given by
\begin {equation}\label{3.4}
\begin{array}{c}
\tma=\{\{y,f\}\in(\lI)^2 :y\in\AC \;\;\text{and}\;\; \qquad\qquad\qquad\qquad \\
\qquad\qquad\qquad\qquad\qquad\quad  J y'(t)-B(t)y(t)=\D(t)
f(t)\;\;\text{a.e. on}\;\; \cI \}
\end{array}
\end{equation}
%
and $\Tma=\wt\pi \tma$. Moreover the Lagrange's identity
\begin {equation}\label{3.6}
(f,z)_\D-(y,g)_\D=[y,z]_b - (J y(a),z(a)),\quad \{y,f\}, \;
\{z,g\} \in\tma.
\end{equation}
holds with
\begin {equation}\label{3.7}
[y,z]_b:=\lim_{t \uparrow b}(J y(t),z(t)), \quad y,z \in\dom\tma.
\end{equation}
Formula \eqref{3.7} defines the boundary bilinear form
$[\cd,\cd]_b $ on $\dom \tma$, which plays a crucial  role in our
considerations. By using this form we define the \emph{minimal
relations} $\tmi$ in $\lI$ and $\Tmi$ in $\LI$ via
\begin {equation*}
\tmi=\{\{y,f\}\in\tma: y(a)=0 \;\; \text{and}\;\;
[y,z]_b=0\;\;\text{for each}\;\; z\in \dom \tma \}.
\end{equation*}
and $\Tmi= \wt\pi \tmi$. According to \cite{Orc, LesMal03} $\Tmi$
is a closed symmetric linear relation in $\LI$ and $\Tmi^*=\Tma$.
\begin{remark}\label{rem3.1a}
It  is known (see e.g. \cite{LesMal03}) that the maximal relation
$\Tma$ induced by the definite symmetric system \eqref{3.1}
possesses the following \emph{regularity property}: for each
$\{\wt y, \wt f \}\in \Tma $ there exists unique function $y\in
\AC \cap \lI $ such that $y\in \wt y$ and $\{y,f\}\in \tma$ for
each $f\in\wt f$. Below we associate such a function $ y\in \AC
\cap \lI$ with each pair $\{\wt y, \wt f\}\in\Tma$.
\end{remark}
For any  $\l\in\bC$ denote by $\cN_\l$ the linear space of
solutions of the homogeneous system \eqref{3.2} belonging to
$\lI$. Definition \eqref{3.4} of $\tma$ implies
\begin{equation*}
\cN_\l=\ker (\tma-\l)=\{y\in\lI:\; \{y,\l y\}\in\tma\},
\quad\l\in\bC,
\end{equation*}
and hence $\cN_\l\subset \dom\tma$.

As usual, denote by
\begin{gather*}
n_\pm (\Tmi )=\dim \gN_\l (\Tmi), \quad \l\in\bC_\pm,
\end{gather*}
the  deficiency indices of  $\Tmi$.  Since the system \eqref{3.1}
is definite, $ \pi\cN_\l=\gN_\l (\Tmi)$ and
$\ker(\pi\up\cN_\l)=\{0\},\;\; \l\in\bC$.  This implies that $\dim
\cN_\l=n_\pm (\Tmi), \; \l\in\bC_\pm$.

The following lemma will be useful in the sequel.
\begin{lemma}\label{lem3.2}
{\rm (1)} If $Y(\cd,\l)\in \lo{\cK}$ is an operator solution  of
Eq. \eqref{3.2}, then the relation
\begin {equation}\label{3.30}
\cK\ni h\to (Y(\l) h)(t)=Y(t,\l)h \in\cN_\l.
\end{equation}
defines the linear mapping $Y(\l):\cK\to \cN_\l$ and, conversely,
for each such a mapping $Y(\l)$ there exists  unique
operator-valued  solution $Y(\cd,\l)\in \lo{\cK}$ of equation
\eqref{3.2} such that \eqref{3.30} holds.

\rm{(2)} Let $Y(\cd,\l)\in \lo{\cK}$ be an operator solution of
Eq. \eqref{3.2} and let $F(\l)=\pi Y(\l)(\in [\cK, \LI])$. Then
for each $\wt f\in \LI$
\begin {equation}\label{3.31}
F^*(\l)\wt f=\int_\cI Y^*(t,\l)\D(t)f(t)\, dt, \quad f\in\wt f.
\end{equation}
\end{lemma}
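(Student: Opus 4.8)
The plan is to handle the two parts separately, both being a careful unfolding of the definitions of $\lo{\cK}$, of $\cN_\l$, and of the quotient inner product \eqref{3.0}.

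For the forward assertion in (1), I would argue that if $Y(\cd,\l)\in\lo{\cK}$ is an operator solution of \eqref{3.2}, then for every $h\in\cK$ the function $t\mapsto Y(t,\l)h$ is by definition a vector solution of \eqref{3.2}, while membership in $\lo{\cK}$ gives $Y(\cd,\l)h\in\lI$; hence $Y(\cd,\l)h\in\cN_\l$ and \eqref{3.30} defines a linear map $Y(\l):\cK\to\cN_\l$. For the converse, given an abstract linear map $Y(\l):\cK\to\cN_\l$ I would be forced by \eqref{3.30} to set $Y(t,\l)h:=(Y(\l)h)(t)$; this is well posed pointwise because each element of $\cN_\l$ is a genuine (absolutely continuous) solution rather than an equivalence class, and for fixed $t$ linearity of $Y(\l)$ makes $h\mapsto Y(t,\l)h$ linear, hence bounded since $\dim\cK<\infty$. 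Measurability of $Y(\cd,\l)$ as an operator-valued function follows by fixing an orthonormal basis $\{e_k\}$ of $\cK$ and expressing $Y(\cd,\l)$ through the absolutely continuous functions $Y(\l)e_k$. That $Y(\cd,\l)$ is an operator solution lying in $\lo{\cK}$ is then immediate from $Y(\l)e_k\in\cN_\l\subset\lI$, and uniqueness is automatic since \eqref{3.30} determines $Y(t,\l)$ at every $t$.

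For (2) I would compute $F^*(\l)$ directly from the defining relation of the adjoint of the bounded operator $F(\l)=\pi Y(\l)\in[\cK,\LI]$ (boundedness being free from $\dim\cK<\infty$). For $\wt f\in\LI$, $h\in\cK$ and any representative $f\in\wt f$, unfolding the $\LI$-inner product through \eqref{3.0} gives $(F^*(\l)\wt f,h)_\cK=(\wt f,\pi Y(\l)h)=\int_\cI(\D(t)f(t),Y(t,\l)h)_\bH\,dt$. Moving the adjoint operator across, $(\D(t)f(t),Y(t,\l)h)_\bH=(Y^*(t,\l)\D(t)f(t),h)_\cK$, and since $\dim\cK<\infty$ I may pull $h$ out of the integral to obtain $(F^*(\l)\wt f,h)_\cK=\bigl(\int_\cI Y^*(t,\l)\D(t)f(t)\,dt,\,h\bigr)_\cK$ for all $h$, which yields \eqref{3.31}.

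The points requiring care are analytic bookkeeping rather than a deep obstacle. First, the $\cK$-valued integral in \eqref{3.31} must be shown to converge: by the Cauchy--Schwarz inequality in $\lI$ the scalar integrand is dominated by $||\D^{\frac12}f||\cd||\D^{\frac12}Y(\cd,\l)h||$, both factors being square-integrable, so the integrand lies in $L^1(\cI)$. Second, the right-hand side of \eqref{3.31} must be independent of the representative $f\in\wt f$; this is guaranteed by definiteness, since $||f-f'||_\D=0$ forces $\D(t)(f(t)-f'(t))=0$ a.e., whence $Y^*(t,\l)\D(t)(f(t)-f'(t))=0$ a.e. and the two integrals coincide, consistently with $F^*(\l)\wt f$ depending on $\wt f$ alone.
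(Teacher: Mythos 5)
Your proof is correct. The paper itself offers no argument at this point---it declares part (1) obvious and, for part (2), refers the reader to formula (3.70) of \cite{Mog09.1} (see also formula (2.40) of \cite{LesMal03})---and your direct unfolding of the definitions (pointwise reconstruction of $Y(t,\l)$ from $Y(\l)$ in part (1), and the computation $(F^*(\l)\wt f,h)_\cK=(\wt f,\pi Y(\l)h)_{\LI}=\int_\cI(Y^*(t,\l)\D(t)f(t),h)_\cK\,dt$ in part (2), with the Cauchy--Schwarz bound securing absolute convergence) is exactly the routine verification those remarks presuppose. One small mislabel: the independence of the right-hand side of \eqref{3.31} from the choice of representative $f\in\wt f$ uses only $\D(t)\geq 0$ (so that $\|f-f'\|_\D=0$ forces $\D^{\frac 1 2}(t)(f(t)-f'(t))=0$, hence $\D(t)(f(t)-f'(t))=0$, a.e.), not the definiteness of the system in the sense of Definition \ref{def3.1}; definiteness is what makes $\pi\up\cN_\l$ injective elsewhere in the paper, and it plays no role in this lemma.
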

The first statement of this lemma is obvious, while the second one
can be proved in the same way as formula (3.70) in \cite{Mog09.1}
(see also formula (2.40) in \cite{LesMal03}).

Let $J\in [\bH]$ be the signature operator  in \eqref{3.1} and let
\begin{gather*}
\nu_+=\dim\ker (i J-I) \;\;\; \text{and} \;\;\; \nu_-=\dim\ker (i
J+I).
\end{gather*}
In what follows we suppose that
\begin {equation}\label{3.12}
\wh\nu:=\nu_- - \nu_+\geq 0 .
\end{equation}
In this case one can assume without loss of generality that the
following statements hold:

(i) the Hilbert space $\bH$ is of the form
\begin{gather}\label{3.16}
\bH=H\oplus\wh H \oplus H,
\end{gather}
where $H$ and $\wh H$ are finite dimensional Hilbert spaces with
\begin {equation}\label{3.16a}
 \dim H=\nu_+, \qquad  \dim \wh H=\wh\nu;
\end{equation}

 (ii) the operator $J$ is of the form \eqref{1.3}.

Introducing the Hilbert space
\begin {equation} \label{3.17a}
H_0=H\oplus\wh H
\end{equation}
one can represent the equality \eqref{3.16} as
\begin {equation} \label{3.17b}
\bH=(H\oplus\wh H) \oplus H=H_0\oplus H.
\end{equation}

Let $\nu_{b+} $ and $\nu_{b-}$ be inertia indices of  the
skew-Hermitian bilinear form \eqref{3.7}. Then $\nu_{b\pm}<\infty$
and the following equalities hold \cite{BHSW10,Mog11}
\begin {equation} \label{3.17c}
n_+(\Tmi)=\nu_+ +\nu_{b+}, \qquad n_-(\Tmi)=\nu_- +\nu_{b-}.
\end{equation}
This yields the equivalence
\begin {equation} \label{3.17d}
n_+(\Tmi)=n_-(\Tmi)\iff\wh\nu=\nu_{b+}-\nu_{b-}.
\end{equation}

Next assume that
\begin {equation} \label{3.17.1}
U=\begin{pmatrix} u_1 & u_2 & u_3  \cr u_4 & u_5 & u_6
\end{pmatrix}: H\oplus\wh H\oplus H\to \wh H\oplus H
\end{equation}
is the operator  satisfying the relations
\begin{gather}
\ran U=\wh H\oplus H \label{3.17.2}\\
iu_2u_2^* - u_1u_3^*+u_3u_1^*=iI_{\wh H}, \qquad iu_5u_2^* -
u_4u_3^*+u_6u_1^*=0\label{3.17.3}\\
iu_5u_5^* + u_6u_4^*-u_4u_6^*=0\label{3.17.4}
\end{gather}
One can prove that the operator \eqref{3.17.1} admits an extension
to the $J$-unitary operator
\begin {equation} \label{3.17.5}
\wt U=\begin{pmatrix} u_7 & u_8 & u_9 \cr \hline  u_1 & u_2 & u_3
\cr u_4 & u_5 & u_6
\end{pmatrix}: H\oplus\wh H\oplus H\to H\oplus \wh H\oplus H,
\end{equation}
i.e. the operator satisfying  $\wt U^* J\wt U=J$. The operator
\eqref{3.17.5} induces the linear mapping $\G_a:\AC\to\bH$ given
by
\begin {equation} \label{3.21}
\G_ay=\wt U y(a), \quad y\in\AC.
\end{equation}
In accordance with the decomposition \eqref{3.16} $\G_a$ admits
the block representation
\begin {equation} \label{3.22}
\G_a=\left(\G_{0a},\, \wh\G_a,\, \G_{1a}\right)^\top :\AC\to
H\oplus \wh H\oplus H.
\end{equation}
If a function $y\in\AC$ is decomposed as
\begin {equation*}
y(t)=\{y_0(t),\,\wh y(t), \, y_1(t) \}(\in H\oplus\wh H \oplus H),
\quad t\in\cI,
\end{equation*}
then the mappings $\G_{ja}:\AC\to H, \; j\in \{0,1\},$ and
$\wh\G_{a}:\AC\to \wh H$ in \eqref{3.22} can be represented as
\begin{gather}
\G_{0a}y=u_7 y_0(a)+ u_8 \wh y(a)+ u_9 y_1(a),\quad y\in\AC\label{3.23}\\
\wh\G_{a}y=u_1 y_0(a)+ u_2 \wh y(a)+ u_3 y_1(a),\quad \G_{1a}y=u_4
y_0(a)+ u_5 \wh y(a)+ u_6 y_1(a). \label{3.24}
\end{gather}
This implies that $\wh\G_a$ and $\G_{1a}$ are determined by the
operator $U$, while $\G_{0a}$ is determined by the extension $\wt
U$.

Let $\l\in\bC$ and  $\cK$ be a finite-dimensional  Hilbert space.
By using the operator \eqref{3.17.5} we associate with each
operator solution $Y(\cd,\l):\cI\to [\cK,\bH]$ of  equation
\eqref{3.2} the operator $Y_a (\l)\in [\cK,\bH]$ given by
\begin {equation}\label{3.26}
Y_a(\l)=\wt U Y(a,\l).
\end{equation}
If in addition $Y(\cd,\l) \in \lo{\cK}$, then  the operator
\eqref{3.26} admits the representation
\begin {equation}\label{3.32}
Y_{a}(\l)=\G_{a} Y(\l),
\end{equation}
where $Y(\l)$ is defined in Lemma \ref{lem3.2}.

In what follows we associate with each operator $U$ (see
\eqref{3.17.1}) the operator solution $\f
(\cd,\l)=\f_U(\cd,\l)(\in [H_0,\bH]),\;\l\in\bC,$ of Eq.
\eqref{3.2} with the initial data
\begin {equation}\label{3.32.1}
\f_U(a,\l)= \begin{pmatrix} u_6^*  & iu_3^* \cr -iu_5^* & u_2^*
\cr -u_4^*  & -iu_1^*\end{pmatrix}:\underbrace { H\oplus\wh
H}_{H_0}\to \underbrace{H\oplus \wh H\oplus H}_{\bH}.
\end{equation}
One can easily verify that for each $J$-unitary extension $\wt U$
of $U$ the following equality holds
\begin {equation}\label{3.32.2}
\f_{U,a}(\l)(=\wt U \f_U(a,\l))=\begin{pmatrix}I_{H_0} \cr 0
\end{pmatrix}:H_0\to H_0\oplus H.
\end{equation}
The particular case of the operator $U$ and its $J$-unitary
extension $\wt U$ is (cf. \cite{HinSch06})
\begin {equation*}
U= \begin{pmatrix}  0& I_{\wh H} & 0 \cr  \cos B & 0 & \sin
B\end{pmatrix}, \quad  \wt U=\begin{pmatrix} \sin B & 0 & -\cos B
\cr 0& I_{\wh H} & 0 \cr \cos B & 0 & \sin B\end{pmatrix},
\end{equation*}
where $B=B^*\in [H]$. For such $U$ the  solution $\f_U(\cd,\l)$
is defined by the initial data
\begin {equation*}
\f_U(a,\l)=\begin{pmatrix} \sin B & 0  \cr 0& I_{\wh H}  \cr -\cos
B & 0
\end{pmatrix}: H\oplus\wh H\to H\oplus
\wh H\oplus H.
\end{equation*}

\subsection{Decomposing boundary triplets}\label{sub2.3}
We start with  the following lemma.
\begin{lemma}\label{lem3.2.1}
If $n_-(\Tmi)\leq n_+(\Tmi)$, then there exist a finite
dimensional  Hilbert space $\wt \cH_b$, a subspace $\cH_b\subset
\wt \cH_b$  and a surjective linear mapping
\begin{gather}\label{3.32.4}
\G_b=\begin{pmatrix}\G_{0b}\cr  \wh\G_b \cr  \G_{1b}\end{pmatrix}
:\dom\tma\to \wt \cH_b\oplus\wh H \oplus \cH_b
\end{gather}
such that for all $y,z \in \dom\tma$ the following identity is
valid
\begin {equation} \label{3.32.5}
\begin{array}{r}
 [y,z]_b=(\G_{0b}y,\G_{1b}z)_{\wt\cH_b}-(\G_{1b}y,\G_{0b}z)_{\wt\cH_b}+
 \qquad\qquad\qquad\qquad \\
\qquad\qquad\qquad\qquad +i(P_{\cH_b^\perp}\G_{0b}y,
P_{\cH_b^\perp}\G_{0b}z)_{\wt\cH_b}+ i (\wh\G_b y, \wh\G_b z)_{\wh
H}
\end{array}
\end{equation}
Moreover, for each such a mapping $\G_b$ one has
\begin {equation}\label{3.32.6}
\dim\cH_b=\nu_{b-}, \qquad \dim \wt\cH_b=\nu_{b+}- \wh\nu
\end{equation}
and the following equivalence holds
\begin {equation}\label{3.32.7}
n_+(\Tmi)= n_-(\Tmi)\iff\wt \cH_b=\cH_b.
\end{equation}
Therefore in the case of equal deficiency indices
$n_+(\Tmi)=n_-(\Tmi)$  the identity \eqref{3.32.5} takes the form
\begin {equation*}
[y,z]_b=(\G_{0b}y,\G_{1b}z)_{\cH_b}-(\G_{1b}y,\G_{0b}z)_{\cH_b}+
i (\wh\G_b y, \wh\G_b z)_{\wh H}
\end{equation*}
\end{lemma}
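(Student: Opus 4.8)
The plan is to read \eqref{3.32.5} as a normal-form (congruence) problem for a skew-Hermitian sesquilinear form and to manufacture $\G_b$ from the resulting isomorphism. First I would record the structural facts about the boundary form. Since the signature operator satisfies $J^*=-J$, the bilinear form $[\cd,\cd]_b$ from \eqref{3.7} is skew-Hermitian on $\dom\tma$, and its inertia indices $\nu_{b+},\nu_{b-}$ are finite (this is the input used already in \eqref{3.17c}). Consequently its radical $N:=\{y\in\dom\tma:[y,z]_b=0\ \text{for all}\ z\in\dom\tma\}$ has finite codimension $\nu_{b+}+\nu_{b-}$ in $\dom\tma$, and $[\cd,\cd]_b$ descends to a \emph{non-degenerate} skew-Hermitian form $\om$ of inertia $(\nu_{b+},\nu_{b-})$ on the quotient $V:=\dom\tma/N$.

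Next I would build the model space and form. Fix a finite-dimensional Hilbert space $\wt\cH_b$ with a distinguished subspace $\cH_b$, put $\cH_b^\perp:=\wt\cH_b\ominus\cH_b$, and define on $\wt\cH_b\oplus\wh H\oplus\cH_b$ the skew-Hermitian form $Q$ whose value on triples $(u,w,v)$ and $(u',w',v')$ equals the right-hand side of \eqref{3.32.5} with $\G_{0b}y,\wh\G_b y,\G_{1b}y$ replaced by $u,w,v$ and correspondingly for the primed triple. Writing $u=(u_1,u_2)\in\cH_b\oplus\cH_b^\perp$, the form $Q$ splits as a $Q$-orthogonal direct sum of the symplectic-type form $(u_1,v')-(v,u_1')$ on $\cH_b\oplus\cH_b$, the form $i(u_2,u_2')$ on $\cH_b^\perp$, and the form $i(w,w')$ on $\wh H$. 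A direct count (multiply by $-i$ and apply Sylvester's law) shows that $-iQ$ contributes $\dim\cH_b$ positive and $\dim\cH_b$ negative directions from the symplectic block and $\dim\cH_b^\perp+\wh\nu$ further positive directions; hence $Q$ is non-degenerate with inertia $(\dim\cH_b+\dim\cH_b^\perp+\wh\nu,\ \dim\cH_b)$.

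I would then fix the dimensions so that this inertia equals that of $[\cd,\cd]_b$: set $\dim\cH_b=\nu_{b-}$ and $\dim\cH_b^\perp=\nu_{b+}-\nu_{b-}-\wh\nu$, the latter being $\geq 0$ precisely because $n_-(\Tmi)\leq n_+(\Tmi)$ together with $\wh\nu\geq 0$ gives, via \eqref{3.17c}, $\nu_{b+}-\nu_{b-}=\bigl(n_+(\Tmi)-n_-(\Tmi)\bigr)+\wh\nu\geq\wh\nu$. This forces $\dim\wt\cH_b=\nu_{b-}+(\nu_{b+}-\nu_{b-}-\wh\nu)=\nu_{b+}-\wh\nu$, i.e.\ exactly \eqref{3.32.6}. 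Now $\om$ and $Q$ are non-degenerate skew-Hermitian forms on complex spaces of equal dimension $\nu_{b+}+\nu_{b-}$ and equal inertia, so the classification of such forms (Sylvester applied to $-i\om$ and $-iQ$) yields a linear isomorphism $T:V\to\wt\cH_b\oplus\wh H\oplus\cH_b$ with $\om(\xi,\eta)=Q(T\xi,T\eta)$. Composing $T$ with the canonical surjection $\dom\tma\to V$ and reading off the three components produces the surjective mapping $\G_b=(\G_{0b},\wh\G_b,\G_{1b})^\top$, and \eqref{3.32.5} then holds by construction.

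For the ``moreover'' assertions I would argue conversely. Any surjective $\G_b$ satisfying \eqref{3.32.5} realizes $[\cd,\cd]_b$ as the pullback of the non-degenerate form $Q$; since $Q$ is non-degenerate and $\G_b$ is onto, one gets $\ker\G_b=N$ and the inertia of $[\cd,\cd]_b$ coincides with that of $Q$, so matching $(\dim\cH_b+\dim\cH_b^\perp+\wh\nu,\dim\cH_b)$ with $(\nu_{b+},\nu_{b-})$ forces \eqref{3.32.6} for \emph{every} such $\G_b$. Finally $\wt\cH_b=\cH_b$ means $\cH_b^\perp=\{0\}$, i.e.\ $\nu_{b+}-\nu_{b-}=\wh\nu$, which by \eqref{3.17d} is exactly $n_+(\Tmi)=n_-(\Tmi)$, giving \eqref{3.32.7}; the stated simplification of \eqref{3.32.5} in the equal-indices case is then immediate. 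The genuinely delicate point is the third paragraph: one must check that the \emph{mixed} normal form, carrying the extra term $i(P_{\cH_b^\perp}\G_{0b}y,P_{\cH_b^\perp}\G_{0b}z)$ on the subspace complement, is congruent to $\om$ — equivalently, that the $\nu_{b+}-\nu_{b-}$ surplus positive directions of $\om$ can be split off by a Witt-type decomposition and distributed between $\cH_b^\perp$, of dimension $n_+(\Tmi)-n_-(\Tmi)$, and the \emph{prescribed} space $\wh H$, of dimension $\wh\nu$. This is exactly where the hypothesis $n_-(\Tmi)\leq n_+(\Tmi)$ enters in an essential way.
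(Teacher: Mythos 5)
Your proof is correct, but it takes a genuinely different route from the paper's. The paper does not argue from scratch: it invokes Lemma 5.1 of \cite{Mog11}, which already supplies a surjective map $\G_b=(\G_{0b}',\,\wh\G_b',\,\G_{1b})^\top:\dom\tma\to\cH_b\oplus\wh\cH_b\oplus\cH_b$ satisfying $[y,z]_b=(\G_{0b}'y,\G_{1b}z)-(\G_{1b}y,\G_{0b}'z)+i(\wh\G_b'y,\wh\G_b'z)$ with $\dim\cH_b=\nu_{b-}$ and $\dim\wh\cH_b=\nu_{b+}-\nu_{b-}$; since $\nu_{b+}-\nu_{b-}\geq\wh\nu$ (this is where $n_-(\Tmi)\leq n_+(\Tmi)$ enters for the paper too), one may assume $\wh H\subset\wh\cH_b$, write $\wh\cH_b=\cH_2'\oplus\wh H$, set $\wt\cH_b=\cH_b\oplus\cH_2'$, $\G_{0b}=\G_{0b}'+P_{\cH_2'}\wh\G_b'$, $\wh\G_b=P_{\wh H}\wh\G_b'$, and verify \eqref{3.32.5} by direct computation, i.e.\ the whole proof is a regrouping of an external normal form. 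You instead prove the normal form ab initio: quotient $\dom\tma$ by the radical of $[\cd,\cd]_b$, compute the inertia of the model form on $\wt\cH_b\oplus\wh H\oplus\cH_b$ (hyperbolic block of signature $(\dim\cH_b,\dim\cH_b)$ plus positive blocks on $\cH_b^\perp$ and $\wh H$), and apply the congruence classification of non-degenerate Hermitian forms. Your approach is self-contained modulo the finiteness of $\nu_{b\pm}$ and \eqref{3.17c} (which the paper states with references before the lemma), and it treats the ``for each such mapping'' clause more cleanly than the paper does: pulling back a non-degenerate form along a surjection forces $\ker\G_b$ to equal the radical, so \eqref{3.32.6} follows for an \emph{arbitrary} admissible $\G_b$ by uniqueness of inertia, a point the paper leaves implicit (inherited from \cite{Mog11}). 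What the paper's route buys in exchange is a concrete realization of $\G_b$ through singular boundary values $[y,\t_j]_b$, which is exploited in Remark \ref{rem3.2a} and is not produced by your abstract congruence argument. One small point to make explicit: your matching presupposes that $\nu_{b\pm}$ are the numbers of positive and negative squares of the Hermitian form $-i[\cd,\cd]_b$; the paper fixes this convention only implicitly via \eqref{3.17c} and the citations, and with the opposite convention your count would give $\dim\cH_b=\nu_{b+}$, contradicting \eqref{3.32.6}, so the convention should be stated.
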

\begin{proof}
In view of \eqref{3.17c} and \eqref{3.12} one has $
\nu_{b+}-\nu_{b-}\geq \wh\nu$. Therefore by \cite [Lemma
5.1]{Mog11} there exist Hilbert spaces $\cH_b$ and $\wh\cH_b$ and
a surjective linear mapping
\begin {equation}\label{3.33.3}
\G_b=(\G_{0b}',\,  \wh\G_b',\,  \G_{1b})^\top:\dom\tma\to
\cH_b\oplus\wh\cH_b\oplus \cH_b
\end{equation}
such that
\begin {equation*}
[y,z]_b=(\G_{0b}'y,\G_{1b}z)_{\cH_b}-(\G_{1b}y,\G_{0b}'z)_{\cH_b}+
i (\wh\G_b y, \wh\G_b z)_{\wh\cH_b}, \quad y,z \in \dom\tma.
\end{equation*}
Moreover, for such a mapping $\G_b$ one has
\begin {equation}\label{3.34}
\dim\cH_b=\nu_{b-}, \qquad \dim \wh\cH_b=\nu_{b+}- \nu_{b-},
\end{equation}
which in view of \eqref{3.16a}   yields $\dim\wh\cH_b\geq \dim \wh
H $. Therefore without loss of generality one may assume that $\wh
H\subset \wh \cH_b$ and hence
\begin {gather}\label{3.34.0}
\wh \cH_b= \cH_2'\oplus\wh H
\end{gather}
with $\cH_2'=\wh \cH_b\ominus\wh H$. Let
$\wt\cH_b=\cH_b\oplus\cH_2'$ (so that $\cH_b\subset\wt\cH_b$) and
let $\G_{0b}:\dom\tma\to \wt\cH_b $ and $\wh\G_{b}:\dom\tma\to \wh
H $ be the linear mappings given by
\begin {gather*}
\G_{0b}=\G_{0b}'+P_{\cH_2'}\wh\G_b', \qquad \wh \G_b=P_{\wh H}\wh
\G_b'.
\end{gather*}
Then \eqref{3.33.3} can be written in the form \eqref{3.32.4} and
the direct calculation gives the identity \eqref{3.32.5}.
Moreover,
\begin {gather*}
\dim \wt\cH_b=\dim \cH_b+(\dim \wh\cH_b-\dim \wh H),
\end{gather*}
which together with \eqref{3.34} and the second equality in
\eqref{3.16a} yields \eqref{3.32.6}. Finally, the equivalence
\eqref{3.32.7} is implied by \eqref{3.17d} and \eqref{3.32.6}.
\end{proof}
\begin{remark}\label{rem3.2a}
(1) Since the mapping $\G_b$ is surjective, it follows from
\eqref{3.32.5} that $\G_b y =0$ for each function $y\in\dom\tma$
such that $y(t)=0$ on some interval $(\b, b)\subset \cI$.
Therefore, if $y_1, y_2\in\dom\tma$ and $y_1(t)=y_2(t)$ on some
interval $(\b, b)$, then $\G_b y_1=\G_b y_2$.

(2) In the case of the regular system \eqref{3.1}  (i.e., when
$\cI=[a,b]$ is a compact interval and both integrals $\int_\cI
||B(t)||\, dt$ and $\int_\cI ||\D(t)||\, dt$ are finite) one can
put in \eqref{3.32.4} $\wt \cH_b=\cH_b=H$ and $\G_b y= X_b y(b),
\; y\in \dom\tma,$ where $X_b\in [\bH]$ and $X_b^*J X_b=J $.

In  general case Remark 5.2 in \cite{Mog11} implies that   the
mapping \eqref{3.32.4} can be constructed  with the aid of the
following assertion:

--- there exist systems  of functions
$\{\t_j^{(1)}\}_1^{\nu_{b+}-\wh \nu},\;\{\t_j^{(2)} \}_1^{\wh\nu}
$ and $\{\t_j^{(3)}\}_1^{\nu_{b-}}$ in $\dom\tma$  such that  the
operators
\begin {equation*}
\G_{0b}y =\{[y, \t_j^{(1)}]_b\}_1^{\nu_{b+}-\wh \nu}, \quad
\wh\G_b y=\Sigma_1^{\wh\nu}\, [y,\t_j^{(2)} ]_b\, e_j, \quad
\G_{1b}y =\{[y,\t_j^{(3)}]_b\}_1^{\nu_{b-}}
\end{equation*}
($y\in\dom\tma$) form the surjective linear mapping
$\G_b=(\G_{0b},\, \hat\G_b,\, \G_{1b})^\top:\dom\tma\to
\bC^{\nu_{b+}-\wh\nu}\oplus \wh H\oplus \bC^{\nu_{b-}}$ satisfying
the identity \eqref{3.32.5} (here $\{e_j\}_1^{\wh\nu}$ is an
orthonormal basis in $\wh H$).

In the case $\nu_{b-}=0, \; \nu_{b+}=\wh \nu$ and $\dim \wh H=1$
one has $\wt \cH_b=\cH_b=\{0\}$. In this case one can put
$$
\wh \G_b y=[y,\t]_b \,e,
$$
where $e$ is an ort in $\wh H$ and $\t$ is a function in
$\dom\tma$ such that $[\t,\t]_b=i$.

These assertions  show that one may consider $\G_b y$  as a
singular boundary value of a function $y\in\dom\tma$ (cf.
\cite[Ch. 13.2]{DunSch}).
\end{remark}

 The following
proposition is immediate from \cite[Theorem 5.8]{Mog11} and
\eqref{3.32.7}.
\begin{proposition}\label{pr3.3}
Assume that $n_-(\Tmi)\leq n_+(\Tmi)$,  $\wt U$ is the $J$-unitary
operator \eqref{3.17.5},  $\G_a$ is the linear mapping
\eqref{3.21} with the block representation \eqref{3.22} and $\G_b$
is the surjective linear mapping \eqref{3.32.4} satisfying the
identity \eqref{3.32.5}. Moreover, let $\cH_0$ and  $\cH_1(\subset
\cH_0)$ be finite dimensional Hilbert spaces defined by
\begin {equation}\label{3.34b}
\cH_0=H_0 \oplus\wt\cH_b, \qquad \cH_1=H_0\oplus \cH_b
\end{equation}
 and let $\G_j:\Tma\to\cH_j, \; j\in
\{0,1\},$ be the operators given by
\begin {gather}
\G_0\{\wt y, \wt f\}=\{ - \G_{1a}y +i(\wh\G_a-\wh\G_b)y,\;
\G_{0b}y\}(\in  H_0\oplus\wt\cH_b),\qquad\qquad\qquad\quad\label{3.43.1}\\
\G_1\{\wt y, \wt f\}=\{\G_{0a}y + \tfrac 1 2(\wh\G_a+\wh\G_b)y, \;
-\G_{1b}y\}(\in  H_0\oplus \cH_b), \quad \{\wt y, \wt
f\}\in\Tma.\label{3.43.2}
\end{gather}
(here $y\in\dom\tma$ is the function corresponding to $\{\wt y,
\wt f\}\in\Tma$ according to Remark \ref{rem3.1a}). Then the
collection $\Pi=\bta$ is a boundary triplet for $\Tma$.

If in addition $n_+(\Tmi)=n_-(\Tmi)$, then $\Pi$ turns into an
ordinary boundary triplet $\Pi=\bt$ for $\Tma$, where
$\cH=H_0\oplus\cH_b$  and $\G_j:\Tma\to\cH, \; j\in\{0,1\},$ are
the operators given by \eqref{3.43.1} and \eqref{3.43.2} with
$\wt\cH_b=\cH_b$.
\end{proposition}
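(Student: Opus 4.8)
The goal is to verify that the collection $\Pi = \{\cH_0 \oplus \cH_1, \G_0, \G_1\}$ defined by \eqref{3.43.1}, \eqref{3.43.2} satisfies the two defining requirements of Definition \ref{def2.5}: the abstract Green's identity \eqref{2.28}, and the surjectivity of the combined map $\G = (\G_0, \G_1)^\top$. The plan is to reduce everything to two already-established ingredients---the Lagrange identity \eqref{3.6} with the boundary form decomposition \eqref{3.32.5} from Lemma \ref{lem3.2.1}, and the surjectivity of $\G_a$ and $\G_b$ together with the independence of the endpoint data at $a$ and at $b$. Since the proposition is stated as ''immediate from \cite[Theorem 5.8]{Mog11} and \eqref{3.32.7},'' the real content is assembling these pieces; I would present the verification directly rather than invoke the external theorem, so as to keep the argument self-contained modulo Lemma \ref{lem3.2.1}.

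\emph{Step 1: Green's identity.} For $\wh f = \{\wt y, \wt f\}$ and $\wh g = \{\wt z, \wt g\}$ in $\Tma$, with associated functions $y, z \in \dom\tma$ (Remark \ref{rem3.1a}), I would start from \eqref{3.6}, which gives $(f,z)_\D - (y,g)_\D = [y,z]_b - (Jy(a), z(a))$. The first task is to rewrite the regular term $(Jy(a), z(a))$ using the $J$-unitarity of $\wt U$: since $\wt U^* J \wt U = J$ and $\G_a y = \wt U y(a)$, one has $(Jy(a), z(a)) = (J \G_a y, \G_a z)$, and expanding $J$ via \eqref{1.3} in the block coordinates $\G_a = (\G_{0a}, \wh\G_a, \G_{1a})^\top$ produces a bilinear expression in the components $\G_{0a}, \wh\G_a, \G_{1a}$ of $y$ and $z$. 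Simultaneously I would insert the boundary-form expansion \eqref{3.32.5} for $[y,z]_b$ in terms of $\G_{0b}, \wh\G_b, \G_{1b}$. The right-hand side of \eqref{2.28}, namely $(\G_1 \wh f, \G_0 \wh g)_{\cH_0} - (\G_0 \wh f, \G_1 \wh g)_{\cH_0} + i(P_2 \G_0 \wh f, P_2 \G_0 \wh g)_{\cH_2}$, should then be expanded using the explicit formulas \eqref{3.43.1}, \eqref{3.43.2} and the decompositions $\cH_0 = H_0 \oplus \wt\cH_b$, $\cH_1 = H_0 \oplus \cH_b$ from \eqref{3.34b}, with $\cH_2 = \cH_0 \ominus \cH_1 = \wt\cH_b \ominus \cH_b = \cH_2'$. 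The verification is then a matching of terms: the $\wt\cH_b$/$\cH_b$-components reproduce the $b$-boundary form \eqref{3.32.5}, while the $H_0 = H \oplus \wh H$-components must reproduce the regular term $-(J\G_a y, \G_a z)$ together with the $i(\wh\G_b y, \wh\G_b z)$ piece.

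\emph{Step 2: surjectivity.} Here I would use that $\G_a : \AC \to \bH$ is surjective (it factors through the $J$-unitary $\wt U$ and evaluation $y \mapsto y(a)$, which is onto by existence of solutions with prescribed initial data) and that $\G_b : \dom\tma \to \wt\cH_b \oplus \wh H \oplus \cH_b$ is surjective by Lemma \ref{lem3.2.1}. The key additional fact is that the data at $a$ and the data at $b$ can be prescribed independently: by Remark \ref{rem3.2a}(1), modifying $y$ near $b$ does not affect $\G_a y$, and conversely one can adjust initial data at $a$ without disturbing $\G_b y$, so the joint map $y \mapsto (\G_a y, \G_b y)$ is onto $\bH \oplus (\wt\cH_b \oplus \wh H \oplus \cH_b)$. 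Since $\G_0, \G_1$ are invertible linear combinations of these components (the $H_0$-parts of $\G_0, \G_1$ involve $\G_{0a}, \G_{1a}, \wh\G_a, \wh\G_b$ in a triangular/invertible fashion, and the $\wt\cH_b, \cH_b$-parts are $\G_{0b}, -\G_{1b}$), surjectivity of $\G = (\G_0,\G_1)^\top$ onto $\cH_0 \oplus \cH_1$ follows.

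\emph{Main obstacle and the equal-indices case.} The hard part will be the bookkeeping in Step 1: one must check that the $\wh H$-components combine correctly, since $\wh\G_a$ appears symmetrically split as $\tfrac12(\wh\G_a + \wh\G_b)$ in $\G_1$ and antisymmetrically as $i(\wh\G_a - \wh\G_b)$ in $\G_0$, and the factor $i$ together with the $i I_{\wh H}$ block of $J$ must conspire to yield exactly the $i(\wh\G_b y, \wh\G_b z)$ term of \eqref{3.32.5} while cancelling all unwanted $\wh\G_a$-$\wh\G_a$ and cross terms against the regular contribution $-(J\G_a y, \G_a z)$. This cancellation is the delicate computation and is presumably the reason the formulas \eqref{3.43.1}, \eqref{3.43.2} take their particular shape. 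Finally, the equal-indices addendum is immediate: by \eqref{3.32.7}, $n_+(\Tmi) = n_-(\Tmi)$ forces $\wt\cH_b = \cH_b$, whence $\cH_2' = \{0\}$, $\cH_0 = \cH_1 = H_0 \oplus \cH_b =: \cH$, and the term $i(P_{\cH_b^\perp}\G_{0b}y, P_{\cH_b^\perp}\G_{0b}z)$ in \eqref{3.32.5} vanishes; the Green's identity \eqref{2.28} then loses its $P_2$-term and reduces to the ordinary form, so $\Pi$ becomes an ordinary boundary triplet in the sense of Remark \ref{rem2.10b}.
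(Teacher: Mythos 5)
Your route is genuinely different from the paper's: the paper disposes of this proposition in one line, as ``immediate from \cite[Theorem 5.8]{Mog11} and \eqref{3.32.7}'', i.e.\ it imports the boundary-triplet property of the decomposing map wholesale from the earlier work, whereas you verify the two axioms of Definition \ref{def2.5} directly. Your Step 1 is sound and is the worthwhile part of this self-contained approach: writing $(Jy(a),z(a))=(J\G_a y,\G_a z)$ via $J$-unitarity of $\wt U$, expanding $J$ in the blocks $(\G_{0a},\wh\G_a,\G_{1a})$, and inserting \eqref{3.32.5} for $[y,z]_b$, one finds that the $\wh H$ cross terms coming from $\tfrac12(\wh\G_a+\wh\G_b)$ against $i(\wh\G_a-\wh\G_b)$ sum exactly to $-i(\wh\G_a y,\wh\G_a z)+i(\wh\G_b y,\wh\G_b z)$, which is precisely what is needed to cancel the $iI_{\wh H}$ contribution of $J$ at $a$ and reproduce the $i(\wh\G_b y,\wh\G_b z)$ term of \eqref{3.32.5}; the remaining terms match \eqref{2.28} term by term. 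The equal-indices addendum via \eqref{3.32.7} is also handled correctly.

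There is, however, a genuine gap in Step 2. The claim that $y\mapsto y(a)$ is onto $\bH$ ``by existence of solutions with prescribed initial data'' does not work as stated: solutions of the homogeneous system with prescribed initial data always exist, but when $b$ is singular they need not belong to $\lI$ (in the limit point case only $\dim H$ of them do for $\l\in\bC_+$), hence need not lie in $\dom\tma$ at all. Likewise, the assertion that ``one can adjust initial data at $a$ without disturbing $\G_b y$'' is exactly the crux of surjectivity and cannot be waved through; Remark \ref{rem3.2a}(1) only says $\G_b$ is insensitive to modifications near $b$, not that such modifications of $y(a)$ within $\dom\tma$ exist. The correct argument uses definiteness in an essential way: there is a compact subinterval $[a,\b]\subset\cI$ on which the system is already definite; for the regular definite system on $[a,\b]$ the two-point map $y\mapsto\{y(a),y(\b)\}$ on its maximal relation is onto $\bH\oplus\bH$ (this uses $W^*(t)JW(t)=J$ for the fundamental matrix and definiteness to kill the orthogonal complement of the range); choosing $y$ with $y(a)=h$, $y(\b)=0$ and extending by zero produces $\{y,f\}\in\tma$ with arbitrary $y(a)$ and $\G_b y=0$. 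Combining this with the surjectivity of $\G_b$ from Lemma \ref{lem3.2.1} and linearity gives joint surjectivity of $(\G_a,\G_b)$, and then your observation that $(\G_0,\G_1)$ is an invertible linear transformation of $(\G_a y,\G_b y)$ finishes the proof. This patching lemma is nontrivial and is precisely what the citation to \cite[Theorem 5.8]{Mog11} encapsulates; without it your Step 2 is an assertion, not a proof.
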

\begin{definition}\label{def3.4}
The boundary triplet $\Pi=\bta$  constructed in Proposition
\ref{pr3.3} will be called a decomposing  boundary triplet for
$\Tma$.
\end{definition}
\begin{proposition}\label{pr3.6}
Let $n_-(\Tmi)\leq n_+(\Tmi)$, let $U$ be the operator
\eqref{3.17.1}, let $\wh \G_a$ and $\G_{1a}$ be the linear
mappings \eqref{3.24} and let $\G_b$ be the linear mapping
\eqref{3.32.4}. Then:

{\rm (1)} The equalities
\begin{gather}
T=\{\{\wt y, \wt f\}\in\Tma: \, \G_{1a}y=0, \;\wh\G_a y=\wh\G_b
y,\;
\G_{0b}y =\G_{1b}y=0 \}\label {3.45}\\
T^*=\{\{\wt y, \wt f\}\in\Tma: \, \G_{1a}y=0, \;\wh\G_a y=\wh\G_b
y \}\label {3.45a}
\end{gather}
define a symmetric extension $T$ of $\Tmi$ and its adjoint $T^*$.
Moreover, the deficiency indices of $T$ are $
n_+(T)=\nu_{b+}-\wh\nu$ and $ n_-(T)=\nu_{b-}$.

{\rm (2)} The collection $\dot\Pi=\{\wt\cH_b\oplus \cH_b,
\dot\G_0,\dot\G_1\}$ with the operators
\begin {equation}\label {3.46}
\dot\G_0 \{\wt y,\wt f\}=\G_{0b}y, \qquad \dot\G_1 \{\wt y,\wt
f\}=-\G_{1b}y, \qquad \{\wt y,\wt f\}\in T^*,
\end{equation}
is a boundary triplet for $T^*$ and the (maximal symmetric)
relation $A_0(=\ker\dot\G_0)$ is of the form
\begin {equation}\label {3.47}
A_0=\{\{\wt y, \wt f\}\in\Tma: \, \G_{1a}y=0, \;\wh\G_a y=\wh\G_b
y,\; \G_{0b}y =0 \}.
\end{equation}

If in addition $n_+(\Tmi)=n_-(\Tmi)$, then  $
n_+(T)=n_-(T)=\nu_{b-}$, $\dot\Pi=\{\cH_b, \dot\G_0,\dot\G_1\}$ is
an ordinary boundary triplet for $T^*$ and $A_0=A_0^*$.
\end{proposition}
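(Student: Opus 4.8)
The plan is to prove Proposition~\ref{pr3.6} by systematically applying the abstract machinery of Proposition~\ref{pr2.10a} to the decomposing boundary triplet $\Pi=\bta$ from Proposition~\ref{pr3.3}. The key observation is that the operators $\G_0$ and $\G_1$ in \eqref{3.43.1}, \eqref{3.43.2} have natural block structures matching the decompositions $\cH_0=H_0\oplus\wt\cH_b$ and $\cH_1=H_0\oplus\cH_b$ from \eqref{3.34b}. I would set $\wh\cH=H_0$, $\dot\cH_0=\wt\cH_b$ and $\dot\cH_1=\cH_b$, so that in the notation of Proposition~\ref{pr2.10a} the ``hat'' components are $\wh\G_0\{\wt y,\wt f\}=-\G_{1a}y+i(\wh\G_a-\wh\G_b)y$ and $\wh\G_1\{\wt y,\wt f\}=\G_{0a}y+\tfrac12(\wh\G_a+\wh\G_b)y$, while the ``dot'' components are $\dot\G_0\{\wt y,\wt f\}=\G_{0b}y$ and $\dot\G_1\{\wt y,\wt f\}=-\G_{1b}y$.

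With this identification, statement (1) would follow almost immediately. The extension $\wt A$ of Proposition~\ref{pr2.10a}(1) defined by $\wh\G_0\wh f=\dot\G_0\wh f=\dot\G_1\wh f=0$ becomes precisely the set of $\{\wt y,\wt f\}\in\Tma$ satisfying $\G_{1a}y=0$, $\wh\G_a y=\wh\G_b y$ (these two together encode $\wh\G_0\wh f=0$ once one checks that the $H$ and $\wh H$ parts decouple), together with $\G_{0b}y=\G_{1b}y=0$; this is exactly $T$ as in \eqref{3.45}. Likewise $\wt A^*=\{\wh f:\wh\G_0\wh f=0\}$ recovers \eqref{3.45a}. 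For the deficiency indices, Proposition~\ref{pr2.10a}(1) gives $n_\pm(T)=n_\pm(\Tmi)-\dim\wh\cH=n_\pm(\Tmi)-\dim H_0$; combining with \eqref{3.17c} and the relations $\dim H_0=\nu_++\wh\nu=\nu_-$ (the latter from \eqref{3.16a} and \eqref{3.12}) yields $n_+(T)=\nu_{b+}-\wh\nu$ and $n_-(T)=\nu_{b-}$, as claimed.

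Statement (2) is then a direct transcription. By Proposition~\ref{pr2.10a}(2) the collection $\dot\Pi=\{\dot\cH_0\oplus\dot\cH_1,\dot\G_0\up\wt A^*,\dot\G_1\up\wt A^*\}=\{\wt\cH_b\oplus\cH_b,\G_{0b}y,-\G_{1b}y\}$ is a boundary triplet for $T^*$, matching \eqref{3.46}. The maximal symmetric extension $A_0=\ker\dot\G_0$ is characterized by adjoining $\G_{0b}y=0$ to the defining relations of $T^*$, giving \eqref{3.47}. The final assertions under $n_+(\Tmi)=n_-(\Tmi)$ follow from the equivalence \eqref{3.32.7}: then $\wt\cH_b=\cH_b$, so $\dot\cH_0=\dot\cH_1=\cH_b$ and $\dot\Pi$ becomes an ordinary boundary triplet (Remark~\ref{rem2.10b}), forcing $n_+(T)=n_-(T)=\nu_{b-}$ and $A_0=A_0^*$.

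The main obstacle I anticipate is the careful bookkeeping needed to verify that the abstract decomposition of Proposition~\ref{pr2.10a} genuinely applies, i.e.\ that $\wh\cH=H_0$ sits inside both $\cH_0$ and $\cH_1$ as the \emph{same} subspace and that the off-diagonal ``$H_0$ versus $\wt\cH_b$'' cross terms in Green's identity \eqref{2.28} match up correctly with the four-term form \eqref{3.32.5}. Concretely, one must confirm that the conditions ``$\wh\G_0\wh f=0$'' really collapse to the two scalar-type conditions $\G_{1a}y=0$ and $\wh\G_a y=\wh\G_b y$ separately (rather than to a single combined condition), which requires inspecting how the $H$ and $\wh H$ blocks of $H_0$ interact with the imaginary cross term $i(P_{\cH_b^\perp}\G_{0b}y,P_{\cH_b^\perp}\G_{0b}z)$ in \eqref{3.32.5}. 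Once that decoupling is confirmed, the remainder is routine dimension counting via \eqref{3.17c} and \eqref{3.16a}.
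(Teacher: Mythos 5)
Your proposal is correct and takes essentially the same route as the paper: the paper's proof likewise fixes a $J$-unitary extension $\wt U$ of $U$, forms the decomposing boundary triplet $\Pi=\bta$ of Proposition \ref{pr3.3}, and applies Proposition \ref{pr2.10a} with exactly your identification $\wh\cH=H_0$, $\dot\cH_0=\wt\cH_b$, $\dot\cH_1=\cH_b$. The decoupling of $\wh\G_0\{\wt y,\wt f\}=0$ into $\G_{1a}y=0$ and $\wh\G_a y=\wh\G_b y$, which you flag as the main point to check, holds simply because $\G_{1a}y\in H$ and $(\wh\G_a-\wh\G_b)y\in\wh H$ lie in orthogonal summands of $H_0$.
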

\begin{proof}
Let $\wt U$ be the $J$-unitary extension \eqref{3.17.5} of $U$,
let $\G_{0a}$ be the operator \eqref{3.23} and let $\Pi=\bta$ be
the decomposing boundary triplet \eqref{3.43.1}, \eqref{3.43.2}
for $\Tma$. Applying to this triplet Proposition \ref{pr2.10a} one
obtains the desired statements.
\end{proof}
\begin{remark}\label{rem3.7}
Clearly, $\mul T=\mul T^*$ if and only if the following condition
is fulfilled:

(C1) For each function $y\in\dom\tma$ the equalities
\begin {equation}\label {3.48}
\G_{1a}y=0, \quad \wh\G_a y=\wh\G_b y\;\;\;\text{ and} \;\;\;
\D(t)y(t)=0 \;\;\text{(a.e. on}\;\; \cI)
\end{equation}
yield $\G_{0b}y=\G_{1b}y=0 $.

 Moreover, $\mul T^*=0 $ (i.e., $T$ is a densely
defined operator) if and only if the following condition is
satisfied:

(C2) For each $y\in\dom\tma$ the equalities \eqref{3.48} yield
$y=0$.

\end{remark}
\section{$\cL_\D^2$-solutions of boundary value problems}\label{sect4}
In what follows we suppose that the symmetric system \eqref{3.1}
satisfies the condition $n_-(\Tmi)\leq n_+(\Tmi)$. Our
considerations will be also based on the following assumptions:
\begin{itemize}
\item[(A1)]
 $U$ is the operator \eqref{3.17.1} satisfying the relations \eqref{3.17.2} -
\eqref{3.17.4} and $\wh\G_a$ and $\G_{1a}$ are the linear mappings
\eqref{3.24}.

\item[(A2)]
 $\wt\cH_b$ and $\cH_b(\subset \wt\cH_b)$ are finite dimensional
Hilbert spaces and $\G_b$ is the surjective linear mapping
\eqref{3.32.4} such that \eqref{3.32.5} holds.
\end{itemize}
In addition to (A1)--(A2) we will sometimes use the following
assumption:
\begin{itemize}
\item[(A3)]
$\wt U$ is a $J$-unitary extension \eqref{3.17.5} of $U$ and
$\G_{0a}$ is the mapping \eqref{3.23}.
\end{itemize}

Let  (A1)--(A2) be satisfied and  let $\pair\in\RP$ be a
collection of holomorphic operator pairs \eqref{2.2} with
$C_0(\l)\in [\wt\cH_b], \; C_1(\l)\in [\cH_b,\wt\cH_b], \;
\l\in\bC_+,$ and $D_0(\l)\in [\wt\cH_b,\cH_b], \; D_1(\l)\in
[\cH_b], \; \l\in\bC_-$. For a given function $f\in\lI$ consider
the following boundary value problem:
\begin{gather}
J y'-B(t)y=\l \D(t)y+\D(t)f(t), \quad t\in\cI,\label{4.0.1}\\
\G_{1a}y=0, \quad \wh \G_a y= \wh\G_b y,\quad \l\in\CR,\label{4.0.2}\\
C_0(\l)\G_{0b}y +C_1(\l)\G_{1b}y=0, \quad \l\in\bC_+ \label{4.0.3.1}\\
 D_0(\l)\G_{0b}y +D_1(\l)\G_{1b}y=0, \quad \l\in\bC_-.\label{4.0.3.2}
\end{gather}
A function $y(\cd,\cd):\cI\tm (\CR)\to\bH$ is called a solution of
this problem if for each $\l\in\CR$ the function $y(\cd,\l)$
belongs to $\AC\cap\lI$ and satisfies the equation \eqref{4.0.1}
a.e. on $\cI$ (so that $y\in\dom\tma$) and the boundary conditions
\eqref{4.0.2} -- \eqref{4.0.3.2}.

If $n_+(\Tmi)=n_-(\Tmi)$, then in view of \eqref{3.32.7}
$\wt\cH_b=\cH_b$ and the collection $\tau$ turns into a Nevanlinna
operator pair $\tau\in\wt R(\cH_b)$ defined by \eqref{2.19} with
$C_j(\l)\in [\cH_b],\;\l\in\CR, \; j\in\{0,1\}$. In this case the
boundary conditions \eqref{4.0.3.1}--\eqref{4.0.3.2} takes the
form
\begin {equation}\label{4.0.3a}
 C_0(\l)\G_{0b}y +C_1(\l)\G_{1b}y=0, \quad \l\in\CR.
\end{equation}
If in addition $\tau\in\wt R^0(\cH_b)$ is a selfadjoint operator
pair \eqref{2.22} with $C_j\in [\cH_b], \;j\in\{0,1\},$ then
\eqref{4.0.3a} becomes a self-adjoint boundary condition (at the
endpoint $b$):
\begin {equation}\label{4.0.3b}
 C_0\G_{0b}y +C_1\G_{1b}y=0.
\end{equation}
\begin{theorem}\label{th4.0.1}
Let  $T$ be a symmetric relation in $\LI$ defined by \eqref{3.45}.
If $\pair\in\RP$ is a collection \eqref{2.2}, then for every
$f\in\lI$ the boundary problem \eqref{4.0.1} - \eqref{4.0.3.2} has
a unique solution $y(t,\l)=y_f(t,\l) $ and the equality
\begin {equation}\label {4.0.4}
R(\l)\wt f = \pi(y_f(\cd,\l)), \quad \wt f\in \LI, \quad f\in\wt
f, \quad \l\in\CR,
\end{equation}
defines a generalized resolvent $R(\l)=:R_\tau(\l)$ of $T$.
Conversely, for each generalized resolvent $R(\l)$ of $T$ there
exists a unique $\tau\in\RP$ such that $R(\l)=R_\tau(\l)$.

If in addition $n_+(\Tmi)=n_-(\Tmi)$, then  $n_+(T)=n_-(T)$ and
the above statements hold with   Nevanlinna operator pairs
$\tau\in\wt R(\cH_b)$ of the form \eqref{2.19} and the boundary
condition \eqref{4.0.3a} in place of \eqref{4.0.3.1} and
\eqref{4.0.3.2}. Moreover, $R_\tau(\l)$ is a canonical resolvent
of $T$ if and only if $\tau\in\wt R^0(\cH_b)$ is a self-adjoint
operator pair \eqref{2.22}, in which case $R_\tau(\l)=(\wt T^\tau
- \l)^{-1}$ with
\begin {equation}\label {4.0.5}
\wt T^\tau=\{\{\wt y,\wt f\}\in\Tma : \G_{1a}y=0, \;\wh \G_a y=
\wh\G_b y,\; C_0\G_{0b}y +C_1\G_{1b}y=0\}.
\end{equation}
\end{theorem}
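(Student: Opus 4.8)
The plan is to obtain Theorem~\ref{th4.0.1} as a specialization of the abstract description of generalized resolvents in Theorem~\ref{th2.12}, applied not to $\Tma$ but to the boundary triplet $\dot\Pi=\{\wt\cH_b\oplus\cH_b,\dot\G_0,\dot\G_1\}$ for $T^*$ supplied by Proposition~\ref{pr3.6}. By Proposition~\ref{pr3.6}(1) and \eqref{3.32.6} one has $n_+(T)=\nu_{b+}-\wh\nu=\dim\wt\cH_b$ and $n_-(T)=\nu_{b-}=\dim\cH_b$, so the admissible boundary parameters for $\dot\Pi$ are exactly the collections $\tau\in\wt R(\wt\cH_b,\cH_b)=\RP$ occurring in the statement. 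To avoid the notational clash with Theorem~\ref{th2.12} (there the solution is $f$ and the datum is $g$), I fix the concrete inhomogeneity $f\in\lI$, set $g:=\wt f\in\LI$, and look for the abstract solution $\wt y$ of problem \eqref{2.45}--\eqref{2.47}.

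The crux is to check that the concrete boundary problem \eqref{4.0.1}--\eqref{4.0.3.2} is merely the transcription of this abstract one. For $y\in\AC\cap\lI$ the equation \eqref{4.0.1} says precisely $\{y,\l y+f\}\in\tma$, hence $\{\wt y,\l\wt y+g\}\in\Tma$ upon applying $\wt\pi$; conversely, the regularity property of Remark~\ref{rem3.1a} extracts from any $\{\wt y,\l\wt y+g\}\in\Tma$ a unique representative $y\in\AC\cap\lI$ obeying \eqref{4.0.1}. Adjoining the conditions \eqref{4.0.2}, i.e.\ $\G_{1a}y=0$ and $\wh\G_a y=\wh\G_b y$, is by \eqref{3.45a} exactly the requirement $\{\wt y,\l\wt y+g\}\in T^*$, which is \eqref{2.45}. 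Finally, since $\dot\G_0\{\wt y,\l\wt y+g\}=\G_{0b}y$ and $\dot\G_1\{\wt y,\l\wt y+g\}=-\G_{1b}y$ by \eqref{3.46}, the abstract conditions \eqref{2.46}--\eqref{2.47} read $C_0(\l)\G_{0b}y-C_1(\l)(-\G_{1b}y)=0$ on $\bC_+$ and the analogue with $D_0,D_1$ on $\bC_-$, that is, precisely \eqref{4.0.3.1}--\eqref{4.0.3.2}. Thus the concrete and abstract problems have identical solutions, the sign in \eqref{2.46} being exactly compensated by the sign $\dot\G_1=-\G_{1b}$ in \eqref{3.46}.

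With this dictionary, the existence and uniqueness of the abstract solution $\wt y=R_\tau(\l)g$ and the fact that $R_\tau(\cd)$ is a generalized resolvent of $T$ follow from Theorem~\ref{th2.12}; passing from $\wt y$ to its unique $\AC\cap\lI$ representative $y_f(\cd,\l)$ via Remark~\ref{rem3.1a} yields the (unique) solution of \eqref{4.0.1}--\eqref{4.0.3.2} and the formula \eqref{4.0.4}. Uniqueness at the level of functions is inherited from that of $\wt y$ together with the uniqueness clause of the regularity property. The converse assertion, that every generalized resolvent of $T$ arises from a unique $\tau\in\RP$, is just the converse half of Theorem~\ref{th2.12}.

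In the equal-index case $n_+(\Tmi)=n_-(\Tmi)$ one has $\wt\cH_b=\cH_b$ by \eqref{3.32.7}, so $\dot\Pi$ becomes an ordinary boundary triplet for $T^*$ (Proposition~\ref{pr3.6}) and $\RP$ reduces to the Nevanlinna pairs $\wt R(\cH_b)$ of the form \eqref{2.19}, while \eqref{4.0.3.1}--\eqref{4.0.3.2} merge into the single condition \eqref{4.0.3a}. By the last clause of Theorem~\ref{th2.12}, $R_\tau(\l)$ is canonical iff $\tau\in\wt R^0(\cH_b)$, i.e.\ iff $\tau=\{(C_0,C_1);\cH_b\}$ is a self-adjoint pair \eqref{2.22}; then $R_\tau(\l)=(\wt T^\tau-\l)^{-1}$, where $\wt T^\tau$ is the self-adjoint extension obtained by imposing the constant condition \eqref{4.0.3b} on $T^*$, which by \eqref{3.46} and \eqref{3.45a} is precisely \eqref{4.0.5}. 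The only genuinely non-formal ingredient in all of this is the back-and-forth between abstract pairs in $\Tma$ and their $\AC\cap\lI$ representatives; the remainder is bookkeeping of the sign conventions in \eqref{3.46} and Theorem~\ref{th2.12}.
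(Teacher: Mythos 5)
Your proof is correct and takes exactly the paper's route: the paper's own proof consists of the single step of applying Theorem \ref{th2.12} to the boundary triplet $\dot\Pi=\{\wt\cH_b\oplus\cH_b,\dot\G_0,\dot\G_1\}$ from Proposition \ref{pr3.6}. Your write-up merely makes explicit the bookkeeping the paper leaves implicit, namely the dictionary between the abstract problem \eqref{2.45}--\eqref{2.47} and the concrete one \eqref{4.0.1}--\eqref{4.0.3.2} via Remark \ref{rem3.1a} and the sign convention $\dot\G_1\{\wt y,\wt f\}=-\G_{1b}y$.
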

\begin{proof}
Let $\dot \Pi=\{\wt\cH_b\oplus\cH_b,\dot\G_0, \dot\G_1\}$ be the
boundary triplet \eqref{3.46} for $T^*$. Applying to this triplet
Theorem \ref{th2.12} we obtain the required statements.
\end{proof}
\begin{remark}\label{rem4.0.3}
Let in Theorem \ref{th4.0.1} $\tau_0=\{\tau_{+},\tau_{-} \}\in\wt
R(\wt\cH_b,\cH_b)$ be defined by \eqref{2.2} with
\begin {equation}\label{4.0.6}
C_0(\l) \equiv I_{\wt\cH_b}, \quad C_1(\l)\equiv 0 , \quad D_0(\l)
\equiv P_{\cH_b}(\in [\wt\cH_b,\cH_b]), \quad D_1(\l)\equiv 0
\end{equation}
and let $R_0(\l):=R_{\tau_0}(\l)$ be the corresponding generalized
resolvent of $T$. Then
\begin {equation*}
R_0(\l)=(A_0-\l)^{-1},\;\l\in\bC_+ \;\;\text{and}\;\;
R_0(\l)=(A_0^*-\l)^{-1},\;\l\in\bC_-,
\end{equation*}
where $A_0$ is given by \eqref{3.47}.

If in addition $n_+(\Tmi)=n_-(\Tmi)$, then $\tau_0$ turns into a
self-adjoint operator pair $\tau_0=\{(I_{\cH_b},0);\cH_b\}\in \wt
R^0(\cH_b)$ and $R_0(\l)=(A_0-\l)^{-1}$   is a canonical resolvent
of $T$.
\end{remark}
\begin{proposition}\label{pr4.1}
Let the assumptions {\rm (A1)} and {\rm (A2)} be satisfied. Then:

{\rm (1)} For every $\l\in\CR$ there exists a unique operator
solution $v_0(\cd,\l)\in\lo{H_0}$ of Eq. \eqref{3.2} such that
\begin{gather}
\G_{1a} v_0(\l)=-P_H, \qquad i(\wh\G_a-\wh\G_b)v_0(\l)=P_{\wh  H},
\;\;\;\; \l\in\CR \label{4.3}\\
\G_{0b} v_0(\l)=0, \;\;\;\;\l\in\bC_+;\qquad P_{\cH_b}\G_{0b}
v_0(\l)=0, \;\;\;\;\l\in\bC_- \label{4.4}
\end{gather}

{\rm(2)} For every $\l\in\bC_+ \; (\l\in\bC_-)$ there exists a
unique operator solution $u_+(\cd,\l)\in\lo{\wt\cH_b}$ (resp.
$u_-(\cd,\l)\in\lo{\cH_b}$) of Eq. \eqref{3.2} such that
\begin{gather}
\G_{1a}u_\pm(\l)=0, \qquad i(\wh\G_a-\wh\G_b)u_\pm(\l)=0,\quad
\l\in\bC_\pm,\label{4.5}\\
\G_{0b}u_+(\l)=I_{\wt\cH_b},\;\; \l\in\bC_+; \qquad
P_{\cH_b}\G_{0b}u_-(\l)=I_{\cH_b},\;\; \l\in\bC_-.\label{4.6}
\end{gather}
In formulas \eqref{4.3}-- \eqref{4.6} $v_0(\l)$ and $u_\pm(\l)$
are linear mappings from Lemma \ref{lem3.2}, (1) corresponding to
the solutions $v_0(\cd,\l)$ and $u_{\pm}(\cd,\l)$ respectively.
\end{proposition}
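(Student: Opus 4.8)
The plan is to derive everything from the decomposing boundary triplet $\Pi=\bta$ for $\Tma$ furnished by Proposition \ref{pr3.3}, together with its $\g$-fields $\g_\pm(\cd)$ and the isomorphism statements of Proposition \ref{pr2.8}. The idea is that each of the four required solutions is, after the identification $\pi\up\cN_\l$, the value of $\g_+(\l)$ (for $\l\in\bC_+$) or $\g_-(\l)$ (for $\l\in\bC_-$) on a fixed vector of $\cH_0=H_0\oplus\wt\cH_b$ or $\cH_1=H_0\oplus\cH_b$, and that the boundary data prescribed in \eqref{4.3}--\eqref{4.6} are exactly the block components of $\G_0$ (resp.\ $P_1\G_0$) read off from \eqref{3.43.1}.

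First I would rewrite \eqref{4.3}--\eqref{4.6} in triplet form. By \eqref{3.43.1}, for a function $y\in\dom\tma$ the $H_0$-component of $\G_0\{\pi y,\l\pi y\}$ equals $-\G_{1a}y+i(\wh\G_a-\wh\G_b)y$ and its $\wt\cH_b$-component equals $\G_{0b}y$; since $H_0=H\oplus\wh H$ with $\G_{1a}y\in H$ and $i(\wh\G_a-\wh\G_b)y\in\wh H$, requiring the $H_0$-component to be $h_0$ is the same as requiring $-\G_{1a}y=P_Hh_0$ and $i(\wh\G_a-\wh\G_b)y=P_{\wh H}h_0$. Consequently, for $\l\in\bC_+$, conditions \eqref{4.3}--\eqref{4.4} are equivalent to $\G_0\{\pi v_0(\l)h_0,\l\pi v_0(\l)h_0\}=\{h_0,0\}$ for all $h_0\in H_0$, and \eqref{4.5}--\eqref{4.6} to $\G_0\{\pi u_+(\l)h,\l\pi u_+(\l)h\}=\{0,h\}$ for all $h\in\wt\cH_b$. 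For $\l\in\bC_-$ one applies $P_1=I_{H_0}\oplus P_{\cH_b}$ to \eqref{3.43.1}; the same computation turns \eqref{4.3} together with the second relation of \eqref{4.4} into $P_1\G_0\{\pi v_0(\l)h_0,\l\pi v_0(\l)h_0\}=\{h_0,0\}$, and \eqref{4.5}--\eqref{4.6} into $P_1\G_0\{\pi u_-(\l)h,\l\pi u_-(\l)h\}=\{0,h\}$, the right-hand sides now lying in $\cH_1$.

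Existence and uniqueness then follow from Proposition \ref{pr2.8}: the operator $\G_0\up\wh\gN_\l(\Tmi)$ is an isomorphism onto $\cH_0$ for $\l\in\bC_+$, and $P_1\G_0\up\wh\gN_z(\Tmi)$ is an isomorphism onto $\cH_1$ for $z\in\bC_-$, so each displayed requirement determines a unique element of $\gN_\l(\Tmi)$, given explicitly through the $\g$-fields (for instance $\pi v_0(\l)h_0=\g_+(\l)\{h_0,0\}$ for $\l\in\bC_+$). Finally I would lift these $\gN_\l(\Tmi)$-valued mappings to operator solutions: by definiteness $\pi\up\cN_\l$ is a bijection of $\cN_\l$ onto $\gN_\l(\Tmi)$, so composing its inverse with the mapping just obtained yields a linear mapping $\cK\to\cN_\l$ (with $\cK$ equal to $H_0$, $\wt\cH_b$ or $\cH_b$), and Lemma \ref{lem3.2}\,(1) converts it into the unique operator solution in $\lo{\cK}$ realizing the prescribed boundary data. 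The argument is essentially a transcription of the triplet data; the one delicate point is the bookkeeping of the block structure --- keeping the $H$- and $\wh H$-parts of the first $\G_0$-component separate, and inserting $P_1$ exactly when $\l\in\bC_-$ --- which is where an oversight would most easily arise.
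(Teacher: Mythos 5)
Your proposal is correct and follows essentially the same route as the paper: there, too, $v_0(\l)$ and $u_\pm(\l)$ arise as the blocks of the isomorphisms $Z_\pm(\l)$ with $\g_\pm(\l)=\pi Z_\pm(\l)$, the conditions \eqref{4.3}--\eqref{4.6} are translated via \eqref{3.43.1} and \eqref{2.33} into $\G_0' Z_+(\l)=I_{\cH_0}$ and $P_{H_0\oplus\cH_b}\G_0' Z_-(\l)=I_{\cH_1}$, and Lemma \ref{lem3.2} converts the resulting mappings into operator solutions. The only deviation is your uniqueness step, which you read off directly from the injectivity of $\G_0\up\wh\gN_\l(\Tmi)$, resp. $P_1\G_0\up\wh\gN_z(\Tmi)$, together with $\ker (\pi\up\cN_\l)=\{0\}$, whereas the paper deduces it from the unique solvability of the homogeneous boundary problem with parameter $\tau_0$ (Theorem \ref{th4.0.1}); both arguments are valid, and yours stays entirely inside the triplet machinery.
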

\begin{proof}
Let $\wt U$ be the $J$-unitary extension \eqref{3.17.5} of $U$,
let $\G_{0a}$ be the operator \eqref{3.23} and let $\Pi=\bta$ be
the decomposing boundary triplet \eqref{3.43.1}, \eqref{3.43.2}
for  $\Tma$. Assume also that $\g_\pm(\cd)$ are the $\g$-fields
of  $\Pi$. Since the quotient mapping $\pi$ isomorphically maps
$\cN_\l$ onto $\gN_\l (\Tmi)$, it follows that for every
$\l\in\bC_+\;(\l\in\bC_-)$ there exists an isomorphism
$Z_+(\l):\cH_0\to \cN_\l$ (resp. $Z_-(\l):\cH_1\to \cN_\l$) such
that
\begin {equation}\label{4.11}
\g_+(\l)=\pi Z_+(\l), \;\;\;\l\in\bC_+; \qquad \g_-(\l)=\pi
Z_-(\l), \;\;\;\l\in\bC_-.
\end{equation}
Let $\G_0'$ and $\G_1'$ be the linear mappings given by
\begin {equation}\label{4.11.1}
\begin{array}{l}
\G_0'=\begin{pmatrix} - \G_{1a} +i(\wh\G_a-\wh\G_b)\cr  \G_{0b}
\end{pmatrix}:\dom\tma \to H_0\oplus\wt\cH_b,\\
\G_1'=\begin{pmatrix} \G_{0a} + \tfrac 1 2(\wh\G_a+\wh\G_b)\cr
-\G_{1b}
\end{pmatrix}:\dom\tma \to H_0\oplus\cH_b.
\end{array}
\end{equation}
Then by \eqref{3.43.1} and \eqref{3.43.2} one has $ \G_j\{\pi y,
\l \pi y\}=\G_j' y, \; y\in\cN_\l, \; j \in \{0,1\}. $ Combining
of this equality with \eqref{4.11} and \eqref{2.33} gives
\begin{gather}
\G_0' Z_+(\l)=I_{\cH_0},\quad \l\in\bC_+;\qquad
P_{H_0\oplus\cH_b}\G_0' Z_-(\l)= I_{\cH_1}, \quad
\l\in\bC_-,\label{4.12}
\end{gather}
which in view of \eqref{4.11.1} can be written as
\begin{gather}
\begin{pmatrix}-\G_{1a}+i (\wh\G_a- \wh\G_b) \cr \G_{0b}
\end{pmatrix} Z_+(\l)=\begin{pmatrix} I_{H_0} & 0 \cr 0 & I_{\wt\cH_b}
\end{pmatrix}, \;\; \l\in\bC_+ \label{4.14}\\
\begin{pmatrix}-\G_{1a}+i (\wh\G_a- \wh\G_b) \cr P_{\cH_b}\G_{0b}
\end{pmatrix} Z_-(\l)=\begin{pmatrix} I_{H_0} & 0 \cr 0 & I_{\cH_b}
\end{pmatrix}, \;\; \l\in\bC_- \label{4.16}
\end{gather}
It follows from \eqref{4.14} and \eqref{4.16} that
\begin{gather}
\G_{1a}Z_\pm(\l)=(-P_H,\; 0), \quad \tfrac 1 2 (\wh\G_a- \wh\G_b)
Z_\pm(\l)=(-\tfrac i 2 P_{\wh H},\; 0), \;\;\;\l\in\bC_\pm \label{4.18}\\
\G_{0b} Z_+(\l)= (0,\, I_{\wt \cH_b}), \;\; \l\in\bC_+; \quad
P_{\cH_b}\G_{0b} Z_-(\l)= (0,\, I_{ \cH_b}), \;\;
\l\in\bC_-.\label{4.22}
\end{gather}

Assume now that the block representations of $Z_\pm(\l)$ are
\begin {gather}
Z_+(\l)=(v_0(\l),\,u_+(\l)):H_0\oplus\wt\cH_b\to
\cN_\l,\;\;\l\in\bC_+\label{4.26}\\
 Z_-(\l)=(v_0(\l),\, u_-(\l)):H_0\oplus \cH_b\to
\cN_\l,\;\;\l\in\bC_-\label{4.26a}
\end{gather}
and let $v_0(\cd,\l)\in \lo{H_0}, \; u_+(\cd,\l)\in \lo{\wt\cH_b}
$ and $u_-(\cd,\l)\in \lo{\cH_b} $ be the operator solutions of
Eq. \eqref{3.2} corresponding to $v_0(\l), \; u_+(\l)$ and
$u_-(\l)$ respectively (see Lemma \ref{lem3.2}). Then  the
representations \eqref{4.26} and \eqref{4.26a} together with
\eqref{4.18} and \eqref{4.22} yield the relations
\eqref{4.3}-\eqref{4.6} for $v_0(\cd,\l)$ and $u_\pm (\cd,\l)$.

To prove uniqueness of $v_0(\cd,\l)$ and $u_\pm(\cd,\l)$ assume
that $\wt v_0(\cd,\l)\in\lo{H_0}, \wt u_+(\cd,\l)\in\lo{\wt \cH_b}
$ and $ \wt u_-(\cd,\l)\in\lo{\cH_b}$ are other solutions of Eq.
\eqref{3.2} satisfying \eqref{4.3}--\eqref{4.6}. Then for each
$h_0\in H_0, \; \wt h_b\in\wt\cH_b$ and $h_b\in \cH_b$ the
functions $y_1=(v_0(t,\l)- \wt v_0(t,\l))h_0, \; y_2=(u_+(t,\l)-
\wt u_+(t,\l)) \wt h_b$ and $y_3=(u_-(t,\l)- \wt u_-(t,\l)) h_b$
are solutions of the homogeneous boundary problem \eqref{4.0.1} --
\eqref{4.0.3.2} with $f=0$ and $C_j(\l), \; D_j(\l),\;
j\in\{0,1\},$ defined by \eqref{4.0.6}. Since by Theorem
\ref{th4.0.1} such a problem has a unique solution $y=0$, it
follows that $y_1=y_2=y_3=0$ and, consequently, $v_0=\wt v_0$ and
$u_\pm=\wt u_\pm$.
\end{proof}
\begin{proposition}\label{pr4.1a}
Assume the hypothesis  {\rm (A1)--(A3)}. Let  $\Pi=\bta$ be the
decomposing
boundary triplet for $\Tma$ defined in Proposition \ref{pr3.3}, 
let $\g_\pm(\cd)$ and  $M_\pm(\cd)$ be the corresponding
$\g$-field and the Weyl function, respectively. Then $\g_\pm(\cd)$
is connected with the solutions $v_0(\cd,\l)$ and $u_\pm(\cd,\l)$
from Proposition \ref{pr4.1} by
\begin {gather}
\g_\pm(\l)\up H_0=\pi v_0(\l),\;\;\;\;\l\in\bC_\pm;\label{4.26.1}\\
\quad \g_+(\l)\up \wt\cH_b=\pi u_+(\l), \;\;\;\l\in\bC_+;\quad
\g_-(\l)\up \cH_b=\pi u_-(\l), \;\;\;\l\in\bC_-.\label{4.26.2}
\end{gather}
and the block representations
\begin{gather}
M_+(\l)=\begin{pmatrix} m_0(\l )& M_{2+}(\l) \cr M_{3+}(\l) &
M_{4+}(\l)
\end{pmatrix}: H_0\oplus\wt\cH_b\to H_0\oplus\cH_b,
\;\;\;\l\in\bC_+\label{4.26.3}\\
M_-(\l)=\begin{pmatrix} m_0(\l )& M_{2-}(\l) \cr M_{3-}(\l) &
M_{4-}(\l)
\end{pmatrix}: H_0\oplus\cH_b\to H_0\oplus \wt\cH_b,
\;\;\;\l\in\bC_-\label{4.26.4}
\end{gather}
hold with
\begin{gather}
m_0(\l)=(\G_{0a}+\wh\G_a)v_0(\l)+\tfrac i 2 P_{\wh H}, \quad
\l\in\CR \label{4.26.5}\\
M_{2\pm}(\l)=(\G_{0a}+\wh\G_a)u_\pm(\l),\quad \l\in\bC_\pm \label{4.26.6}\\
M_{3+}(\l)=-\G_{1b}v_0(\l), \qquad M_{4+}(\l)=-\G_{1b}u_+(\l),
\quad \l\in\bC_+\label{4.26.7}\\
M_{3-}(\l)=(-\G_{1b}+i P_{\cH_b^\perp}\G_{0b})v_0(\l), \label{4.26.8a}\\
M_{4-}(\l)=(-\G_{1b}+i P_{\cH_b^\perp}\G_{0b})u_-(\l),\;\;
\l\in\bC_-.\label{4.26.8b}
\end{gather}
\end{proposition}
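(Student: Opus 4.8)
The plan is to reduce everything to the block representations of the $\g$-fields that were already produced inside the proof of Proposition~\ref{pr4.1}, and then to compute $M_\pm(\cd)$ by substituting these $\g$-fields into the defining formulas \eqref{2.32a} and \eqref{2.32b} and simplifying with the boundary conditions \eqref{4.3}--\eqref{4.6}. First I would recall that in the proof of Proposition~\ref{pr4.1} we produced isomorphisms $Z_+(\l):\cH_0\to\cN_\l$ ($\l\in\bC_+$) and $Z_-(\l):\cH_1\to\cN_\l$ ($\l\in\bC_-$) with $\g_\pm(\l)=\pi Z_\pm(\l)$, together with the block decompositions \eqref{4.26}, \eqref{4.26a}, namely $Z_+(\l)=(v_0(\l),u_+(\l))$ and $Z_-(\l)=(v_0(\l),u_-(\l))$. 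Restricting these identities to the direct summands $H_0$, $\wt\cH_b$ and $\cH_b$ at once gives the $\g$-field formulas \eqref{4.26.1} and \eqref{4.26.2}; this step is bookkeeping and carries no real difficulty.

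Next I would compute $M_+(\l)$ for $\l\in\bC_+$. For $h_0=\{h,\wt h_b\}\in H_0\oplus\wt\cH_b$ the function representing $\g_+(\l)h_0$ is $y=v_0(\cd,\l)h+u_+(\cd,\l)\wt h_b$, so by \eqref{2.32a} and the expression \eqref{3.43.2} for $\G_1$ one has $M_+(\l)h_0=\{(\G_{0a}+\tfrac12(\wh\G_a+\wh\G_b))y,\,-\G_{1b}y\}$. The key simplification is to convert the symmetric combination $\tfrac12(\wh\G_a+\wh\G_b)$ into $\wh\G_a$ on each summand: from $i(\wh\G_a-\wh\G_b)v_0(\l)=P_{\wh H}$ in \eqref{4.3} one gets $\tfrac12(\wh\G_a+\wh\G_b)v_0(\l)=\wh\G_a v_0(\l)+\tfrac i2 P_{\wh H}$, while $i(\wh\G_a-\wh\G_b)u_+(\l)=0$ in \eqref{4.5} gives $\tfrac12(\wh\G_a+\wh\G_b)u_+(\l)=\wh\G_a u_+(\l)$. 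Reading off the $H_0$- and $\cH_b$-components of $M_+(\l)\{h,\wt h_b\}$ then yields the blocks \eqref{4.26.5}--\eqref{4.26.7}.

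Then I would treat $M_-(z)$ for $z\in\bC_-$, which is the only place where genuine care is needed. Here the defining formula \eqref{2.32b} carries the extra summand $iP_2\G_0$, where $P_2$ projects $\cH_0$ onto $\cH_2=\wt\cH_b\ominus\cH_b=\cH_b^\perp$. Writing $h_1=\{h,h_b\}\in H_0\oplus\cH_b$ and $y=v_0(\cd,z)h+u_-(\cd,z)h_b$, the term $P_2\G_0\{y,zy\}$ equals $\{0,P_{\cH_b^\perp}\G_{0b}y\}$ by \eqref{3.43.1}, since the $H_0$-part of $\G_0$ is orthogonal to $\cH_b^\perp$. Adding $i$ times this to $\G_1\{y,zy\}$ leaves the $H_0$-component $(\G_{0a}+\tfrac12(\wh\G_a+\wh\G_b))y$ untouched, while it combines the $\cH_b$-component $-\G_{1b}y$ of $\G_1$ with the $\cH_b^\perp$-component $iP_{\cH_b^\perp}\G_{0b}y$ into a single $\wt\cH_b$-valued expression $(-\G_{1b}+iP_{\cH_b^\perp}\G_{0b})y$. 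The $H_0$-block is simplified exactly as before via \eqref{4.3} and \eqref{4.5}, giving $m_0(z)$ and $M_{2-}(z)$, and the $\wt\cH_b$-block splits into \eqref{4.26.8a}, \eqref{4.26.8b}.

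The main obstacle is precisely the bookkeeping of this $iP_2\G_0$ term together with the observation that the $H_0$-block $m_0(\cd)$ arising in $M_+$ and in $M_-$ is literally the same operator function \eqref{4.26.5} on all of $\CR$; this is forced by $v_0(\cd,\l)$ being defined for every $\l\in\CR$ and is consistent with the identity $M_+^*(\ov\l)=M_-(\l)$ from Proposition~\ref{pr2.8}, which I would keep as a sanity check rather than use in the derivation. Everything else is a direct substitution, so no separate limiting or density argument is required.
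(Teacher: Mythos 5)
Your proposal is correct and follows essentially the same route as the paper's proof: both reduce \eqref{4.26.1}--\eqref{4.26.2} to the identities $\g_\pm(\l)=\pi Z_\pm(\l)$ with the block decompositions \eqref{4.26}, \eqref{4.26a}, and then compute $M_\pm$ by applying $\G_1'$ (resp.\ $\G_1'+iP_{\cH_b^\perp}\G_0'$) to $Z_\pm(\l)$ via \eqref{2.32a}, \eqref{2.32b} and converting $\tfrac12(\wh\G_a+\wh\G_b)$ into $\G_{0a}+\wh\G_a$ plus the $\tfrac i2 P_{\wh H}$ correction using the boundary relations for $v_0$ and $u_\pm$. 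The only difference is organizational (you work componentwise where the paper records the intermediate block identities \eqref{4.26.9}--\eqref{4.26.15}), not mathematical.
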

\begin{proof}
The equalities \eqref{4.26.1} and \eqref{4.26.2} are immediate
from \eqref{4.11} and \eqref{4.26}, \eqref{4.26a}.

Next assume that $\G_0'$ and $\G_1'$ are the linear mappings
\eqref{4.11.1} and let $M_\pm(\cd)$ have the block representations
\eqref{4.26.3}, \eqref{4.26.4}. Then by using \eqref{4.11} and
\eqref{2.32a}, \eqref{2.32b} one obtains
\begin {equation*}
\G_1' Z_+(\l)=M_+(\l), \;\;\l\in\bC_+; \qquad   (\G_1'+i
P_{\cH_b^\perp}\G_0') Z_-(\l)=M_-(\l), \;\; \l\in\bC_-,
\end{equation*}
which  can be represented as
\begin{gather}
\begin{pmatrix}\G_{0a}+\tfrac 1 2  (\wh\G_a+ \wh\G_b) \cr -\G_{1b}
\end{pmatrix} Z_+(\l)=\begin{pmatrix} m_0(\l )& M_{2+}(\l) \cr M_{3+}(\l)
 & M_{4+}(\l)\end{pmatrix}, \;\; \l\in\bC_+ \label{4.26.9}\\
\begin{pmatrix}\G_{0a}+\tfrac 1 2  (\wh\G_a+ \wh\G_b) \cr
-\G_{1b}+iP_{\cH_b^\perp}\G_{0b}
\end{pmatrix} Z_-(\l)=\begin{pmatrix} m_0(\l )& M_{2-}(\l) \cr M_{3-}(\l) & M_{4-}(\l)
\end{pmatrix}, \;\; \l\in\bC_-. \label{4.26.10}
\end{gather}
Hence
\begin{gather}
\G_{0a}Z_\pm(\l)=(P_H m_0(\l),\, P_H M_{2\pm}(\l)),
\;\;\;\l\in\bC_\pm
\label{4.26.11} \\
\tfrac 1 2 (\wh\G_a+ \wh\G_b) Z_\pm(\l)=(P_{\wh H} m_0(\l),\,
P_{\wh H}
M_{2\pm}(\l)),\; \;\;\l\in\bC_\pm. \label{4.26.12}\\
\G_{1b} Z_+(\l)=(-M_{3+}(\l),\, -M_{4+}(\l)), \;\; \l\in\bC_+,\label{4.26.13}\\
(-\G_{1b}+i P_{\cH_b^\perp}\G_{0b})Z_-(\l)=(M_{3-}(\l), \,
M_{4-}(\l)), \;\; \l\in\bC_-. \label{4.26.14}
\end{gather}
Summing up the second equality in \eqref{4.18} with
\eqref{4.26.11} and \eqref{4.26.12} one obtains
\begin {equation}\label{4.26.15}
(\G_{0a}+\hat\G_a)Z_{\pm}(\l)=(m_0(\l)-\tfrac i 2 P_{\wh
H},\,M_{2\pm}(\l)), \;\;\;\l\in\bC_\pm.
\end{equation}
Combining now \eqref{4.26.13}--\eqref{4.26.15} with the block
representations \eqref{4.26} and \eqref{4.26a} of $Z_\pm(\l)$ we
arrive at the equalities \eqref{4.26.5}--\eqref{4.26.8b}.
\end{proof}
In the case of equal deficiency indices the statements of
Propositions \ref{pr4.1} and \ref{pr4.1a} can be rather
simplified.  Namely the following corollary is obvious.
\begin{corollary}\label{cor4.1b}
Let the assumptions {\rm (A1)} and {\rm (A2)} be satisfied,
$n_+(\Tmi)=n_-(\Tmi)$, and let $A_0$ be the selfadjoint extension
of $\Tmi$ given by \eqref{3.47}. Then for every $\l\in\rho (A_0)$
there exists a unique pair of operator-valued solutions
$v_0(\cd,\l)\in\lo{H_0}$ and $u(\cd,\l)\in\lo{\cH_b}$ of Eq.
\eqref{3.2} satisfying the following boundary conditions:
\begin{gather*}
\G_{1a} v_0(\l)=-P_H, \quad i(\wh\G_a-\wh\G_b)v_0(\l)=P_{\wh
H},\quad \G_{0b}
v_0(\l)=0, \;\; \l\in\rho (A_0),  \\
\G_{1a}u(\l)=0, \quad i(\wh\G_a-\wh\G_b)u(\l)=0, \quad
\G_{0b}u(\l)=I_{\cH_b},\;\;\l\in\rho (A_0).
\end{gather*}
Assume,  in addition, that the assumption {\rm (A3)} is fulfilled
and $\Pi=\bt$ is an ordinary decomposing boundary triplet
\eqref{3.43.1}, \eqref{3.43.2} for $\Tma$.   Then the
corresponding Weyl function $M(\cd)$ admits a  block matrix
representation
\begin {equation}\label{4.26.16}
M(\l)=\begin{pmatrix} m_0(\l )& M_{2}(\l) \cr M_{3}(\l) &
M_{4}(\l)
\end{pmatrix}: H_0\oplus \cH_b\to H_0\oplus\cH_b,
\;\;\;\l\in\rho (A_0)
\end{equation}
with the entries given by
\begin{gather}
m_0(\l)=(\G_{0a}+\wh\G_a)v_0(\l)+\tfrac i 2 P_{\wh H},\qquad
M_{2}(\l)=(\G_{0a}+\wh\G_a)u(\l),\label{4.26.17}\\
M_{3}(\l)=-\G_{1b}v_0(\l), \qquad M_{4}(\l)=-\G_{1b}u(\l), \quad
\l\in\rho (A_0).\label{4.26.18}
\end{gather}
\end{corollary}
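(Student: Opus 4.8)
The plan is to obtain the corollary as a direct specialization of Propositions \ref{pr4.1} and \ref{pr4.1a} to the case of equal deficiency indices, where the geometry collapses. First I would record the decisive simplification: since $n_+(\Tmi)=n_-(\Tmi)$, the equivalence \eqref{3.32.7} gives $\wt\cH_b=\cH_b$, so $\cH_b^\perp=\{0\}$ in $\wt\cH_b$ and hence $P_{\cH_b^\perp}=0$. By Proposition \ref{pr3.3} the decomposing triplet then becomes an \emph{ordinary} boundary triplet $\Pi=\bt$ with $\cH=H_0\oplus\cH_b$, and its relation $\ker\G_0$ coincides with the extension \eqref{3.47}, which by Remark \ref{rem2.10b} is self-adjoint. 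Consequently the single $\g$-field $\g(\cd)$ and Weyl function $M(\cd)$ are holomorphic on all of $\rho(A_0)$ and satisfy $\g(\l)=\g_\pm(\l)$, $M(\l)=M_\pm(\l)$ for $\l\in\bC_\pm$.

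Next I would produce the two solutions. Applying Proposition \ref{pr4.1}, the point is that $P_{\cH_b}=I_{\cH_b}$ now holds on $\wt\cH_b=\cH_b$, so the lower-half-plane conditions in \eqref{4.4} and \eqref{4.6} coincide with the upper-half-plane ones, namely $\G_{0b}v_0(\l)=0$ and $\G_{0b}u_\pm(\l)=I_{\cH_b}$. Hence the two half-plane solutions agree, $u_+=u_-=:u$, and the boundary conditions in the statement hold for $v_0(\cd,\l)$ and $u(\cd,\l)$ whenever $\l\in\CR$. To promote this to every $\l\in\rho(A_0)$, I would use that $\rho(A_0)\subset\wh\rho(\Tmi)$, so $\cN_\l$ is defined there and, by \eqref{4.12} with $\cH_0=\cH_1=\cH$, the isomorphism $Z(\l)=(\pi\up\cN_\l)^{-1}\g(\l)$ from \eqref{4.11}, \eqref{4.11.1} exists and is holomorphic on $\rho(A_0)$; its block decomposition $Z(\l)=(v_0(\l),u(\l))$ yields the required solutions for all $\l\in\rho(A_0)$, with uniqueness inherited from the invertibility of $\G_0'\up\cN_\l$.

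Finally I would read off the Weyl function. Specializing Proposition \ref{pr4.1a} with $\wt\cH_b=\cH_b$, the two block forms \eqref{4.26.3} and \eqref{4.26.4} merge into the single matrix \eqref{4.26.16}, and the extra terms $iP_{\cH_b^\perp}\G_{0b}$ in \eqref{4.26.8a}, \eqref{4.26.8b} vanish. Thus $M_{3-}=M_{3+}=-\G_{1b}v_0(\l)$ and $M_{4-}=M_{4+}=-\G_{1b}u(\l)$, so all four entries are given by \eqref{4.26.17}, \eqref{4.26.18}, first for $\l\in\CR$ and then for all $\l\in\rho(A_0)$ by the holomorphic continuation of $M(\cd)$.

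The only step requiring an argument beyond transcription of the equal-index simplifications is the passage from $\CR$, on which Propositions \ref{pr4.1} and \ref{pr4.1a} are stated, to the full resolvent set $\rho(A_0)$, which may contain real points. I expect this to be the main (and only) obstacle, and to be resolved entirely by the holomorphy of the ordinary $\g$-field and Weyl function on $\rho(A_0)$ (Remark \ref{rem2.10b}) together with the identification of $v_0$ and $u$ as the blocks of $Z(\l)$; everything else is a routine collapse of the $\pm$-versions once $P_{\cH_b^\perp}=0$ is in force.
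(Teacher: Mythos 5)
Your proposal is correct and follows the paper's own route: the paper simply declares the corollary ``obvious'' as the equal-deficiency-index specialization of Propositions \ref{pr4.1} and \ref{pr4.1a}, which is exactly the collapse $\wt\cH_b=\cH_b$, $P_{\cH_b^\perp}=0$, $u_+=u_-=:u$ that you carry out. The one step you elaborate --- passing from $\CR$ to all of $\rho(A_0)$ via the holomorphy of the ordinary $\g$-field and Weyl function (Remark \ref{rem2.10b}) and the block decomposition $Z(\l)=(v_0(\l),u(\l))$, with uniqueness from the invertibility of $\G_0'\up\cN_\l$ --- is precisely the detail the paper leaves implicit, and your handling of it is sound.
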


\begin{theorem}\label{th4.2}
Let the assumptions {\rm (A1)} and {\rm (A2)}  be fulfilled and
let $\pair\in \RP$ be a collection of operator pairs \eqref{2.2}.
 Then for each $\l\in\CR$ there exists a unique operator solution
$v_\tau(\cd,\l)\in\lo{H_0}$ of Eq. \eqref{3.2} satisfying the
boundary conditions
\begin {gather}
\G_{1a}v_\tau(\l)=-P_H, \quad \l\in\CR,\label {4.27}\\
i(\wh\G_a-\wh\G_b)v_\tau(\l)=P_{\wh H},\quad \l\in\CR,\label {4.27a}\\
C_0(\l)\G_{0b}v_\tau(\l)+C_1(\l)\G_{1b}v_\tau(\l)=0,
\;\;\l\in\bC_+,\label {4.28} \\
D_0(\l)\G_{0b}v_\tau(\l)+D_1(\l)\G_{1b}v_\tau(\l)=0,
\;\;\l\in\bC_-\label {4.28a}
\end{gather}
(here $P_H$ and $P_{\wh H}$ are the orthoprojectors  in  $H_0$
onto $H$ and $\wh H$ respectively and $v_\tau(\l)$ is the linear
map from Lemma \ref{lem3.2} corresponding to the solution
$v_\tau(\cd,\l)$).
 Moreover, $v_\tau(\cd,\l)$ is connected with the solutions $v_0(\cd,\l)$ and
 $u_\pm(\cd,\l)$ from Proposition \ref{pr4.1} via the equalities
\begin {gather}
v_\tau(t,\l)=v_0(t,\l)-u_+(t,\l)(\tau_+(\l)+M_{4+}(\l))^{-1}M_{3+}(\l),
\quad
\l\in\bC_+ \label{4.29}\\
v_\tau(t,\l)=v_0(t,\l)-u_-(t,\l)(\tau_+^*(\ov\l)+M_{4-}(\l))^{-1}M_{3-}(\l),
\quad \l\in\bC_- \label{4.30},
\end{gather}
in which $M_{3\pm}(\cd)$ and $M_{4\pm}(\cd)$ are the operator
functions given by \eqref{4.26.7}--\eqref{4.26.8b}.

If in addition $n_+(\Tmi)=n_-(\Tmi)$,  then $\tau\in \wt R(\cH_b)$
is given by \eqref{2.19} and the boundary conditions \eqref{4.28}
and \eqref{4.28a} take the form
\begin {equation*}
C_0(\l)\G_{0b}v_\tau(\l)+C_1(\l)\G_{1b}v_\tau(\l)=0, \;\;\l\in\CR.
\end{equation*}
\end{theorem}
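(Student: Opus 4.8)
The plan is to seek the operator solution $v_\tau(\cd,\l)$ in the form
\begin{gather*}
v_\tau(t,\l)=v_0(t,\l)-u_+(t,\l)X_+(\l),\quad\l\in\bC_+,\\
v_\tau(t,\l)=v_0(t,\l)-u_-(t,\l)X_-(\l),\quad\l\in\bC_-,
\end{gather*}
with operators $X_+(\l)\in[H_0,\wt\cH_b]$ (for $\l\in\bC_+$) and $X_-(\l)\in[H_0,\cH_b]$ (for $\l\in\bC_-$) to be determined. Since $v_0(\cd,\l)$ and $u_\pm(\cd,\l)$ are $\cL_\D^2$-solutions of Eq.~\eqref{3.2}, every such $v_\tau(\cd,\l)$ again belongs to $\lo{H_0}$ and solves \eqref{3.2}; thus only the boundary conditions \eqref{4.27}--\eqref{4.28a} have to be arranged by a suitable choice of $X_\pm(\l)$.

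First I would treat the case $\l\in\bC_+$. By Proposition \ref{pr4.1} one has $\G_{1a}u_+(\l)=0$ and $i(\wh\G_a-\wh\G_b)u_+(\l)=0$, so \eqref{4.27} and \eqref{4.27a} hold for any $X_+(\l)$, since $\G_{1a}v_\tau(\l)=\G_{1a}v_0(\l)=-P_H$ and $i(\wh\G_a-\wh\G_b)v_\tau(\l)=P_{\wh H}$. Using $\G_{0b}v_0(\l)=0$, $\G_{0b}u_+(\l)=I_{\wt\cH_b}$ and the definitions \eqref{4.26.7} of $M_{3+}(\l),M_{4+}(\l)$, I find
\begin{equation*}
\G_{0b}v_\tau(\l)=-X_+(\l),\qquad \G_{1b}v_\tau(\l)=-M_{3+}(\l)+M_{4+}(\l)X_+(\l).
\end{equation*}
Substituting these into \eqref{4.28} turns the Nevanlinna condition into the algebraic equation $(C_0(\l)-C_1(\l)M_{4+}(\l))X_+(\l)=-C_1(\l)M_{3+}(\l)$. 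Interpreting its solution as a composition of linear relations, $X_+(\l)h_0$ is the unique $\xi\in\wt\cH_b$ with $\{\xi,\,M_{3+}(\l)h_0-M_{4+}(\l)\xi\}\in\tau_+(\l)$, which is exactly $X_+(\l)=(\tau_+(\l)+M_{4+}(\l))^{-1}M_{3+}(\l)$ and yields \eqref{4.29}.

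The step I expect to be the main obstacle is the well-definedness of $X_+(\l)$, i.e.\ the bounded invertibility of $C_0(\l)-C_1(\l)M_{4+}(\l)$ (equivalently, that $\tau_+(\l)+M_{4+}(\l)$ is invertible and the above composition is single valued). I would obtain it from $\tau\in\RP$ together with Proposition \ref{pr4.1a}, by which $M_{4+}(\cd)$ is the Weyl function of the boundary triplet $\dot\Pi$ for $T^*$ from Proposition \ref{pr3.6}; the sign-definiteness of its imaginary part, combined with the admissibility relations \eqref{2.6}--\eqref{2.8}, forces $0\in\rho(C_0(\l)-C_1(\l)M_{4+}(\l))$. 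In fact this is precisely the invertibility underlying the unique solvability asserted in Theorem \ref{th4.0.1} (obtained through $\dot\Pi$ and Theorem \ref{th2.12}), so I would simply quote it. Uniqueness of $v_\tau(\cd,\l)$ then follows as in the last paragraph of the proof of Proposition \ref{pr4.1}: the difference of two solutions yields, for each $h_0\in H_0$, a solution of the homogeneous problem \eqref{4.0.1}--\eqref{4.0.3.2} with $f=0$, which by Theorem \ref{th4.0.1} vanishes.

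For $\l\in\bC_-$ the argument is dual, now based on $u_-(\cd,\l)$, $M_{3-}(\l)$, $M_{4-}(\l)$ and the condition \eqref{4.28a}; here one must also carry the extra term $iP_{\cH_b^\perp}\G_{0b}$ present in \eqref{4.26.8a}, \eqref{4.26.8b}. Solving the resulting equation for $X_-(\l)$ and rewriting the lower parameter by means of Proposition \ref{pr2.2}(1)---the identity $(-\tau_+(\ov\l))^\tm=-\tau_-(\l)$ together with \eqref{2.8b}, which also accounts for the $P_{\cH_b^\perp}\G_{0b}$ twist---replaces $\tau_-(\l)$ by $\tau_+^*(\ov\l)$ and produces \eqref{4.30}. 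Finally, in the case $n_+(\Tmi)=n_-(\Tmi)$ one has $\wt\cH_b=\cH_b$ by \eqref{3.32.7}, the pair $\tau$ reduces to the single Nevanlinna pair \eqref{2.19}, and \eqref{4.28}--\eqref{4.28a} merge into the one condition displayed at the end of the statement; no new argument is required.
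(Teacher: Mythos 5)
Your proposal is correct and follows essentially the same route as the paper: the paper likewise reduces everything to the boundary triplet $\dot\Pi$ of Proposition \ref{pr3.6}, identifies $M_{4+}(\cd)$ as its Weyl function to get $0\in\rho(\tau_+(\l)+M_{4+}(\l))$ (resp. $0\in\rho(\tau_+^*(\ov\l)+M_{4-}(\l))$), computes $\G_{0b}v_\tau(\l)$, $\G_{1b}v_\tau(\l)$ from \eqref{4.4}, \eqref{4.6}, \eqref{4.26.7}--\eqref{4.26.8b}, handles $\bC_-$ via \eqref{2.8b}, and proves uniqueness exactly as you do, through the homogeneous problem and Theorem \ref{th4.0.1}. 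The only difference is presentational: you derive \eqref{4.29}--\eqref{4.30} by solving for the unknown $X_\pm(\l)$, whereas the paper posits these formulas and verifies the boundary conditions.
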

\begin{proof}
To prove the theorem it is sufficient to show that the equalities
\eqref{4.29} and  \eqref{4.30} define a unique solution
$v_\tau(\cd,\l)\in\lo{H_0}$ of Eq. \eqref{3.2} satisfying
\eqref{4.27}--\eqref{4.28a}.

Let $\dot\Pi=\{\wt\cH_b\oplus\cH_b,\dot\G_0,\dot\G_1\}$ be a
boundary triplet \eqref{3.46} for $T^*$. Then by Proposition
\ref{pr2.10a}, (3) the corresponding Weyl function is $\dot
M_+(\l)=M_{4+}(\l)$ and according to \cite{Mog06.2} one has
$0\in\rho (\tau_+(\l)+M_{4+}(\l)), \;\l\in\bC_+,$ and $0\in\rho
(\tau_+^*(\ov\l)+M_{4-}(\l)), \;\l\in\bC_-$. Therefore for each
$\l\in\CR$ the equalities \eqref{4.29} and \eqref{4.30} correctly
define the solution $v_\tau(\cd,\l)\in\lo{H_0}$ of Eq.
\eqref{3.2}.

 Combining \eqref{4.29} and \eqref{4.30}
with  \eqref{4.3} and \eqref{4.5} one gets the equalities
\eqref{4.27} and \eqref{4.27a}. To prove \eqref{4.28} and
\eqref{4.28a} we let
$T_+(\l)=(\tau_+(\l)+M_{4+}(\l))^{-1},\;\l\in\bC_+,$ and
$T_-(\l)=(\tau_+^*(\ov\l)+M_{4-}(\l))^{-1},\;\;\l\in\bC_-$. Then
\begin {equation}\label{4.31.1}
\tau_+(\l)=\{\{T_+(\l)h, (I-M_{4+}(\l)T_+(\l))h\}:h\in\cH_b\}
\end{equation}
and $\tau_+^*(\ov\l)=\{\{T_-(\l)h,
h-M_{4-}(\l)T_-(\l))h\}:h\in\wt\cH_b\}$, which in view of
\eqref{2.8b} yields
\begin {gather}
\tau_-(\l)=\{\{(-T_-(\l)-iP_{\cH_b^\perp}+iP_{\cH_b^\perp}M_{4-}(\l)T_-(\l))h
,\qquad \qquad\qquad\label{4.31.2}\\
\qquad\qquad \qquad\qquad\qquad(-P_{\cH_b} +
P_{\cH_b}M_{4-}(\l)T_-(\l))h\}:h\in\wt\cH_b\}.\nonumber
\end{gather}

It follows from \eqref{4.4}, \eqref{4.6} and \eqref{4.26.8a},
\eqref{4.26.8b} that
\begin {gather}
\G_{0b}v_0(\l)=-i P_{\cH_b^\perp}M_{3-}(\l), \qquad
\G_{1b}v_0(\l)=
-P_{\cH_b}M_{3-}(\l), \;\;\l\in\bC_-\label{4.31.3}\\
\G_{0b}u_-(\l)=I_{\cH_b}-i P_{\cH_b^\perp}M_{4-}(\l), \qquad
\G_{1b}u_-(\l)= -P_{\cH_b}M_{4-}(\l), \;\;\l\in\bC_-\label{4.31.4}
\end{gather}
and  the relations \eqref{4.29} and \eqref{4.30} with taking
\eqref{4.4},\eqref{4.6}, \eqref{4.26.7} and \eqref{4.31.3},
\eqref{4.31.4} into account give
\begin {gather*}
\G_{0b}v_\tau(\l)=-T_+(\l)M_{3+}(\l), \quad \l\in\bC_+,\\
\G_{1b}v_\tau(\l)=-(I-M_{4+}(\l)T_+(\l))M_{3+}(\l), \quad \l\in\bC_+,\\
\G_{0b}v_\tau(\l)=(-iP_{\cH_b^\perp}-T_-(\l)+iP_{\cH_b^\perp}M_{4-}(\l)T_-(\l))
M_{3-}(\l),\;\;\;\l\in\bC_-,\\
\G_{1b}v_\tau(\l)=(-P_{\cH_b} +
P_{\cH_b}M_{4-}(\l)T_-(\l))M_{3-}(\l), \;\;\;\l\in\bC_-
\end{gather*}
Hence by \eqref{4.31.1} and \eqref{4.31.2} one has
\begin {equation}\label{4.31.5}
\{\G_{0b} v_\tau(\l)h_0, \G_{1b}v_\tau(\l)h_0\}\in\tau_\pm(\l),
\;\;\; h_0\in H_0, \;\;\l\in\bC_\pm,
\end{equation}
 which in view of the equalities \eqref{2.2} yields \eqref{4.28} and
\eqref{4.28a}.

Finally, one proves uniqueness  of $v_\tau(\cd,\l)$ in the same
way as uniqueness of $v_0(\cd,\l)$ in Proposition \ref{pr4.1}.
\end{proof}
\section{$m$-functions}\label{sect5}
Let the assumptions (A1) and (A2) at the beginning of Section
\ref{sect4} be fulfilled.
\begin{definition}\label{def5.1}
A boundary parameter $\tau $ (at the endpoint $b$) is a collection
$\pair$  of holomorphic  operator pairs \eqref{2.2} belonging to
the class $\wt R(\wt\cH_b,\cH_b)$.

In  the case of equal deficiency indices $n_+(\Tmi)=n_-(\Tmi)$ one
has $\wt\cH_b=\cH_b$ and, therefore, a boundary parameter is an
operator pair $\tau\in\wt R(\cH_b)$ of the form \eqref{2.19}.
\end{definition}
Let in addition to (A1) and (A2) the  assumption (A3) be
satisfied, let $\tau$ be a boundary parameter and let $v_\tau
(\cd,\l)\in\lo{H_0}$ be the corresponding operator solution of Eq.
\eqref{3.2} defined in Theorem \ref{th4.2}.
\begin{definition}\label{def5.2}
The operator function $m_\tau(\cd):\CR\to [H_0]$ defined by
\begin {equation}\label{5.0}
m_\tau(\l)=(\G_{0a}+\wh \G_a)v_\tau (\l)+\tfrac i 2 P_{\wh H},
\quad\l\in\CR,
\end{equation}
will be called the $m$-function corresponding to the boundary
parameter $\tau$ or, equivalently, to the boundary value problem
\eqref{4.0.1}--\eqref{4.0.3.2}.

If $n_+(\Tmi)=n_-(\Tmi)$, then  $m_\tau(\cd)$ corresponds to the
boundary value problem \eqref{4.0.1}, \eqref{4.0.2} and
\eqref{4.0.3a}. In this case the $m$-function $m_\tau(\cd)$ will
be called canonical if $\tau\in \wt R^0(\cH_b)$.
\end{definition}
It follows from  \eqref{4.27} that $m_\tau(\cd)$ satisfies the
equality
\begin {equation}\label{5.1}
v_{\tau,a}(\l)\left(=\begin{pmatrix} \G_{0a}+\wh \G_a\cr \G_{1a}
\end{pmatrix} v_\tau(\l)\right )=\begin{pmatrix} m_\tau(\l)-\tfrac
i 2 P_{\wh H}\cr -P_H
\end{pmatrix}:H_0\to H_0\oplus H.
\end{equation}

It turns out that for a given operator $U$ and a boundary
parameter $\tau$ the $m$-function $m_\tau(\cd)$ is defined up to a
self-adjoint constant. More precisely, the following proposition
holds.
\begin{proposition}\label{pr5.2a}
Suppose  that under the assumptions {\rm (A1)} and {\rm (A2)}
\begin {equation*}
\wt U_j=\begin{pmatrix} u_7^{(j)} & u_8^{(j)}  & u_9^{(j)} \cr
\hline  u_1 & u_2 & u_3 \cr u_4 & u_5 & u_6 \end{pmatrix}:
H\oplus\wh H\oplus H\to H\oplus \wh H\oplus H, \quad j\in\{1,2\}
\end{equation*}
are two $J$-unitary extensions of $U$ and $\G_{0a}^{(j)}:\AC\to H,
\; j\in\{1,2\},$ are the mappings \eqref{3.23}. Moreover, let
$\tau$ be a boundary parameter and let
\begin {equation*}
m_\tau^{(j)}(\l)=(\G_{0a}^{(j)}+\wh \G_a)v_\tau (\l)+\tfrac i 2
P_{\wh H},\quad \l\in\CR,\quad j\in\{1,2\}
\end{equation*}
be the corresponding $m$-functions. Then there exists an operator
$B=B^*\in [H]$ such that the equality
\begin {equation}\label{5.1a}
m_\tau^{(2)}(\l)=m_\tau^{(1)}(\l)+\wt B, \quad \l\in\CR,
\end{equation}
holds with the operator $\wt B=\wt B^*\in [H_0]$ given by $\wt B=B
P_H$.
 \end{proposition}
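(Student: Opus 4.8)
The key observation is that the solution $v_\tau(\cd,\l)$ is the \emph{same} for both $m$-functions: by Theorem \ref{th4.2} it is determined by the boundary conditions \eqref{4.27}--\eqref{4.28a}, which involve only the data $U$, $\G_b$ and $\tau$ fixed in {\rm (A1)}, {\rm (A2)}, and not the choice of $J$-unitary extension. Consequently the two $m$-functions differ only through $\G_{0a}$, and the plan is to compute $\G_{0a}^{(2)}-\G_{0a}^{(1)}$ by comparing the two extensions through the single operator $V:=\wt U_2\wt U_1^{-1}\in[\bH]$. Since the $J$-unitary operators form a group, $V$ is again $J$-unitary, i.e. $V^*JV=J$.

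Writing $\wt U_j=(R^{(j)},U)^\top$ with $R^{(j)}=(u_7^{(j)},u_8^{(j)},u_9^{(j)}):\bH\to H$ the (differing) top block row and $U$ the common lower block rows, the relation $\wt U_2=V\wt U_1$ together with surjectivity of $\wt U_1$ shows that $V$ fixes the $\wh H$- and the lower $H$-component of every vector. With respect to $\bH=H\oplus\wh H\oplus H$ this forces the block-triangular shape
\begin{equation*}
V=\begin{pmatrix} V_{11}&V_{12}&V_{13}\cr 0&I_{\wh H}&0\cr 0&0&I_H\end{pmatrix}.
\end{equation*}
The main computation is to impose $V^*JV=J$ with $J$ as in \eqref{1.3}: forming $JV$ and then $V^*JV$ blockwise and matching against $J$ yields $V_{11}=I_H$, $V_{12}=0$ and $V_{13}=V_{13}^*$. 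Setting $B:=V_{13}=B^*\in[H]$, the top block row of $V$ reads $(V\eta)_0=\eta_0+B\eta_2$ for $\eta=(\eta_0,\wh\eta,\eta_2)\in\bH$.

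Taking $\eta=\wt U_1 y(a)$, so that $\eta_0=\G_{0a}^{(1)}y$, $\eta_2=\G_{1a}y$ and $(V\eta)_0=\G_{0a}^{(2)}y$ (recall from \eqref{3.24} that $\wh\G_a$ and $\G_{1a}$ depend on $U$ alone), I obtain the operator identity $\G_{0a}^{(2)}=\G_{0a}^{(1)}+B\,\G_{1a}$ on $\dom\tma$. Substituting into \eqref{5.0} and using that $\wh\G_a$ is common to both $m$-functions gives
\begin{equation*}
m_\tau^{(2)}(\l)-m_\tau^{(1)}(\l)=(\G_{0a}^{(2)}-\G_{0a}^{(1)})v_\tau(\l)=B\,\G_{1a}v_\tau(\l),\quad\l\in\CR.
\end{equation*}
By the boundary condition \eqref{4.27} one has $\G_{1a}v_\tau(\l)=-P_H$, whence the difference equals $-BP_H$, independent of $\l$. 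Replacing $B$ by $-B$ (still self-adjoint) yields \eqref{5.1a} with $\wt B=BP_H$; that $\wt B=\wt B^*\in[H_0]$ follows from $(BP_Hh_0,h_0')=(P_Hh_0,BP_Hh_0')=(h_0,BP_Hh_0')$ for $h_0,h_0'\in H_0$, using $B=B^*$.

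I expect the $J$-unitary block identification to be the only genuine obstacle; everything else is substitution. The one point requiring care is to establish the block-triangular form of $V$ from the shared lower rows \emph{before} invoking $J$-unitarity, so that precisely the $(1,3)$-entry survives and is automatically self-adjoint.
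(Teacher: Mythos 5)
Your proposal is correct and follows essentially the same route as the paper: factor $\wt U_2=X\wt U_1$ with a $J$-unitary block-triangular $X$ whose only nontrivial entry is a self-adjoint corner block $B$, deduce $\G_{0a}^{(2)}=\G_{0a}^{(1)}\pm B\,\G_{1a}$, and conclude via $\G_{1a}v_\tau(\l)=-P_H$ from \eqref{4.27}. The paper merely asserts the factorization "can easily be proved" from $\wt U_j^*J\wt U_j=J$, whereas you carry out that verification explicitly (shared lower rows force the triangular form, then $J$-unitarity forces $V_{11}=I$, $V_{12}=0$, $V_{13}=V_{13}^*$); up to a harmless sign convention in $B$, the arguments coincide.
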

\begin{proof}
By using the equality $\wt U_j^*J\wt U_j=J, \;j\in\{1,2\},  $ one
can easily prove that there exists $B=B^*\in [H]$ such that $\wt
U_2=X \wt U_1$ with
\begin {equation*}
X=\begin{pmatrix} I & 0 & -B \cr 0 & I & 0\cr 0 &  0 &
I\end{pmatrix}: H\oplus\wh H\oplus H\to H\oplus \wh H\oplus H.
\end{equation*}
Therefore the mappings $\G_a^{(j)}y=\wt U_j y(a), y\in \AC, \;
j\in\{1,2\},$ are connected by $\G_a^{(2)}=X \G_a^{(1)}$, which in
view of \eqref{3.22} gives
\begin {equation*}
\G_{0a}^{(2)}=\G_{0a}^{(1)}-B \G_{1a}.
\end{equation*}
Now by using \eqref{4.27} one obtains
\begin {gather*}
m_\tau^{(2)}(\l)= (\G_{0a}^{(2)}+\wh\G_a)v_\tau(\l)+ \tfrac i 2
P_{\wh
H}=\\
(\G_{0a}^{(1)}+\wh\G_a )v_\tau(\l)+\tfrac i 2 P_{\wh H}-B
\G_{1a}v_\tau(\l)=m_\tau^{(1)}(\l)+ BP_H,
\end{gather*}
which proves \eqref{5.1a}.
\end{proof}
In the following proposition we show that the $m$-function
$m_\tau(\cd)$ can be defined in a somewhat different way.
\begin{proposition}\label{pr5.3}
Let under the assumptions {\rm (A1)--(A3)} $\tau$ be a boundary
parameter at the endpoint $b$,  let $\f_U(\cd,\l)(\in [H_0,\bH])$
be the operator solution defined by \eqref{3.32.1} and let
$\psi(\cd,\l)(\in [H_0,\bH]), \;\l\in\bC,$ be the operator
solutions of Eq. \eqref{3.2} with the initial data
\begin {equation}\label{5.2}
\psi_a(\l)(=\wt U \psi (a,\l))=\begin{pmatrix} -\tfrac i 2 P_{\wh
H}\cr -P_H\end{pmatrix}:H_0\to H_0\oplus H.
\end{equation}
Then there exists a unique operator function $m(\cd):\CR\to [H_0]$
such that for every $\l\in\CR$ the operator solution $v(\cd,\l)$
of Eq. \eqref{3.2} given by
\begin {equation}\label{5.3}
v(t,\l):=\f_U (t,\l)m(\l)+\psi (t,\l)
\end{equation}
belongs to $\lo{H_0}$ and satisfies the  boundary conditions
\eqref{4.27a}--\eqref{4.28a}.  Moreover, the equalities $v(t,\l)=
v_\tau(t,\l)$ and $m(\l)=m_\tau(\l)$ are valid.
\end{proposition}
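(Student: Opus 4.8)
The plan is to read the proposition off Theorem \ref{th4.2}, which already provides a unique $v_\tau(\cd,\l)\in\lo{H_0}$ solving \eqref{4.27}--\eqref{4.28a}. First I would record the initial data of the two reference solutions. Since $\G_a=(\G_{0a},\wh\G_a,\G_{1a})^\top$ acts by $\G_a Y(\l)=\wt U Y(a,\l)$ (see \eqref{3.21}--\eqref{3.22}), formulas \eqref{3.32.2} and \eqref{5.2} give, in the block decomposition $\bH=H_0\oplus H$,
\begin{gather*}
(\G_{0a}+\wh\G_a)\f_U(\l)=I_{H_0},\qquad \G_{1a}\f_U(\l)=0,\\
(\G_{0a}+\wh\G_a)\psi(\l)=-\tfrac i2 P_{\wh H},\qquad \G_{1a}\psi(\l)=-P_H .
\end{gather*}
In particular, for \emph{every} $m(\l)\in[H_0]$ the solution $v=\f_U m+\psi$ satisfies $\G_{1a}v(\l)=-P_H$, i.e.\ \eqref{4.27} holds automatically; hence requiring $v$ to satisfy \eqref{4.27a}--\eqref{4.28a} is the same as requiring it to satisfy the whole system \eqref{4.27}--\eqref{4.28a}.

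For existence I would show that $v_\tau$ itself has the form \eqref{5.3}. Set $w(\cd,\l):=v_\tau(\cd,\l)-\psi(\cd,\l)$, an operator solution of \eqref{3.2}. Subtracting $\G_{1a}\psi(\l)=-P_H$ from \eqref{4.27} gives $\G_{1a}w(\l)=0$, so $\wt U w(a,\l)=\bigl((\G_{0a}+\wh\G_a)w(\l),\,0\bigr)^\top$. Writing $m_\tau(\l):=(\G_{0a}+\wh\G_a)w(\l)\in[H_0]$, this equals $\bigl(I_{H_0},0\bigr)^\top m_\tau(\l)=\wt U\f_U(a,\l)m_\tau(\l)$ by \eqref{3.32.2}; since $\wt U$ is invertible ($J$-unitary), $w(a,\l)=\f_U(a,\l)m_\tau(\l)$, and uniqueness for the Cauchy problem of \eqref{3.2} forces $w=\f_U m_\tau$ on all of $\cI$. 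Thus $v_\tau=\f_U m_\tau+\psi$. Moreover $m_\tau(\l)=(\G_{0a}+\wh\G_a)v_\tau(\l)-(\G_{0a}+\wh\G_a)\psi(\l)=(\G_{0a}+\wh\G_a)v_\tau(\l)+\tfrac i2 P_{\wh H}$, which is exactly \eqref{5.0}; so the coefficient produced here is the $m$-function, and $v=v_\tau$ has all the asserted properties by Theorem \ref{th4.2}.

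For uniqueness, suppose $m(\l)\in[H_0]$ is such that $v=\f_U m+\psi\in\lo{H_0}$ satisfies \eqref{4.27a}--\eqref{4.28a}. By the remark above $v$ then satisfies the full system \eqref{4.27}--\eqref{4.28a}, so the uniqueness part of Theorem \ref{th4.2} gives $v=v_\tau$, i.e.\ $\f_U(m-m_\tau)\equiv 0$ on $\cI$. Evaluating at $t=a$ and using that $\f_U(a,\l)$ is injective (because $\wt U\f_U(a,\l)=(I_{H_0},0)^\top$ by \eqref{3.32.2} and $\wt U$ is invertible) yields $m=m_\tau$, and simultaneously $v=v_\tau$.

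Once Theorem \ref{th4.2} is available the argument is largely bookkeeping; the only points needing care are the correct reading of the block decompositions $\bH=H_0\oplus H$ and $H_0=H\oplus\wh H$ in the initial-value computations, and the injectivity of $\f_U(a,\l)$, which is what lets one descend from equality of solutions to equality of the coefficients $m(\l)$. I do not expect a substantive obstacle beyond this.
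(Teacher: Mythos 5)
Your proposal is correct and takes essentially the same approach as the paper: both proofs reduce everything to Theorem \ref{th4.2} combined with the initial-data identities \eqref{3.32.2}, \eqref{5.2}, \eqref{5.1} and uniqueness for the Cauchy problem of Eq.~\eqref{3.2}, with the uniqueness half (automatic validity of \eqref{4.27}, hence $v=v_\tau$ by Theorem \ref{th4.2}) being identical. The only cosmetic difference is in the existence half: the paper starts from $m_\tau(\cd)$ as given by Definition \ref{def5.2} and verifies that $v:=\f_U m_\tau+\psi$ has the same initial value as $v_\tau$, whereas you run the same computation backwards, extracting $m_\tau(\l)=(\G_{0a}+\wh\G_a)(v_\tau-\psi)(\l)$ from $v_\tau$ itself.
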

\begin{proof}
Let $m_\tau(\cd)$ be the  $m$-function in the sense of Definition
\ref{def5.2} and let $v(\cd,\l), \;\l\in\CR,$ be the solution of
Eq. \eqref{3.2} given by \eqref{5.3} with $m(\l)=m_\tau(\l)$. Then
in view of \eqref{3.32.2},\eqref{5.2} and \eqref{5.1} one has
$v_a(\l)=v_{\tau,a}(\l)$ and, consequently, $v(t,\l)=
v_\tau(t,\l)$. Therefore by Theorem \ref{th4.2} $v(\cd,\l)$
belongs to $\lo{H_0}$ and satisfies the   boundary conditions
\eqref{4.27a}--\eqref{4.28a} . Hence there exists an operator
function $m(\l)(=m_\tau(\l))$ with the desired properties.

Assume now that the solution $v(\cd,\l)$ of Eq. \eqref{3.2} given
by \eqref{5.3} with some $m(\l)$ belongs to $\lo{H_0}$ and
satisfies the boundary conditions \eqref{4.27a}--\eqref{4.28a}.
Then in view of \eqref{3.32.2}  and \eqref{5.2}
$\G_{1a}v(\l)=-P_H$ and according to Theorem \ref{th4.2} $v(t,\l)=
v_\tau(t,\l)$. Therefore $m(\l)=m_\tau(\l)$, which proves
uniqueness of $m(\l)$.
\end{proof}
Description of all $m$-functions immediately in terms of the
boundary parameter $\tau$ is contained in the following theorem.
\begin{theorem}\label{th5.4}
Let the assumptions  {\rm (A1)--(A3)} be satisfied and let
$M_\pm(\cd)$ be the operator functions given by \eqref{4.26.3} -
\eqref{4.26.8b} (that is, $M_\pm(\cd)$ are the Weyl functions of
the decomposing boundary triplet $\Pi=\bta$). Moreover, let
$\tau_0=\{\tau_+,\tau_- \}$ be a boundary parameter defined by
\eqref{2.2} and the equality \eqref{4.0.6}. Then
$m_0(\l)=m_{\tau_0}(\l)$ and for every boundary parameter $\pair$
defined by \eqref{2.2} the corresponding $m$-function
$m_\tau(\cd)$ is of the form
\begin {equation}\label{5.5}
m_\tau(\l)=m_0(\l)+M_{2+}(\l)(C_0(\l)-C_1(\l)M_{4+}(\l))^{-1}C_1(\l)M_{3+}(\l),
\quad\l\in\bC_+
\end{equation}
\end{theorem}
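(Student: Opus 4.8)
The plan is to reduce the statement to the representation \eqref{4.29} of the solution $v_\tau(\cd,\l)$ established in Theorem \ref{th4.2} and to the defining formula \eqref{5.0} for the $m$-function, and then to rewrite the relation-theoretic inverse $(\tau_+(\l)+M_{4+}(\l))^{-1}$ as a genuine operator expression in $C_0(\l),C_1(\l)$.

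First I would dispose of the equality $m_0(\l)=m_{\tau_0}(\l)$. For the boundary parameter $\tau_0$ given by \eqref{4.0.6} the condition \eqref{4.28} reads $\G_{0b}v_{\tau_0}(\l)=0$ for $\l\in\bC_+$, which is exactly the normalization \eqref{4.4} singling out $v_0(\cd,\l)$; hence by the uniqueness part of Theorem \ref{th4.2} one has $v_{\tau_0}(\cd,\l)=v_0(\cd,\l)$, and then \eqref{5.0} together with \eqref{4.26.5} gives $m_{\tau_0}(\l)=(\G_{0a}+\wh\G_a)v_0(\l)+\tfrac i2 P_{\wh H}=m_0(\l)$.

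For the main identity I would read \eqref{4.29} as an equality of the linear maps $v_\tau(\l),v_0(\l),u_+(\l):H_0\to\cN_\l$ from Lemma \ref{lem3.2} and apply the operator $\G_{0a}+\wh\G_a$ to both sides. Using \eqref{4.26.5}, \eqref{4.26.6} and adding $\tfrac i2 P_{\wh H}$ as prescribed by \eqref{5.0}, this yields at once
\[
m_\tau(\l)=m_0(\l)-M_{2+}(\l)(\tau_+(\l)+M_{4+}(\l))^{-1}M_{3+}(\l),\quad\l\in\bC_+.
\]
It then remains to identify the middle factor. Writing $T_+(\l)=(\tau_+(\l)+M_{4+}(\l))^{-1}$ and inserting the parametrization \eqref{4.31.1} of $\tau_+(\l)$ into the defining relation $C_0(\l)h_0+C_1(\l)h_1=0$ (with $h_0=T_+(\l)h$, $h_1=(I-M_{4+}(\l)T_+(\l))h$) and collecting the coefficient of $h$, one obtains $(C_0(\l)-C_1(\l)M_{4+}(\l))T_+(\l)=-C_1(\l)$. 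Substituting $T_+(\l)=-(C_0(\l)-C_1(\l)M_{4+}(\l))^{-1}C_1(\l)$ into the display above produces \eqref{5.5}.

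The main obstacle, and the only non-routine point, is this last identification: it requires fixing the convention under which the sum of the relation $\tau_+(\l)$ and the operator $M_{4+}(\l)$, and its inverse, are formed, and—crucially—verifying that $C_0(\l)-C_1(\l)M_{4+}(\l)\in[\wt\cH_b]$ is actually invertible rather than merely establishing the one-sided identity $(C_0-C_1 M_{4+})T_+=-C_1$. I would argue invertibility directly: any $h_0$ in $\ker(C_0(\l)-C_1(\l)M_{4+}(\l))$ yields, via $h_1=-M_{4+}(\l)h_0$, an element $\{h_0,0\}\in\tau_+(\l)+M_{4+}(\l)$, i.e. $\{0,h_0\}\in T_+(\l)$, so $h_0=0$ by single-valuedness of the everywhere-defined bounded operator $T_+(\l)$; since $\dim\wt\cH_b<\infty$, injectivity forces invertibility. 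The required single-valuedness of $T_+(\l)$ is guaranteed by $0\in\rho(\tau_+(\l)+M_{4+}(\l))$ from \cite{Mog06.2}, already invoked in the proof of Theorem \ref{th4.2}.
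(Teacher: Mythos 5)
Your proof is correct and follows the same overall route as the paper's: both parts begin by identifying $v_{\tau_0}(\cd,\l)$ with $v_0(\cd,\l)$ to get $m_0(\l)=m_{\tau_0}(\l)$, and both obtain the intermediate formula $m_\tau(\l)=m_0(\l)-M_{2+}(\l)(\tau_+(\l)+M_{4+}(\l))^{-1}M_{3+}(\l)$ by applying $\G_{0a}+\wh\G_a$ to \eqref{4.29} and invoking \eqref{4.26.5}, \eqref{4.26.6}. The only divergence is the last step: the paper simply cites \cite[Lemma 2.1]{MalMog02} for the invertibility of $C_0(\l)-C_1(\l)M_{4+}(\l)$ and for the identity $-(\tau_+(\l)+M_{4+}(\l))^{-1}=(C_0(\l)-C_1(\l)M_{4+}(\l))^{-1}C_1(\l)$, whereas you derive both directly from the parametrization \eqref{4.31.1} plus an injectivity argument; this argument is sound, since single-valuedness of $(\tau_+(\l)+M_{4+}(\l))^{-1}$ is already secured in the proof of Theorem \ref{th4.2} via $0\in\rho(\tau_+(\l)+M_{4+}(\l))$, and injectivity of the endomorphism $C_0(\l)-C_1(\l)M_{4+}(\l)$ of the finite-dimensional space $\wt\cH_b$ (guaranteed by assumption (A2)) forces invertibility. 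What your variant buys is a self-contained proof of the step the paper outsources to \cite{MalMog02}, at the cost of a slightly longer argument; otherwise the two proofs coincide.
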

\begin{proof}
One can easily verify that  $v_0(t,\l)=v_ {\tau_0}(t,\l)$, where
$v_0(\cd,\l)\in\lo{H_0}$ is the solution of Eq. \eqref{3.2}
defined in Proposition \ref{pr4.1}. This and the equality
\eqref{4.26.5} imply that $m_0(\l)=m_{\tau_0}(\l)$. Next, applying
the operator $\G_{0a}+\wh\G_a$ to the equalities \eqref{4.29} and
\eqref{4.30} with taking \eqref{4.26.5} and \eqref{4.26.6} into
account one obtains
\begin {gather}
m_\tau(\l)=m_0(\l)-M_{2+}(\l)(\tau_+(\l)+M_{4+}(\l))^{-1}M_{3+}(\l),
\quad\l\in\bC_+, \label{5.6}\\
m_\tau(\l)=m_0(\l)-M_{2-}(\l)(\tau_+^*(\ov\l)+M_{4-}(\l))^{-1}M_{3-}(\l),
\quad\l\in\bC_-. \label{5.7}
\end{gather}
Moreover, according to \cite[Lemma 2.1]{MalMog02}
$0\in\rho(C_0(\l)-C_1(\l)M_{4+}(\l)), \; \l\in\bC_+,$ and
\begin {gather*}
-(\tau_+(\l)+M_{4+}(\l))^{-1}=(C_0(\l)-C_1(\l)M_{4+}(\l))^{-1}C_1(\l),
\quad\l\in\bC_+ ,
\end{gather*}
which together with \eqref{5.6} yields \eqref{5.5}.
\end{proof}
The following corollary is immediate from Theorem \ref{th5.4}.
\begin{corollary}\label{cor5.4a}
Let under the assumptions {\rm (A1)--(A3)} $n_+(\Tmi)=n_-(\Tmi)$
and let $M(\cd)$ be the operator function given by
\eqref{4.26.16}--\eqref{4.26.18} (so that $M(\cd)$ is the Weyl
function of the ordinary decomposing boundary triplet $\Pi=\bt$
for $\Tma$). Moreover, let $\tau_0=\{(I_{\cH_b},0);\cH_b\} \in\wt
R^0(\cH_b)$. Then $m_0(\l)=m_{\tau_0}(\l)$ and for every boundary
parameter $\tau$ defined by \eqref{2.19}  the corresponding
$m$-function $m_\tau(\cd)$ is
\begin {equation}\label{5.8a}
m_\tau(\l)=m_0(\l)+M_2(\l)(C_0(\l)-C_1(\l)M_4(\l))^{-1}C_1(\l)M_3(\l),
\quad\l\in\CR.
\end{equation}
\end{corollary}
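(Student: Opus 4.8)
The plan is to obtain this as a direct specialization of Theorem~\ref{th5.4} to the case of equal deficiency indices, where the decomposing boundary triplet collapses to an ordinary one. First I would invoke the equivalence \eqref{3.32.7}: the hypothesis $n_+(\Tmi)=n_-(\Tmi)$ forces $\wt\cH_b=\cH_b$, so by Proposition~\ref{pr3.3} the collection $\Pi=\bt$ with $\cH=H_0\oplus\cH_b$ is an ordinary boundary triplet for $\Tma$. By Remark~\ref{rem2.10b} its single Nevanlinna Weyl function $M(\cdot)$ is related to the half-plane Weyl functions of Proposition~\ref{pr4.1a} through $M(\l)=M_\pm(\l)$, $\l\in\bC_\pm$. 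Comparing the block representations \eqref{4.26.3}, \eqref{4.26.4} with \eqref{4.26.16} then gives, entrywise, $M_{j+}(\l)=M_j(\l)$ for $\l\in\bC_+$ and $M_{j-}(\l)=M_j(\l)$ for $\l\in\bC_-$, $j\in\{2,3,4\}$, while $m_0(\cdot)$ is the common $(1,1)$-entry of all three representations. In particular the parameter $\tau_0$ of \eqref{4.0.6} collapses to the self-adjoint pair $\{(I_{\cH_b},0);\cH_b\}\in\wt R^0(\cH_b)$, so the identity $m_0(\l)=m_{\tau_0}(\l)$ of Theorem~\ref{th5.4} persists verbatim.

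Next I would establish \eqref{5.8a} on each half-plane separately. On $\bC_+$ this is immediate: the boundary parameter \eqref{2.19} has $\tau_+=\tau$ with $C_0(\l),C_1(\l)\in[\cH_b]$, so \eqref{5.5} reads exactly as \eqref{5.8a} once $M_{2+},M_{3+},M_{4+}$ are replaced by $M_2,M_3,M_4$. For $\l\in\bC_-$ I would return to formula \eqref{5.7} from the proof of Theorem~\ref{th5.4}. The only term needing attention is $\tau_+^*(\ov\l)$: since $\cH_0=\cH_1=\cH_b$, the $\tm$-adjoint reduces to the ordinary adjoint, and Proposition~\ref{pr2.2}(1) yields $\tau_+^*(\ov\l)=\tau_-(\l)=\tau(\l)$, i.e.\ the pair $\{(C_0(\l),C_1(\l));\cH_b\}$ (this also matches the Nevanlinna symmetry $(\tau(\ov\l))^*=\tau(\l)$ recorded in Remark~\ref{rem2.4}). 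Substituting this together with $M_{2-}=M_2$, $M_{3-}=M_3$, $M_{4-}=M_4$ into \eqref{5.7} gives $m_\tau(\l)=m_0(\l)-M_2(\l)(\tau(\l)+M_4(\l))^{-1}M_3(\l)$ for $\l\in\bC_-$, which has exactly the shape of the $\bC_+$ formula \eqref{5.6}.

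It then remains to convert the relation-theoretic expression $-(\tau(\l)+M_4(\l))^{-1}$ into the operator form of \eqref{5.8a}, for which I would apply \cite[Lemma~2.1]{MalMog02} precisely as in the proof of Theorem~\ref{th5.4}: for the Nevanlinna pair $\tau$ and the Nevanlinna function $M_4(\cdot)$ one obtains $0\in\rho(C_0(\l)-C_1(\l)M_4(\l))$ and $-(\tau(\l)+M_4(\l))^{-1}=(C_0(\l)-C_1(\l)M_4(\l))^{-1}C_1(\l)$, now valid for every $\l\in\CR$ rather than only on $\bC_+$. Inserting this into the two half-plane identities produces \eqref{5.8a} throughout $\CR$. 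The only point requiring any care --- and the one place where equal deficiency indices is genuinely used --- is the passage from two separate half-plane formulas to a single one with the $\pm$-subscripts removed; this rests entirely on the gluing $M(\l)=M_\pm(\l)$ of Remark~\ref{rem2.10b} and on $\tau_+^*(\ov\l)=\tau(\l)$, so no new estimate is needed and the corollary is indeed immediate from Theorem~\ref{th5.4}.
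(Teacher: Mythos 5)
Your proof is correct and follows essentially the same route as the paper, which simply declares the corollary immediate from Theorem~\ref{th5.4}. Your careful treatment of the lower half-plane --- passing through \eqref{5.7}, identifying $\tau_+^*(\ov\l)=\tau_-(\l)=\tau(\l)$ via Proposition~\ref{pr2.2} in the case $\wt\cH_b=\cH_b$, and invoking \cite[Lemma 2.1]{MalMog02} on $\bC_-$ as well as $\bC_+$ --- is exactly the detail the paper's ``immediate'' elides, since \eqref{5.5} is stated only for $\l\in\bC_+$ while \eqref{5.8a} is asserted on all of $\CR$.
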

\begin{proposition}\label{pr5.6}
The $m$-function $m_\tau(\cd)$ is a Nevanlinna operator function
such that the relation
\begin {equation}\label{5.21}
(\im \,\l)^{-1}\cd \im\, m_\tau(\l)\geq \int_\cI
v_\tau^*(t,\l)\D(t) v_\tau(t,\l)\, dt
\end{equation}
holds for all $\l\in\bC_+$. If in addition $n_+(\Tmi)=n_-(\Tmi)$,
then \eqref{5.21} holds for all $\l\in\CR$.
\end{proposition}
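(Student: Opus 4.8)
The plan is to read off both assertions from Green's identity \eqref{2.28} for the decomposing boundary triplet $\Pi=\bta$ of Proposition \ref{pr3.3}, evaluated on the solution $v_\tau(\cd,\l)$.

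First I would prove the inequality \eqref{5.21} on $\bC_+$. Fix $h_0\in H_0$ and $\l\in\bC_+$ and set $\wh f=\{\pi v_\tau(\l)h_0,\,\l\pi v_\tau(\l)h_0\}\in\Tma$; this lies in $\Tma$ since $v_\tau(\cd,\l)$ solves \eqref{3.2}, so $\pi v_\tau(\l)h_0\in\gN_\l(\Tmi)$. Taking $\wh g=\wh f$, the left-hand side of \eqref{2.28} equals $(\l-\ov\l)\|\pi v_\tau(\l)h_0\|^2=2i\,\im\l\,(Q(\l)h_0,h_0)$, where $Q(\l):=\int_\cI v_\tau^*(t,\l)\D(t)v_\tau(t,\l)\,dt\geq0$. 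For the right-hand side I use $\G_j\{\pi y,\l\pi y\}=\G_j'y$ for $y\in\cN_\l$ (with $\G_0',\G_1'$ from \eqref{4.11.1}, cf. \eqref{3.43.1}, \eqref{3.43.2}) together with the boundary conditions \eqref{4.27}, \eqref{4.27a}, which give $-\G_{1a}v_\tau(\l)+i(\wh\G_a-\wh\G_b)v_\tau(\l)=P_H+P_{\wh H}=I_{H_0}$ and $(\G_{0a}+\tfrac12(\wh\G_a+\wh\G_b))v_\tau(\l)=m_\tau(\l)$. Hence
\[
\G_0\wh f=\{h_0,\,\G_{0b}v_\tau(\l)h_0\},\qquad \G_1\wh f=\{m_\tau(\l)h_0,\,-\G_{1b}v_\tau(\l)h_0\},
\]
and substituting these into the right-hand side of \eqref{2.28} (whose $\cH_2$-term is $i\|P_{\cH_b^\perp}\G_{0b}v_\tau(\l)h_0\|^2$) and dividing through by $2i$ and rearranging yields
\begin{multline*}
\im(m_\tau(\l)h_0,h_0)=\im\l\,(Q(\l)h_0,h_0)\\
+\im(\G_{1b}v_\tau(\l)h_0,\G_{0b}v_\tau(\l)h_0)-\tfrac12\|P_{\cH_b^\perp}\G_{0b}v_\tau(\l)h_0\|^2.
\end{multline*}

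The crux is to show that the two boundary terms at $b$ are jointly nonnegative. Writing $k_0=\G_{0b}v_\tau(\l)h_0$ and $k_1=\G_{1b}v_\tau(\l)h_0$, relation \eqref{4.31.5} gives $\{k_0,k_1\}\in\tau_+(\l)$, and the defining condition \eqref{2.6} of the class $\RP$ is equivalent to the dissipativity
\[
2\,\im(k_1,k_0)_{\wt\cH_b}\geq\|P_{\cH_b^\perp}k_0\|^2,\qquad\{k_0,k_1\}\in\tau_+(\l),\ \l\in\bC_+
\]
(this equivalence is the content of \cite{Mog06.1}, cf. Proposition \ref{pr2.2}). Thus $\im(k_1,k_0)-\tfrac12\|P_{\cH_b^\perp}k_0\|^2\geq0$, and dividing the displayed identity by $\im\l>0$ gives $(\im\l)^{-1}\im m_\tau(\l)\geq Q(\l)$, i.e. \eqref{5.21}. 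I expect this dissipativity --- passing from \eqref{2.6} to the form inequality for every element of $\tau_+(\l)$ --- to be the main obstacle, and the step I would justify most carefully.

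For the Nevanlinna property, $m_\tau(\cd)$ is holomorphic on $\bC_\pm$ by \eqref{5.6}, \eqref{5.7} and the invertibility of $\tau_\pm+M_{4\pm}$. The symmetry $m_\tau^*(\ov\l)=m_\tau(\l)$ I would get by taking the adjoint of \eqref{5.6} at $\ov\l\in\bC_+$: the block identities contained in $M_+^*(\ov\l)=M_-(\l)$ (Proposition \ref{pr2.8}) read $m_0^*(\ov\l)=m_0(\l)$, $M_{2+}^*(\ov\l)=M_{3-}(\l)$, $M_{3+}^*(\ov\l)=M_{2-}(\l)$, $M_{4+}^*(\ov\l)=M_{4-}(\l)$, and together with $(\tau_+(\ov\l)+M_{4+}(\ov\l))^*=\tau_+^*(\ov\l)+M_{4-}(\l)$ they transform the adjoint of \eqref{5.6} precisely into \eqref{5.7}. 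Since \eqref{5.21} already gives $\im m_\tau(\l)\geq\im\l\,Q(\l)\geq0$ on $\bC_+$, this symmetry propagates the correct sign to $\bC_-$, so $\im\l\cd\im m_\tau(\l)\geq0$ throughout $\CR$ and $m_\tau(\cd)$ is Nevanlinna. Finally, in the equal-indices case $\wt\cH_b=\cH_b$, so $P_{\cH_b^\perp}=0$ and $\Pi$ is an ordinary triplet; the computation above then holds verbatim for every $\l\in\CR$ (with the Nevanlinna pair \eqref{2.19} and $\{k_0,k_1\}\in\tau(\l)$) and reduces to $\im(m_\tau(\l)h_0,h_0)=\im\l\,(Q(\l)h_0,h_0)+\im(k_1,k_0)$. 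The pair condition \eqref{2.20} gives $\im\l\cd\im(k_1,k_0)\geq0$, hence $(\im\l)^{-1}\im(k_1,k_0)\geq0$ for both signs of $\im\l$, and dividing by $\im\l$ yields \eqref{5.21} on all of $\CR$.
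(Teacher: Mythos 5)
Your proof is correct and takes essentially the same route as the paper: the identity you extract from Green's identity \eqref{2.28} is exactly the paper's \eqref{5.30}, which the paper obtains by unfolding Lagrange's identity \eqref{3.6} together with \eqref{5.21a} and \eqref{3.32.5} (precisely the ingredients from which \eqref{2.28} for the decomposing triplet is built), and both arguments then finish with the dissipativity inequality \eqref{5.31} for elements of $\tau_+(\l)$ taken from \cite{Mog06.1}, plus the same holomorphy-and-symmetry argument ($m_\tau^*(\ov\l)=m_\tau(\l)$ via $M_+^*(\ov\l)=M_-(\l)$) for the Nevanlinna property. The only difference is cosmetic — you work at the abstract boundary-triplet level where the paper computes concretely with $\G_a$, $\wh\G_a$, $\wh\G_b$, $\G_{0b}$, $\G_{1b}$ — and your handling of the equal-indices case via \eqref{2.20} is the natural completion of what the paper leaves implicit.
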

\begin{proof}
It follows from \eqref{5.6} and  \eqref{5.7}  that the operator
function $m_\tau(\cd)$ is holomorphic in $\CR$. Next,   the
equality $M_+^*(\ov\l)=M_-(\l)$ for the Weyl functions
\eqref{4.26.3} and \eqref{4.26.4} implies that
$m_0^*(\ov\l)=m_0(\l), \;M_{2+}^*(\ov\l)=M_{3-}(\l), \;
M_{3+}^*(\ov\l)=M_{2-}(\l) $ and $M_{4+}^*(\ov\l)=M_{4-}(\l)$.
This and \eqref{5.6}, \eqref{5.7} yield the equality
$m_\tau^*(\ov\l)=m_\tau(\l), \; \l\in\CR$.  Now it remains to show
that $m_\tau(\cd)$ satisfies  \eqref{5.21}.

Let $\pair\in\RP$  be a boundary parameter defined by \eqref{2.2}.
Assume that $\l\in\bC_+, \; h_0\in H_0$ and let
$y:=v_\tau(\l)h_0$, so that $y=y(t)=v_\tau(t,\l)h_0,\; t\in\cI$.
It follows from \eqref{3.21} that
\begin {equation}\label{5.21a}
(J y(a),y(a))=(J\G_a y, \G_a y)=-2i \,\im (\G_{1a}y,\G_{0a}y
)+i\,||\wh \G_a y||^2.
\end{equation}
Applying now the Lagrange's identity \eqref{3.6} to $\{y,\l
y\}\in\tma$ and taking the equalities \eqref{5.21a} and
\eqref{3.32.5} into account one obtains
\begin {gather}
\im\, \l \cd (y,y)_\D=\tfrac 1 2 (||\wh \G_b y||^2-||\wh \G_a
y||^2 )+ \im\,
(\G_{1a}y,\G_{0a}y )-\label{5.22}\\
(\im\, (\G_{1b}y,\G_{0b}y )-\tfrac 1 2 ||P_{\cH_b^\perp} \G_{0b}
y||^2).\nonumber
\end{gather}
It follows from \eqref{4.27a} that $\wh \G_b y=\wh\G_a y+i P_{\wh
H} h_0$ and, therefore,
\begin {equation}\label{5.25}
|| \wh \G_b y||^2 -||\wh\G_a y||^2=||P_{\wh H} h_0||^2 + 2\im
(\wh\G_a y,P_{\wh H} h_0 ).
\end{equation}
According to \eqref{5.1}
\begin {gather}
\G_{0a}y=P_H m_\tau(\l)h_0, \qquad \G_{1a}y=-P_H h_0, \label{5.26}\\
\wh\G_a y=P_{\wh H}m_\tau(\l)h_0 - \tfrac i 2 P_{\wh
H}h_0\label{5.27}
\end{gather}
and substitution of \eqref{5.27} to the right hand part of
\eqref{5.25} yields
\begin {equation}\label{5.28}
||\wh \G_b y||^2-||\wh \G_a y||^2 =2\im\, (P_{\wh H}m_\tau(\l)h_0,
P_{\wh H}h_0).
\end{equation}
Moreover, by \eqref{5.26} one has
\begin {equation}\label{5.29}
\im\, (\G_{1a}y,\G_{0a}y )=\im\, (P_H m_\tau(\l)h_0, P_H h_0).
\end{equation}
Substituting now \eqref{5.28} and \eqref{5.29} to \eqref{5.22}  we
obtain
\begin {equation}\label{5.30}
\im\, \l \cd (y,y)_\D=\im\, (m_\tau(\l)h_0, h_0)- (\im\,(\G_{1b}y,
\G_{0b}y)-\tfrac 1 2  ||P_{\cH_b^\perp} \G_{0b}y||^2).
\end{equation}
It follows from  \eqref{4.28} that $\{\G_{0b}y,
\G_{1b}y\}\in\tau_+(\l)$. Therefore according to \cite[Proposition
4.3]{Mog06.1}
\begin {equation}\label{5.31}
\im\,(\G_{1b}y, \G_{0b}y)-\tfrac 1 2  ||P_{\cH_b^\perp}
\G_{0b}y||^2\geq 0.
\end{equation}
Moreover, in view of \eqref{3.0} one has
\begin {equation}\label{5.32}
(y,y)_\D=((\int_\cI v_\tau^*(t,\l)\D(t) v_\tau(t,\l)\, dt)h_0,
h_0).
\end{equation}
Combining now  \eqref{5.31} and \eqref{5.32} with \eqref{5.30} we
arrive at the relation \eqref{5.21}.
\end{proof}
\begin{corollary}\label{cor5.6a}
For each boundary parameter $\tau$ the following equality holds:
\begin {equation}\label{5.33}
\f_U(x,\l)v_\tau^*(x,\ov\l)-v_\tau(x,\l)\f_U^*(x,\ov\l)=J, \quad
x\in\cI, \quad \l\in\CR.
\end{equation}
 \end{corollary}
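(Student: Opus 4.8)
The plan is to realize both $\f_U(\cd,\l)$ and $v_\tau(\cd,\l)$ as ``columns'' of one invertible fundamental solution and to reduce \eqref{5.33} to the $J$-unitarity of $\wt U$ together with the purely algebraic normalizations \eqref{3.32.2} and \eqref{5.1}. First I would introduce the operator solution $\cY(\cd,\l)\in [\bH]$ of Eq.~\eqref{3.2} normalized by $\cY(a,\l)=\wt U^{-1}$ (equivalently $\wt U\cY(a,\l)=I_\bH$); since this is the fundamental matrix of the first-order system $y'=-J(B(t)+\l\D(t))y$ (recall $J^{-1}=-J$), the operator $\cY(x,\l)$ is invertible for every $x\in\cI$. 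A direct differentiation using $J^*=-J$, $B^*=B$, $\D^*=\D$ gives, for any two operator solutions $Y(\cd,\l)$ and $Z(\cd,\mu)$, the operator Lagrange identity
$$
\tfrac{d}{dx}\bigl(Z^*(x,\mu)JY(x,\l)\bigr)=(\l-\ov\mu)\,Z^*(x,\mu)\D(x)Y(x,\l).
$$
Taking $\mu=\ov\l$ and $Y=Z=\cY$ makes the right-hand side vanish, so $\cY^*(x,\ov\l)J\cY(x,\l)$ is constant in $x$; evaluating at $a$ and using $\wt U^*J\wt U=J$ yields $\cY^*(x,\ov\l)J\cY(x,\l)=\wt U^{-*}J\wt U^{-1}=J$. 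Inverting this identity (both factors are invertible) and using $J^{-1}=-J$ I obtain the dual relation $\cY(x,\l)J\cY^*(x,\ov\l)=J$ for all $x\in\cI$.

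Next I would identify the two distinguished solutions with specific columns of $\cY$. By \eqref{3.32.2} the initial data of $\f_U(\cd,\l)$ is $\wt U^{-1}(I_{H_0},0)^\top$, hence by uniqueness of the initial value problem at the regular endpoint $a$ one has $\f_U(x,\l)=\cY(x,\l)\Phi$ with the $\l$-independent operator $\Phi=(I_{H_0},0)^\top:H_0\to\bH$; by \eqref{5.1} the initial data of $v_\tau(\cd,\l)$ is $\wt U^{-1}(m_\tau(\l)-\tfrac i2 P_{\wh H},\,-P_H)^\top$, hence $v_\tau(x,\l)=\cY(x,\l)\Psi(\l)$ with $\Psi(\l)=(m_\tau(\l)-\tfrac i2 P_{\wh H},\,-P_H)^\top:H_0\to\bH$. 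Since $\Phi$ is independent of $\l$, substituting these expressions into the left-hand side of \eqref{5.33} factors out the fundamental solution:
$$
\f_U(x,\l)v_\tau^*(x,\ov\l)-v_\tau(x,\l)\f_U^*(x,\ov\l)=\cY(x,\l)\bigl(\Phi\Psi^*(\ov\l)-\Psi(\l)\Phi^*\bigr)\cY^*(x,\ov\l).
$$

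It then remains to compute the constant middle factor $N:=\Phi\Psi^*(\ov\l)-\Psi(\l)\Phi^*\in[\bH]$ and to check that $N=J$; combined with the dual Wronskian of the first step this gives exactly $\cY(x,\l)J\cY^*(x,\ov\l)=J$, which is the assertion. For this computation I would use that $m_\tau(\cd)$ is a Nevanlinna function (Proposition \ref{pr5.6}), so $m_\tau^*(\ov\l)=m_\tau(\l)$ and therefore $\Psi^*(\ov\l)=(m_\tau(\l)+\tfrac i2 P_{\wh H},\,-E)$, where $E:H\hookrightarrow H_0$ denotes the canonical embedding and $E^*:H_0\to H$ its adjoint. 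Writing $J$ in block form with respect to $\bH=H_0\oplus H=(H\oplus\wh H)\oplus H$, namely
$$
J=\begin{pmatrix} iP_{\wh H} & -E\cr E^* & 0\end{pmatrix},
$$
the $2\times 2$ block computation collapses: the diagonal $H_0$-block becomes $(m_\tau(\l)+\tfrac i2 P_{\wh H})-(m_\tau(\l)-\tfrac i2 P_{\wh H})=iP_{\wh H}$, while the off-diagonal blocks produce $-E$ and $E^*$, so indeed $N=J$.

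The main (though routine) obstacle is precisely this last bookkeeping: one must keep the two decompositions $H_0=H\oplus\wh H$ and $\bH=H_0\oplus H$ consistent and invoke the Nevanlinna symmetry $m_\tau^*(\ov\l)=m_\tau(\l)$ at the right place, so that the genuinely $\l$-dependent terms $m_\tau(\l)$ cancel and leave exactly the constant entries of $J$. Everything else is structural: the constancy of the Wronskian, the passage to the dual form $\cY J\cY^*=J$, and the representation of $\f_U$ and $v_\tau$ through $\cY$. Note also that square-integrability of $\cY$ plays no role here, since \eqref{5.33} is a pointwise identity in $x\in\cI$ and only the relevant columns $\f_U(\cd,\l)$, $v_\tau(\cd,\l)$ need belong to $\lo{H_0}$.
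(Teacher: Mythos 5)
Your proof is correct and follows essentially the same route as the paper: both introduce the fundamental solution $Y_0(\cd,\l)$ normalized by $\wt U Y_0(a,\l)=I_\bH$, use the Lagrange/Wronskian identity and $J$-unitarity of $\wt U$ to get $Y_0(x,\l)JY_0^*(x,\ov\l)=J$, factor $\f_U$ and $v_\tau$ through $Y_0$ via their initial data \eqref{3.32.2} and \eqref{5.1}, and reduce \eqref{5.33} to the constant block computation $\f_{U,a}(\l)v_{\tau,a}^*(\ov\l)-v_{\tau,a}(\l)\f_{U,a}^*(\ov\l)=J$ using $m_\tau^*(\ov\l)=m_\tau(\l)$. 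The only cosmetic difference is that you derive the operator Lagrange identity by direct differentiation while the paper cites \eqref{3.6}.
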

\begin{proof}
Let $\wt U$ be a $J$-unitary extension \eqref{3.17.5} of $U$ and
let $Y_0(x,\l)(\in [\bH])$ be the operator solution of Eq.
\eqref{3.2} with $Y_{0,a}(\l)(=\wt UY_0(a,\l))=I_\bH$. Then by the
Lagrange's identity \eqref{3.6} one has
\begin {equation*}
Y_0^*(x,\ov\l)JY_0(x,\l)=Y_0^*(a,\ov\l)JY_0(a,\l)=\wt U^{-1*}J\wt
U^{-1}=J
\end{equation*}
and, consequently,
\begin {equation}\label{5.34}
Y_0(x,\l)JY_0^*(x,\ov\l)=J,\quad x\in\cI, \quad \l\in\CR.
\end{equation}
Since by Proposition \ref{pr5.6} $m_\tau^*(\ov\l)=m_\tau(\l)$, it
follows from \eqref{5.1} that
\begin {equation*}
v_{\tau,a}^*(\ov\l)=(m_\tau(\l)+\tfrac i 2 P_{\wh H}, \;
-I_H):H_0\oplus H\to H_0.
\end{equation*}
Combining of this equality with \eqref{3.32.2} yields
\begin{gather*}
\f_{U,a}(\l)v_{\tau,a}^*(\ov\l)-v_{\tau,a}(\l)\f_{U,a}^*(\ov\l)=
\begin{pmatrix}I_{H_0}\cr 0 \end{pmatrix}(m_\tau(\l)+\tfrac i 2 P_{\wh H},
 \; -I_H)-\\
\begin{pmatrix} m_\tau(\l)-\tfrac i 2 P_{\wh H} \cr -P_H\end{pmatrix} (I_{H_0},
\; 0)=\begin{pmatrix} i P_{\wh H} & -I_H \cr P_H  &
0\end{pmatrix}=J.
\end{gather*}
Now by using \eqref{5.34} one obtains
\begin{gather*}
\f_U(x,\l)v_\tau^*(x,\ov\l)-v_\tau(x,\l)\f_U^*(x,\ov\l)=\\
(Y_0(x,\l)\f_{U,a}(\l))(Y_0(x,\ov\l)v_{\tau,a} (\ov\l) )^*-
(Y_0(x,\l)v_{\tau,a}(\l))(Y_0(x,\ov\l)\f_{U,a} (\ov\l))^*=\\
Y_0(x,\l)(\f_{U,a}(\l)v_{\tau,a} (\ov\l) )^*-
v_{\tau,a}(\l)\f_{U,a}^* (\ov\l))Y_0^*(x,\ov\l)=
Y_0(x,\l)JY_0^*(x,\ov\l)=J.
\end{gather*}
\end{proof}

In the following proposition we show that a canonical $m$-function
$m_\tau(\cd)$ is the Weyl function of some symmetric extension of
$\Tmi$ (in the sense of Definition \ref{def2.10}).
\begin{proposition}\label{pr5.7}
Let the assumptions {\rm (A1)--(A3)} be satisfied and let
$n_+(\Tmi)=n_-(\Tmi)$. Moreover, let $\tau\in\wt R^0(\cH_b)$ be a
boundary parameter \eqref{2.22}, let $v_\tau(\cd,\l)\in\lo{H_0}$
be the operator solution of Eq. \eqref{3.2} defined in Theorem
\ref{th4.2} and let $m_\tau(\cd)$ be the corresponding
$m$-function. Then:

 {\rm (1)} The equalities
\begin{gather*}
\wt T=\{\{\wt y,\wt f\}\in\Tma : y(a)=0, \;\wh \G_b y=0, \;
C_0\G_{0b}y+ C_1 \G_{1b}y =0 \},\\
\wt T^*=\{\{\wt y,\wt f\}\in\Tma : C_0\G_{0b}y + C_1 \G_{1b}y =0
\}
\end{gather*}
define a symmetric extension $\wt T$ of $\Tmi$ and its adjoint
$\wt T^*$;

{\rm (2)} The collection $\wt\Pi=\{H_0, \wt\G_0, \wt\G_1\}$ with
the operators
\begin {equation}\label{5.37}
\wt\G_0 \{\wt y,\wt f\}=-\G_{1a}y+i (\wh \G_a -\wh \G_b) y, \;\;
\wt\G_1
 \{\wt
y,\wt f\}=\G_{0a}y+ \tfrac 1 2 (\wh \G_a +\wh \G_b) y, \;\; \{\wt
y,\wt f\}\in\wt T^*,
\end{equation}
is a boundary triplet for $\wt T^*$. Moreover, the $\g$-field
$\wt\g(\cd)$ and Weyl function $\wt M(\cd)$ of $\wt\Pi$ are
\begin {equation}\label{5.38}
\wt\g(\l)=\pi v_\tau(\l), \qquad \wt M(\l)=m_\tau(\l), \qquad
\l\in\CR.
\end{equation}

{\rm (3)}The following identity holds
\begin {equation}\label{5.39}
m_\tau(\mu)-m_\tau^*(\l)= (\mu-\ov\l)\int_\cI v_\tau^*(t,\l)\D(t)
v_\tau(t,\mu)\, dt, \quad \mu,\l\in\CR.
\end{equation}
This implies that for the canonical $m$-function $m_\tau(\cd)$ the
inequality \eqref{5.21} turns into the eq1uality, which holds for
all $\l\in\CR$.
\end{proposition}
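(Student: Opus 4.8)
The plan is to present $\wt\Pi=\{H_0,\wt\G_0,\wt\G_1\}$ as the boundary triplet obtained by peeling the $\cH_b$-component off a rotated decomposing triplet, and then to identify its $\g$-field and Weyl function with $\pi v_\tau(\cd)$ and $m_\tau(\cd)$ by computing the boundary values of $v_\tau$. Since $\tau\in\wt R^0(\cH_b)$, I would first pass to the normalized representation \eqref{2.23}, so that $C_0=\cos B$ and $C_1=\sin B$ with $B=B^*\in[\cH_b]$; equivalent pairs define the same boundary condition, hence the same $v_\tau$ and $m_\tau$. Starting from the ordinary decomposing boundary triplet $\Pi=\{\cH,\G_0,\G_1\}$ for $\Tma$ of Proposition \ref{pr3.3}, with $\cH=H_0\oplus\cH_b$ and $\cH_b$-components $\G_{0b}y$ and $-\G_{1b}y$ of $\G_0,\G_1$, I would apply to the pair of $\cH_b$-components the block rotation $\begin{pmatrix}\cos B&-\sin B\\ \sin B&\cos B\end{pmatrix}$, leaving the $H_0$-components $\wt\G_0,\wt\G_1$ untouched. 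A direct check shows this matrix preserves $\begin{pmatrix}0&-I\\ I&0\end{pmatrix}$, so by the standard invariance of boundary triplets under such symplectic transformations (which may also be verified directly from the Green identity and \eqref{3.32.5}) the rotated collection $\Pi'$ is again an ordinary boundary triplet for $\Tma$, whose new $\cH_b$-boundary operator is $\Phi_0=\cos B\cd\G_{0b}y+\sin B\cd\G_{1b}y=C_0\G_{0b}y+C_1\G_{1b}y$, with partner $\Phi_1$.

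Next I would apply Proposition \ref{pr2.10a} to $\Pi'$ with the splitting $\wh\cH=\cH_b$ (carrying $\Phi_0,\Phi_1$) and $\dot\cH=H_0$ (carrying $\wt\G_0,\wt\G_1$). Part (1) then yields that $\wt A=\{\wh f:\Phi_0\wh f=\wt\G_0\wh f=\wt\G_1\wh f=0\}$ is a closed symmetric extension of $\Tmi$ with adjoint $\wt A^*=\{\wh f:\Phi_0\wh f=0\}$, and part (2) yields that $\dot\Pi=\{H_0,\wt\G_0,\wt\G_1\}=\wt\Pi$ is a boundary triplet for $\wt A^*$. Since $\Phi_0\wh f=0$ is precisely $C_0\G_{0b}y+C_1\G_{1b}y=0$, we have $\wt A^*=\wt T^*$. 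The one identification to make is $\wt A=\wt T$: using $H_0=H\oplus\wh H$, the condition $\wt\G_0\{\wt y,\wt f\}=0$ splits into $\G_{1a}y=0$ and $\wh\G_a y=\wh\G_b y$, and $\wt\G_1\{\wt y,\wt f\}=0$ splits into $\G_{0a}y=0$ and $\wh\G_a y+\wh\G_b y=0$; together these give $\wh\G_b y=\wh\G_a y=0$ and $\G_{0a}y=\G_{1a}y=0$, hence $\G_a y=\wt U y(a)=0$, so $y(a)=0$ by invertibility of $\wt U$. This matches the defining conditions of $\wt T$ and establishes part (1) and the boundary-triplet claim of part (2).

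For the formulas \eqref{5.38}, I would verify the defining relations \eqref{2.34a} directly. The pair $\{\pi v_\tau(\l)h_0,\l\pi v_\tau(\l)h_0\}$ lies in $\wt T^*$ because $v_\tau(\cd,\l)$ solves \eqref{3.2} and obeys \eqref{4.28}; moreover \eqref{4.27} and \eqref{4.27a} give $\wt\G_0\{\pi v_\tau(\l)h_0,\l\pi v_\tau(\l)h_0\}=-\G_{1a}y+i(\wh\G_a-\wh\G_b)y=P_Hh_0+P_{\wh H}h_0=h_0$, so $\wt\g(\l)=\pi_1(\wt\G_0\up\wh\gN_\l(\wt T))^{-1}=\pi v_\tau(\l)$. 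Applying $\wt\G_1$ and using $(\wh\G_a-\wh\G_b)y=-iP_{\wh H}h_0$ together with the definition \eqref{5.0} of $m_\tau$, one finds $\wt\G_1\{\pi v_\tau(\l)h_0,\l\pi v_\tau(\l)h_0\}=(\G_{0a}+\wh\G_a)v_\tau(\l)h_0+\tfrac i2P_{\wh H}h_0=m_\tau(\l)h_0$, i.e.\ $\wt M(\l)=m_\tau(\l)$.

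Finally, part (3) is the abstract Nevanlinna identity \eqref{2.34b} for $\wt\Pi$: $m_\tau(\mu)-m_\tau^*(\l)=(\mu-\ov\l)\wt\g^*(\l)\wt\g(\mu)=(\mu-\ov\l)(\pi v_\tau(\l))^*\pi v_\tau(\mu)$. By Lemma \ref{lem3.2}(2) applied to $F(\l)=\pi v_\tau(\l)$ one has $(\pi v_\tau(\l))^*\pi v_\tau(\mu)=\int_\cI v_\tau^*(t,\l)\D(t)v_\tau(t,\mu)\,dt$, which is \eqref{5.39}; putting $\mu=\l$ turns it into $2i\,\im m_\tau(\l)=(\l-\ov\l)\int_\cI v_\tau^*(t,\l)\D(t)v_\tau(t,\l)\,dt$, i.e.\ equality in \eqref{5.21} for all $\l\in\CR$. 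I expect the main obstacle to be the second paragraph — producing the genuine boundary triplet $\Pi'$ and correctly matching the extension $\wt A$ peeled off by Proposition \ref{pr2.10a} with $\wt T$, especially the reduction $\G_a y=0\Rightarrow y(a)=0$; once $\wt\Pi$ is in hand, parts (2) and (3) follow from the definitions and \eqref{2.34b}.
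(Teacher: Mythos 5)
Your proposal is correct and follows essentially the same route as the paper: the paper likewise normalizes $\tau$ via \eqref{2.23}, forms exactly your rotated triplet $\Pi'$ (denoted $\ov\Pi$ in \eqref{5.40}--\eqref{5.40a}), applies Proposition \ref{pr2.10a} with $\wh\cH=\cH_b$, $\dot\cH_0=\dot\cH_1=H_0$, and obtains \eqref{5.39} from \eqref{2.34b} together with Lemma \ref{lem3.2}, (2). The only cosmetic difference is that you verify $\wt\g(\l)=\pi v_\tau(\l)$ and $\wt M(\l)=m_\tau(\l)$ directly from \eqref{2.34a} via the boundary values \eqref{4.27}, \eqref{4.27a} and \eqref{5.0}, whereas the paper reads them off from Proposition \ref{pr2.10a}, (3) after identifying the solution $\ov v_0(\cd,\l)$ of Proposition \ref{pr4.1} for $\ov\Pi$ with $v_\tau(\cd,\l)$.
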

\begin{proof}
Clearly, we may assume that $\tau $ is given in the normalized
form \eqref{2.23}, in which case the operators
\begin{gather}
\ov\G_0\{\wt y,\wt f\}=\{-\G_{1a}y+i (\wh \G_a -\wh \G_b)
y,\;C_0\G_{0b}y+ C_1
\G_{1b}y  \}(\in H_0\oplus\cH_b), \label{5.40}\\
\ov\G_1\{\wt y,\wt f\}=\{\G_{0a}y+ \tfrac 1 2 (\wh \G_a +\wh \G_b)
y,\; C_1\G_{0b}y-C_0\G_{1b}y  \}(\in H_0\oplus\cH_b)\label{5.40a}
\end{gather}
($\{\wt y,\wt f\}\in\Tma $) form a decomposing boundary triplet
$\ov\Pi=\{\cH, \ov\G_0, \ov\G_1\}$ for $\Tma$ with
$\cH=H_0\oplus\cH_b$. Let $\ov\g(\l)$ be the $\g$-field and
\begin {equation}\label{5.41}
\ov M(\l)=\begin{pmatrix} \ov m_0(\l) & \ov M_2(\l) \cr \ov
M_3(\l) & \ov M_4(\l)
\end{pmatrix}:H_0\oplus\cH_b\to
H_0\oplus\cH_b, \quad \l\in\CR,
\end{equation}
be the Weyl function of the triplet $\ov\Pi$. Assume also that
$\ov v_0(\cd,\l)\in \lo{H_0}$ is the operator solution of Eq.
\eqref{3.2} defined in Proposition \ref{pr4.1} (for the triplet
$\ov\Pi$). Then $\ov v_0(t,\l)= v_\tau(t,\l) $ and \eqref{4.26.1}
yields $\ov\g(\l)\up H_0=\pi v_\tau (\l)$. Moreover, in view of
\eqref{4.26.17} one has $\ov m_0(\l)=m_\tau(\l), \; \l\in\CR$.
Applying now Proposition \ref{pr2.10a} to the triplet $\ov\Pi$
(with $\dot\cH_0=\dot\cH_1=H_0$) we obtain  statements  (1) and
(2). Finally, \eqref{5.39} follows from the identity \eqref{2.34b}
for the triplet $\wt\Pi$ and Lemma \ref{lem3.2}, (2) applied to
the solution $v_\tau(\cd,\l)$.
\end{proof}
\begin{remark}\label{rem5.8}
Let under the assumptions (A1)--(A3) $\Pi=\bt$ be an ordinary
decomposing boundary triplet for $\Tma$, let $\tau\in\wt
R^0(\cH_b)$ be a boundary parameter given in the normalized form
\eqref{2.22}, \eqref{2.23} and let $\ov\Pi=\{\cH, \ov\G_0,
\ov\G_1\}$ be another  decomposing boundary triplet for $\Tma$
defined by \eqref{5.40} and \eqref{5.40a}. The triplets $\Pi$ and
$\ov\Pi$ are connected by
\begin {equation*}
\begin{pmatrix}\ov\G_0 \cr\ov\G_1 \end{pmatrix}=\begin{pmatrix} X_1 & X_2
\cr X_3 & X_4 \end{pmatrix}
\begin{pmatrix} \G_0 \cr \G_1 \end{pmatrix},
\end{equation*}
where $X_j\in [H_0\oplus\cH_b] $ are  defined as follows:
\begin {equation*}
X_1=\begin{pmatrix} I & 0 \cr 0 &C_0 \end{pmatrix}, \;\;\;
X_2=\begin{pmatrix} 0 & 0 \cr 0 & -C_1 \end{pmatrix} , \;\;\;
X_3=\begin{pmatrix} 0 & 0 \cr 0 & C_1\end{pmatrix} , \;\;\;
X_4=\begin{pmatrix} I & 0 \cr 0 &C_0 \end{pmatrix}.
\end{equation*}
Therefore according to \cite{DM95} the Weyl functions $M(\cd)$ and
$\ov M(\cd)$ of the triplets $\Pi$ and $\ov\Pi$ respectively are
connected by means of linear fractional transformation,
\begin {equation} \label{5.45}
\ov M(\l)=(X_3+X_4 M(\l))(X_1+X_2 M(\l))^{-1}.
 \end{equation}
By using the block representation \eqref{4.26.16} of $M(\l)$ one
obtains
\begin{gather*}
(X_1+X_2 M(\l))^{-1}  =\begin{pmatrix}I & 0 \cr -C_1M_3 &
C_0-C_1M_4
\end{pmatrix}^{-1}=\\
\begin{pmatrix}I & 0 \cr  (C_0-C_1M_4)^{-1}C_1M_3
& (C_0-C_1M_4)^{-1}\end{pmatrix}
\end{gather*}
\noindent and \eqref{5.45}, \eqref{5.41}  imply that $\ov m_0(\l)$
coincides with the right-hand side of \eqref{5.8a}. This and the
equality $m_\tau(\l)=\ov m_0(\l)$ obtained in the proof of
Proposition \ref{pr5.7} yield \eqref{5.8a}. Thus, for canonical
$m$-functions $m_\tau(\cd)$ formula \eqref{5.8a} is a simple
consequence of the relation \eqref{5.45} for Weyl functions.

Note that in this proof we follow the reasonings of  \cite[Remark
86]{DM95}, where the Krein formula for canonical resolvents was
proved in a similar  way.
\end{remark}

\section{Spectral functions}
\subsection{Green's function}
In the sequel we put $\gH:=\LI$ and denote by $\gH_b$ the set of
all $\wt f\in\gH$ with the following property: there exists
$\b\in\cI$ (depending on $\wt f$) such that for some (and  hence
for all) function $f\in\wt f$ the equality $\D(t)f(t)=0 $ holds
a.e. on $(\b, b)$.

Assume hypothesis  (A1) and (A2). Let  $\f_U(\cd,\l)$ be the
operator-valued solution \eqref{3.32.1}, let $\tau$ be a boundary
parameter and let $v_\tau(\cd,\l)\in\lo{H_0}$ be the
operator-valued  solution of Eq. \eqref{3.2} defined in Theorem
\ref{th4.2}.
%

%
\begin{definition}
The operator function $G_\tau (\cd,\cd,\l):\cI\times\cI\to [\bH]$
given by
\begin {equation} \label{6.1}
G_\tau (x,t,\l)=\begin{cases} v_\tau(x,\l)\, \f_U^*(t,\ov\l), \;\;
x>t \cr \f_U(x,\l)\, v_\tau^* (t,\ov\l), \;\; x<t \end{cases},
\quad \l\in \CR
\end{equation}
will be called the Green's function corresponding to the boundary
parameter $\tau$.
\end{definition}

Next we compute  the generalized resolvent of $T$ in terms of the
Green's function.
\begin{theorem}\label{th6.2}
Let $\tau$ be a boundary parameter and let $R_\tau(\cd)$ be the
corresponding generalized resolvent of the relation $T$ (see
Theorem \ref{th4.0.1}). Then
\begin {equation}\label{6.2}
R_\tau(\l)\wt f=\pi\left(\int_\cI  G_\tau (\cd,t,\l)\D(t)f(t)\, dt
\right ), \quad \wt f\in\gH, \quad  f\in\wt f.
\end{equation}
\end{theorem}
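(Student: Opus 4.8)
The plan is to lean on Theorem \ref{th4.0.1}, which already identifies $R_\tau(\l)\wt f$ with $\pi(y_f(\cd,\l))$ for the unique $\AC\cap\lI$ solution $y_f$ of the boundary value problem \eqref{4.0.1}--\eqref{4.0.3.2}. So it suffices to prove that $y(x,\l):=\int_\cI G_\tau(x,t,\l)\D(t)f(t)\,dt$ is exactly that solution. Splitting the integral at $x$ gives $y=v_\tau(\cd,\l)g_1+\f_U(\cd,\l)g_2$, where $g_1(x)=\int_a^x\f_U^*(t,\ov\l)\D(t)f(t)\,dt$ and $g_2(x)=\int_x^b v_\tau^*(t,\ov\l)\D(t)f(t)\,dt$. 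The tail integral $g_2$ converges for every $\wt f\in\gH$ by Cauchy--Schwarz, since $v_\tau(\cd,\ov\l)\in\lo{H_0}$, while $g_1(x)$ is over a compact subinterval. I would first establish the formula for $\wt f\in\gH_b$ (dense in $\gH$, by truncating $f$ near $b$) and recover the general case at the end from the boundedness of $R_\tau(\l)$.

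First I would verify that $y(\cd,\l)$ solves \eqref{4.0.1}. Differentiating $y=v_\tau g_1+\f_U g_2$, the terms hitting the variable limits combine into $\bigl(v_\tau(x,\l)\f_U^*(x,\ov\l)-\f_U(x,\l)v_\tau^*(x,\ov\l)\bigr)\D(x)f(x)$, which by Corollary \ref{cor5.6a} equals $-J\D(x)f(x)$; multiplying by $J$ and using $J^2=-I$ reproduces the forcing term $\D(t)f(t)$, while the remaining terms give $(B(t)+\l\D(t))y$ because both $v_\tau(\cd,\l)$ and $\f_U(\cd,\l)$ solve the homogeneous equation \eqref{3.2}. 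For $\wt f\in\gH_b$, if $\D f=0$ on $(\b,b)$ then $g_2\equiv 0$ and $g_1\equiv d:=\int_a^\b\f_U^*(t,\ov\l)\D(t)f(t)\,dt$ on $(\b,b)$, so that $y=v_\tau(\cd,\l)d$ near $b$; this yields $y\in\lI$ immediately, since $v_\tau(\cd,\l)\in\lo{H_0}$.

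For the boundary conditions, at the regular endpoint $g_1(a)=0$ gives $y(a)=\f_U(a,\l)c$ with $c:=\int_a^b v_\tau^*(t,\ov\l)\D(t)f(t)\,dt$, so $\G_a y=\wt U y(a)=\f_{U,a}(\l)c=(c,0)^\top$ by \eqref{3.32.2}, and hence $\G_{1a}y=0$. At the singular endpoint, since $y=v_\tau(\cd,\l)d$ near $b$, Remark \ref{rem3.2a} lets me replace the singular boundary values of $y$ by those of $v_\tau(\l)d$, giving $\G_{0b}y=(\G_{0b}v_\tau(\l))d$, $\G_{1b}y=(\G_{1b}v_\tau(\l))d$ and $\wh\G_b y=(\wh\G_b v_\tau(\l))d$; the relations \eqref{4.28}, \eqref{4.28a} satisfied by $v_\tau$ then deliver the Nevanlinna conditions \eqref{4.0.3.1}, \eqref{4.0.3.2} at once.

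I expect the coupling condition $\wh\G_a y=\wh\G_b y$ in \eqref{4.0.2} to be the main obstacle, because it ties together the two a priori unrelated endpoint data $c$ (which fixes $y(a)$) and $d$ (which fixes $y$ near $b$). The plan is to extract this relation from the Lagrange identity \eqref{3.6}: applying it to $\{y,\l y+f\}$ and to $\{z,\ov\l z\}$ with $z=v_\tau(\cd,\ov\l)h$, $h\in H_0$, gives $(c,h)=[y,v_\tau(\ov\l)h]_b-(Jy(a),v_\tau(a,\ov\l)h)$, and evaluating the bracket through \eqref{3.32.5} together with the boundary normalizations \eqref{4.27}, \eqref{4.27a} of $v_\tau$ should collapse precisely to $\wh\G_a y=\wh\G_b y$. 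Finally, for general $\wt f\in\gH$ I would pass to the limit along $\wt f_\b\to\wt f$ with $\wt f_\b\in\gH_b$: continuity of $R_\tau(\l)$ converges the left-hand sides, while dominated convergence (controlled by $v_\tau(\cd,\ov\l)\in\lo{H_0}$ for the tail $g_2$) identifies the limit of the right-hand sides with $\pi\bigl(\int_\cI G_\tau(\cd,t,\l)\D(t)f(t)\,dt\bigr)$, yielding \eqref{6.2}.
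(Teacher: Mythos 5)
Most of your plan coincides with the paper's own proof: the same splitting $y=v_\tau(\cd,\l)g_1+\f_U(\cd,\l)g_2$ of the Green's-function integral, verification of \eqref{4.0.1} via Corollary \ref{cor5.6a}, the observation that $y=v_\tau(\cd,\l)d$ near $b$ for $\wt f\in\gH_b$ (giving $y\in\lI$, and the conditions \eqref{4.0.3.1}--\eqref{4.0.3.2} through Remark \ref{rem3.2a} and \eqref{4.28}--\eqref{4.28a}), the computation $\G_a y=\f_{U,a}(\l)c$ giving $\G_{1a}y=0$, and the final truncation/limiting argument. All of that is correct and matches the paper. The gap is exactly at the point you yourself flag as the main obstacle, the coupling condition $\wh\G_a y=\wh\G_b y$, and your proposed resolution does not close as stated. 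First, in the Lagrange identity the bracket $[y,z]_b$ with $z=v_\tau(\cd,\ov\l)h$ does not reduce by \eqref{3.32.5}, \eqref{4.27}, \eqref{4.27a} alone: the terms involving $\G_{0b},\G_{1b}$ vanish only because $\{\G_{0b}y,\G_{1b}y\}\in\tau_+(\l)$ while $\{\G_{0b}z,\G_{1b}z\}\in\tau_-(\ov\l)$, and these two relations are $\tm$-adjoint to one another by Proposition \ref{pr2.2}, (1) (i.e., by condition \eqref{2.7} of the class $\RP$); without invoking this structural fact the bracket is not computable. Second, and more seriously, once the bracket collapses to $i(\wh\G_b y,\wh\G_b z)$, the identity you obtain reads
\begin{equation*}
\bigl(\wh\G_a y-\wh\G_b y,\; \wh\G_b v_\tau(\ov\l)h\bigr)_{\wh H}=0,\qquad h\in H_0,
\end{equation*}
which only says that $\wh\G_a y-\wh\G_b y$ is orthogonal to $\ran\bigl(\wh\G_b v_\tau(\ov\l)\bigr)$. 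To conclude you need $\wh\G_b v_\tau(\ov\l):H_0\to\wh H$ to be surjective, equivalently $(m_\tau(\l)-\tfrac i2 I_{H_0})\up\wh H$ to be injective; this is true but requires its own argument (if $m_\tau(\l)h=\tfrac i2 h$ with $h\in\wh H$, then by \eqref{5.1} the function $v_\tau(\cd,\l)h$ vanishes at $a$, hence vanishes identically by uniqueness for the initial value problem, and then \eqref{4.27a} forces $h=0$). Your sketch asserts the identity "collapses precisely" without either ingredient, so as written the crucial step would not go through.

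For comparison, the paper sidesteps both issues by a direct computation: from \eqref{5.1} and \eqref{3.32.2} one has $v_\tau(t,\l)\up\wh H=\f_U(t,\l)(m_\tau(\l)-\tfrac i 2 I_{H_0})\up\wh H$, and combining this with \eqref{6.7}, \eqref{6.10}, \eqref{6.12} and the symmetry $m_\tau^*(\ov\l)=m_\tau(\l)$ shows that $\wh\G_b y_f$ and $\wh\G_a y_f$ are both equal to the same integral $\int_\cI (v_\tau(t,\ov\l)\up\wh H)^*\D(t)f(t)\,dt$. If you want to keep your Lagrange-identity route, it can be repaired by adding the two missing ingredients above, but the paper's direct identification is shorter and avoids the surjectivity question entirely.
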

\begin{proof}
Since $v_\tau(\cd, \ov\l)\in\lo{H_0}$, it follows from \eqref{6.1}
that
$$\int_\cI||G_\tau(x,t,\l)\D^{\frac 1 2}(t)||^2 \, dt<\infty, \; x\in\cI.$$ Hence
for each $f\in\lI$ and $x\in\cI$ one has
\begin {equation*}
\int_\cI ||G_\tau(x,t,\l)\D(t)f(t)||\, dt \leq \int_\cI
||G_\tau(x,t,\l)\D^{\frac 1 2}(t)||\cd || \D^{\frac 1
2}(t)f(t)||\, dt <\infty
\end{equation*}
and, therefore, the equality
\begin {equation} \label{6.3}
y_f=y_f(x,\l):=\int_\cI  G_\tau (x,t,\l)\D(t)f(t)\, dt, \quad
f\in\lI, \quad \l\in\CR
\end{equation}
correctly defines the function $y_f(\cd,\cd):\cI\times \CR\to
\bH$. This implies that  \eqref{6.2} is equivalent to the
following statement: for each $\wt f\in\gH$
\begin {equation} \label{6.4}
y_f(\cd,\l)\in\lI \;\;\text{and}\;\; R_\tau(\l)\wt f = \pi
(y_f(\cd,\l)), \quad f\in\wt f, \;\;\;\l\in\CR.
\end{equation}
To prove \eqref{6.4} first assume that $\wt f\in\gH_b$. We show
that in this case the function $y_f(\cd,\l)$ given by \eqref{6.3}
is a solution of the boundary problem
\eqref{4.0.1}--\eqref{4.0.3.2}. It follows from \eqref{6.1} that
\begin {equation} \label{6.5}
y_f=y_f(x,\l)=\f_u(x,\l)C_1(x,\l)+v_\tau(x,\l)C_2(x,\l)=Y(x,\l)C(x,\l),
\end{equation}
where
\begin {gather*}
C_1(x,\l)=\int_x^b  v_\tau^*(t,\ov\l)\D(t) f(t)\, dt, \qquad
C_2(x,\l)=\int_a^x \f_U^*(t,\ov\l)\D(t) f(t)\, dt,\\
Y(x,\l)=(\f_u(x,\l),\,v_\tau(x,\l) ), \quad C(x,\l)=\{C_1(x,\l),
C_2(x,\l)\}(\in H_0\oplus H_0).
\end{gather*}
Moreover, by \eqref{6.5} and the equality $\D(t)f(t)=0$ (a.e. on
$(\b,b)$) one has
\begin {equation} \label{6.6}
y_f(x,\l)=v_\tau(x,\l)\int_\cI \f_U^*(t,\ov\l)\D(t) f(t)\, dt,
\quad x\in (\b,b).
\end{equation}
This implies that $y_f\in \AC\cap \lI$.  Next, in view of
\eqref{5.33} one has
\begin {gather*}
Y(x,\l)C'(x,\l)=(-\f_u(x,\l)v_\tau^*(x,\ov\l)+v_\tau(x,\l)\f_U^*(x,\ov\l))
\D(x)f(x)=\\
-J \D(x)f(x).
\end{gather*}
By using this equality we obtain
\begin {gather*}
J y_f'(x,\l) - B(x)y_f(x,\l)=(J Y'(x,\l)-B(x)Y(x,\l))C(x,\l)+\\
J Y(x,\l)C'(x,\l)= \l\D(x)Y(x,\l)C(x,\l)-J^2
\D(x)f(x)=\\
\l\D(x)y_f(x,\l)+\D(x)f(x).
\end{gather*}
Thus, for each $\l\in\CR$ the function $y_f(\cd,\l)$ satisfies
\eqref{4.0.1} a.e. on $\cI$.

Next we show that $y_f(\cd,\l)$ satisfies the boundary conditions
\eqref{4.0.2}
 \eqref{4.0.3.2}. Let $\wt U$ be a $J$-unitary extension \eqref{3.17.5} of $U$
and let $\G_a$ be the mapping \eqref{3.21}. Since by \eqref{6.5}
\begin {equation*}
\G_a y_f=\f_{U,a}(\l)\int_\cI  v_\tau^*(t,\ov\l)\D(t)f(t)\, dt,
\end{equation*}
it follows from \eqref{3.32.2} that
\begin {gather} \label{6.7}
\wh\G_a y_f=P_{\wh H} \int _\cI v_\tau^*(t,\ov\l)\D(t)f(t)\,
dt=\int _\cI
(v_\tau (t,\ov\l)\up\wh H)^*\D(t)f(t)\, dt,\\
 \G_{1a}y_f=0.\label{6.8}
\end{gather}
Moreover, according to  Remark \ref{rem3.2a}, (1) the equality
\eqref{6.6} yields
\begin {gather}
\wh\G_b y_f=\wh\G_b v_\tau(\l)\int _\cI \f_U^*(t,\ov\l)\D(t)f(t)\,
dt,
\label{6.10}\\
\G_{0b} y_f=\G_{0b} v_\tau(\l)\int _\cI \f_U^*(t,\ov\l)\D(t)f(t)\,
dt,
\label{6.11a}\\
\quad \G_{1b} y_f=\G_{1b} v_\tau(\l)\int _\cI
\f_U^*(t,\ov\l)\D(t)f(t)\, dt .\label{6.11b}
\end{gather}
In view of \eqref{6.8} the first condition in \eqref{4.0.2} is
fulfilled. Next, by \eqref{4.27a} and \eqref{5.1} one has
\begin {equation} \label{6.12}
\wh\G_b v_\tau(\l)=\wh\G_a v_\tau(\l)+i P_{\wh H}=(P_{\wh
H}m_\tau(\l)-\tfrac i 2 P_{\wh H} )+iP_{\wh H}=P_{\wh
H}(m_\tau(\l)+\tfrac i 2 I_{H_0}).
\end{equation}
Moreover, in view of \eqref{5.1}
$$
v_{\tau,a}(\l)\up \wh H= \begin{pmatrix} (m_\tau(\l)-\tfrac i 2
I_{H_0})\up \wh H \cr 0 \end{pmatrix}:\wh H\to H_0\oplus H, \quad
\l\in\CR,
$$
and \eqref{3.32.2} gives $v_\tau(t,\l)\up\wh H=
\f_U(t,\l)(m_\tau(\l)-\tfrac i 2 I_{H_0})\up \wh H$. Combining
this equality with \eqref{6.10}, \eqref{6.12} and  \eqref{6.7} one
obtains
\begin {gather*}
\wh\G_b y_f =P_{\wh H}(m_\tau(\l)+\tfrac i 2 I_{H_0})\int _\cI
\f_U^*(t,\ov\l)\D(t)f(t)\, dt =\\\int _\cI
(\f_U(t,\ov\l)(m_\tau(\ov\l)-\tfrac
i 2 I_{H_0})\up\wh H)^*\D(t)f(t)\, dt= \\
\int _\cI (v_\tau (t,\ov\l)\up\wh H)^*\D(t) f(t)\, dt=\wh \G_a y_f
\end{gather*}
(here we make use of the relation $m_\tau^*(\ov\l)=m_\tau (\l)$).
Hence the second condition in \eqref{4.0.2} is fulfilled. Finally
combining \eqref{6.11a} and \eqref{6.11b} with  \eqref{4.28} and
\eqref{4.28a} one obtains the relations \eqref{4.0.3.1} and
\eqref{4.0.3.2} for $y_f$. Thus $y_f(\cd,\l)$ is a solution of the
boundary problem \eqref{4.0.1}-- \eqref{4.0.3.2} and by Theorem
\ref{th4.0.1} the relations \eqref{6.4} hold.

Now assume that $\wt f\in \gH$ is arbitrary,   $f\in\wt f$, and
$y_f=y_f(x,\l)$ is given by  \eqref{6.3}. Assume also that $f_n=f
\chi_{[a,b-\frac 1 n ]}, \; \wt f_n =\pi f_n(\in \gH_b)$ and let
$y_{f_n}=y_{f_n}(x,\l)$ be given by \eqref{6.3} with $f_n(t)$ in
place of $f(t)$. Moreover, let a function $y_R\in\lI$ be such that
$\pi y_R=R_\tau (\l) \wt f $. Since $\wt f_n \to \wt f$ and $\pi
y_{f_n}=R_\tau(\l)\wt f_n$, it follows that $||y_R-y_{f_n}||_\D\to
0$. On the other hand, $y_{f_n}(x,\l)\to y_f(x,\l), \; x\in\cI,$
and, consequently, $\D (x)(y_f(x,\l)-y_R(x,\l))=0$ a.e. on $\cI$.
Hence $y_f\in\lI$ and $\pi y_f=\pi y_R=R_\tau (\l)\wt f$, which
gives the relations \eqref{6.4} for $\wt f$.
 \end{proof}
\begin{remark}\label{rem6.2a}
Theorem \ref{6.2} generalizes several results in this direction.
More precisely, in the case of  Hamiltonian system \eqref{3.1}
($\wh H=\{0\}$) and separated boundary conditions formulas
\eqref{6.1} and  \eqref{6.2}  for \emph{canonical resolvents} of
$\Tmi$ were proved in \cite{HinSch93,Kra89}. Moreover, assuming
that the minimal operator $\Tmi$ is generated by Hamiltonian
system  with  the minimal deficiency indices  $n_\pm(\Tmi)=\dim
H$, formulas \eqref{6.1} and  \eqref{6.2} for \emph{generalized
resolvents}  of $\Tmi$ have been obtained in \cite{DLS88,DLS93}.
Note also that formulas for canonical and generalized resolvents
of even order ordinary differential equations subject to separated
boundary conditions are known  as late as the middle of nineteenth
(see e.g. \cite{DunSch,Nai,Sht57}).
\end{remark}

\subsection{The space $\LSH$}\label{sect6.2}
Let $\cH$ be a finite dimensional Hilbert space.
\begin{definition}\label{def6.3}
A non-decreasing operator function $\Si(\cd): \bR\to [\cH]$ is
called a distribution function if it is left continuous and
satisfies the equality $\Si(0)=0$.
\end{definition}

Next recall the definition of the space $\LSH$ (see e.g.
\cite[Section 20.5]{Nai}, \cite[Section 7.2.3]{Ber}). Denote by
$C_{0}(\cH)$ the set of continuous vector functions $f: \bR\to
\cH$ having  compact supports. Introduce the semi-scalar product
on $C_{0}(\cH)$  by setting
\begin {equation} \label{6.13}
(f,g)_{\LSH}=\int_\bR(d \Si(t)f(t),g(t)_\cH)=\lim_{d(\pi_n)\to
0}\sum_{k=1}^n (\Si(\D_k)f(\xi_k),g(\xi_k)).
\end{equation}
Here  $\pi_n=\{a=t_0< t_1<\dots <t_n=b\}$ denotes  a partition of
a segment $[a,b]$ containing the supports of functions $f$ and
$g$, $d(\pi_n)$ is the diameter of the partition $\pi_n$,
$\Si(\D_k) := \Si(t_k)-\Si(t_{k-1}),$ and $\xi_k\in
[t_{k-1},t_k]$. The limit in \eqref{6.13} is understood in the
same sense  as in the definition of the Riemann-Syieltjes
integral, i.e., a particular choice of $\pi_n$ with a given
diameter and of $\xi_k\in [t_{k-1},t_k]$ is irrelevant.

The completion of $C_{0}(H)$ with respect to the semi-norm
$p(f):=(f,f))_{\LSH}^{\frac1 2}$ gives rise to a  semi-Hilbert
space $\wt L^2(\Si,H)$ (i.e., to a complete space with a semi-norm
in place of norm). Denoting by $\ker p:=\{f\in \wt
L^2(\Si,H):p(f)=0\}$ the kernel of the semi-norm, we introduce the
quotient space  $\LSH := \wt L^2(\Si,H)/\ker p$ which is already
Hilbert space.

Let $\Si=(\s_{ij})_{i,j=1}^n$ be a matrix valued  measure
generated by a distribution function $\Si(\cd)$ and let
$\s=\sum_{j} \s_{jj}$. Clearly,  the measure $\Si(\cdot)$ is
absolutely continuous with respect to $\s$ (in fact both measures
are equivalent). Therefore, by the Radon-Nykodim theorem, there
exists a $\s$-measurable matrix density
$\Psi(\cdot)=(\psi_{ij}(\cdot))_{i,j=1}^n$ such that
\begin {equation*}
\Si(\delta)=\int_\delta \Psi (t)d\s (t), \qquad \Psi
(t):=(\psi_{ij}(t))_{i,j=1}^n=(d \s_{ij}/d\s)_{i,j=1}^n,\quad
\delta\in \mathcal B_b(\Bbb R).
\end{equation*}
Let $\wt L_0^2(\Si,\bC^n)$ be the set of $\s$-measurable
vector-valued functions $f:\bR\to\bC^n$ satisfying
  \begin {equation}\label{6.14a}
||f||_{\wt L_0^2(\Si,\bC^n)}^2:=\int_\bR(\Psi(t)f(t),
f(t))\,d\s(t)<\infty.
  \end{equation}
%
\begin{theorem}\cite{Kac50}\label{thKac}
The spaces $\wt L^2(\Si,\bC^n)$ and $L^2(\Si,\bC^n)$ are
identified isometrically with the spaces $\wt L_0^2(\Si,\bC^n)$
and $L_0^2(\Si,\bC^n):=\wt L_0^2(\Si,\bC^n)/N_0$, respectively,
where $N_0=\{f\in \wt L_0^2(\Si,\bC^n):||f||_{\wt
L_0^2(\Si,\bC^n)}\}=0$ is the kernel of the semi-norm. Therefore,
$f\in \wt L^2(\Si,\bC^n)$ if and only if $f$ is $\s$-measurable
and the norm \eqref{6.14a} is finite.
  \end{theorem}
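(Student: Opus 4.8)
The plan is to realize $\wt L_0^2(\Si,\bC^n)$ itself as a completion of $C_0(\bC^n)$ for the semi-norm $p(\cd)$ and then pass to quotients. Concretely, I would show that the natural inclusion $C_0(\bC^n)\hookrightarrow \wt L_0^2(\Si,\bC^n)$ is isometric for the two semi-norms and has dense range, and that $\wt L_0^2(\Si,\bC^n)$ is complete; since by definition $\wt L^2(\Si,\bC^n)$ is the completion of $C_0(\bC^n)$ with respect to $p$, the fact that a complete semi-Hilbert space containing $C_0(\bC^n)$ isometrically and densely is a completion of it will give the isometric identification $\wt L^2(\Si,\bC^n)=\wt L_0^2(\Si,\bC^n)$, and factoring by the kernels of the respective semi-norms ($\ker p$ and $N_0$) will yield $L^2(\Si,\bC^n)=L_0^2(\Si,\bC^n)$. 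The final assertion is then immediate, since membership in $\wt L_0^2(\Si,\bC^n)$ means precisely $\s$-measurability together with finiteness of \eqref{6.14a}.

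For the first step, fix $f,g\in C_0(\bC^n)$. The semi-inner product \eqref{6.13} is a Riemann--Stieltjes integral of a continuous compactly supported integrand against the matrix distribution function $\Si(\cd)$, whose entries are of bounded variation on each compact interval (differences of non-decreasing functions). Hence it coincides with the Lebesgue--Stieltjes integral $\int_\bR(d\Si(t)f(t),g(t))$, and inserting the Radon--Nikodym representation $d\Si=\Psi\,d\s$ gives $(f,g)_{\LSH}=\int_\bR(\Psi(t)f(t),g(t))\,d\s(t)$. In particular $p(f)^2=\|f\|_{\wt L_0^2(\Si,\bC^n)}^2$, so the inclusion is isometric for the semi-norms.

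The heart of the matter is density and completeness, and here the decomposition $\Psi=\Psi^{1/2}\Psi^{1/2}$ with $\Psi(t)\ge 0$ is the right device. Writing $R\colon f\mapsto \Psi^{1/2}f$ one has $\|f\|_{\wt L_0^2(\Si,\bC^n)}=\|Rf\|_{L^2(\s;\bC^n)}$, so $R$ maps $\wt L_0^2(\Si,\bC^n)$ semi-isometrically into the genuine Hilbert space $L^2(\s;\bC^n)$, with image $\cR:=\{g\in L^2(\s;\bC^n):g(t)\in\ran\Psi^{1/2}(t)\ \text{a.e.}\}$ (a preimage $f$ being obtained pointwise via the pseudoinverse of $\Psi^{1/2}(t)$). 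I would identify the $L^2(\s)$-closure of $R(C_0(\bC^n))=\{\Psi^{1/2}h:h\in C_0(\bC^n)\}$ by orthogonality: if $(g,\Psi^{1/2}h)_{L^2(\s)}=0$ for all $h\in C_0(\bC^n)$, then $\int_\bR(\Psi^{1/2}(t)g(t),h(t))\,d\s(t)=0$ for all such $h$, and density of $C_0(\bC^n)$ in $L^2(\s;\bC^n)$ (regularity of the finite Borel measure $\s$ on $\bR$) forces $\Psi^{1/2}g=0$ a.e. Thus the closure of $R(C_0(\bC^n))$ is the orthogonal complement of $\ker R$, which is exactly $\cR$, the pointwise ranges and kernels being closed in finite dimension. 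Since $\cR=R(\wt L_0^2(\Si,\bC^n))$ is therefore closed, $\wt L_0^2(\Si,\bC^n)$ is complete, and since $R(C_0(\bC^n))$ is dense in $\cR$, the subspace $C_0(\bC^n)$ is dense in $\wt L_0^2(\Si,\bC^n)$ in the semi-norm; this closes the identification.

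The step I expect to be the main obstacle is precisely this density claim: a general $f\in\wt L_0^2(\Si,\bC^n)$ need not lie in $L^2(\s;\bC^n)$, so one cannot simply approximate $f$ itself by continuous functions and then multiply by $\Psi^{1/2}$, and the orthogonality argument above is what circumvents this. A convenient technical simplification is that the choice $\s=\sum_j\s_{jj}$ forces $\sum_j\psi_{jj}(t)=d\s/d\s=1$ $\s$-a.e., whence $\|\Psi(t)\|\le \operatorname{tr}\Psi(t)=1$ and $\|\Psi^{1/2}(t)\|\le 1$; this makes $R$ a contraction on all of $L^2(\s;\bC^n)$, so the passage between the $\Psi$-weighted semi-norm and the $L^2(\s)$-norm introduces no integrability difficulties and legitimizes the adjoint move $(g,\Psi^{1/2}h)=(\Psi^{1/2}g,h)$ used above.
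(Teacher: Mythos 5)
Your argument is correct, and it takes a genuinely different route from the paper, which in fact writes out no proof at all: the theorem is attributed to \cite{Kac50} (with \cite{DunSch} as the other classical source), and the paper then remarks that the direct-integral representation \eqref{6.15}, imported from \cite{MalMal03}, ``gives a simple proof.'' Your proof---factor the semi-norm through the multiplication map $R\colon f\mapsto \Psi^{1/2}f\in L^2(\s;\bC^n)$, show that $R$ maps $\wt L_0^2(\Si,\bC^n)$ onto $\cR=\{g\in L^2(\s;\bC^n): g(t)\in\ran \Psi^{1/2}(t)\ \s\text{-a.e.}\}$ via measurable pseudoinverses, and obtain in one stroke that $\cR$ is closed and that $R(C_0(\bC^n))$ is dense in it from the orthogonality computation $g\perp R(C_0(\bC^n))\Leftrightarrow \Psi^{1/2}g=0$ $\s$-a.e.---is essentially a bare-hands construction of that direct integral: your $\cR$ is a concrete model of $\int_\bR\oplus\, G(t)\,d\s(t)$, with $\Psi^{1/2}(t)$ implementing the pointwise identification of the fibre $G(t)$ with $\ran\Psi^{1/2}(t)\subset\bC^n$. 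What the paper's route buys is brevity and a structural formula \eqref{6.15} that it reuses; what yours buys is a self-contained elementary proof whose only external input is density of $C_0(\bC^n)$ in $L^2(\s;\bC^n)$, and it makes transparent why the theorem holds: completeness and density are inherited from the genuine Hilbert space $L^2(\s;\bC^n)$ through a semi-isometry with closed range.

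Two minor inaccuracies, neither of which is a gap. First, $\s$ need not be a finite measure on $\bR$ (a distribution function is only locally bounded), so the density you invoke is that of $C_0(\bC^n)$ in $L^2$ of a locally finite (Radon) Borel measure; your trace normalization $\operatorname{tr}\Psi(t)=1$ $\s$-a.e., hence $\|\Psi^{1/2}(t)\|\le 1$, is indeed what legitimizes the adjoint move $(g,\Psi^{1/2}h)_{L^2(\s)}=(\Psi^{1/2}g,h)_{L^2(\s)}$ without integrability problems. Second, the completion of a semi-normed space is unique only up to the size of the kernel of the semi-norm, so the identification $\wt L^2(\Si,\bC^n)=\wt L_0^2(\Si,\bC^n)$ must be read---exactly as in the theorem's own formulation---modulo that ambiguity: what your argument establishes, and what the statement asserts, is that every element of the completion is represented isometrically by a $\s$-measurable $f$ with \eqref{6.14a} finite, and that the quotient Hilbert spaces $L^2(\Si,\bC^n)$ and $L_0^2(\Si,\bC^n)$ coincide isometrically.
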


It was shown in \cite{MalMal03} that the spaces $\wt
L^2(\Si,\bC^n)$ and $L^2(\Si,\bC^n)$ admit the representation in
the form of direct integrals
\begin {equation}\label{6.15}
\wt L^2(\Si,\bC^n)=\int_\bR \oplus \wt G(t) \, d\s(t), \quad
L^2(\Si,\bC^n)=\int_\bR \oplus  G(t) \, d\s(t),
\end{equation}
where $\wt G(t)$is the $n$-dimensional Euclidian space with the
semi-scalar product $\langle f,g \rangle:=(\Psi (t)f,g)$ and
$G(t)=\wt G(t)/\{f\in \wt G(t): (\Psi (t)f,f)=0\}$. In particular,
representation  \eqref{6.15} gives a simple proof of Theorem
\ref{thKac} (as distinguished from the known  proofs in
\cite{Kac50} and \cite{DunSch}).

\subsection{Spectral functions and the Fourier transform}
Let the assumptions (A1) and (A2) at the beginning of Section
\ref{sect4} be satisfied and let $\f_U(\cd,\l)(\in [H_0,\bH])$  be
the operator solution of Eq. \eqref{3.2} with the initial data
\eqref{3.32.1}. For each $\wt f\in\gH_b$ introduce the Fourier
transform $\wh f(\cd):\bR\to H_0$ by setting
\begin {equation} \label{6.18}
\wh f(s)=\int_\cI \f_U^*(t,s)\D(t)f(t)\, dt, \quad f\in \wt f.
\end{equation}
Note that  $\wh f(\cd)$ is uniquely defined by $\wt f$, i.e., it
does not depend on the choice of $f\in\wt f$.

Next assume that $\pair \in \wt R(\wt\cH_b, \cH_b)$ is a boundary
parameter given by \eqref{2.2} (with $\cH_0=\wt\cH_b$ and
$\cH_1=\cH_b$). Then according to Theorem \ref{th4.0.1} the
corresponding boundary problem \eqref{4.0.1}--\eqref{4.0.3.2}
generates the generalized resolvent
\begin {equation} \label{6.18.1}
R_\tau(\l)=P_\gH (\wt T^\tau-\l)^{-1}\up \gH, \quad \l\in\CR,
\end{equation}
of the symmetric relation $T\in \C (\gH)$. The equality
\eqref{6.18.1} uniquely (up to the unitary equivalence) defines a
self-adjoint $\gH$-minimal relation $\wt T^\tau$ in $\wt
\gH\supset  \gH$ such that $T\subset \wt T^\tau$. Denote also by
$F_\tau(\cd)$ the corresponding spectral function of $T$, so that
in view of \eqref{2.38}
\begin {equation} \label{6.18.2}
R_\tau(\l)=\int_\bR \frac {dF_\tau(t)}{t-\l}, \quad \l\in\CR.
\end{equation}
In the following we fix some $J$-unitary extension $\wt U$ of $U$
(see \eqref{3.17.5}) and denote by $m_\tau(\cd)$  the $m$-function
of the boundary problem \eqref{4.0.1}--\eqref{4.0.3.2}. Note that
in view of Proposition \ref{pr5.2a} a choice of $\wt U$ does not
matter in our further considerations.
\begin{definition}\label{def6.4}
A distribution function $\Si(\cd)=\Si_\tau(\cd):\bR\to [H_0]$ is
called a spectral function of the boundary problem
\eqref{4.0.1}--\eqref{4.0.3.2} if, for each $\wt f\in\gH_b$ and
for each finite interval $[\a,\b) \subset \bR$, the Fourier
transform \eqref{6.18} satisfies the equality
\begin {equation} \label{6.19}
((F_\tau(\b)-F_\tau(\a))\wt f,\wt
f)_\gH=\int_{[\a,\b)}(d\Si_\tau(s)\wh f(s),\wh f(s)).
\end{equation}
\end{definition}
Note that the integral on the right-hand side of \eqref{6.19}
exists, since the function $\wh f(\cd)$ is continuous (and even
holomorphic) on $\bR$; moreover, by \eqref{6.19} $\wh f\in\LS$.

Let $\wt\gH_0:=\wt\gH\ominus \mul \Tt$, so that
\begin {equation} \label{6.20}
\wt\gH=\wt\gH_0\oplus\mul \Tt
\end{equation}
Then by \eqref{2.39} and \eqref{6.19} one has
\begin {equation} \label{6.21}
||P_{\wt\gH_0}\wt f||_{\wt\gH}=||\wh f||_{\LS}, \quad \wt
f\in\gH_b
\end{equation}
and, consequently, $||\wh f||\leq ||\wt f||$. Therefore for each
$\wt f\in\gH$ there exists a function $\wh f\in\LS$ (the Fourier
transform of $\wt f$) such that
\begin {equation*}
\lim\limits_{\b\uparrow b}||\wh f - \int_a^\b
\f_U^*(t,\cd)\D(t)f(t)\, dt
 ||_{\LS}=0, \quad f\in\wt f,
\end{equation*}
and the equality $V\wt f=\wh f, \; \wt f\in\gH,$ defines the
linear operator $V:\gH\to \LS$ such that
\begin {equation} \label{6.23}
||V\wt f||_{\LS}=||P_{\wt\gH_0}\wt f||_{\wt\gH}, \quad \wt
f\in\gH.
\end{equation}
This implies that $V$ is a contraction from $\gH$ to $\LS$.
\begin{theorem}\label{th6.5}
For each boundary parameter $\tau$ there exists a unique spectral
function $\Si_\tau(\cd)$ of the boundary problem
\eqref{4.0.1}--\eqref{4.0.3.2}. This function is defined by the
Stieltjes inversion formula
\begin {equation} \label{6.24}
\Si_\tau(s)=\lim\limits_{\d\to+0}\lim\limits_{\varepsilon\to +0}
\frac 1 \pi \int_{-\d}^{s-\d}\im \,m_\tau(\s+i\varepsilon)\, d\s.
\end{equation}
\end{theorem}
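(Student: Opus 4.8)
The plan is to manufacture the candidate $\Si_\tau(\cd)$ directly from the Stieltjes inversion of $m_\tau(\cd)$ and then verify that it satisfies the defining identity \eqref{6.19}. By Proposition \ref{pr5.6} the function $m_\tau(\cd)$ is Nevanlinna, so it admits an integral representation $m_\tau(\l)=A+B\l+\int_\bR(\tfrac1{s-\l}-\tfrac{s}{1+s^2})\,d\Si_\tau(s)$ with $A=A^*$, $B\geq 0$ and a nondecreasing $[H_0]$-valued $\Si_\tau(\cd)$; the standard inversion theory for such functions shows that the double limit in \eqref{6.24} exists and reproduces exactly this $\Si_\tau(\cd)$, normalized by $\Si_\tau(0)=0$. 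This is the object whose spectral meaning I then have to establish, with uniqueness handled at the end.

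The key computation is to express the diagonal of the generalized resolvent through $m_\tau$ and the Fourier transform. I fix $\wt f\in\gH_b$, $f\in\wt f$, and set $\wh f(\l):=\int_\cI\f_U^*(t,\ov\l)\D(t)f(t)\,dt$, so that $\wh f(s)$ agrees with \eqref{6.18} for real $s$ and $\wh f(\cd)$ is an entire $H_0$-valued function. Substituting the representation $v_\tau(t,\l)=\f_U(t,\l)m_\tau(\l)+\psi(t,\l)$ from Proposition \ref{pr5.3} into the Green's function \eqref{6.1} and using $m_\tau^*(\ov\l)=m_\tau(\l)$, the kernel splits as $G_\tau(x,t,\l)=\f_U(x,\l)m_\tau(\l)\f_U^*(t,\ov\l)+G_0(x,t,\l)$, where $G_0$ is built from $\f_U$ and $\psi$ alone and is therefore entire in $\l$. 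Carrying this through the resolvent formula \eqref{6.2} and integrating in $x$ and $t$ (using $\int_\cI\f_U^*(t,\ov\l)\D(t)f(t)\,dt=\wh f(\l)$ and $\int_\cI\f_U^*(x,\l)\D(x)f(x)\,dx=\wh f(\ov\l)$), I expect
\[
(R_\tau(\l)\wt f,\wt f)_\gH=(m_\tau(\l)\wh f(\l),\wh f(\ov\l))_{H_0}+I_0(\l),\qquad \l\in\CR,
\]
where $I_0(\l)$ is the double integral of the entire kernel $G_0$ against $\D f$, hence entire in $\l$; the reflection property $G_0^*(x,t,\l)=G_0(t,x,\ov\l)$ yields $\ov{I_0(\l)}=I_0(\ov\l)$, so $I_0$ is real on $\bR$.

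Next I would apply the scalar Stieltjes--Perron inversion to the Herglotz function $\l\mapsto(R_\tau(\l)\wt f,\wt f)_\gH=((\Tt-\l)^{-1}\wt f,\wt f)_{\wt\gH}$, whose representing measure is $t\mapsto(F_\tau(t)\wt f,\wt f)_\gH$ by \eqref{6.18.2}. On the left this returns $((F_\tau(\b)-F_\tau(\a))\wt f,\wt f)_\gH$ at continuity points $\a,\b$. On the right I insert the decomposition above: since $I_0$ is entire, $\tfrac1{2i}(I_0(s+i\varepsilon)-I_0(s-i\varepsilon))=\varepsilon I_0'(s)+o(\varepsilon)\to 0$ uniformly on $[\a,\b]$, so $I_0$ contributes nothing; using $R_\tau(\ov\l)=R_\tau^*(\l)$ one gets $\im(R_\tau(\l)\wt f,\wt f)_\gH=\im(m_\tau(\l)\wh f(\l),\wh f(\ov\l))_{H_0}+\tfrac1{2i}(I_0(\l)-I_0(\ov\l))$, and as $\varepsilon\downarrow 0$ the factors $\wh f(s\pm i\varepsilon)\to\wh f(s)$ reduce the leading term to $\tfrac1\pi\int_\a^\b(\im m_\tau(s+i\varepsilon)\wh f(s),\wh f(s))\,ds$, which converges to $\int_{[\a,\b)}(d\Si_\tau(s)\wh f(s),\wh f(s))$ by the weak convergence $\tfrac1\pi\im m_\tau(s+i\varepsilon)\,ds\rightharpoonup d\Si_\tau(s)$ and the continuity of $\wh f$. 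Comparing the two sides is precisely \eqref{6.19}, so $\Si_\tau(\cd)$ is a spectral function recovered by \eqref{6.24}.

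The hard part will be justifying that replacing $\wh f(s\pm i\varepsilon)$ by $\wh f(s)$ produces no error in the limit: the discrepancy is controlled by $\varepsilon\,\re m_\tau(s+i\varepsilon)$, which tends to $0$ in $L^1_{loc}(\bR)$ for any Nevanlinna function even though $\|m_\tau(s+i\varepsilon)\|$ itself may blow up like $\varepsilon^{-1}$ near atoms of $\Si_\tau$; controlling this, together with the endpoint behaviour built into the $\d$-$\varepsilon$ double limit of \eqref{6.24} and the choice of continuity points, is the delicate analytic point. Uniqueness then follows from \eqref{6.19} itself: if $\Si_\tau'$ is another spectral function, polarization gives $\int_{[\a,\b)}(d(\Si_\tau-\Si_\tau')(s)\wh f(s),\wh g(s))=0$ for all $\wt f,\wt g\in\gH_b$ and all intervals, and since the system is definite the Fourier transforms $\{\wh f(\cd):\wt f\in\gH_b\}$ are total in $H_0$ at $\Si_\tau$-almost every point, which with $\Si_\tau(0)=\Si_\tau'(0)=0$ forces $\Si_\tau=\Si_\tau'$.
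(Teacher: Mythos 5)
Your proposal follows essentially the same route as the paper: the decomposition $G_\tau(x,t,\l)=\f_U(x,\l)\,m_\tau(\l)\,\f_U^*(t,\ov\l)+G_0(x,t,\l)$ obtained from Proposition \ref{pr5.3} is exactly the paper's first step, and your Stieltjes--Perron inversion of the Herglotz function $\l\mapsto(R_\tau(\l)\wt f,\wt f)_\gH$ via \eqref{6.2} and \eqref{6.18.2} is precisely the Stieltjes--Liv\u{s}ic argument that the paper outsources to Theorem 4 of \cite{Sht57}. The details you supply (the entire remainder $I_0$ dropping out, the replacement of $\wh f(s\pm i\varepsilon)$ by $\wh f(s)$, and the definiteness-based uniqueness) are sound fillings-in of that citation rather than a different method.
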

\begin{proof}
It follows from Proposition \ref{pr5.3} that the Green function
\eqref{6.1} admits the representation
\begin {gather*}
G_\tau(x,t,\l)=\f_U(x,\l) m_\tau(\l)\f_U^*(t,\ov\l)+G_0(x,t,\l),\\
\text{where} \quad
 G_0 (x,t,\l)=\begin{cases} \psi (x,\l)\, \f_U^*(t,\ov\l), \;\; x>t
\cr \f_U(x,\l)\, \psi^* (t,\ov\l), \;\; x<t \end{cases}
\qquad\qquad\qquad\qquad
\end{gather*}
Now by using  \eqref{6.2} and the Stieltjes-Liv\u{s}ic inversion
formula one proves the theorem in the same way as Theorem 4 in
\cite{Sht57}.
\end{proof}
Next, similarly to \cite{DunSch,Nai,Sht57} one can prove the
following theorem.
\begin{theorem}\label{th6.5a}
Let $V:\gH\to\LS$ be the Fourier transform corresponding to the
spectral function $\St$ and let $V^*$ be the operator adjoint to
$V$. Then for each function $g=g(s)\in\LS$ with the compact
support the function
\begin {equation*}
f_g(t):=\int_\bR \f_U(t,s)\, d\Si_\tau(s) g(s)
\end{equation*}
belongs to  $\lI$ and $V^*g=\pi f_g$. Therefore
\begin {equation} \label{6.25}
V^* g=\pi\left(\int_\bR \f_U(\cd,s)\,d\Si_\tau(s) g(s)\right),
\quad g=g(s)\in \LS,
\end{equation}
where the integral converges in the semi-norm \eqref{3.0}.
\end{theorem}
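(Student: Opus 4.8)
The plan is to read off $V^{*}$ from its defining relation $(V\wt f,g)_{\LS}=(\wt f,V^{*}g)_{\gH}$, to test it against the dense set $\gH_b$, and to interchange the order of the $dt$- and $d\Si_\tau$-integrations. First I would fix $g\in\LS$ with $\operatorname{supp}g\subset[\a_1,\b_1)$ and check that $f_g(t)=\int_{[\a_1,\b_1)}\f_U(t,s)\,d\Si_\tau(s)g(s)\in\bH$ is well defined and locally bounded. For $c\in\bH$ one has the pairing
\[
(f_g(t),c)_{\bH}=\int_{[\a_1,\b_1)}(d\Si_\tau(s)g(s),\f_U^{*}(t,s)c)_{H_0}=(g,\f_U^{*}(t,\cd)c)_{\LS},
\]
so by the Cauchy--Schwarz inequality in $\LS$, together with the continuity of $\f_U(\cd,\cd)$ and the finiteness of $\Si_\tau$ on compacts, I would get a bound $\|f_g(t)\|_{\bH}\leq C_\b\|g\|_{\LS}$ uniform for $t$ in any compact subinterval $[a,\b]\subset\cI$. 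Hence the truncations $f_g^{(\b)}:=\chi_{[a,\b]}f_g$ lie in $\lI$ and $\pi f_g^{(\b)}\in\gH_b$.

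Next I would establish the basic Fubini identity. For $\wt f\in\gH_b$ with a representative $f$ satisfying $\D f=0$ a.e. on $(\b_0,b)$, inserting the definition \eqref{6.18} of $\wh f$ into $(V\wt f,g)_{\LS}=\int_\bR(d\Si_\tau(s)\wh f(s),g(s))_{H_0}$ and interchanging integrations yields
\[
(V\wt f,g)_{\LS}=\int_\cI(\D(t)f(t),f_g(t))_{\bH}\,dt=(f,f_g^{(\b)})_\D=(\wt f,\pi f_g^{(\b)})_{\gH},\qquad \b\geq\b_0 .
\]
The interchange is legitimate because, after passing to the scalar measure $\s$ and its density $\Psi$ introduced in Subsection \ref{sect6.2}, both integrations run over the compact sets $[a,\b_0]$ and $[\a_1,\b_1)$, on which $\f_U$ is continuous, $\D$ is integrable and $\s$ is finite; the last two equalities use that $\D f$ vanishes past $\b_0$ and that $f_g^{(\b)}=f_g$ on $[a,\b]$.

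The decisive step is then to prove $f_g\in\lI$. Since $\pi f_g^{(\b)}$ itself belongs to $\gH_b$ with support in $[a,\b]$, I would apply the identity above with $\wt f=\pi f_g^{(\b)}$ to obtain $\int_a^\b(\D f_g,f_g)\,dt=(\pi f_g^{(\b)},\pi f_g^{(\b)})_{\gH}=(V\pi f_g^{(\b)},g)_{\LS}$. Because $V$ is a contraction (see \eqref{6.23}), the last quantity is at most $\|\pi f_g^{(\b)}\|_{\gH}\|g\|_{\LS}=\big(\int_a^\b(\D f_g,f_g)\big)^{1/2}\|g\|_{\LS}$. Cancelling one factor gives $\int_a^\b(\D f_g,f_g)\,dt\leq\|g\|_{\LS}^{2}$ uniformly in $\b$, and letting $\b\uparrow b$ shows $f_g\in\lI$ with $\|f_g\|_\D\leq\|g\|_{\LS}$; in particular $f_g^{(\b)}\to f_g$ in the semi-norm \eqref{3.0}, which is the asserted convergence of the integral.

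Finally I would identify $V^{*}g=\pi f_g$. For $\wt f\in\gH_b$ supported in $[a,\b_0]$ and all $\b\geq\b_0$ the chain $(\wt f,\pi f_g^{(\b)})_{\gH}=(V\wt f,g)_{\LS}=(\wt f,V^{*}g)_{\gH}$ holds; passing to the limit $\b\uparrow b$ gives $(\wt f,\pi f_g)_{\gH}=(\wt f,V^{*}g)_{\gH}$ for every $\wt f\in\gH_b$, and since $\gH_b$ is dense in $\gH$ this forces $V^{*}g=\pi f_g$. The extension of \eqref{6.25} to arbitrary $g\in\LS$ then follows by approximating $g$ in $\LS$ by compactly supported functions and using the continuity of $V^{*}$. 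I expect the main obstacle to be the integrability bookkeeping in the Fubini step and the verification that $f_g$ is locally bounded; once these are in place, the contraction estimate delivers $f_g\in\lI$ essentially for free.
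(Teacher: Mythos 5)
Your proposal is correct. Note that the paper contains no proof of Theorem \ref{th6.5a} at all: it merely remarks that the theorem can be proved ``similarly to \cite{DunSch,Nai,Sht57}'', deferring to the classical eigenfunction-expansion literature. Your argument is, in substance, precisely that classical argument transplanted to the present setting: pair $V\wt f$ against $g$ over the dense set $\gH_b$, interchange the $dt$- and $d\Si_\tau$-integrations over compacts (legitimately, via the scalar measure $\s$ and the density $\Psi$ of Subsection \ref{sect6.2}), and then extract $f_g\in\lI$ from a Bessel-type bound. In particular, your key device --- testing the Fubini identity against $\wt f=\pi f_g^{(\b)}$ itself, so that $\|\pi f_g^{(\b)}\|_{\gH}^2=(V\pi f_g^{(\b)},g)_{\LS}$, and cancelling one factor using the contraction property \eqref{6.23} to get $\|f_g^{(\b)}\|_{\D}\le \|g\|_{\LS}$ uniformly in $\b$ --- is exactly the standard trick in \cite{Sht57,Nai}, and it is what makes the membership $f_g\in\lI$ and the semi-norm convergence in \eqref{6.25} come out ``for free''. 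The only points you gloss over are routine: the absolute convergence (hence well-definedness and continuity in $t$) of the defining integral for $f_g$, which follows from the same Cauchy--Schwarz estimate in $L^2(\Si_\tau;H_0)$ you invoke for local boundedness; the density of $\gH_b$ in $\gH$, which the paper itself uses without comment in the proof of Theorem \ref{th6.2}; and, for general $g\in\LS$, the observation that $V^*(g_n-g_m)$ being Cauchy yields convergence of the truncated integrals in the semi-norm \eqref{3.0}, which your final approximation step implicitly supplies.
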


For a spectral function $\Si_\tau(\cd)$ denote by $\L$ the
multiplication operator in $\LS$ given by the relations
\begin {equation} \label{6.26}
\begin{array}{c}
\dom \L=\{f\in\LS:tf(t)\in\LS\} , \\
(\L f)(t)=tf(t), \;\; f\in\dom\L.
\end{array}
\end{equation}
As is known $\L$ is a self-adjoint operator and  the spectral
measure $E_\L(\cd)$ of $\L$ is
\begin {equation} \label{6.27}
(E_\L (\d)f)(t)=\chi_{\d}(t)f(t), \quad f\in\LS, \quad \d\in\cB,
\end{equation}
where $\chi_\d(\cd)$ is the indicator of the Borel set $\d$.
Moreover, in view of \eqref{6.19} one has
\begin {equation} \label{6.27a}
F_\tau(\b)-F_\tau(\a)=V^* E_\L ([\a,\b))V, \quad [\a,\b)\subset
\bR.
\end{equation}
\begin{proposition}\label{pr6.6}
Let $\St$ be a spectral function of the boundary problem
\eqref{4.0.1}-- \eqref{4.0.3.2}, let $V:\gH\to \LS$ be the
corresponding Fourier transform and let
\begin {equation} \label{6.28}
L_0:={\rm clos}\, (V\gH)
\end{equation}
Then the operator $\L$ is $L_0$-minimal (in the sense of
Definition \ref{def2.10c}).
\end{proposition}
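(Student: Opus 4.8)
We need to show that the multiplication operator $\L$ on $\LS$ is $L_0$-minimal, where $L_0 = \text{clos}\,(V\gH)$. By Definition \ref{def2.10c}, this means we must verify one of two equivalent conditions: either $\text{span}\{L_0, (\L-\l)^{-1}L_0 : \l\in\CR\} = \LS$, or there is no nontrivial subspace of $\LS\ominus L_0$ that is reducing for the spectral measure $E_\L$. I will aim at the second (spectral-invariance) formulation, since the explicit form of $E_\L$ in \eqref{6.27} makes invariant subspaces transparent.

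**The plan.** The key structural fact is that $\LS$ is realized as a direct integral $\int_\bR^\oplus G(t)\,d\s(t)$ (see \eqref{6.15}), and under this realization $E_\L(\d)$ acts by multiplication by the indicator $\chi_\d$ (formula \eqref{6.27}). Consequently, a closed subspace $\cL\subset\LS$ is reducing for all $E_\L(\d)$, $\d\in\cB$, if and only if it is a \emph{direct-integral} subspace, i.e. $\cL = \int_\bR^\oplus \cL(t)\,d\s(t)$ for some measurable field of subspaces $\cL(t)\subset G(t)$. The plan is therefore: first, reduce $L_0$-minimality to showing that if $\cL\subset\LS\ominus L_0$ satisfies $E_\L(\d)\cL\subset\cL$ for every bounded interval $\d=[\a,\b)$, then $\cL=\{0\}$. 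Second, I would use the direct-integral description to write such an invariant $\cL$ as a measurable field $\cL(t)$, and translate the condition $\cL\perp L_0$ into a pointwise (a.e.\ $\s$) orthogonality condition against the fibers of $\overline{V\gH}$.

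**Identifying the obstacle and the mechanism.** The heart of the matter is that the range $V\gH$ must be ``fiberwise total'' in $\LS$. Here I would invoke the defining relation \eqref{6.27a}, namely $F_\tau(\b)-F_\tau(\a) = V^* E_\L([\a,\b))V$, together with the isometry-modulo-kernel property \eqref{6.23}. The essential point is that the Fourier images $\wh f(s)$, as $\wt f$ ranges over $\gH_b$, are built from the columns of $\f_U^*(t,s)$ via \eqref{6.18}, and the definiteness of the system \eqref{3.1} guarantees that these values span the relevant fibers $G(s)$ for $\s$-a.e.\ $s$. Concretely, if $\cL=\int^\oplus\cL(t)\,d\s(t)$ is $E_\L$-invariant and orthogonal to every $V\wt f$, then for each $\wt f\in\gH_b$ one has $\int_{[\a,\b)}(d\Si_\tau(s)\wh f(s), g(s))=0$ for all $g\in\cL$ and all intervals $[\a,\b)$; differentiating in the interval endpoints forces $\wh f(s)\perp_{\Psi(s)}\cL(s)$ for $\s$-a.e.\ $s$. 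Since the vectors $\{\wh f(s)\}$ exhaust $H_0$ (equivalently, the full fiber $G(s)$) as $\wt f$ varies—this is where the minimal dimension $N_\Si=\dim H_0$ and the nondegeneracy of $\f_U$ enter—we conclude $\cL(s)=\{0\}$ for $\s$-a.e.\ $s$, hence $\cL=\{0\}$.

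**Expected main difficulty.** I anticipate the principal obstacle to be the fiberwise totality step: making rigorous the claim that $\{\wh f(s) : \wt f\in\gH_b\}$ spans $G(s)$ for almost every $s$, since $\wh f(s)=\int_\cI \f_U^*(t,s)\D(t)f(t)\,dt$ and one must rule out the existence of a measurable vector field annihilating all such integrals on a set of positive $\s$-measure. This will use the definiteness of the system to exclude nontrivial solutions orthogonal to $\ran\D$, much as in the uniqueness argument of Theorem \ref{th6.5}. Once fiberwise totality is established, the translation of $E_\L$-invariance into a direct-integral decomposition via \eqref{6.15} and \eqref{6.27} is routine, and the $L_0$-minimality of $\L$ follows immediately from condition (2) of Definition \ref{def2.10c}.
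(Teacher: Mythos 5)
Your plan is sound and, once its central lemma is supplied, it proves the proposition; but it is a genuinely different route from the paper's, and that lemma is precisely what you leave as a sketch. The paper also verifies condition (2) of Definition \ref{def2.10c}, yet it never decomposes $\LS$ into fibers: it takes $g\in L_1:=\ker V^*=\LS\ominus L_0$ with $E_\L([\a,\b))g\in L_1$ for every bounded interval, notes that then also $\L E_\L([\a,\b))g\in L_1$, and applies the explicit formula \eqref{6.25} for $V^*$ (Theorem \ref{th6.5a}) to both vectors. This produces the two time-domain functions $y(t)=\int_{[\a,\b)}\f_U(t,s)\,d\Si_\tau(s)g(s)$ and $f(t)=\int_{[\a,\b)}\f_U(t,s)\,d\Si_\tau(s)\,s\,g(s)$, both annihilated by $\D(t)$ a.e.; differentiation under the integral sign gives $Jy'-B(t)y=\D(t)f(t)=0$ a.e., so $y$ is a solution of the homogeneous system with $\D y=0$ a.e., hence $y\equiv 0$ by definiteness, and evaluation at $t=a$, where $\f_U(a,s)$ is independent of $s$ and injective, yields $\int_{[\a,\b)}d\Si_\tau(s)g(s)=0$ for all intervals, i.e. $g=0$. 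Thus the paper pulls the invariant vector back to $\LI$ through $V^*$ and invokes definiteness once, globally, with no direct integrals, measurable fields, or null-set bookkeeping. Your fiberwise route requires exactly that extra layer: (i) the totality claim that $\{\wh f(s)\}$ spans $G(s)$, which you assert rather than prove, and (ii) a common-null-set argument, since each relation $(\Psi(s)\wh f(s),g(s))=0$ holds $\s$-a.e. with an exceptional set depending on $\wt f$ (and on $g$), so you need a countable family $\{\wt f_n\}$ whose transforms span the fibers simultaneously for $\s$-a.e. $s$. Both points are closable, and by the same mechanism the paper uses: for (i), if $h\in H_0$ satisfies $(\wh f(s),h)=0$ for all compactly supported $f$, choose $f=\chi_{[a,\b]}\f_U(\cd,s)h$ to get $\int_a^\b(\D(t)\f_U(t,s)h,\f_U(t,s)h)\,dt=0$, hence $\D(t)\f_U(t,s)h=0$ a.e., hence $\f_U(\cd,s)h\equiv 0$ by definiteness and $h=0$ because $\ker\f_U(a,s)=\{0\}$; this even gives totality for every $s$, not merely a.e. For (ii), continuity (indeed analyticity) of $s\mapsto\wh f(s)$ together with rational truncation points and rational spectral parameters produces the required countable family. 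What your approach buys in exchange for these technicalities is a sharper intermediate statement, fiberwise totality of the transforms, which is equivalent to $L_0$-minimality and does not use Theorem \ref{th6.5a} at all; what the paper's approach buys is brevity, since formula \eqref{6.25} lets definiteness do all the work in the time domain.
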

\begin{proof}
Let $L_1:=\ker V^*(=\LS \ominus L_0)$ and let $g\in L_1$ be a
vector such that $E_\L ([\a,\b))g\in L_1$ for each bounded
interval $[\a,\b)\subset \bR$. Then $\L E_\L ([\a,\b))g\in L_1$
and, consequently, $V^* E_\L ([\a,\b))g=0$ and $V^* \L E_\L
([\a,\b))g=0$. This and \eqref{6.25} imply that the functions
\begin {equation}\label{6.30}
y(t)=\int_{[\a,\b)}\f_U(t,s)\,d\Si_\tau(s)g(s), \;\; f(t)=
\int_{[\a,\b)}\f_U(t,s)\,d\Si_\tau(s) s g(s)
\end{equation}
satisfy the equalities
\begin {equation} \label{6.31}
\D(t)y(t)=0 \;\;\text{and} \;\;\D(t)f(t)=0 \;\;\text{a.e. on}
\;\;\cI.
\end{equation}
On the other hand, in view of \eqref{6.30} one has
\begin{gather*}
J y'(t)-B(t)y(t)=\int_{[\a,\b)}(J\f_U'(t,s)-B(t)\f_U(t,s))\, d
\Si_\tau(s)g(s)=\\
\int_{[\a,\b)}(s\D(t)\f_U(t,s))\, d \Si_\tau(s)g(s)=\D(t)f(t).
\end{gather*}
Combining this equality with \eqref{6.31} and taking definiteness
of the system \eqref{3.1} into account one gets
\begin {equation}\label{6.32}
y(t)=\int_{[\a,\b)}\f_U(t,s)\,d\Si_\tau(s)g(s)=0, \quad t\in\cI,
\quad [\a,\b)\subset \bR.
\end{equation}
It follows from \eqref{3.32.1} and \eqref{3.32.2} that the
operator $\f_U(a,s)$ dos not depend on $s$ and $\ker
\f_u(a,s)=\{0\}$. This and \eqref{6.32} yield
\begin{gather*}
\int_{[\a,\b)}d\Si_\tau(s)g(s)=0, \quad [\a,\b)\subset \bR,
\end{gather*}
which gives the equality $g=0$. Thus the condition (2) of
Definition \ref{def2.10c} is satisfied.
\end{proof}
Let $\wt\gH$ be  decomposed as in  \eqref{6.20} and let
\begin {equation}\label{6.33}
\gH_V:=\wt\gH_0\cap\gH, \quad \gH_k:=\mul \Tt\cap\gH, \quad
\gH_c=\gH\ominus(\gH_V\oplus\gH_k).
\end{equation}
Then
\begin {equation}\label{6.34}
\gH=\gH_V\oplus\gH_k\oplus\gH_c
\end{equation}
and by \eqref{6.21} the operator $V$ (the Fourier transform) is
isometric on $\gH_V$, strictly contractive on $\gH_c$ and has
$\gH_k$ as a kernel. Observe also that $\mul T\subset \gH_k$, so
that $V\up \mul T=0$.

Next assume that $\gH_0:=\gH\ominus\mul T$, so that $\gH$ can be
represented as
\begin {equation}\label{6.35}
\gH=\gH_0\oplus \mul T.
\end{equation}
It follows from \eqref{6.34} that $\gH_0$ is the maximally
possible subspace of $\gH$ on which the Fourier transform $V$ may
be isometric.
\begin{definition}\label{def6.7}
A spectral function $\St$ of the boundary problem
\eqref{4.0.1}--\eqref{4.0.3.2} will be referred to the class
$SF_0$ if the operator
\begin {equation}\label{6.36}
V_0:=V\up\gH_0
\end{equation}
is an isometry from $\gH_0$ to $\LS$.
\end{definition}
By using $\gH$-minimality of $\Tt$ one can easily show that
\begin {equation}\label{6.37}
\St\in SF_0\iff \mul\Tt=\mul T.
\end{equation}
Therefore all spectral functions belong to $SF_0$ if and only if
$\mul T=\mul T^*$.

If $\St\in SF_0$, then by   \eqref{6.25} for each $\wt f\in \gH_0$
the inverse Fourier transform is
\begin {equation}\label{6.38}
\wt f=\pi\left(\int_\bR \f_U(\cd,s)\,d\Si_\tau(s) \wh f(s)\right)
\end{equation}
\begin{theorem}\label{th6.9}
Let $\tau$ be a boundary parameter, let $\St$ be the spectral
function of the boundary problem \eqref{4.0.1}--\eqref{4.0.3.2}
and let $V$ be the corresponding Fourier transform. Assume also
that $\Tt\in\C (\wt\gH)$ is the (exit space) self-adjoint
extension of $T$ defined by \eqref{6.18.1}, $\wt\gH_0\subset
\wt\gH$ and $\gH_0\subset \gH$ are the subspaces from
decompositions \eqref{6.20} and  \eqref{6.35} respectively  and
$T^\tau$ is the operator part of $\Tt$ (so that $T^\tau$ is a
self-adjoint operator in $\wt\gH_0$). If $\St\in SF_0$, then
$\gH_0\subset \wt\gH_0$ and there exists a unitary operator $\wt
V\in [\wt\gH_0, \LS]$ such that $\wt V\up\gH_0=V_0(=V\up \gH_0)$
and the operators $T^\tau$ and $\L$ are unitarily equivalent by
means of $\wt V$.

Moreover, if $\mul T=\mul T^* $ (that is, the condition {\rm (C1)}
in Remark \ref{rem3.7} is fulfilled), then the statements of the
theorem hold for each spectral function $\St$.
 \end{theorem}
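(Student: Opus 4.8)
The plan is to reduce the theorem to the abstract unitary‑equivalence criterion of Proposition \ref{pr2.11.1}, applied to the pair $\wt A_1=T^\tau$ (acting in $\wt\gH_0$) and $\wt A_2=\L$ (acting in $\LS$), with the unitary identification $V_0=V\up\gH_0$. First I would settle the bookkeeping of the four spaces. Since $\St\in SF_0$, the equivalence \eqref{6.37} gives $\mul\Tt=\mul T$; together with the decompositions \eqref{6.20} and \eqref{6.35} this shows that $\gH_0=\gH\ominus\mul T$ and $\wt\gH_0=\wt\gH\ominus\mul T$ share the common summand $\mul T$, whence $\gH_0\subset\wt\gH_0$ and, moreover, $\wt\gH_0\ominus\gH_0=\wt\gH\ominus\gH$. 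Because $V$ annihilates $\mul T$ and $V_0$ is isometric, $L_0={\rm clos}\,(V\gH)=V_0\gH_0$ is closed and $V_0:\gH_0\to L_0$ is unitary.

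Next I would transfer minimality. By Proposition \ref{pr6.6} the operator $\L$ is $L_0$-minimal, so it remains to verify that $T^\tau$ is $\gH_0$-minimal in $\wt\gH_0$. Writing $E(\cd)=E'(\cd)P_{\wt\gH_0}$ for the spectral measures of $\Tt$ and of its operator part $T^\tau$ (see \eqref{2.0.1}), one has $E([\a,\b))\up\wt\gH_0=E'([\a,\b))$; hence any $E'$-invariant subspace $\gH'\subset\wt\gH_0\ominus\gH_0=\wt\gH\ominus\gH$ is automatically $E$-invariant while still contained in $\wt\gH\ominus\gH$. The $\gH$-minimality of $\Tt$ and condition (2) of Definition \ref{def2.10c} then force $\gH'=\{0\}$, which is precisely $\gH_0$-minimality of $T^\tau$.

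The technical heart is the resolvent identity demanded by Proposition \ref{pr2.11.1}, namely
\[
P_{\gH_0}(T^\tau-\l)^{-1}\up\gH_0=V_0^*\bigl(P_{L_0}(\L-\l)^{-1}\up L_0\bigr)V_0,\quad \l\in\CR .
\]
I would obtain it by first integrating \eqref{6.27a} against $(t-\l)^{-1}$ and combining with \eqref{6.18.2} and the spectral theorem for $\L$ to reach the clean global formula $R_\tau(\l)=V^*(\L-\l)^{-1}V$ on all of $\gH$. Restricting to $\gH_0$, I would use that for $\wt f\in\gH_0\subset\wt\gH_0$ one has $(T^\tau-\l)^{-1}\wt f=(\Tt-\l)^{-1}\wt f$ (the multivalued part of $\Tt$ does not enter its resolvent), so that $P_{\gH_0}(T^\tau-\l)^{-1}\up\gH_0=P_{\gH_0}R_\tau(\l)\up\gH_0$. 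Finally the identities $V=V_0P_{\gH_0}$ (because $V\up\mul T=0$) and $\ran V^*\subset\gH_0$ give $P_{\gH_0}V^*=V_0^*$ and $V\up\gH_0=V_0$, while $V_0^*=V_0^*P_{L_0}$ together with $\ran V_0=L_0$ lets me insert $P_{L_0}$ and the restriction $\up L_0$; this produces exactly the displayed identity. With every hypothesis in place, Proposition \ref{pr2.11.1} yields a unitary $\wt V\in[\wt\gH_0,\LS]$ with $\wt V\up\gH_0=V_0$ implementing $T^\tau=\wt V^*\L\wt V$, which is the assertion.

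For the concluding ("moreover") statement I would simply invoke the remark following \eqref{6.37}: the condition $\mul T=\mul T^*$, i.e. (C1) of Remark \ref{rem3.7}, is equivalent to $\St\in SF_0$ for \emph{every} boundary parameter $\tau$, so the case already proved applies to each spectral function. I expect the main obstacle to be the careful reconciliation of the four spaces $\gH,\gH_0,\wt\gH,\wt\gH_0$ and of the projections $P_{\gH_0},P_{L_0}$ with the global resolvent $R_\tau(\l)$ so as to land the identity in exactly the form Proposition \ref{pr2.11.1} requires; once $R_\tau(\l)=V^*(\L-\l)^{-1}V$ is in hand, the remaining manipulations are routine bookkeeping.
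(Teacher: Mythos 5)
Your proposal is correct and follows essentially the same route as the paper: use $\St\in SF_0\Rightarrow\mul\Tt=\mul T$ to align the decompositions \eqref{6.20}, \eqref{6.35}, derive the projected resolvent identity $P_{\gH_0}(T^\tau-\l)^{-1}\up\gH_0=V_0^*\bigl(P_{L_0}(\L-\l)^{-1}\up L_0\bigr)V_0$ from \eqref{6.27a}, invoke Proposition \ref{pr6.6} for $L_0$-minimality of $\L$ and the $\gH$-minimality of $\Tt$ for $\gH_0$-minimality of $T^\tau$, and conclude by Proposition \ref{pr2.11.1}. The only cosmetic difference is that the paper works directly with the spectral functions $E_\tau$ and $E_\L$ before passing to resolvents, whereas you pass through the global formula $R_\tau(\l)=V^*(\L-\l)^{-1}V$ and restrict; your spelled-out verification of $\gH_0$-minimality via condition (2) of Definition \ref{def2.10c} is exactly what the paper's one-line claim implicitly contains.
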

\begin{proof}
Since in view of \eqref{6.37} $\mul \Tt=\mul T$, it follows that
$\gH_0\subset\wt\gH_0$ and the decomposition \eqref{6.20} takes
the form
\begin {equation}\label{6.42}
\wt\gH=\wt\gH_0\oplus\mul T.
\end{equation}
It follows from \eqref{6.27a} and \eqref{2.37} that for each
finite interval $[\a,\b)\subset \bR$ the spectral function
$E_\tau(\cd)$ of $T^\tau$ satisfies the equality
\begin {equation*}
P_{\gH_0}E_\tau ([\a,\b))\up\gH_0=V_0^* E_\L
([\a,\b))V_0=V_0^*(P_{L_0}E_\L ([\a,\b))\up L_0)V_0,
\end{equation*}
where $L_0=V_0\gH_0(=V\gH_0)$. This implies that
\begin {equation}\label{6.43}
P_{\gH_0}(T^\tau -\l)^{-1}\up \gH_0= V_0^*(P_{L_0}(\L-\l)^{-1}\up
L_0)V_0, \quad \l\in\CR.
\end{equation}
Since  $\Tt$ is $\gH$-minimal, it follows from \eqref{6.35} and
\eqref{6.42} that the operator $T^\tau$ is $\gH_0$-minimal.
Moreover, according to Proposition \ref{pr6.6} the  operator $\L$
is $L_0$-minimal. Now, applying Proposition \ref{pr2.11.1} to
operators $T^\tau$ and $\L$ we arrive at the desired statements
for $\St\in SF_0$.

The last statement of the theorem follows from the fact that in
the case $\mul T=\mul T^* $ the inclusion $\St\in SF_0$ holds for
each spectral function $\St$.
\end{proof}
Combining of Theorems \ref{th6.9} and \ref{th4.0.1} yields the
following corollary.
\begin{corollary}\label{cor6.10}
Let $\tau$ be a boundary parameter and let $\St$ be the spectral
function of the boundary problem \eqref{4.0.1}--\eqref{4.0.3.2}.
Then the following statements are equivalent:

{\rm (1)} $n_+(\Tmi)=n_-(\Tmi), \; \tau \in \wt R^0(\cH_b)$ and
the canonical self-adjoint extension $\Tt$ of $T$ given by
\eqref{4.0.5} satisfies the equality $\mul \Tt=\mul T$

{\rm (2)} The Fourier transform $V$ isometrically maps $\gH_0$
onto $\LS$ (that is, $V\up \gH_0$ is a unitary operator).

If the statement {\rm (1)} (and hence {\rm (2)}) is valid, then
the operator $T^\tau$ (the self-adjoint part of $\Tt$) and the
multiplication operator $\L$ are unitarily equivalent by means of
$V$.
\end{corollary}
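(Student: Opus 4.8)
The plan is to prove the equivalence $(1)\Llr(2)$ by combining the criterion \eqref{6.37}, $\St\in SF_0\iff\mul\Tt=\mul T$, with the description of canonical resolvents from Theorem \ref{th4.0.1}. First I would observe that statement $(2)$ can be reformulated: $V\up\gH_0$ is unitary onto $\LS$ precisely when it is both isometric and surjective. By Definition \ref{def6.7} and \eqref{6.37}, the isometry of $V_0=V\up\gH_0$ is equivalent to $\St\in SF_0$, i.e.\ to $\mul\Tt=\mul T$. So the content of $(2)$ splits into the condition $\mul\Tt=\mul T$ together with surjectivity of $V_0$ onto $\LS$.

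For the implication $(1)\Rightarrow(2)$, I would argue as follows. Under $(1)$ we have $n_+(\Tmi)=n_-(\Tmi)$ and $\tau\in\wt R^0(\cH_b)$, so by Theorem \ref{th4.0.1} the generalized resolvent $R_\tau(\l)$ is \emph{canonical}, meaning $\wt\gH=\gH$ and $\Tt$ is the self-adjoint extension \eqref{4.0.5} acting in $\gH$ itself. The hypothesis $\mul\Tt=\mul T$ then gives $\St\in SF_0$ via \eqref{6.37}, so $V_0$ is an isometry. To get surjectivity I would invoke Theorem \ref{th6.9}: since $\St\in SF_0$, there is a unitary $\wt V\in[\wt\gH_0,\LS]$ with $\wt V\up\gH_0=V_0$. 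Because the resolvent is canonical, $\wt\gH=\gH$ and hence $\wt\gH_0=\gH_0$; therefore $\wt V=V_0$ is already unitary from $\gH_0$ onto $\LS$, which is exactly $(2)$. The final sentence of the corollary about unitary equivalence of $T^\tau$ and $\L$ is then immediate from the last clause of Theorem \ref{th6.9} with $\wt V=V_0$.

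For the converse $(2)\Rightarrow(1)$, I would run the argument backwards. If $V_0$ is unitary onto $\LS$, then in particular it is an isometry, so $\St\in SF_0$ and $\mul\Tt=\mul T$ by \eqref{6.37}. The delicate part is to deduce that the resolvent is canonical, i.e.\ $\wt\gH=\gH$, from surjectivity of $V_0$. Here I would use Theorem \ref{th6.9} again: it provides a unitary $\wt V\in[\wt\gH_0,\LS]$ restricting to $V_0$ on $\gH_0$. Since $V_0$ is \emph{already} unitary onto $\LS$, and $\gH_0\subset\wt\gH_0$ with $\wt V$ an isometric extension of $V_0$, the ranges force $\gH_0=\wt\gH_0$; combined with $\mul\Tt=\mul T$ and decompositions \eqref{6.20}, \eqref{6.35} this yields $\wt\gH=\gH$. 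Thus $\Tt$ is a canonical self-adjoint extension of $T$ in $\gH$, so $R_\tau(\l)=(\Tt-\l)^{-1}$ is canonical; by the last part of Theorem \ref{th4.0.1} this happens if and only if $n_+(\Tmi)=n_-(\Tmi)$ and $\tau\in\wt R^0(\cH_b)$, with $\Tt$ of the form \eqref{4.0.5}. This recovers $(1)$.

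The main obstacle I anticipate is the surjectivity bookkeeping in $(2)\Rightarrow(1)$: one must be careful that $\wt V$ extends $V_0$ \emph{isometrically} to the possibly larger space $\wt\gH_0$, and then argue that surjectivity of the restriction $V_0$ already exhausts $\LS$, leaving no room for $\wt\gH_0$ to be strictly larger than $\gH_0$. This uses $\gH$-minimality of $\Tt$ in an essential way, since without it $\wt\gH_0\ominus\gH_0$ could in principle be nontrivial yet map into $\LS$ under $\wt V$; $\gH$-minimality (through Proposition \ref{pr6.6} and the construction in Theorem \ref{th6.9}) guarantees that $\wt V(\wt\gH_0)=\overline{V\gH}=L_0$, and unitarity of $V_0$ forces $L_0=\LS$, closing the gap. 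Everything else reduces to assembling the already-established equivalences \eqref{6.37} and Theorems \ref{th6.9}, \ref{th4.0.1}.
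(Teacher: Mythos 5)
Your proposal is correct and takes essentially the same route as the paper, whose entire proof is the one-line remark that the corollary follows by combining Theorems \ref{th6.9} and \ref{th4.0.1}; your write-up simply executes that combination in detail via \eqref{6.37} and the decompositions \eqref{6.20}, \eqref{6.35}. The only hair-thin point is that in the direction $(2)\Rightarrow(1)$ the equality $n_+(\Tmi)=n_-(\Tmi)$ is not literally part of the ``if and only if'' clause of Theorem \ref{th4.0.1} (which is stated under that hypothesis); it is recovered first, from the fact that a canonical resolvent of $T$ can exist only when $n_+(T)=n_-(T)$ (remark after Definition \ref{def2.11.3}) together with the index identities of Proposition \ref{pr3.6} and \eqref{3.17d} -- a step that is immediate to fill.
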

\begin{theorem}\label{th6.11}
Assume that $T$ is a densely defined operator, that is, the
condition {\rm (C2)} in Remark \ref{rem3.7} is fulfilled. Then for
each boundary parameter $\tau$ and the corresponding spectral
function $\St$ the following hold: {\rm (i)} $\Tt$ is an operator,
that is, $\Tt= T^\tau$;   {\rm (ii)} the  Fourier transform $V$ is
an isometry; {\rm (iii) } there exists a unitary operator $\wt
V\in [\wt\gH, \LS]$ such that $\wt V\up \gH= V$ and the operators
$\Tt$ and $\L$ are unitarily equivalent by means of $\wt V$.

Moreover, the following statements are equivalent:

{\rm (1)} $n_+(\Tmi)=n_-(\Tmi)$ and $ \tau \in \wt R^0(\cH_b)$,
so that $\Tt$ is the canonical self-adjoint extension of $T$
given by the boundary conditions \eqref{4.0.5};

{\rm (2)} $V\gH =\LS$, that is the fourier transform $V$ is a
unitary operator.

If the statement {\rm (1)} (and hence {\rm (2)}) is valid, then
the operators $\Tt$ and $\L$ are unitarily equivalent by means of
$V$.
\end{theorem}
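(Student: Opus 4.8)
The plan is to deduce the whole statement from Theorem~\ref{th6.9}, the characterization \eqref{6.37} of the class $SF_0$, and the canonical-resolvent criterion in Theorem~\ref{th4.0.1}. The first step is to observe that condition (C2) means $\mul T^*=\{0\}$; since $T\subset T^*$ gives $\mul T\subset\mul T^*$, we get $\mul T=\mul T^*=\{0\}$, so condition (C1) of Remark~\ref{rem3.7} holds. Hence by \eqref{6.37} every spectral function $\St$ belongs to $SF_0$ and $\mul\Tt=\mul T=\{0\}$.

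From $\mul\Tt=\{0\}$ statement (i) is immediate, since a relation with trivial multivalued part is an operator, so $\Tt=T^\tau$. For (ii) and (iii) I would specialize Theorem~\ref{th6.9} to the present situation. Because $\mul T=\{0\}$, the decomposition \eqref{6.35} gives $\gH_0=\gH$, so the isometry $V_0=V\up\gH_0$ provided by Theorem~\ref{th6.9} coincides with $V$; this is (ii). Because $\mul\Tt=\{0\}$, the decomposition \eqref{6.20} gives $\wt\gH=\wt\gH_0$, so the unitary operator $\wt V\in[\wt\gH_0,\LS]$ of Theorem~\ref{th6.9} is a unitary operator from $\wt\gH$ onto $\LS$ with $\wt V\up\gH=V$, and it intertwines $T^\tau=\Tt$ with $\L$. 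This is exactly (iii).

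For the equivalence the key is the relation $\wt\gH=\gH$. If (1) holds, then by Theorem~\ref{th4.0.1} the resolvent $R_\tau(\cd)$ is canonical, so $\wt\gH=\gH$; combined with (iii) the extending unitary $\wt V$ equals $V$ and maps $\gH=\wt\gH$ onto $\LS$, giving $V\gH=\LS$, which is (2). Conversely, if (2) holds, then $V$ is surjective and, being isometric by (ii), unitary from $\gH$ onto $\LS$; since the unitary $\wt V$ of (iii) extends $V$ and is injective on $\wt\gH$ while $\wt V(\gH)=V(\gH)=\LS=\wt V(\wt\gH)$, we must have $\wt\gH=\gH$. Then $R_\tau(\cd)=(\Tt-\l)^{-1}$ is a canonical resolvent of $T$, which forces $n_+(T)=n_-(T)$ and hence, by Proposition~\ref{pr3.6} together with \eqref{3.17d}, $n_+(\Tmi)=n_-(\Tmi)$; Theorem~\ref{th4.0.1} then yields $\tau\in\wt R^0(\cH_b)$, which is (1). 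The concluding assertion follows at once: when (1) holds we have $\wt\gH=\gH$, so the operator $\wt V$ from (iii) is $V$ itself, and $\Tt=T^\tau$ is therefore unitarily equivalent to $\L$ by means of $V$.

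The main obstacle I anticipate is the bookkeeping in the direction (2)$\Rightarrow$(1): one has to extract $\wt\gH=\gH$ purely from surjectivity of $V$ and the existence of the extending unitary $\wt V$, and then convert the canonicity of $R_\tau(\cd)$ into the constraints on the deficiency indices and on $\tau$ through the if-and-only-if part of Theorem~\ref{th4.0.1}. The remaining steps are routine specializations of Theorem~\ref{th6.9} to the case $\mul T=\{0\}$.
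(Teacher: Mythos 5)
Your proposal is correct and takes essentially the same route as the paper, whose entire proof reads: since $\mul T=\mul T^*=\{0\}$, the statements follow from Theorem \ref{th6.9} and Corollary \ref{cor6.10}. The only difference is that you inline the content of Corollary \ref{cor6.10} (which the paper itself obtains by combining Theorems \ref{th6.9} and \ref{th4.0.1}) instead of citing it — your direct argument that $V\gH=\LS$ forces $\wt\gH=\gH$ via injectivity of $\wt V$, followed by the deficiency-index bookkeeping through Proposition \ref{pr3.6} and \eqref{3.17d}, is exactly the reasoning hidden in that corollary.
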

\begin{proof}
Since $\mul T=\mul T^*=\{0\}$, the required statements are implied
by Theorem \ref{th6.9} and Corollary \ref{cor6.10}.
\end{proof}
It follows from Theorem \ref{th6.9} that the operators $T^\tau$
and $\L$ have the same spectral properties. This implies, in
particular, the following corollary.
\begin{corollary}\label{cor6.12}
{\rm (1)} If $\tau $ is a boundary parameter such that $\St \in
SF_0$, then the spectral multiplicity of the operator $T^\tau$
does not exceed $\nu_-(=\dim H_0)$.

{\rm (2)} If the condition  {\rm (C1)} in Remark \ref{rem3.7} is
fulfilled, then the above statement on the spectral multiplicity
of  $T^\tau$ holds for each boundary parameter $\tau$.
\end{corollary}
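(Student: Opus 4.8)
The plan is to read off the assertion from Theorem \ref{th6.9} together with the fibre structure of the multiplication operator $\L$ on $\LS$. The crux is the elementary fact that the spectral multiplicity of $\L$ is controlled by the dimensions of the fibres in the direct-integral decomposition \eqref{6.15}, and these fibres are subspaces of the finite-dimensional space $H_0$; everything else is a matter of assembling results already at our disposal.

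First I would dispose of part (1). Assume $\St\in SF_0$. By Theorem \ref{th6.9} the operator part $T^\tau$ of $\Tt$ is unitarily equivalent (via the operator $\wt V$ produced there) to the multiplication operator $\L$ on $\LS$. Unitarily equivalent self-adjoint operators have identical spectral multiplicity functions, so it suffices to estimate the spectral multiplicity of $\L$. Using the representation \eqref{6.15}, namely $\LS=\int_\bR^\oplus G(t)\,d\s(t)$ with $\dim G(t)\le\dim H_0$ for $\s$-almost every $t$, the operator $\L$ acts on each fibre $G(t)$ as multiplication by the scalar $t$. Hence $\L$ has diagonal spectral multiplicity not exceeding $\dim H_0$ (the essential supremum of $\dim G(t)$). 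Since $\dim H_0=\dim H+\dim\wh H=\nu_++\wh\nu=\nu_-$ by \eqref{3.16a} and \eqref{3.12}, this yields the bound $\nu_-$ for the multiplicity of $\L$, and therefore for that of $T^\tau$, proving (1).

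For part (2) I would invoke the characterization \eqref{6.37}, $\St\in SF_0\Llr\mul\Tt=\mul T$, together with the observation recorded after it: all spectral functions lie in $SF_0$ precisely when $\mul T=\mul T^*$. By Remark \ref{rem3.7} condition (C1) is exactly $\mul T=\mul T^*$, so under (C1) the hypothesis $\St\in SF_0$ of part (1) is automatically satisfied for every boundary parameter $\tau$. The multiplicity bound of part (1) then propagates verbatim to each $\St$, which is the claim in (2).

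The only genuinely substantive step is the fibre-dimension estimate for $\L$: one must know that multiplication by the independent variable in $L^2(\Si;H_0)$, with $\dim H_0=\nu_-<\infty$, has maximal spectral multiplicity at most $\nu_-$. This is standard for direct integrals but is where the finite-dimensionality of $H_0$ (and hence the numerical value $\nu_-$) genuinely enters; the rest is bookkeeping with the unitary equivalence from Theorem \ref{th6.9} and the equivalences \eqref{6.37} and (C1).
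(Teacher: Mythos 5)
Your proposal is correct and follows essentially the same route as the paper, which obtains the corollary directly from Theorem \ref{th6.9} (unitary equivalence of $T^\tau$ with the multiplication operator $\L$ in $\LS$) combined with the standard fibre-dimension bound for $\L$ coming from the direct-integral structure \eqref{6.15}, and handles part (2) exactly as you do via \eqref{6.37} and the identification of (C1) with $\mul T=\mul T^*$. Your write-up merely makes explicit the multiplicity estimate $\dim G(t)\le \dim H_0=\nu_-$ that the paper leaves implicit.
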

In the next theorem we give a parametrization of all spectral
functions $\St$ in terms of a boundary parameter $\tau$.
\begin{theorem}\label{th6.13}
Let $n_+(\Tmi)=n_-(\Tmi)$ and let $M(\cd)$ be the operator
function defined  by \eqref{4.26.16}--\eqref{4.26.18}. Then, for
each boundary parameter $\tau \in \wt R(\cH_b)$ given by
\eqref{2.19} the equality
\begin {equation}\label{6.45}
m_\tau(\l)=m_0(\l)+M_2(\l)(C_0(\l)-C_1(\l)M_4(\l))^{-1}C_1(\l)M_3(\l),
\quad\l\in\CR,
\end{equation}
together with \eqref{6.24} defines a (unique) spectral function
$\St$ of the boundary problem \eqref{4.0.1}--\eqref{4.0.3.2}.
Moreover, the following hold:

{\rm (1)} $\St \in SF_0$ if and only if the following two
conditions are satisfied:
\begin{gather}
\lim\limits_{y\to \infty} \frac 1 y (C_0(i y)-C_1(i y)M_4(i
y))^{-1}C_1(i
y)=0,\label{6.46}\\
\lim\limits_{y\to \infty} \frac 1 y M_4(i y) (C_0(i y)-C_1(i
y)M_4(i y))^{-1}C_0(i y)=0.\label{6.47}
\end{gather}

{\rm (2)} Each spectral function $\St$ belongs to the class $SF_0$
if and only if
\begin {equation*}
\lim\limits_{y\to \infty} \frac 1 y M_4(i y) =0
\;\;\;\text{and}\;\;\; \lim\limits_{y\to \infty}y\,\im (M_4(i
y)h,h)=+\infty, \;\; h\in\cH_b, \;\;h\neq 0.
\end{equation*}
\end{theorem}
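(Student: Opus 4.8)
The plan is to settle the opening assertion at once and then reduce each of (1) and (2) to a statement about the multivalued part of the exit space extension $\Tt$, which I read off from the behaviour at $i\infty$ of the Weyl function $M_4(\cd)$ of the auxiliary boundary triplet for $T^*$. The first sentence requires no new work: since $n_+(\Tmi)=n_-(\Tmi)$, Corollary \ref{cor5.4a} gives the $m$-function exactly in the form \eqref{6.45} (this is \eqref{5.8a}), and Theorem \ref{th6.5} then provides a unique spectral function $\St$ of the problem \eqref{4.0.1}--\eqref{4.0.3.2}, recovered from $m_\tau(\cd)$ by the Stieltjes inversion formula \eqref{6.24}.

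For statement (1) I would start from the equivalence \eqref{6.37}, so that $\St\in SF_0$ is the same as $\mul\Tt=\mul T$. By Theorem \ref{th4.0.1} the $\gH$-minimal extension $\Tt$ generating $R_\tau(\cd)$ is precisely the exit space extension of $T$ produced by the boundary parameter $\tau$ through the ordinary boundary triplet $\dot\Pi=\{\cH_b,\dot\G_0,\dot\G_1\}$ for $T^*$ of Proposition \ref{pr3.6}, whose Weyl function is the block $M_4(\cd)$ (Proposition \ref{pr4.1a} and the proof of Theorem \ref{th4.2}). Hence $\mul\Tt=\mul T$ is exactly the $\Pi$-admissibility of $\tau$ relative to $\dot\Pi$, and I would invoke the general admissibility criterion of \cite{DM00, DM09}, which controls the added multivalued part through the growth of $(\tau(iy)+M_4(iy))^{-1}$ as $y\to\infty$. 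It is significant that only $M_4(\cd)$ enters here and not $m_0,M_2,M_3$: the triplet doing the parametrisation is $\dot\Pi$, whose Weyl function is $M_4$.

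The technical heart, and the step I expect to be the main obstacle, is to convert these abstract admissibility conditions into the explicit limits \eqref{6.46}, \eqref{6.47}. Here I would use the identity $-(\tau_+(\l)+M_4(\l))^{-1}=(C_0(\l)-C_1(\l)M_4(\l))^{-1}C_1(\l)$ already established in the proof of Theorem \ref{th5.4} (valid since $0\in\rho(C_0(\l)-C_1(\l)M_4(\l))$ by \cite[Lemma 2.1]{MalMog02}), together with the companion expression for $M_4(C_0-C_1M_4)^{-1}C_0$, to rewrite the admissibility expressions built from $(\tau(iy)+M_4(iy))^{-1}$ and $M_4(iy)(\tau(iy)+M_4(iy))^{-1}$ in the stated form. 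The delicate point is controlling the inverse $(C_0(iy)-C_1(iy)M_4(iy))^{-1}$ uniformly along the imaginary axis and checking that no spurious cross terms survive, which is exactly what the general theorem of \cite{DM00, DM09} supplies.

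Finally, statement (2) I would deduce from the observation recorded just after \eqref{6.37}, that every spectral function lies in $SF_0$ precisely when $\mul T=\mul T^*$, i.e. under condition {\rm (C1)}. It then remains to express $\mul T=\mul T^*$ through $M_4(\cd)$ alone. Working again with the triplet $\dot\Pi$, the condition $\lim_{y\to\infty}\tfrac1y M_4(iy)=0$ is the vanishing of the mass of $M_4(\cd)$ at infinity, while $\lim_{y\to\infty}y\,\im(M_4(iy)h,h)=+\infty$ for every $h\neq0$ records that the spectral measure of $M_4(\cd)$ is unbounded in each direction; by the limit-value description of $\mul T$ and of $\mul T^*$ in terms of the Weyl function, these two requirements together are equivalent to $\mul T=\mul T^*$. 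As a cross-check one verifies that \eqref{6.46} and \eqref{6.47} hold for every Nevanlinna pair $\{C_0,C_1\}$ if and only if these two conditions on $M_4(\cd)$ are in force, which recovers (2) directly from (1).
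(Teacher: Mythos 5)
Your proposal is correct and follows essentially the same route as the paper: the opening assertion is obtained from Corollary \ref{cor5.4a} together with Theorem \ref{th6.5}, and statements (1) and (2) are reduced to the ordinary boundary triplet $\dot\Pi=\{\cH_b,\dot\G_0,\dot\G_1\}$ for $T^*$, whose Weyl function is the block $M_4(\cd)$, to which the $\Pi$-admissibility results of Derkach--Hassi--Malamud--de Snoo are applied. The only difference is one of detail: the paper simply cites \cite{DM95,DM00} at this point, whereas you unpack what those results say (the equivalence $\St\in SF_0\iff\mul\Tt=\mul T$ from \eqref{6.37}, the identity $-(\tau_+(\l)+M_{4}(\l))^{-1}=(C_0(\l)-C_1(\l)M_{4}(\l))^{-1}C_1(\l)$, and the characterization of $\mul T=\mul T^*$ via the boundary behaviour of $M_4$ at $i\infty$), which is consistent with the paper's intent.
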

\begin{proof}
The main statement of the theorem directly follows from Corollary
\ref{cor5.4a} and Theorem \ref{th6.5}.

Next, consider the boundary triplet $\dot\Pi=\{\cH_b,\dot \G_0,
\dot\G_1\}$ for $T^*$ defined in Proposition \ref{pr3.6}. Since
$M(\cd) $ is the Weyl function of the decomposing boundary triplet
\eqref{3.43.1}, \eqref{3.43.2}for $\Tma$, it follows from
Proposition \ref{pr2.10a}, (3) that the Weyl function of the
triplet $\dot\Pi$ coincides with $M_4(\l)$. Now applying to the
boundary triplet $\dot\Pi$ the results of \cite{DM95, DM00} we
obtain statements (1) and (2).
\end{proof}
\subsection{The case of minimal equal deficiency indices}\label{sub6.4}
In this subsection we reformulate the above results for the
simplest case of minimally possible equal deficiency indices of
$\Tmi$, which in view of \eqref{3.17c} are
\begin {equation}\label{6.50}
n_+(\Tmi)= n_-(\Tmi)=\nu_-.
\end{equation}
In this case $\nu_{b-}=0, \; \nu_{b+}=\wh\nu$ and according to
Lemma \ref{lem3.2.1} there exists a surjective linear mapping
$\wh\G_b:\dom\tma\to \wh H$ such that
\begin {equation}\label{6.51}
[y,z]_b=i(\wh\G_b y, \wh\G_b z), \quad y,z\in\dom\tma.
\end{equation}
Below we suppose that the assumption (A1) at the beginning of
Section \ref{sect4} is fulfilled and that $\wh \G_b$ is a
surjective operator satisfying \eqref{6.51}.

It follows from Proposition \ref{pr3.6} and Theorem \ref{th4.0.1}
that the equality
\begin {equation*}
T=\{\{\wt y,\wt f\}\in\Tma: \G_{1a}y=0, \; \wh\G_a y = \wh\G_b y
\}
\end{equation*}
defines a self-adjoint relation $T$ in $\gH(=\LI)$ and the
(canonical) resolvent of $T$ is given by the boundary value
problem
\begin{gather}
J y'-B(t)y=\l\D (t)y +\D(t) f(t), \quad t\in\cI,\label{6.53}\\
\G_{1a}y =0,  \qquad \wh\G_a y=\wh\G_b y,
\;\;\;\l\in\CR.\label{6.54}
\end{gather}

Next, in view of Theorem \ref{th4.2} for each $\l\in\CR$ there
exists a unique operator solution $v(\cd,\l)\in\lo{H_0}$ of Eq.
\eqref{3.2} such that
\begin {equation}\label{6.55}
\G_{1a} v(\l)=-P_H, \qquad i(\wh\G_a -\wh\G_b)v(\l)=P_{\wh H},
\quad \l\in\CR.
\end{equation}
Moreover, if $\wt U$ is a $J$-unitary extension \eqref{3.17.5} of
$U$ and $\G_{0a}$ is the mapping \eqref{3.23}, then the
(canonical) $m$-function $m(\cd)$ of the  problem \eqref{6.53},
\eqref{6.54} is given by the equality
\begin {equation}\label{6.56}
m(\l)=(\G_{0a}+\wh \G_a)v (\l)+\tfrac i 2 P_{\wh H},
\quad\l\in\CR,
\end{equation}
or, equivalently, by the relations
\begin {equation*}
v(t,\l):=\f_U (t,\l)m(\l)+\psi (t,\l)\in\lo{H_0}, \;\;
i(\wh\G_a-\wh\G_b) v(\l)=P_{\wh H}, \;\;\l\in\CR.
\end{equation*}

The boundary problem \eqref{6.53}, \eqref{6.54} has a unique
spectral function $\Si(\cd)$, which is defined by the Stieltjes
formula \eqref{6.24} with $m_\tau(\cd)=m(\cd)$. Moreover
,Corollary \ref{cor6.10} implies that the corresponding Fourier
transform $V$ isometrically maps $\gH_0(=\gH\ominus \mul T)$ onto
$L^2(\Si;H_0)$.
\subsection{Example}
In this subsection we provide an example illustrating the results
of the paper.

Let $\cI=[0,\infty)$ and let $\d(\cd)$ be a Borel  function on
$\cI$ such that $\d(t)>0$ (a.e. in $\cI$) and
\begin {equation*}
C:=\int_0^\infty \d(t) \, dt<\infty.
\end{equation*}
Assume also that in formulas \eqref{3.16} and \eqref{3.17a} $H=\wh
H=\bC$, so that $\bH=\bC^3$ and $H_0=\bC^2$. Consider the
symmetric system
\begin {equation}\label{6.57}
Jy'=\D(t) f(t), \quad t\in\cI, \quad f\in\lI,
\end{equation}
where $J$ and $\D(t)$ are given by the matrices
\begin {equation*}
J=\begin{pmatrix} 0& 0 & -1 \cr 0 & i & 0 \cr 1& 0 & 0
\end{pmatrix}, \qquad \D(t)=\begin{pmatrix}\tfrac 1 2 (\d(t)+1) &
0 & \tfrac i 2 (\d(t)-1)\cr 0 & 1 & 0 \cr -\tfrac i 2 (\d(t)-1) &
0 & \tfrac 1 2 (\d(t)+1)
\end{pmatrix}.
\end{equation*}
Clearly, $\D(t)$ is a nonnegative  invertible matrix (a.e. on
$\cI$); therefore the system \eqref{6.57} is definite and $\Tmi$
is a densely defined operator in $\LI$. The immediate checking
shows that  the homogeneous system
\begin {equation}\label{6.59}
Jy'=\l\D(t) y, \quad t\in\cI, \quad \l\in\bC,
\end{equation}
has a fundamental solution
\begin {equation}\label{6.60}
Y(t,\l)=\begin{pmatrix} e^{-i \l \Phi(t)} & 0 & e^{i \l t} \cr 0 &
e^{-i \l t}& 0 \cr -i e^{-i \l \Phi(t)} & 0 & ie^{i \l t}
\end{pmatrix},
\end{equation}
where
\begin {equation*}
\Phi (t):=\int_0^t \d(s)\, ds.
\end{equation*}

Denote by $y^{(1)}(\cd,\l), \; y^{(2)}(\cd,\l) $ and
$y^{(3)}(\cd,\l)$ vector solutions of Eq. \eqref{6.59} formed by
the first, second and third columns of the matrix \eqref{6.60}
respectively. It is easily seen that $y^{(1)}(\cd,\l), \;
y^{(3)}(\cd,\l)\in\lI, \; y^{(2)}(\cd,\l)\notin\lI$ for all
$\l\in\bC_+$ and $y^{(1)}(\cd,\l), \; y^{(2)}(\cd,\l)\in\lI, \;
y^{(3)}(\cd,\l)\notin\lI$ for all $\l\in\bC_-$. Therefore the
operator $\Tmi$ has minimally possible equal deficiency indices
$n_+(\Tmi)=n_-(\Tmi)=2$.

Let $\t(\cd)\in\lI$ be the solution of Eq. \eqref{6.59} given by
\begin {equation*}
\t(t)=\tfrac i {\sqrt 2} e^{-C}y^{(1)}(t,i)= \tfrac i {\sqrt 2}
e^{-C}\{e^{\Phi(t)}, 0, -i e^{\Phi(t)}\}.
\end{equation*}
Since $[\t,\t]_\infty=i$, it follows from Remark \ref{rem3.2a},
(2) that the equality $ \wh\G_b y=[y,\t]_\infty, \; y\in\dom\tma,$
defines the surjective linear mapping $\wh \G_b:\dom\tma\to \bC$
satisfying \eqref{6.51}.

We assume that $\wt U=I$ (see \eqref{3.17.5}). Then for each
function $y\in\dom\tma$ decomposed as
\begin {equation*}
y(t)=\{y_0(t), \wh y(t), y_1(t)\}(\in \bC\oplus\bC\oplus\bC),
\quad t\in\cI,
\end{equation*}
one has $\G_{0a}y=y_0(0), \; \wh\G_a y=\wh y(0), \;
\G_{1a}y=y_1(0) $  and the boundary problem \eqref{6.53},
\eqref{6.54} can be written as
\begin{gather}
J y'=\l\D (t)y +\D(t) f(t), \quad t\in\cI,\label{6.63}\\
y_1(0) =0,  \qquad \wh y(0)=[y,\t]_b,
\;\;\;\;\;\;\l\in\CR.\label{6.64}
\end{gather}
According to Subsection \ref{sub6.4} there exists a unique
operator solution
\begin {equation}\label{6.65}
v(t,\l)=\begin{pmatrix} r_0(t,\l) & q_0(t,\l) \cr \wh r(t,\l) &
\wh q(t,\l) \cr r_1(t,\l) &
q_1(t,\l)\end{pmatrix}:\underbrace{\bC\oplus\bC}_{H_0}\to
\underbrace{\bC\oplus\bC\oplus\bC}_{\bH}, \quad \l\in\CR,
\end{equation}
of Eq. \eqref{6.59} belonging to $\lo{H_0}$ and satisfying the
boundary conditions \eqref{6.55}, which in our case take the form
\begin {equation*}
r_1(0,\l)=-1, \quad \wh r(0,\l)- [r,\t]_\infty=0, \quad
q_1(0,\l)=0, \quad \wh q(0,\l)- [q,\t]_\infty=-i.
\end{equation*}
The immediate checking shows that for $\l\in\bC_+$ such a solution
is
\begin {equation}\label{6.67}
v(t,\l)=\begin{pmatrix} ie^{i \l t} & \tfrac i {\sqrt 2}  e^{i \l
C} (e^{-i \l \Phi(t)}+ e^{i \l t}) \cr 0 & 0 \cr -e^{i \l t} &
\tfrac i {\sqrt 2}  e^{i \l C} (-ie^{-i \l \Phi(t)}+ ie^{i \l
t})\end{pmatrix}.
\end{equation}
Combining of \eqref{6.56} with \eqref{6.65} implies  that the
$m$-function of the  problem \eqref{6.63}, \eqref{6.64} is
\begin {equation*}
m(\l)=\begin{pmatrix} r_0(0,\l) & q_0(0,\l) \cr \wh r(0,\l) & \wh
q(0,\l)+\tfrac i 2 \end{pmatrix}, \quad \l \in\CR.
\end{equation*}
Therefore by \eqref{6.67} one has
\begin {equation*}
m(\l)=\begin{pmatrix}i & i\sqrt 2 e^{i\l C}\cr 0 & \frac i 2
\end{pmatrix}, \quad \l \in\bC_+.
\end{equation*}
Applying the Stieltjes formula \eqref{6.24} to $m(\cd)$  one
obtains the spectral function of the boundary problem
\eqref{6.63}, \eqref{6.64}:
\begin {equation}\label{6.69}
\Si(s)=\frac 1 \pi\begin{pmatrix} s & - \tfrac i {\sqrt 2 C}
(e^{isC}-1)\cr \tfrac i {\sqrt 2 C} (e^{-isC}-1 ) & \tfrac 1 2
s\end{pmatrix}
\end{equation}
Since $\Si (s)$ has the continuous derivative
\begin {equation}\label{6.70}
\Si'(s)=\frac 1 \pi\begin{pmatrix} 1 &  \tfrac 1 {\sqrt 2 }
e^{isC}\cr \tfrac 1 {\sqrt 2 } e^{-isC} & \tfrac 1 2
\end{pmatrix},
\end{equation}
it follows that $L^2(\Si; \bC^2)$ is the set of all functions
$g(\cd)$ such that
$$\int_\bR (\Si'(s)g (s), g(s))\, ds <\infty$$.

To simplify further considerations we pass to the new orthonormal
basis $\{\dot e_1,\dot e_2, \dot e_3 \}$ in $\bC^3$ with $\dot
e_1=\{\tfrac 1 {\sqrt 2}, 0, -\tfrac i {\sqrt 2} \}, \; \dot
e_2=\{ 0,1,0\}$ and $\dot e_3=\{\tfrac 1 {\sqrt 2}, 0, \tfrac i
{\sqrt 2} \}$. Then the Hilbert space $\LI$ can be identified with
the set of all Borel functions $f(\cd):\cI\to \bC^3$ of the form
\begin {equation*}
f(t)=\dot f_1(t)\dot e_1+\dot f_2(t)\dot e_2 +\dot f_3(t)\dot
e_3=:\{\dot f_1(t),\dot f_2(t), \dot f_3(t) \},
\end{equation*}
where $\d^{\frac 1 2}\dot f_1\in L^2(\cI)$ and $\dot f_2, \; \dot
f_3 \in \in L^2(\cI)$.

Next, the equality
\begin {equation*}
\f(t,\l)=\begin{pmatrix} \tfrac 1 2 (e^{-i\l\Phi (t)}+e^{i\l t}) &
0 \cr 0 & e^{-i\l t} \cr \tfrac i 2 (-e^{-i\l\Phi (t)}+e^{i\l t})
& 0)\end{pmatrix}:\underbrace{\bC\oplus\bC}_{H_0}\to
\underbrace{\bC\oplus\bC\oplus\bC}_{\bH}, \quad \l\in\bC,
\end{equation*}
defines the operator solution of Eq. \eqref{6.59} with $\f (0,\l)=
\begin{pmatrix} I_{H_0}\cr 0  \end{pmatrix}$. This and formula \eqref{6.18}
(with $\f_U(t,\l)=\f (t,\l)$) imply that for each function
$f(\cd)=\{\dot f_1 (\cd), \dot f_2 (\cd),\dot f_3 (\cd)\}\in \LI$
the Fourier  transform $\wh f(\cd)=\{\wh f_1 (\cd), \wh f_2
(\cd)\}\in L^2(\Si;\bC^2)$ is given by
\begin {equation*}
\wh f_1(s)= \tfrac 1 {\sqrt 2} \int_0^\infty (e^{i s
\Phi(t)}\d(t)\dot f_1(t)+ e^{-i s t} \dot f_3(t))\, dt, \qquad \wh
f_2(s)=\int_0^\infty e^{i s t}\dot f_2(t)\, dt.
\end{equation*}
According to Theorem \ref{th6.11} $Vf=\wh f$ is a unitary operator
from $\LI$ onto $L^2(\Si;\bC^2)$ and by using \eqref{6.38} one can
easily prove that the inverse Fourier  transform is
\begin{gather*}
\dot f_1(t)= \tfrac 1 {\pi\sqrt 2} \int_\bR (e^{-i s \Phi(t)}\wh
f_1(s)+ \tfrac
1 {\sqrt 2} e^{-is(\Phi(t)-C)}\wh f_2(s))\, d s ,\\
\dot f_2(t)= \tfrac 1 {\pi\sqrt 2} \int_\bR (e^{-i s (t+C)}\wh
f_1(s)+ \tfrac
1 {\sqrt 2} e^{-ist}\wh f_2(s))\, d s ,\\
\dot f_3(t)= \tfrac 1 {\pi\sqrt 2} \int_\bR (e^{i s t}\wh f_1(s)+
\tfrac 1 {\sqrt 2} e^{is(t+C)}\wh f_2(s))\, d s
\end{gather*}

\vskip 2mm

\centerline{\large Acknowledgments}

\vskip 2mm

The authors express their gratitude to DFG for financial support
within the Project No. 436 UKR 113/854.

\end{document}